\newtheorem{thm}{Theorem}[section]
\newtheorem{theorem}[thm]{Theorem}
\newtheorem{prop}[thm]{Proposition}
\newtheorem{proposition}[thm]{Proposition}
\newtheorem{lemma}[thm]{Lemma}
\newtheorem{remark}[thm]{Remark}
\newtheorem{rmk}[thm]{Remark}
\newtheorem{defn}[thm]{Definition}
\newtheorem{definition}[thm]{Definition}
\newtheorem{corr}[thm]{Corollary}
\newtheorem{corollary}[thm]{Corollary}
\newtheorem{fact}[thm]{Fact}
\newtheorem{example}[thm]{Example}
\newtheorem{conjecture}[thm]{Conjecture}
\theoremstyle{definition}
\DeclareMathOperator{\aut}{Aut}
\DeclareMathOperator{\Diff}{Diff}
\newcommand{\mb}[1]{\mathbb{#1}}
\newcommand{\mc}[1]{\mathcal{#1}}
\newcommand{\bdf}{\begin{defn}}
\newcommand{\edf}{\end{defn}}
\newcommand{\bthm}{\begin{thm}}
\newcommand{\ethm}{\end{thm}}
\newcommand{\blem}{\begin{lemma}}
\newcommand{\elem}{\end{lemma}}
\newcommand{\bcor}{\begin{corr}}
\newcommand{\ecor}{\end{corr}}
\newcommand{\bprop}{\begin{prop}}
\newcommand{\eprop}{\end{prop}}
\newcommand{\brmk}{\begin{rmk}}
\newcommand{\ermk}{\end{rmk}}
\newcommand{\bpf}{\begin{proof}}
\newcommand{\epf}{\end{proof}}
\newcommand{\bex}{\begin{example}}
\newcommand{\eex}{\end{example}}
\numberwithin{equation}{section}
\def\R{\mathbb{R}}
\def\Z{\mathbb{Z}}
\def\CP{\mathbb{CP}}
\def\mb{\mathbf}
\def\Symp{\mbox{Symp}}
\def\xkm2{\overline{X}_{k-2}}
\tikzset{curve/.style={settings={#1},to path={(\tikztostart)
			.. controls ($(\tikztostart)!\pv{pos}!(\tikztotarget)!\pv{height}!270:(\tikztotarget)$)
			and ($(\tikztostart)!1-\pv{pos}!(\tikztotarget)!\pv{height}!270:(\tikztotarget)$)
			.. (\tikztotarget)\tikztonodes}},
	settings/.code={\tikzset{quiver/.cd,#1}
		\def\pv##1{\pgfkeysvalueof{/tikz/quiver/##1}}},
	quiver/.cd,pos/.initial=0.35,height/.initial=0}
\tikzset{tail reversed/.code={\pgfsetarrowsstart{tikzcd to}}}
\tikzset{2tail/.code={\pgfsetarrowsstart{Implies[reversed]}}}
\tikzset{2tail reversed/.code={\pgfsetarrowsstart{Implies}}}
\tikzset{no body/.style={/tikz/dash pattern=on 0 off 1mm}}
\newcommand{\ZZ}{\mathbb{Z}}
\newcommand{\CC}{\mathbb{C}}
\newcommand{\RR}{\mathbb{R}}
\newcommand{\PP}{\mathbb{P}}
\title{Enumerative aspect of symplectic log Calabi-Yau divisors and almost toric fibrations}
\date{\today}
\author{Tian-Jun Li, Jie Min, Shengzhen Ning}
\begin{document}
	\maketitle
	\begin{abstract}
		In this paper we are interested in the isotopy classes of symplectic log Calabi-Yau divisors in a fixed symplectic rational surface. We give several equivalent definitions and prove the stability, finiteness and rigidity results. Motivated by the problem of counting toric actions, we obtain a general counting formula of symplectic log Calabi-Yau divisors in a restrictive region of $c_1$-nef cone. A detailed count in the case of $ 2 $- and $ 3 $-point blow-ups of complex projective space for all symplectic forms is also given. In our framework the complexity of the combinatorics of analyzing Delzant polygons is reduced to the arrangement of homology classes. Then we study its relation with almost toric fibrations. We raise the problem of realizing all symplectic log Calabi-Yau divisors by some almost toric fibrations and verify it together with another conjecture of Symington in a special region.
	\end{abstract}
	
	\tableofcontents
	
	\section{Introduction}
	
	Let X be a smooth rational surface and let $ D\subset X  $ be an effective reduced anti-canonical divisor. Such pairs $ (X,D) $ are called anti-canonical pairs or log Calabi-Yau surfaces and has been extensively studied since Looijenga (\cite{Lo81}). The mirror symmetry aspects of anti-canonical pairs were first studied by Gross, Hacking and Keel in \cite{GrHaKe11} and \cite{GrHaKe12}. Then it was studied by many people in for example \cite{Pas14-thesis}, \cite{Keating2018} and \cite{HaKe20-HMS}. The moduli space of anti-canonical pairs have been studied in \cite{Lo81} and \cite{GrHaKe12}, where Torelli type results were proved. See \cite{Fr} for an excellent survey.
	As an symplectic analogue of anti-canonical pairs, the notion of symplectic log Calabi-Yau pairs was introduced and studied in \cite{LiMa16-deformation}, \cite{LiMi-ICCM} and \cite{LiMaMi-logCYcontact}, with applications to contact structures and symplectic fillings.
	
	Instead of considering the pair of a symplectic manifold and a divisor, in this paper we are interested in the space of isotopy classes of symplectic log Calabi-Yau divisors in a fixed symplectic rational surface. We will give several different but equivalent definitions, prove stability, rigidity and finiteness properties. Making use of those results we count the number of elements in it and investigate its relation to toric actions and almost toric fibrations. Note that in this paper although the word ``space" is frequently used, we will only study its properties as a set. In other word, no additionally topological or algebraic structures are added. Hopefully this will not cause confusion. 
	
	\subsection{Space of symplectic log Calabi-Yau divisors}\label{section:intro moduli}
	
	A symplectic divisor refers to a connected configuration of finitely many closed embedded symplectic surfaces $ D=\cup C_i $ in a symplectic 4-manifold such that all intersections are positively transversal and there are no triple intersections. For a more detailed discussion on symplectic divisors, see \cite{LiMa14-divisorcap} or \cite{LiMi-ICCM}.
	
	A \textbf{symplectic log Calabi-Yau pair} $(X,D,\omega)$ is a closed symplectic 4-manifold $(X,\omega)$ together with a nonempty symplectic divisor $D=\cup C_i$ representing the  Poincare dual of $c_1(X,\omega )$.
	It's an easy observation (\cite{LiMa19-survey}) that $ D $ is either a torus or a cycle of spheres. In the former case, $ (X,D,\omega ) $ is called an \textbf{elliptic log Calabi-Yau pair}. In the later case, it's called a \textbf{symplectic Looijenga pair} and can only happen when $ (X,\omega ) $ is rational. As a consequence, we have $ b^+(Q_D)=0 $ or $ 1 $, where $Q_D$ is the intersection matrix of $D$. The symplectic divisor $ D $ is then called a {\bf symplectic log Calabi-Yau divisor} in $ (X,\omega) $.
	
	Now fixing a symplectic rational surface $ (X,\omega) $, we denote by $ p\mathcal{LCY}(X,\omega) $ the set of symplectic log Calabi-Yau divisors in $ (X,\omega) $, where the prefix `$p$' stands for `pre'. The symplectomorphism group $ Symp(X,\omega) $ naturally acts on $ p\mathcal{LCY}(X,\omega) $. The following space we mainly study in this paper can be viewed as the set of path components of $ p\mathcal{LCY}(X,\omega) $ modulo the action of $ Symp(X,\omega) $.
	
	\begin{definition}
		The {\bf isotopy class space} of symplectic log Calabi-Yau divisors is
		\[
		\mc{LCY}(X,\omega)= (p\mathcal{LCY}(X,\omega)/\Symp(X,\omega))/\sim,
		\]
		where $ [D]\sim [D'] $ with $[D],[D']\in p\mathcal{LCY}(X,\omega)/\Symp(X,\omega)$ if there exist representatives $D,D'\in p\mathcal{LCY}(X,\omega)$ and an symplectic isotopy $ D_t $ in $ (X,\omega) $ with $ D_0=D $ and $ D_1=D' $.
	\end{definition}
	
	Denote by $ p\mathcal{LCY}^\perp(X,\omega) $ the subset of $ \omega  $-orthogonal divisors in $ p\mathcal{LCY}(X,\omega) $, that is, the divisors whose irreducible components all intersect $ \omega  $-orthogonally.
	\begin{thm}\label{thm:all equivalent}
		The following sets are different but equivalent ways to describe the isotopy class space of symplectic log Calabi-Yau divisors in $ (X,\omega ) $.
		\begin{align*}
		\mc{LCY}(X,\omega) = &p\mathcal{LCY}(X,\omega)/\text{strict symplectic deformation equivalence} \\
		\cong &p\mathcal{LCY}(X,\omega)/ \text{strict homological equivalence}\\
		\cong &p\mathcal{LCY}(X,\omega)/ \text{strict $ H^2 $-automorphism}\\
		\cong & p\mathcal{LCY}^\perp(X,\omega)/ \text{symplectomorphism}
		\end{align*}
	\end{thm}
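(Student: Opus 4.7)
The plan is to prove the four stated equivalences by verifying a cyclic chain of implications
\[
\mc{LCY}(X,\omega) \;\Rightarrow\; /\text{strict sympl.\ def.}\;\Rightarrow\; /\text{strict hom.}\;\Rightarrow\; /\text{strict }H^2\text{-aut.}\;\Rightarrow\; p\mathcal{LCY}^\perp/\Symp \;\Rightarrow\; \mc{LCY}(X,\omega),
\]
where the word ``strict'' is read as requiring the equivalence to respect the labeling/ordering of the irreducible components $C_i$ of $D$. Each single-arrow direction will be a one-paragraph argument of its own flavor, so the bulk of the effort is in the step from $H^2$-automorphism back to an actual symplectomorphism.

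First I would check the two easiest arrows. Quotienting by the original relation $\sim$ is at least as fine as quotienting by symplectic deformation equivalence, since a symplectic isotopy in $(X,\omega)$ composed with a symplectomorphism is a special case of a one-parameter family of log Calabi-Yau divisors under a deforming symplectic form (apply Moser's theorem to the family of forms to retrieve an ambient isotopy). Conversely, if $(D_t, \omega_t)$ is a strict symplectic deformation through log Calabi-Yau pairs in a fixed $X$, then $c_1(X,\omega_t)$ and $[\omega_t]$ vary continuously in $H^2(X;\R)$, and the standard Gromov–McDuff deformation-implies-isotopy machinery for rational surfaces (combined with the fact that $b^+(Q_D)\le 1$ forces the divisor components to be stable) produces a symplectomorphism of $(X,\omega)$ carrying $D_0$ to $D_1$ up to ambient isotopy. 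The implication from deformation equivalence to homological equivalence is tautological, and from homological equivalence to $H^2$-automorphism equivalence it is the statement that an (oriented) isomorphism of the configuration extends to a cohomological automorphism of $X$ preserving $[\omega]$ and $c_1$, which we get by defining the automorphism on the span of $[C_i]$ and extending trivially on the orthogonal complement in $H^2$.

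The harder direction is to produce a symplectomorphism (not just an isotopy class) realizing a given $H^2$-automorphism. Here I would first restrict attention to $p\mathcal{LCY}^\perp(X,\omega)$ by invoking an orthogonalization lemma: given any $D\in p\mathcal{LCY}(X,\omega)$ there is a symplectic isotopy of $D$, supported in arbitrarily small neighborhoods of the intersection points, making all intersections $\omega$-orthogonal (this is a standard local Darboux argument at each node). Once both divisors are $\omega$-orthogonal, a strict $H^2$-automorphism matching their homology classes can be promoted to an actual isomorphism of symplectic plumbings via the local models of symplectic neighborhoods, and then extended to the ambient $X$ by comparing the complements, which are symplectic fillings with fixed symplectic homology data, using the uniqueness results for fillings/caps of rational surfaces cited in the earlier work of the authors. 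This closes the cycle.

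The main obstacle, and the step I expect to require the most care, is the last one: upgrading a cohomological automorphism into an ambient symplectomorphism. In the non-orthogonal case this is genuinely subtle because the local symplectic geometry near a node is not determined by cohomology alone. Passing to $p\mathcal{LCY}^\perp(X,\omega)$ first, as above, is what makes this feasible, since $\omega$-orthogonal crossings have a canonical symplectic model; the remaining task is then a neighborhood/complement gluing argument whose input is the rationality of $X$ and the constraint $b^+(Q_D)\le 1$. I would frame this step explicitly as a lemma and refer to the subsequent stability and rigidity results of the paper for the technical content, so that the equivalence theorem itself reads as a structural statement about the four quotients.
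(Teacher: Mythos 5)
Your overall architecture (a cycle of implications, with the orthogonalization of crossings and the upgrade from cohomological data to an ambient symplectomorphism isolated as the hard steps) matches the shape of the argument, but the key step is missing and your proposed substitute for it does not work. The crux of the third equivalence is that a strict $H^2$-automorphism $\gamma$ (an abstract integral isometry preserving $PD[C_i]$ and $[\omega]$) must first be \emph{realized by a diffeomorphism of $X$}; this is exactly Theorem 2.8(1) of \cite{LiLi-genus}, used in Proposition \ref{prop:torelli}, and it is genuinely nontrivial since for $\chi(X)>12$ not every integral isometry of $H^2(X;\ZZ)$ is induced by a diffeomorphism. Your proposal never produces this diffeomorphism. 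Instead you try to build a symplectomorphism directly by matching plumbing neighborhoods via local models and then ``comparing the complements \ldots using the uniqueness results for fillings/caps.'' No such uniqueness is available: the complements of symplectic log Calabi-Yau divisors are symplectic fillings of torus bundles whose classification is open, and the Golla--Lisca example quoted in the appendix of this paper exhibits two divisors with the same self-intersection sequence whose complements are not even homotopy equivalent. Without first knowing that $\gamma$ comes from a diffeomorphism, there is also no reason the two pairs $(X,D)$, $(X,D')$ are diffeomorphic at all, so the gluing has nothing to glue. (A smaller issue in the same spirit: extending an isometry from the span of the $[C_i]$ ``trivially on the orthogonal complement'' is not legitimate over $\ZZ$, since that sublattice need not be primitive or unimodular; fortunately the direction you use it for is the trivial one, as a homological equivalence is already a diffeomorphism and hence gives the $H^2$-automorphism for free.)

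Two further points. First, the equality with the quotient by strict symplectic deformation equivalence is definitional here (strict deformation equivalence keeps $\omega$ fixed, so a family of $\omega$-symplectic divisors is literally a symplectic isotopy); your Moser/``deformation-implies-isotopy'' detour conflates the strict and non-strict notions, and what you call ``standard Gromov--McDuff machinery'' for configurations is in fact the main theorem of \cite{LiMa16-deformation} (Theorem \ref{thm: symplectic deformation class=homology classes}), which must be cited, not rederived in a sentence. Second, the passage from a strict symplectic deformation equivalence of $\omega$-orthogonal divisors to an honest symplectomorphism is not a neighborhood-plus-complement gluing but an isotopy extension argument in the style of Siebert--Tian \cite{SibTian05}, carried out for marked divisors (Lemmas \ref{lemma:isotopy to strong isotopy}--\ref{lemma:strong isotopy to Ham} and Proposition \ref{prop:def to symp}): one fixes Darboux balls at the nodes along the isotopy and patches locally defined Hamiltonians vanishing on the divisor. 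Your instinct to orthogonalize first by a local perturbation near the nodes is correct and is exactly what the paper does, but the remaining content is an ambient Hamiltonian extension of the isotopy, not a comparison of complements.
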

	Here the notions of strict homological equivalence and strict symplectic deformation equivalence were defined in \cite{LiMa16-deformation} and will be recalled in Definition \ref{def:deformation}. By a strict $ H^2 $-automorphism we mean an integral isometry of the lattice $ H^2(X;\ZZ) $ preserving the Poincare dual of the homology classes of components of $ D $ and the symplectic class $ [\omega ] $ (see Proposition \ref{prop:torelli}).  And by symplectomorphism we mean the restriction of the action of $Symp(X,\omega)$ on the subset $p\mathcal{LCY}^\perp(X,\omega)\subset p\mathcal{LCY}(X,\omega)$.
	
	Theorem \ref{thm:all equivalent} is a meta-theorem, made up of three equivalences. The first equivalence is the main result of \cite{LiMa16-deformation} (see also Theorem \ref{thm: symplectic deformation class=homology classes}). The second equivalence is proved in Proposition \ref{prop:torelli}, which can be viewed as an improvement of the first equivalence. The last equivalence is established in Proposition \ref{prop:def to symp}, where we prove a version of symplectic isotopy extension theorem for symplectic divisors, adapting an argument of Siebert and Tian (\cite{SibTian05}). This theorem can be seen as a symplectic version of the Torelli theorem in \cite{Fr}.

	For two symplectic log Calabi-Yau pairs $(X,\omega_1,D_1)$ and $(X,\omega_2,D_2)$ with the same underlying manifold but different symplectic forms, there is also a notion of (non-strict) symplectic deformation equivalence introduced in \cite{LiMa16-deformation}, see also Definition \ref{def:deformation}. In this paper we also introduce the following space of (non-strict) deformation classes associated to a manifold $X$.
	\begin{definition}
		For a closed smooth 4-manifold $X$, the {\bf deformation class space} of symplectic log Calabi-Yau divisors is
		\[ \widetilde{\mc{LCY}}(X)= \big{(}\bigsqcup\limits_{\omega \text{ symplectic forms on }X } p\mathcal{LCY}(X,\omega) \big{)} / \sim,  \] where $ \sim $ is the symplectic deformation equivalence. 
	\end{definition}
	
	Note that if $D_1,D_2\in p\mathcal{LCY}(X,\omega)$ are strict deformation equivalent, then the pairs $(X,\omega,D_1)$ and $(X,\omega,D_2)$ are deformation equivalent. Therefore there is a natural map (not always injective) $\mathcal{LCY}(X,\omega)\rightarrow \widetilde{\mathcal{LCY}}(X)$ for each symplectic form $\omega$.
	
	Recall that a toric action on a symplectic 4-manifold $ (X,\omega) $ is a group homomorphism $ \rho:T\to Ham(X,\omega) $, where $ T\cong (S^1)^2 $ is a 2-torus. We define the space of toric actions  $$ \mc{T}(X,\omega)=\{ \rho:T^2\to Ham(X,\omega ) \} / \sim^{t}, $$ where $ \sim^t $ is the equivalence of toric actions on $ (X,\omega ) $ (Definition \ref{def:toric equivalence}).
	For any toric action on a closed symplectic 4-manifold, we obtain a natural symplectic log Calabi-Yau divisor called the boundary divisor, as  preimage of the boundary of moment polygon.
	To further inverstigate this relation between symplectic log Calabi-Yau divisors and toric actions on symplectic rational surfaces, we introduce the notion of toric symplectic log Calabi-Yau divisors. For this purpose, given $D\in p\mathcal{LCY}(X,\omega)$ we denote the length of $D$ by $k(D)$ and define $q(D)=12-k(D)-D^2$ (this is called the ``charge" in \cite{Fr}).
	\begin{definition} \label{def: toric lcy}
		$ D\in p\mathcal{LCY}(X,\omega) $ is called a {\bf toric symplectic log Calabi-Yau divisor} if $q(D)=0$. Denote by $$ t\mc{LCY}(X,\omega)=\{ D\in p\mathcal{LCY}(X,\omega)\,|\, q(D)=0 \}/\sim\,\,\subset \mathcal{LCY}(X,\omega) $$ the isotopy class space of toric symplectic log Calabi-Yau divisors, where $\sim$ is the strict symplectic deformation equivalence. Moreover, let $\widetilde{t\mc{LCY}}(X)\subset \widetilde{\mc{LCY}}(X) $ be the subset consisting only deformation classes of toric symplectic log Calabi-Yau divisors.
	\end{definition}
	
	The boundary divisors of a toric action is easily seen to be a toric symplectic log Calabi-Yau divisor. And in fact, the converse is also true up to strict symplectic deformation equivalence. In Section \ref{section:toric action}, we prove the following equivalence.
	\begin{thm}\label{thm:toric=tLCY}
		Let $ (X,\omega ) $ be a symplectic rational surface. The natural map \[
		\mc{T}(X,\omega)\to t\mc{LCY}(X,\omega )
		\]
		taking a toric action to its boundary divisor is a bijection.
		
	\end{thm}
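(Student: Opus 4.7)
The strategy is to construct the inverse to the map via Delzant's classification theorem: a toric symplectic 4-manifold is determined, up to equivariant symplectomorphism, by its moment polygon, and the moment polygon is in turn determined (up to $SL(2,\ZZ)\ltimes\RR^2$) by the cyclic sequence of primitive outward edge normals together with the $\omega$-lengths of the edges. Under the correspondence between toric divisors and moment polygons, these two pieces of data translate precisely into the self-intersection numbers $a_i=C_i^2$ and symplectic areas $\ell_i=\omega(C_i)$ of the boundary components $C_i$.

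For surjectivity, fix $D=\cup C_i\in p\mc{LCY}(X,\omega)$ with $q(D)=0$. The vanishing of the charge forces $\sum a_i=3k-12$, and together with the unimodular recursion $v_{i+1}+v_{i-1}=-a_iv_i$ this produces a closed cyclic sequence of primitive lattice vectors in $\ZZ^2$ with consecutive pairs forming an oriented basis. The positive areas $\ell_i$ then determine a Delzant $k$-gon $\Delta$. Let $(X_\Delta,\omega_\Delta,\rho_\Delta)$ be the resulting toric 4-manifold with standard toric action and boundary divisor $D_\Delta$. By construction, $D$ and $D_\Delta$ share the same homological data (and hence the same symplectic areas on matching components). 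Theorem \ref{thm:all equivalent} then yields a symplectomorphism $\phi:(X_\Delta,\omega_\Delta)\to(X,\omega)$ carrying $D_\Delta$ to a representative of $[D]$; pushing forward $\rho_\Delta$ through $\phi$ produces a toric action on $(X,\omega)$ mapping to $[D]$.

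For injectivity, suppose $\rho_1,\rho_2\in\mc{T}(X,\omega)$ have boundary divisors representing the same class in $t\mc{LCY}(X,\omega)$. Strict symplectic deformation equivalence preserves the cyclic sequence of self-intersection numbers and the $\omega$-areas of components, so the two moment polygons $\Delta_1,\Delta_2$ share the same combinatorial-affine data. By Delzant's theorem they differ by an element of $SL(2,\ZZ)\ltimes\RR^2$, and if $\phi_i:(X_{\Delta_i},\omega_{\Delta_i})\to(X,\omega)$ are the equivariant symplectomorphisms realizing $\rho_i$, then composing $\phi_2$, the equivariant identification $X_{\Delta_1}\cong X_{\Delta_2}$, and $\phi_1^{-1}$ gives a symplectomorphism of $(X,\omega)$ intertwining $\rho_1$ and $\rho_2$ up to an automorphism of $T^2$, i.e.\ $\rho_1\sim^t\rho_2$.

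The principal technical obstacle is the Delzant polygon construction in the surjectivity step: given only the cyclic array $(a_1,\ldots,a_k)$ abstractly extracted from a toric symplectic log Calabi-Yau divisor, one must verify that the recursion on primitive normals closes up after $k$ steps and assembles into a convex polygon with positive edge lengths $\ell_i$. This is a combinatorial computation standard in toric geometry, but care is needed to align the cyclic ordering of the polygon's edges with the ordering of components of $D$, to handle the low-$k$ degeneracies, and to identify the abstract $\ZZ^2$ lattice underlying the polygon with the lattice of circle subgroups of the eventual torus. Once this is in place, the remaining work in both directions is packaged by Theorem \ref{thm:all equivalent} and Delzant's theorem.
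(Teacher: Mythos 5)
Your overall route coincides with the paper's: build the moment polygon from the self-intersection sequence $s(D)$ and area sequence $a(D)$ via the normal-vector recursion, invoke Delzant's theorem, and transfer back using rigidity of toric divisors; your injectivity argument is essentially the paper's. However, the surjectivity step has a genuine gap exactly where you flag the ``principal technical obstacle'' and then defer it as standard. First, the closing-up of the recursion $v_{i+1}+v_{i-1}=-a_iv_i$ after $k$ steps is equivalent to triviality of the monodromy $A(D)$ of the boundary torus bundle, and this is \emph{not} a formal consequence of $q(D)=0$ together with unimodularity (note also $\sum a_i=12-3k$, not $3k-12$): the paper proves it (Lemma \ref{lemma:continued fraction}) by first showing that $q(D)=0$ forces $D$ to be an iterated toric blow-up of one of the minimal models B3, C4, D4 (Lemma \ref{lemma:toric=toric blow up}), and then verifying that the continued-fraction identity $[s_{i+1},\dots,s_{i-1}]=0$ is a blow-up invariant. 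Second, positivity of the areas $\ell_i$ alone does not make the polygon close up; one needs $\sum \ell_i v_i=0$, which the paper derives from solvability of $Q_Dz=a(D)$ (available because $b^+(Q_D)=1$; Lemma \ref{lemma:delzant}). Third, simplicity and convexity of the resulting polygon require the winding number of the $v_i$ about the origin to be exactly one, again proved by blow-up induction from the minimal models (Lemma \ref{lemma:winding = 1}). These three points are the actual content of Proposition \ref{prop:CY to toric}, not bookkeeping.

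A smaller but real issue: to pass from $(X_\Delta,\omega_\Delta,D_\Delta)$ to $(X,\omega,D)$ you invoke Theorem \ref{thm:all equivalent}, which compares divisors inside a \emph{fixed} $(X,\omega)$; here the two pairs live in a priori different symplectic manifolds and share only the data $(s,a)$. The correct bridge is the iso-tautness of labeled toric divisors (Lemma \ref{lemma:taut}, resting on the holomorphic tautness of toric anticanonical pairs and the fact that the classes $[C_i]$ span $H_2$), followed by Proposition \ref{prop:def to symp} to upgrade strict symplectic deformation equivalence to an actual symplectomorphism for $\omega$-orthogonal representatives.
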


	\subsection{Stability and rigidity}
	To give a better description of the isotopy class space $\mathcal{LCY}(X,\omega)$, we also introduce a more combinatorial space $\mathcal{HLCY}(X,\omega)$ consisting of {\bf homological log Calabi-Yau divisors} (see Definition \ref{def:hlcy}). There is a natural map 
	\[\mathcal{LCY}(X,\omega)\rightarrow\mathcal{HLCY}(X,\omega)\]
	by sending a geometric divisor to its underlying homological configuration. In Proposition \ref{prop:realization} we prove this map is bijective. So we can actually identify $\mathcal{LCY}(X,\omega)$ and $\mathcal{HLCY}(X,\omega)$. It turns out working over $\mathcal{HLCY}(X,\omega)$ will simpify our later discussions on finiteness, counting and almost toric fibrations. 
	
	As an immediate consequence, we obeserve the following stability phenomenon. For a symplectic 4-manifold $(X,\omega)$, denote by $ \mc{S}_\omega  $ the set of elements of $H_2(X;\ZZ)$ that can be represented by $\omega $-symplectic spheres. It turns out the isotopy class space of symplectic log Calabi-Yau divisors is determined by $ \omega  $ through $ \mc{S}_\omega  $.
	\begin{thm}\label{thm:stability}
		Let $ X $ be a rational surface. Suppose $ \omega  $ and $ \omega' $ are two symplectic forms with $ c_1(X,\omega)\cdot [\omega]>0 $, $ c_1(X,\omega')\cdot [\omega']>0 $ and $ \mc{S}_{\omega}=\mc{S}_{\omega'} $. Then \[
		\mc{LCY}(X,\omega )=\mc{LCY}(X,\omega').
		\]
	\end{thm}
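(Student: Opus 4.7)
My plan is to reduce the statement to a purely homological comparison. By Proposition \ref{prop:realization}, there is a bijection $\mathcal{LCY}(X,\omega)\cong\mathcal{HLCY}(X,\omega)$ (and the analogous bijection for $\omega'$), so it suffices to establish the homological counterpart $\mathcal{HLCY}(X,\omega)=\mathcal{HLCY}(X,\omega')$ under the hypotheses. The advantage is that $\mathcal{HLCY}$ is defined entirely in terms of data living in $H_2(X;\mathbb{Z})$; the only input depending on the symplectic form should be the set $\mathcal{S}_\omega$ of classes represented by symplectic spheres.

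For the Looijenga (cycle-of-spheres) case, a homological log Calabi-Yau configuration consists of an ordered cyclic tuple $(A_1,\dots,A_k)$ of classes in $H_2(X;\mathbb{Z})$ which (i) sums to the Poincare dual of $c_1(X)$, (ii) satisfies the cyclic intersection pattern of a chain of spheres, and (iii) has each $A_i$ lying in $\mathcal{S}_\omega$. Conditions (i) and (ii) are intrinsic to the smooth manifold $X$ (using that $c_1$ of a rational surface is a smooth invariant up to the sign fixed by orientation), while (iii) is the only place $\omega$ enters. The equivalence relation used in the definition of $\mathcal{HLCY}$ (strict homological equivalence, or equivalently strict $H^2$-automorphism, cf.\ Theorem \ref{thm:all equivalent}) is likewise intrinsic to $X$ via the lattice $H^2(X;\mathbb{Z})$. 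Thus the hypothesis $\mathcal{S}_\omega=\mathcal{S}_{\omega'}$ immediately produces the equality of the Looijenga parts of $\mathcal{HLCY}(X,\omega)$ and $\mathcal{HLCY}(X,\omega')$.

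The main obstacle is the elliptic case, where $D$ is a single symplectic torus representing the Poincare dual of $c_1(X)$: a torus class is not directly detected by $\mathcal{S}_\omega$. To treat it I would exploit the rational hypothesis together with $c_1(X,\omega)\cdot[\omega]>0$ to argue that the existence of a Looijenga homological configuration forces the existence of a symplectic torus in the anti-canonical class, by symplectically smoothing a single node of a geometric Looijenga pair realizing that configuration. Since the homological datum of an elliptic LCY divisor consists only of the single fixed class equal to the Poincare dual of $c_1(X)$, the elliptic part of $\mathcal{HLCY}(X,\omega)$ is either empty or a singleton, and is nonempty exactly when such a symplectic torus exists. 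The smoothing argument shows this dichotomy is controlled by $\mathcal{S}_\omega$ alone, so together with the Looijenga case we obtain $\mathcal{HLCY}(X,\omega)=\mathcal{HLCY}(X,\omega')$ and hence the theorem. The delicate step will be verifying that a sphere-chain configuration with the required intersection pattern is always available to smooth whenever $c_1\cdot[\omega]>0$ in the rational setting; I would isolate this as a separate lemma and prove it by an explicit construction (for example, starting from a pencil of anti-canonical curves on a minimal model and blowing up) combined with the identification $\mathcal{S}_\omega=\mathcal{S}_{\omega'}$ to transport the construction between the two symplectic forms.
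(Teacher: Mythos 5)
Your opening reduction is exactly the paper's: both proofs invoke Proposition \ref{prop:realization} to replace $\mc{LCY}$ by $\mc{HLCY}$ and then compare the homological data. However, there is a genuine gap in your treatment of the Looijenga case, precisely at the sentence claiming that the equivalence relation defining $\mathcal{HLCY}$ is ``intrinsic to $X$ via the lattice $H^2(X;\ZZ)$.'' It is not: by Definition \ref{def:hlcy}, $\mathcal{HLCY}(X,\omega)=p\mathcal{HLCY}(X,\omega)/\Symp(X,\omega)$, and the homological image of $\Symp(X,\omega)$ is $D_{K_\omega,[\omega]}$, which depends on the symplectic \emph{class} $[\omega]$ (equivalently, a strict $H^2$-automorphism is required to fix $[\omega]$). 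Since the hypothesis $\mc{S}_\omega=\mc{S}_{\omega'}$ does not force $[\omega]=[\omega']$, your argument only yields $p\mathcal{HLCY}(X,\omega)=p\mathcal{HLCY}(X,\omega')$ and says nothing about whether the two quotients agree. Closing this is the substantive step of the paper's proof: $D_{K,\omega}$ is generated by reflections along the Lagrangian $(-2)$-sphere classes $\mc{L}_{K,\omega}$ (Proposition 4.1 of \cite{LiWu12-lagrangian}), and the partition of the relevant $(-2)$-classes as $\mc{L}_{K,\omega}\sqcup\mc{S}^{-2}_\omega\sqcup(-\mc{S}^{-2}_\omega)$ shows that $\mc{S}_\omega=\mc{S}_{\omega'}$ forces $\mc{L}_{K,\omega}=\mc{L}_{K,\omega'}$, hence equal group actions. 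A related, smaller inaccuracy: your claim that $c_1$ of a rational surface is a smooth invariant ``up to the sign fixed by orientation'' is false --- the canonical classes of symplectic forms on $M_l$ form a full $\Diff^+$-orbit, not a single class up to sign. The correct input is that $\mc{S}^{-1}_\omega=\mc{S}^{-1}_{\omega'}$ already implies $K_\omega=K_{\omega'}$ (Proposition 4.1 of \cite{LiLiu01-uniqueness}), which the paper cites for exactly this purpose.

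On the elliptic case: your concern is legitimate in the sense that a torus class is not literally an element of $\mc{S}_\omega$, but the elaborate smoothing lemma you propose is unnecessary. The length-one configuration is always the single class $PD(c_1)$, and Corollary \ref{lemma:elliptic case} already guarantees that it is realized (uniquely) for every symplectic rational surface with $c_1\cdot[\omega]>0$, which is a standing hypothesis of the theorem; so the elliptic parts of $\mathcal{HLCY}(X,\omega)$ and $\mathcal{HLCY}(X,\omega')$ are both singletons and agree automatically. In summary, your proposal over-engineers the easy part (elliptic divisors) and omits the hard part (identifying the two group actions by which one quotients).
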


	
	
	For each symplectic log Calabi-Yau divisor $ D\subset (X,\omega ) $ of length $ k $, we can associate two invariants of $ D $, its self-intersection sequence $ s(D)=([C_i]^2)_{i=1}^k\in \ZZ^k $ and its symplectic area sequence $ a(D)=(\omega([C_i]))_{i=1}^k\in (\RR^+)^k $.
	In the holomorphic category, a labeled toric anticanonical pair (defined in \cite{Fr}) is called {\bf taut} if its isomorphism class is determined by its self-intersection sequence.
	Similarly we define a labeled symplectic log Calabi-Yau divisor to be {\bf def-taut} (resp. {\bf iso-taut}) if its symplectic deformation class (resp. isotopy class) is determined by $ s(D) $ (resp. $ (s(D),a(D)) $). Such notion can be seen as a type of rigidity for symplectic log Calabi-Yau divisors.
	
	
	
	We show in Lemma \ref{lemma:taut} that toric symplectic log Calabi-Yau divisors are both def-taut and iso-taut, which will be used to prove Proposition \ref{prop:CY to toric}. However general symplectic log Calabi-Yau divisors might lose the tautness. In the Appendix \ref{section:appendixtaut} we give a criterion of def-tautness for 
	several families of $b^+=1$ divisors discussed in \cite{LiMaMi-logCYcontact}.
	These resutls are not used in the current paper but are related to the problems of classifying symplectic fillings of torus bundles. We hope there will be applications in the future.
	
	
	

	\subsection{Finiteness and counting}
	
	Karshon, Kessler and Pinsonnault proved the finiteness of toric actions in \cite{KaKePi07-finite} and gave an upper bound for the number of toric actions in \cite{KaKePi14-count} and \cite{KKP}. Inspired by their results and the identification of $ \mc{T}(X,\omega) $ with $ t\mc{LCY}(X,\omega) $ discussed above, we study the enumerative aspect of symplectic log Calabi-Yau divisors. The strategy of \cite{KaKePi14-count} and \cite{KKP} is to understand the combinatorics of Delzant polygons. In our framework, the combinatorics is reduced to the arrangement of homology classes by investigating the space $\mathcal{HLCY}(X,\omega)$.
	
	First we observe that the isotopy class space of symplectic log Calabi-Yau divisors \[
	|\mc{LCY}(X,\omega)|<\infty
	\] is finite for any $(X, \omega)$ in Corollary \ref{lemma:finiteness}. So it makes sense to count the number $ |\mc{LCY}(X,\omega)| $.
	
	Since we are only interested in symplectic log Calabi-Yau divisors up to strict homological equivalence, it suffices to count in the normalized reduced symplectic cone, which is the fundamental domain inside the symplectic cone under the action of orientation-preserving diffeomorphisms. A reduced symplectic class $ [\omega ] $ on $ M_l=\CC\PP^2\# l\overline{\CC\PP}^2 $ is determined by a reduced vector $ (\delta_1,\dots,\delta_l) $ and write $ \mc{LCY}(l;\delta_1,\dots,\delta_l):=\mc{LCY}(M_l,\omega) $. Note that the existence of symplectic log Calabi-Yau divisors implies $c_1\cdot\omega>0$. So we may focus on a subregion satisfying $\delta_1+\cdots+\delta_l<3$, which is called the $c_1$-nef cone. (see Section \ref{section:c1-nef}).
	
	By enumerating all homological log Calabi-Yau divisors, we get a detailed count for $ M_2=\CC\PP^2\# 2\overline{\CC\PP}^2 $ with every reduced symplectic form in Proposition \ref{prop:general count M2} and Corollary \ref{cor:toric count M2}. For general symplectic log Calabi-Yau divisors, we have
	\[
	|\mc{LCY}(2;\delta_1,\delta_2)|=7(\lceil\dfrac{\delta_1}{1-\delta_1}\rceil +\lceil \dfrac{\delta_1-\delta_2}{1-\delta_1}\rceil )+12,
	\]
	except $ |\mc{LCY}(2;\delta_1,\delta_2)|=13 $ when $ \delta_1=\delta_2<\dfrac{1}{2} $. For toric symplectic log Calabi-Yau divisors, we have \[
	|t\mc{LCY}(2;\delta_1,\delta_2)|=\lceil\dfrac{\delta_1}{1-\delta_1}\rceil +\lceil \dfrac{\delta_1-\delta_2}{1-\delta_1}\rceil.
	\]
	
	For the case of $ M_3=\CC\PP^2\# 3\overline{\CC\PP}^2 $, we also give a detailed counting result for toric symplectic log Calabi-Yau divisors in Proposition \ref{prop:M3}.
	
	However in general it's quite difficult to give a detailed count for $M_l$ when $l$ is getting large, even for toric ones. Therefore we explore the region in the $c_1$-nef cone which admits at least one toric action. This is already a very intricate and interesting question. In section \ref{section:toric cone} we give the complete answer of the toric regions for $l\leq 5$ and some geometric descriptions of the toric regions for general $l$.
	
	In section \ref{section:general counting}, we introduce the notion of {\bf restrictive} region, which is a smaller region than the toric region. It turns out that in this region, the counting problem is simplified dramatically and indeed we are able to give a general counting formula (Proposition \ref{prop:general count}), though quite involved, in this special region. As a by-product we recover the upper bound in \cite{KaKePi14-count}.

	\subsection{Relation with almost toric fibrations}
	An almost toric fibration of a symplectic 4-manifold $ (X,\omega) $ is a Lagrangian fibration $ \pi:X \to B $ with only nodal and elliptic singularities, which can be viewed as a generalization of toric actions. 
	Almost toric fibrations play an important role in the study of exotic Lagrangians (\cite{Vianna-CP2}, \cite{Vianna-CP2-infinite}) and symplectic embeddings (\cite{CasVia-embedding}, \cite{CGHMP-toric-staircase}).
	Similar to the case of toric fibrations, the preimage $\pi^{-1}(\partial B)$ of the base is a symplectic divisor representing the Poincare dual of $ c_1(X,\omega) $ (Proposition 8.2 of \cite{Sym02}), i.e. a symplectic log Calabi-Yau divisor. Moreover, the operations (nodal trade, almost toric blow up, toric blow up) on almost toric fibrations have their analogues (smoothing, non-toric blow up, toric blow up) on symplectic log Calabi-Yau divisors. Therefore we naturally want to understand their relations. Let $\mathcal{ATF}(X,\omega)$ denote all the almost toric fibrations on $(X,\omega)$. The first issue is the following realization problem:
	
	\begin{conjecture}[Realization]\label{conj:atfrealization}
		The map $ \Phi:\mc{ATF}(X,\omega)\to \mc{LCY}(X,\omega) $ is surjective.
	\end{conjecture}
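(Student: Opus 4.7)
The plan is to work entirely with homological log Calabi-Yau configurations via Proposition \ref{prop:realization}, and to argue by induction on the charge $q(D) = 12 - k(D) - D^2$. The base case is $q(D)=0$: here $D$ is toric and Theorem \ref{thm:toric=tLCY} produces a Hamiltonian $T^2$-action whose moment map is itself an almost toric fibration with boundary divisor $D$, so $\Phi$ hits every class in $t\mathcal{LCY}(X,\omega)$.

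For the inductive step with $q(D)>0$, I would seek a reduction of $D$ to a simpler divisor $D'$ of one of two kinds. The first is a non-toric blow-down, using a symplectic $(-1)$-sphere $E \in \mathcal{S}_\omega$ that meets $D$ transversely in a single interior point of one component; contracting $E$ yields $(X',\omega',D')$ with $k(D')=k(D)$ and $(D')^2 = D^2 + 1$, so $q$ drops by one. The second is an inverse nodal trade at a corner between two adjacent components of self-intersection $-1$, smoothing those two spheres into a single smooth representative of their sum class and decreasing $k$ by one and $D^2$ by two. By the inductive hypothesis, a reduced divisor is realized as the boundary divisor of some ATF $\pi'$, and the corresponding inverse move on the base — an almost toric blow-up at a boundary point, respectively a nodal trade at a corner of the polytope — produces the desired ATF $\pi$ with $\pi^{-1}(\partial B)$ isotopic to $D$. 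The compatibility between the algebraic reduction of homological data and the geometric operation on ATFs is guaranteed by the equivalences in Theorem \ref{thm:all equivalent}.

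The main obstacle, and the reason this is posed only as a conjecture, is showing that at least one of the two reductions is always available when $q(D)>0$. For a non-toric blow-down one must locate a suitable $(-1)$-class in $\mathcal{S}_\omega$ meeting $D$ in the required way, which is a genuine combinatorial constraint on the arrangement of homology classes underlying $\mathcal{HLCY}(X,\omega)$; for an inverse nodal trade one needs two adjacent $(-1)$-components in the cycle of $D$, something that need not exist \emph{a priori}. In the restrictive region considered later in the paper, the explicit enumeration provided by Proposition \ref{prop:general count} makes this analysis tractable and the induction can be closed by direct inspection, which I expect to be how the special case together with Symington's conjecture is handled. For general symplectic rational surfaces one likely needs either to enlarge the set of allowed ATF moves — for instance by admitting mutations that are not single blow-downs or single nodal trades — or to prove a structural connectivity theorem asserting that the graph on $\mathcal{HLCY}(X,\omega)$ generated by the two moves above always reaches a toric vertex, which is the combinatorial heart of the problem.
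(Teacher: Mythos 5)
This statement is posed as a conjecture, and the paper does not prove it in general; it establishes it only for $\CP^2$, $S^2\times S^2$, $\CP^2\#l\overline{\CP^2}$ with $l\le 3$ (Propositions \ref{prop:surjectivity conjecture minimal} and \ref{prop:surjective M3}) and for restrictive reduced classes (Proposition \ref{prop:proof of conjecture 5.3}). Your overall skeleton --- work in $\mc{HLCY}$ via Proposition \ref{prop:realization}, anchor at the toric case via Theorem \ref{thm:toric=tLCY}, and lift non-toric blow-down and smoothing to almost toric blow-up and nodal trade --- is the same skeleton the paper uses in those special cases, and you correctly identify both the combinatorial obstruction and the role of the restrictive region. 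However, there are two concrete problems with your reduction scheme. First, the charge bookkeeping is wrong: since $D$ is anticanonical, $D^2=c_1^2=9-l$ is fixed by $X$, so $q(D)=3+l-k(D)$ depends only on the length; smoothing leaves $D^2$ unchanged and decreases $k$ by one, hence \emph{increases} $q$ by one. Your second move therefore runs the induction away from the toric base case rather than toward it. The workable direction is the paper's: start from a toric divisor (maximal length, $q=0$) and smooth down, realizing each smoothing by a nodal trade on the base diagram. Second, the ``inverse nodal trade'' you invoke --- passing from an ATF realizing the smoothed divisor $D'$ back to one realizing $D$ --- is not a freely prescribable move: creating a corner from an edge requires a node whose monodromy/eigenray determines how the homology class splits, so you cannot in general recover an arbitrary prescribed un-smoothing.

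Where the paper actually closes the argument, it does not induct on charge but on the blow-up structure: Lemma \ref{lemma:reduced induction} reduces every configuration to a short list of ``germs'' in minimal models, each germ is realized by an explicit base diagram, and each toric or non-toric blow-up is realized by corner chopping or an almost toric blow-up. The substantive geometric content --- which your sketch omits --- is guaranteeing enough affine room on the base diagram to perform these excisions, handled by mutations and nodal slides in Proposition \ref{prop:surjective M3} and by Lemma \ref{lemma:ATFcorner} in the restrictive region. Also note that lengths $k=1,2$ (e.g.\ the elliptic divisor) fall outside your two-move framework and must be treated by smoothing all corners of a toric fibration. So the proposal is a reasonable strategy statement for an open problem, but as written the induction would not terminate and the key lifting step is not justified.
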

	
	Unlike the space of toric actions $\mathcal{T}(X,\omega)$ which contains the equivalent classes, the space $\mathcal{ATF}(X,\omega)$ here denotes all the almost toric fibrations. Thus the next step is to find a suitable equivalence relation $\sim^{atf}$ such that a bijection between $\mathcal{ATF}(X,\omega)/\sim^{atf}$ and $\mathcal{LCY}(X,\omega)$ can be established.
	
	Note that an affirmative answer to Conjecture \ref{conj:atfrealization} will imply every symplectic rational manifold with $\omega\cdot c_1>0$ admits almost toric fibrations, which is also a question we don't know the answer to. Based on the detailed counting results for small rational manifolds in the previous sections, we are able to prove Conjecture \ref{conj:atfrealization} holds for 
	$ M=S^2\times S^2 $ and $ \CP^2\# l\overline{\CP^2} $ with $ l\le 3 $
	with arbitrary symplectic forms (Proposition \ref{prop:surjectivity conjecture minimal} and \ref{prop:surjective M3}).
	\\
	
	In \cite{Sym02}, Symington observed that different toric fibrations on $ (S^2\times S^2,\omega) $ can be changed to each other via nodal trades, nodal slides and mutations, which gives a path of almost toric fibrations connecting toric fibrations. This observation led to the following conjecture.
	\begin{conjecture}[Connectedness]
		Any two toric fibrations of $ (M,\omega ) $ can be connected by a path of almost toric fibrations of $ (M,\omega ) $.
	\end{conjecture}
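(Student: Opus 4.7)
The plan is to translate the problem into the combinatorics of symplectic log Calabi-Yau divisors, which is the main framework of the paper. By Theorem~\ref{thm:toric=tLCY}, each toric fibration $\pi_i$ of $(M,\omega)$ is determined up to toric equivalence by its boundary divisor $D_i\in t\mc{LCY}(M,\omega)$. The first reduction is therefore: if $D_0$ and $D_1$ determine the same class in $t\mc{LCY}(M,\omega)$, they are conjugate by a symplectomorphism $\varphi$ of $(M,\omega)$; in the rational surface cases of interest, the relevant component of $\Symp(M,\omega)$ is known to be connected, so $\varphi$ can be joined to the identity by a path of symplectomorphisms, yielding a path of toric (hence almost toric) fibrations between $\pi_0$ and $\pi_1$.

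For the genuine case $D_0\ne D_1$ in $t\mc{LCY}(M,\omega)$, I would proceed by induction on a combinatorial distance on the finite set $t\mc{LCY}(M,\omega)$ (finiteness being provided by Corollary~\ref{lemma:finiteness}). The inductive step should show that any two \emph{adjacent} toric classes are connected by a path in $\mc{ATF}(M,\omega)$ built from one composite elementary move: perform a nodal trade at a corner of $\pi_0$, producing an internal node with a visible eigenray; apply a nodal slide to move the node across the base polygon; and then perform a reverse nodal trade at a suitable edge to recover a toric fibration realising the adjacent class. At the divisor level this realises a toric mutation, i.e. a chain-replacement move on the configuration of spheres making up the boundary divisor. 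A careful tracking through the affine structure of the base shows that every elementary change of Delzant polygon corresponds in this way to such a local ATF operation.

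The main obstacle is establishing connectedness of the underlying mutation graph on $t\mc{LCY}(M,\omega)$: one must prove that any two toric classes are related by a finite sequence of these elementary moves. In our framework this is a purely combinatorial question inside $\mc{HLCY}(M,\omega)$ (the homological model introduced in Section~\ref{section:intro moduli}), but for general $M_l$ with arbitrary reduced symplectic forms the combinatorics grows rapidly and no general structural theorem is apparent. In the special region treated in this paper, the enumerations provided by Proposition~\ref{prop:general count M2} and Proposition~\ref{prop:M3} reduce the connectedness question to a finite, explicit check, and this is how the conjecture is verified in the cases of $S^2\times S^2$ and $\CP^2\# l\ov{\CP^2}$ with $l\le 3$ discussed above.
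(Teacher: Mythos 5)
You have correctly recognised that this statement is a conjecture rather than a theorem, and your outline reproduces the paper's actual strategy for the cases it does settle: pass to boundary divisors via Theorem \ref{thm:toric=tLCY}, use the composite move ``nodal trade + nodal slide + mutation + nodal trade'' (the paper's toric mutation, Figure \ref{fig:toricmutation}) to connect adjacent classes in the finite set $t\mc{LCY}(M,\omega)$, and reduce everything to connectedness of the resulting mutation graph, which is then verified by explicit enumeration for $\CP^2$, $S^2\times S^2$ and $\CP^2\# l\overline{\CP^2}$ with $l\le 3$ (Proposition \ref{prop:symington minimal}) and, in the restrictive region, by an induction on $l$ resting on Lemmas \ref{lemma:connecting different corner chopping} and \ref{lemma:chopping corner not intersecting eigenray} (Proposition \ref{prop:proof of conjecture 5.7}). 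As a proof of the general conjecture your proposal therefore has exactly the gap the paper leaves open — no structural theorem for the mutation graph on $t\mc{LCY}(M_l,\omega)$ for arbitrary reduced forms — and you say so honestly, so this is the same approach rather than a new one.

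One step deserves a caution. To handle the case where the two fibrations have the same class in $t\mc{LCY}(M,\omega)$ you invoke connectedness of the relevant component of $\Symp(M,\omega)$ in order to upgrade the conjugating symplectomorphism to a path of fibrations. That input is neither established in the paper nor known for general rational surfaces; $\pi_0$ of the symplectomorphism group of $M_l$ is a delicate quantity already for moderate $l$. The paper avoids the issue entirely by treating toric fibrations only up to the equivalence of Definition \ref{def:toric equivalence}, i.e.\ by working with Delzant polygons up to $AGL(2,\Z)$, so that equivalent fibrations are not distinguished and no symplectic isotopy is required. If the conjecture is read literally your reduction needs this unproved connectedness as an additional hypothesis; if it is read up to equivalence, as the paper does, the step is superfluous. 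In either reading the substantive content — connectedness of the toric mutation graph — is identical in your treatment and the paper's.
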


	Based on Symington's observation and the detailed counting results for toric divisors in small rational manifolds, we can prove it for $ M=S^2\times S^2 $ and $ \CP^2\# l\CP^2 $ with $ l\le 3 $
	(Proposition \ref{prop:surjectivity conjecture minimal}).
	
	It's hard to generalize the approaches of proving these two conjectures for small rational manifolds to the cases for arbitrary $(X,\omega)$ due to the difficulty of enumerating $\mathcal{LCY}(X,\omega)$ and $t\mathcal{LCY}(X,\omega)$. However, if $[\omega]$ is restrictive in the sense of section \ref{section:general counting}, we are fortune enough to verify these two conjectures. In other words, besides the cases for small rational manifolds ($M_l$ for $l\leq3$), there exists an open region inside the $c_1$-nef cone supporting these two conjectures. The proofs are shown in Section \ref{section:verify}.
	
	\textbf{Acknowledgments}:
	We are grateful to Cheuk Yu Mak, Liya Ouyang and Shuo Zhang for helpful conversations. We also thank Yael Karshon for sharing with us a draft of their unpublished paper.
	All authors are supported by NSF grant DMS 1611680.

	\section{Isotopy class space of symplectic log Calabi-Yau divisors}\label{section:properties}

	\subsection{Symplectomorphism and symplectic Torelli theorem}
	The goal of this section is to prove Theorem \ref{thm:all equivalent}. It will be a combination of Theorem \ref{thm: symplectic deformation class=homology classes}, Proposition \ref{prop:def to symp} and Proposition \ref{prop:torelli}. We begin with some notions and results in \cite{LiMa16-deformation}.
	\begin{definition}\label{def:deformation}
		Let $ X $ be a closed oriented 4-manifold. Symplectic divisors $ (D^0,\omega^0) $ and $ (D^1,\omega^1) $ in $ X $ with the same number of components, are {\bf symplectic deformation equivalent} if they are connected by a family of symplectic divisors $ (D^t,\omega^t) $, up to an orientation-preserving diffeomorphism. They are called {\bf homologically equivalent} if there is an orientation-preserving diffeomorphism $ \Phi:X\to X $ such that $ \Phi_*[C_j^0]=[C_j^1] $ for all $ j=1,\dots,k $.
		
		Fix a closed symplectic 4-manifold $ (X,\omega ) $. Symplectic divisors $ D^0 $ and $ D^1 $ in $ (X,\omega ) $ are {\bf strictly symplectic deformation equivalent} if they are connected by a family of symplectic divisor $ D^t $, up to a symplectomorphism of $ (X,\omega ) $. They are {\bf strictly homologically equivalent}  if there is a homological equivalence fixing $ [\omega] $.
		
		%
		%
	\end{definition}
	
	The main result of \cite{LiMa16-deformation} states that homological equivalence and symplectic deformation equivalence introduced above are equivalent for symplectic log Calabi-Yau divisors.
	
	\begin{theorem}[\cite{LiMa16-deformation},\cite{LiMa19-survey}] \label{thm: symplectic deformation class=homology classes}
		Two symplectic log Calabi-Yau divisors $ (D^0,\omega^0) $ and $ (D^1,\omega^1) $ in $ X $ are symplectic deformation equivalent if and only if they are   homologically equivalent. In particular, each symplectic deformation class contains a Kahler pair. Moreover,  two symplectic log Calabi-Yau divisors $ D^0 $ and $ D^1 $ in $ (X,\omega) $ are strictly symplectic deformation equivalent if and only if they are strictly homologically equivalent.
		
	\end{theorem}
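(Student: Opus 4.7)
The plan is to prove the two directions separately, with the reverse implication (homological equivalence implies deformation equivalence) being the substantial content. The forward direction is essentially formal: along a one-parameter family of symplectic divisors $(D^t,\omega^t)$, each component $C_j^t$ traces out a smooth family of embedded surfaces, hence $[C_j^t]$ is independent of $t$; composing with the ambient diffeomorphism implicit in the definition yields the required homological equivalence. For the strict version, one observes that when $\omega^t\equiv \omega$ the end-point diffeomorphism can be absorbed into a symplectomorphism by the definition.

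For the reverse direction I would first reduce to the Looijenga (cycle-of-spheres) case. In the elliptic case $D$ consists of a single symplectic torus Poincaré dual to $c_1$; here homological equivalence of the two tori $T^0,T^1$ in a rational (or minimal elliptic) surface can be upgraded to symplectic isotopy using standard positivity-of-intersection arguments together with the Gromov-type uniqueness for symplectic representatives of $c_1$-classes on such manifolds. In the Looijenga case the basic tool is the sphere-isotopy theorem for rational 4-manifolds: two homologous embedded symplectic spheres of the same self-intersection are symplectically isotopic. I would then induct on the length $k$ of the cycle, with the following outline: (i) fix a homological equivalence $\Phi$ and replace $D^1$ by $\Phi(D^1)$ so the two divisors share the same component-wise homology classes; (ii) apply the sphere-isotopy theorem to move one component $C_1^0$ onto $C_1^1$; (iii) show one can carry along the neighboring components so that the required intersection pattern with $C_1$ is preserved; (iv) after this step, both divisors share a common component, and an application of a symplectic neighborhood / tubular neighborhood theorem for the shared component lets us restrict to the complement, reducing to a cycle of length $k-1$ (a chain with boundary) and allowing the induction to continue.

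The second assertion, that each symplectic deformation class contains a K\"ahler pair, is obtained by realizing an explicit K\"ahler representative $(X',D')$ with the prescribed intersection data (e.g.\ via blow-ups of a toric pair producing the same homology configuration), and then running the above equivalence between the K\"ahler pair and an arbitrary symplectic representative to transport the K\"ahler structure into the given deformation class. The strict statement follows by keeping track of the cohomology class $[\omega]$ throughout: inflation along the components (allowed since $[C_i]\cdot[\omega]>0$ for Looijenga divisors and the torus has $[\omega]\cdot[T]>0$) lets one arrange any given normalization of areas without leaving the strict deformation class, and the neighborhood/Moser arguments above respect the symplectic form when carried out inside a fixed $(X,\omega)$.

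The main obstacle is step (iii) in the induction: moving one component by an ambient symplectic isotopy may destroy the transverse intersections with its two neighbors in the cycle. Overcoming this requires the sphere isotopy to be chosen relative to a neighborhood of the intersection points, using a parametric version of symplectic sphere uniqueness and positivity of intersections to keep the crossings simple and transverse throughout the isotopy, and then a relative Moser / symplectic neighborhood argument to globalize. This is exactly the technical heart of \cite{LiMa16-deformation} and is where care with the strict version (keeping $[\omega]$ fixed) also enters, since inflation must be performed along classes disjoint from the current isotopy support.
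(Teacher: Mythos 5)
This statement is not proved in the paper at all: it is imported verbatim from \cite{LiMa16-deformation} (see also \cite{LiMa19-survey}), and the present paper only recalls the ingredients of that proof in its section on operations and minimal models. The actual argument there is organized around \emph{minimal model reduction}: one blows down toric and non-toric exceptional spheres to reach one of the finitely many minimal homology types listed in Theorem \ref{thm:minimal model}, verifies the statement by hand for those models (where Kähler representatives are explicit), and then checks that toric and non-toric blow-ups are compatible with (strict) deformation equivalence. Your proposal instead inducts on the length $k$ of the cycle by isotoping one component at a time and passing to its complement. That is a genuinely different decomposition, and it is where your argument breaks.

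Concretely, step (iv) does not close the induction. After matching a component $C_1$, the complement of its tubular neighborhood is a symplectic manifold with boundary containing a \emph{chain} of spheres with two distinguished boundary intersections, not a closed symplectic log Calabi--Yau pair of length $k-1$; your induction hypothesis says nothing about such relative configurations, and no reduction back to the absolute closed case is indicated. A second, related gap is the ``basic tool'' you invoke: isotopy uniqueness of homologous embedded symplectic spheres is available for exceptional spheres and spheres of nonnegative square, but the components of a Looijenga divisor can have arbitrarily negative self-intersection (e.g.\ the $(-n)$-components appearing in Theorem \ref{thm:minimal model} and Proposition \ref{prop:spherical classes}), and a general isotopy uniqueness statement for such spheres is precisely the kind of result the blow-down strategy is designed to avoid --- there one only needs positivity of intersections with $J$-holomorphic representatives of exceptional classes of minimal area, plus the explicit classification of minimal models. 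Your treatment of the Kähler representative and of the strict version via inflation is in the right spirit, but as written the core induction does not go through without either a relative version of the theorem for chains or a wholesale switch to the minimal-model reduction.
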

	By the above theorem, we have the following equivalent definition for the isotopy class space of symplectic log Calabi-Yau divisors \begin{align*}
	\mc{LCY}(X,\omega)=&p\mathcal{LCY}(X,\omega)/\text{strict symplectic deformation equivalence}\\
	=& p\mathcal{LCY}(X,\omega)/\text{strict homological equivalence}
	\end{align*}
	
	\begin{rmk}
		Definition \ref{def:deformation} and Theorem \ref{thm: symplectic deformation class=homology classes} were originally stated for symplectic log Calabi-Yau pairs $ (X,\omega,D) $ in \cite{LiMa16-deformation}. The original statements restrict to the current ones by fixing $ X $ or $ (X,\omega ) $.
	\end{rmk}
	
	For symplectic orthogonal divisors, the equivalences introduced in the previous subsection can be shown to be equivalent to symplectomorphism. This is a singular analogue of the following result of Siebert-Tian, saying that in the case of a single symplectic surface, a symplectic isotopy of the surface extends to a Hamiltonian isotopy of the ambient symplectic 4-manifold.
	\begin{prop}[Proposition 0.3 of \cite{SibTian05}]
		A smooth isotopy $ \Sigma_t\subset (X^4,\omega ) $ of a symplectic surface can be generated by a Hamiltonian isotopy, i.e. $ \Sigma_t=\phi^t_{H_t}(\Sigma_0) $. Given an open subset $ U\subset X^4 $ in which $ \Sigma_t\cap U=\Sigma_0\cap U $ is fixed, we may moreover assume that $ H_t|_V=0 $ holds in any relatively compact subset $ V $ in $ U $.
	\end{prop}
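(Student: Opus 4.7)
The strategy is to construct $H_t$ directly from the infinitesimal data of the isotopy and then verify via an ODE uniqueness argument that the resulting Hamiltonian flow carries $\Sigma_0$ onto $\Sigma_t$ setwise. Because each $\Sigma_t$ is symplectic, $\omega$ induces the orthogonal splitting $TX|_{\Sigma_t}=T\Sigma_t\oplus (T\Sigma_t)^{\perp_\omega}$, so the instantaneous motion of $\Sigma_t$ has a well-defined normal component. Concretely, parametrize the isotopy by a smooth family $f_t\co \Sigma\to X$ with $f_t(\Sigma)=\Sigma_t$, and let $\nu_t$ be the projection of $\partial_t f_t$ onto $(T\Sigma_t)^{\perp_\omega}$; this projection is independent of the parametrization and will serve as the target normal velocity for the Hamiltonian flow.

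Next, I would build $H_t$ near $\Sigma_t$. Using a smooth-in-$t$ version of Weinstein's symplectic neighborhood theorem, choose tubular neighborhoods $W_t$ of $\Sigma_t$ together with symplectic identifications of $W_t$ with a neighborhood of the zero section in the normal bundle. Prescribe $H_t|_{\Sigma_t}\equiv 0$ and require that $dH_t(p)(w)=\omega(\nu_t(p),w)$ for every $p\in\Sigma_t$ and every $w$ in the normal fibre at $p$; such an $H_t$ exists, for instance as a quadratic function along the fibres in the Weinstein chart. A direct computation using $\iota_{X_{H_t}}\omega=dH_t$ together with the $\omega$-orthogonal splitting shows that the normal component of $X_{H_t}$ along $\Sigma_t$ equals $\nu_t$ exactly; the tangential component is unrestricted but harmless, since it moves $\Sigma_t$ only within itself. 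Extend $H_t$ to all of $X$ by multiplying with a smooth cutoff supported in $W_t$ and equal to $1$ near $\Sigma_t$, preserving the 1-jet along $\Sigma_t$.

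Let $\phi^t_{H_t}$ denote the resulting Hamiltonian flow and set $\widetilde\Sigma_t:=\phi^t_{H_t}(\Sigma_0)$. Both $\widetilde\Sigma_t$ and $\Sigma_t$ are smooth families of submanifolds with $\widetilde\Sigma_0=\Sigma_0$, and whenever $\widetilde\Sigma_t=\Sigma_t$ their normal velocities along the common surface are both equal to $\nu_t$. An ODE-uniqueness argument applied to the graphs of these families inside $[0,1]\times X$ (or equivalently a Gronwall estimate for the normal displacement in a common tubular neighborhood) therefore yields $\widetilde\Sigma_t=\Sigma_t$ for all $t$, proving the first claim. For the localization assertion, if $\Sigma_t\cap U=\Sigma_0\cap U$ is fixed, then $\nu_t$ vanishes on $\Sigma_t\cap U$, so the prescribed 1-jet of $H_t$ vanishes there; choosing the cutoff above to be identically zero on the relatively compact $V\subset U$ then forces $H_t|_V=0$. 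The main technical obstacle is passing from infinitesimal agreement to the integral statement $\widetilde\Sigma_t=\Sigma_t$ for all $t$: this is where one needs the ODE uniqueness argument, and it is also the step where one must arrange for all the local choices (Weinstein identifications, cutoffs, and the function germ realizing the prescribed 1-jet) to depend smoothly on the parameter $t$.
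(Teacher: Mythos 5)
Your argument is correct and rests on the same mechanism as the Siebert--Tian proof that the paper reproduces (in the proof of Lemma \ref{lemma:strong isotopy to Ham}): one constructs $H_t$ vanishing along $\Sigma_t$ with its normal derivative prescribed so that $X_{H_t}$ realizes the normal velocity of the isotopy, and then concludes by noting that $\partial_t+X_{H_t}$ is tangent to $\bigcup_t\{t\}\times\Sigma_t$. The difference is one of packaging: the paper works in local complex Darboux charts where $\Sigma_t$ is a graph $w=f_t(z)+ig_t(z)$ and writes the explicit formula $H_t=-(\partial_t g_t)(u-f_t)+(\partial_t f_t)(v-g_t)$, then patches by a partition of unity (legitimate because all local candidates vanish on $\Sigma_t$, so the patched function still has a 1-jet realizing the correct normal velocity); you instead phrase everything invariantly via the $\omega$-orthogonal splitting and a $t$-smooth Weinstein neighborhood. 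Your version makes the chart-independence of the construction transparent and localizes cleanly for the second assertion, at the cost of invoking the parametrized Weinstein theorem; the paper's version is more elementary and is what actually generalizes to the divisor setting, where one must additionally freeze neighborhoods of the nodes. Two small inaccuracies: the function realizing the prescribed 1-jet should be fibrewise \emph{linear} in the Weinstein chart, not quadratic (a fibrewise quadratic has vanishing derivative along the zero section); and once you impose $H_t|_{\Sigma_t}=0$, nondegeneracy of $\omega|_{T\Sigma_t}$ forces the tangential component of $X_{H_t}$ along $\Sigma_t$ to vanish rather than being ``unrestricted'' --- harmless either way, since tangential motion does not affect the set-level conclusion.
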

	It is observed in \cite{RiGoIv16} that the argument of Siebert-Tian works for symplectic divisors when the symplectic isotopy is constant near the intersection points. Such an isotopy is an instance of a strong $ D $-symplectic isotopy of marked symplectic divisors introduced in \cite{LiMa16-deformation}.
	
	\begin{definition}
		A {\bf marked symplectic divisor} consists of a 5-tuple\[
		\Theta=(X,\omega,D,\{p_j\},\{I_j\})
		\]
		such that \begin{itemize}
			\item $ D\subset (X,\omega ) $ is an $ \omega  $-orthogonal divisor,
			\item $ p_j $, called centers of marking, are points on $ D $ (intersection points of $ D $ allowed)
			\item $ I_j:(B(\delta_j),\omega_{std})\to (X,\omega ) $, called coordinates of marking, are symplectic embeddings sending the origin to $ p_j $ and with $ I_j^{-1}(D)=\{ z=0 \} \cap B(\delta_j) $ (resp. $ I_j^{-1}(D)=(\{ z=0 \}\cup \{ w=0 \})\cap B(\delta_j) $) if $ p_j $ is a smooth (resp. an intersection) point of $ D $, where $ (z,w) $ is the complex coordinate for $ B(\delta_j) $. Moreover, we require that the image of $ I_j $ are disjoint.
		\end{itemize}
	\end{definition}
	Every $ \omega $-orthogonal symplectic divisor can be regarded as a marked divisor where $ \{ p_j \} $ consists of all intersection points and $ I_j $ exists since $ D $ is $ \omega  $-orthogonal. In the rest of this subsection, we will always take an $ \omega $-orthogonal symplectic divisor as a marked divisor in such way.
	
	\begin{definition}
		Let $ \Theta=(X,\omega,D,\{p_j\},\{I_j\}) $ be a marked divisor. A {\bf $ D $-symplectic isotopy} of $ \Theta $ is a 4-tuple $ (X,\omega_t,D,\{ p_j \}) $ such that $ \omega_t  $ is a smooth family of cohomologous symlectic forms on $ X $ with $ \omega_0=\omega  $ and $ D $ being $ \omega_t  $-symplectic for all $ t $.
		
		A {\bf strong $ D $-symplectic isotopy} is a 5-tuple $ (X,\omega_t,D,\{p_j\},\{I_{j,t}\}) $ such that \begin{itemize}
			\item the 4-tuple $ (X,\omega_t,D,p_j) $ is a $ D $-symplectic isotopy of $ \Theta  $,
			\item $ D $ is $ \omega_t  $-orthogonal,
			\item $ I_{j,t}:(B(\epsilon_j),\omega_{std})\to (X,\omega_t) $ are symplectic embeddings sending the origin to $ p_j $, $ I_{j,0}=I_j $ and $ I_{j,t}^{-1}(D)=\{ z=0 \}\cap B(\epsilon_j) $ (resp. $ I_{j,t}^{-1}(D)=(\{z=0 \}\cup \{ w=0 \})\cap B(\epsilon_j) $) if $ p_j $ is a smooth point (resp. an intersection point) of $ D $, for some $ \epsilon_j<\delta_j $, where $ (z,w) $ is the complex coordinate for $ B(\delta_j) $.
		\end{itemize}
	\end{definition}
	
	\begin{lemma}[\cite{LiMa16-deformation}]\label{lemma:isotopy to strong isotopy}
		If $ (X,\omega,D,\{p_j\},\{I_j\}) $ and $ (X',\omega',D',\{p_j'\},\{I_j'\}) $ are $ D $-symplectic isotopy, then they are strong $ D $-symplectic isotopy.
	\end{lemma}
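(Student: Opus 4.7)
The plan is to upgrade a given $D$-symplectic isotopy $\omega_t$, in which $D$ is merely $\omega_t$-symplectic, to a strong one: the forms $\omega_t$ must be modified (within the fixed cohomology class $[\omega]$) so that $D$ becomes $\omega_t$-orthogonal throughout, and then a smooth family of Darboux-type embeddings $I_{j,t}$ must be produced with $I_{j,0}=I_j$ (and, if matching $\Theta'$ at the other end is required, $I_{j,1}=I_j'$). I would proceed in two stages, following the pattern of Siebert--Tian's extension argument but in an equivariant (along $D$) form.

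\textbf{Stage 1 (orthogonalization).} For each fixed $t$ and each intersection point $p_j=C_i\cap C_k$, the tangent planes of the two branches are transverse symplectic $2$-planes in $(T_{p_j}X,\omega_t)$. The set of symplectic forms on $T_{p_j}X$ of fixed volume making these two planes orthogonal and still positive on each branch is non-empty and contractible, so at the linear level one can smoothly interpolate from $\omega_t|_{T_{p_j}X}$ to an orthogonalizing form. Multiplying the resulting infinitesimal change by a bump function supported in a small ball $B_j$ around $p_j$ and by a cut-off in $t$ that vanishes near $t=0,1$ (where $D$ is already orthogonal by virtue of the markings of $\Theta$ and $\Theta'$), I would produce an exact perturbation $d\alpha_t$ so that $\tilde\omega_t:=\omega_t+d\alpha_t$ is symplectic, cohomologous to $\omega_t$, agrees with $\omega_t$ near $t=0$ and $t=1$, still makes every component of $D$ symplectic (for $B_j$ small and the perturbation small), and makes $D$ $\tilde\omega_t$-orthogonal throughout. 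A parametric Moser argument then yields a diffeotopy $\varphi_t$ with $\varphi_0=\varphi_1=\id$ and $\varphi_t^*\tilde\omega_t=\omega_t$; after absorbing $\varphi_t$ into the data, the $\omega_t$-orthogonality of $D$ can be assumed from the start.

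\textbf{Stage 2 (Darboux charts).} Once $D$ is $\omega_t$-orthogonal, the symplectic neighborhood theorem for $\omega$-orthogonal symplectic divisors supplies, for each $t$, local symplectic embeddings $(B(\epsilon_j),\omega_{std})\to (X,\omega_t)$ sending $\{z=0\}$ (or $\{z=0\}\cup\{w=0\}$) onto a neighborhood of $p_j$ in $D$. A parametric version of this theorem -- again Moser along the $t$-direction -- yields a smooth family $I_{j,t}$ after possibly shrinking $\epsilon_j$. To arrange $I_{j,0}=I_j$ one observes that any two such charts at $p_j$ differ by a symplectic automorphism of a standard ball preserving the coordinate axes; this gauge group is path-connected, so the prescribed chart $I_j$ at $t=0$ (and $I_j'$ at $t=1$) can be spliced in on a smaller ball via an interpolating path of such automorphisms.

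\textbf{Main obstacle.} The delicate step is Stage 1: producing the orthogonalizing perturbation smoothly in $t$, exactly (to remain in $[\omega]$), supported near $D$, keeping every component of $D$ symplectic, and vanishing identically at $t=0,1$. The global symplecticity of $\tilde\omega_t$ and the preservation of the symplectic nature of each $C_i$ must be controlled uniformly in $t$, which forces all ingredients (bump functions, cut-offs, sizes of balls) to be chosen in a coupled and small manner. Once this engineering is done, Stage 2 is a parametric application of the relative Darboux/symplectic neighborhood theorem.
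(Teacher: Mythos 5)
The paper does not actually prove this lemma---it is quoted from \cite{LiMa16-deformation}---so there is no in-paper argument to compare against; your outline is, however, the standard proof from that reference: a Gompf-type local modification of the forms $\omega_t$ near the intersection points to achieve $\omega_t$-orthogonality of $D$, followed by a parametric symplectic neighborhood theorem to produce the charts $I_{j,t}$, and both stages are set up correctly (including the interpolation argument at the linear level and the connectedness of the gauge group of chart changes needed to splice in $I_j$ at $t=0$). One step you should delete rather than repair: the parametric Moser diffeotopy $\varphi_t$ with $\varphi_t^*\tilde\omega_t=\omega_t$ does not ``absorb'' the orthogonalization into the original data, because pulling back replaces the fixed divisor $D$ by the moving divisor $\varphi_t^{-1}(D)$, which is no longer a $D$-symplectic isotopy in the sense of the definition (the divisor must stay fixed while the forms vary). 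It is also unnecessary: the perturbed family $\tilde\omega_t$ is cohomologous to $\omega$, agrees with $\omega$ and $\omega'$ near the endpoints, keeps every component of $D$ symplectic, and makes $D$ $\tilde\omega_t$-orthogonal, so it already serves as the family of forms in the required strong $D$-symplectic isotopy, and your Stage 2 applies to it directly.
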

	In order to fix a neighborhood of the intersections points during the isotopy, we also need the following result which extends a path of symplectic ball embeddings to a Hamiltonian isotopy.
	\begin{lemma}[Theorem 3.3.1 of \cite{McSa17-baby}]\label{lemma:ball isotopy}
		Let $ (X,\omega ) $ be a closed symplectic manifold. Let $ 0\le \lambda<r $ and let $ \psi_0,\psi_1:B(r)\to X $ be symplectic embeddings that are joined by a smooth family of symplectic embeddings $ \psi_t:B(\lambda )\to X $. Then there exists a Hamiltonian isotopy $ \{\phi_t \} $ of $ X $ and a constant $ \lambda<\rho<r $ such that $ \phi_0=id $, $ \phi_1\circ \psi_0|_{B(\rho)}=\psi_1|_{B(\rho)} $ and $ \phi_t\circ \psi_0|_B(\lambda)=\psi_t $.
	\end{lemma}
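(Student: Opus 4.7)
The plan is to reduce the extension of the family $\psi_t$ to a Hamiltonian flow by differentiating in $t$, building a time-dependent Hamiltonian on a neighborhood of the image, and then cutting off. The main subtlety will be producing the slightly larger radius $\rho>\lambda$ so that the flow matches at the endpoints on $B(\rho)$ rather than just on $B(\lambda)$.

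First, I would focus on the interior of the isotopy. The set $N = \bigcup_{t\in[0,1]}\{t\}\times\psi_t(\overline{B(\lambda)})\subset [0,1]\times X$ is compact, so there is an $\epsilon>0$ and an extension of the family to $\tilde\psi_t\co B(\lambda+\epsilon)\to X$ given locally in $t$ as follows: near $t=0$, the embedding $\psi_0$ is already defined on $B(r)$, so simply take $\tilde\psi_t=\psi_0$ plus a small symplectic correction obtained by integrating the germ of $\frac{d}{dt}\psi_t$ along radial directions; splice together such local extensions using a partition of unity in $t$ and Moser's stability trick to keep them symplectic. At the endpoint $t=1$ the same argument using $\psi_1|_{B(r)}$ gives a matching extension, so by choosing $\rho<\lambda+\epsilon$ sufficiently close to $\lambda$ one can arrange $\tilde\psi_0=\psi_0|_{B(\rho)}$ and $\tilde\psi_1=\psi_1|_{B(\rho)}$. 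This extension step is where I expect the real work to lie: the local extensions have to be made smooth in $t$, they have to remain symplectic embeddings (not just immersions), and the matching at the endpoints requires some care.

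Granting the extended family $\tilde\psi_t\co B(\rho)\to X$, the rest is standard. Define a time-dependent vector field $Y_t$ on the open set $U_t:=\tilde\psi_t(B(\rho))$ by $Y_t(\tilde\psi_t(x))=\tfrac{d}{dt}\tilde\psi_t(x)$. Because each $\tilde\psi_t$ is a symplectic embedding, $L_{Y_t}\omega=0$, so $\iota_{Y_t}\omega$ is a closed 1-form on $U_t$. Since $U_t$ is diffeomorphic to a ball and hence simply connected, this form is exact: $\iota_{Y_t}\omega=dK_t$ for a smooth function $K_t\co U_t\to\R$, depending smoothly on $t$. Choose a compactly supported cutoff $\chi_t$ on $X$ which equals $1$ on a neighborhood of $\tilde\psi_t(\overline{B(\rho')})$ for some $\lambda<\rho'<\rho$ and is supported in $U_t$, depending smoothly on $t$, and set $H_t=\chi_t K_t$ (extended by zero). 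Let $\phi_t$ be the Hamiltonian flow of $H_t$; by construction $\phi_0=\id$, and on $B(\rho')$ the flow satisfies $\phi_t\circ\tilde\psi_0=\tilde\psi_t$, which in particular gives $\phi_t\circ\psi_0|_{B(\lambda)}=\psi_t$ and $\phi_1\circ\psi_0|_{B(\rho')}=\psi_1|_{B(\rho')}$, so $\rho'$ plays the role of $\rho$ in the statement.

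The obstruction to this outline is entirely the construction of the smooth extension $\tilde\psi_t$ in the first step; the differentiation and cutoff in the second step are routine once this is in hand. An alternative that avoids a global extension is to prove a local-in-$t$ version: on each small interval $[t_0,t_0+\eta]$ use the symplectic tubular neighborhood theorem applied to the compact embedded ball $\psi_{t_0}(\overline{B(\lambda)})$ to extend, produce the corresponding piece of Hamiltonian isotopy, and concatenate, relying on the endpoint extendability of $\psi_0,\psi_1$ only to pick the correct $\rho$ at times $t=0$ and $t=1$.
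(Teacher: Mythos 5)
First, a remark on the comparison itself: the paper does not prove this lemma. It is quoted verbatim from McDuff--Salamon (Theorem 3.3.1 of \cite{McSa17-baby}) and used as a black box, so there is no internal proof to measure your attempt against; the relevant comparison is with the standard argument in that reference.

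Your second step --- differentiating the extended family to get $Y_t$ with $\iota_{Y_t}\omega$ closed, hence exact on the (simply connected) image, cutting off the primitive, and integrating the resulting Hamiltonian flow --- is correct and is exactly the routine half of the standard proof. The genuine gap is the first step, which you rightly flag as ``where the real work lies'' but then sketch in a way that would not succeed. Two concrete problems. (i) ``Splice together such local extensions using a partition of unity in $t$'' is not a legitimate operation: symplectic embeddings cannot be averaged, and even at the level of generating Hamiltonians, two local-in-$t$ extensions of $\psi_t$ to $B(\lambda+\epsilon)$ are only known to agree on $B(\lambda)$, so interpolating between them on an overlap of time intervals already requires the very statement being proved (that two symplectic ball embeddings agreeing on $B(\lambda)$ are joined, rel a smaller ball, through symplectic embeddings of some $B(\rho)$). (ii) The matching at $t=1$ is the actual content of the theorem and is not addressed. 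If one extends the primitives $K_t$ from $\psi_t(B(\lambda))$ to $X$ arbitrarily and flows $\psi_0|_{B(\rho)}$, one obtains a family $\tilde\psi_t$ with $\tilde\psi_0=\psi_0|_{B(\rho)}$ and $\tilde\psi_t|_{B(\lambda)}=\psi_t$, but $\tilde\psi_1$ will in general differ from $\psi_1$ on $B(\rho)\setminus B(\lambda)$; repairing this requires showing that two symplectic embeddings of $B(r)$ that agree on $B(\lambda)$ are Hamiltonian isotopic on some intermediate ball by an isotopy fixing a smaller ball pointwise --- a nontrivial fact, and the step from which the constant $\rho$ (``there exists $\rho$'' rather than ``for every $\rho<r$'') actually arises. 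Your proposed alternative of concatenating local-in-$t$ pieces inherits the same endpoint problem, and in addition any time reparametrization used to concatenate would break the required identity $\phi_t\circ\psi_0|_{B(\lambda)}=\psi_t$, which the statement demands for every $t$ exactly, not merely up to reparametrization.
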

	Combining the above two lemmas, we can modify the isotopy to the one considered in \cite{RiGoIv16} and apply the argument of \cite{SibTian05}.
	\begin{lemma}\label{lemma:strong isotopy to Ham}
		Two strong $ D $-symplectic isotopic $ \omega $-orthogonal symplectic divisors are symplectomorphic. In particular, they are Hamiltonian diffeomorphic if $ H^1(X;\RR)=0 $.
	\end{lemma}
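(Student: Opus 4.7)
My plan is to string together three moves, converting a strong $D$-symplectic isotopy (which varies the symplectic form while fixing $D$ setwise) into a genuine Hamiltonian motion of the divisor inside $(X,\omega)$.

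First I would use Moser's trick to trade the family of cohomologous forms $\omega_t$ for a family of $\omega$-orthogonal divisors. Concretely, since $[\omega_t]=[\omega_0]$ is constant and the $\omega_t$ are all symplectic, there exists a smooth isotopy of diffeomorphisms $\phi_t\co X\to X$ with $\phi_0=\id$ and $\phi_t^*\omega_t=\omega$. Set $D_t:=\phi_t(D)\subset (X,\omega)$, $p_{j,t}:=\phi_t(p_j)$, and $\wt I_{j,t}:=\phi_t\circ I_{j,t}\co (B(\epsilon_j),\omega_{std})\to (X,\omega)$. Then $D_t$ is a smooth isotopy of $\omega$-orthogonal divisors joining the two given divisors, and $\wt I_{j,t}$ is a smooth family of symplectic ball embeddings whose image is a standard model for $D_t$ near the (moving) intersection point.

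Next I would localize the isotopy near the intersection points using Lemma~\ref{lemma:ball isotopy}. Apply that lemma to each family $\{\wt I_{j,t}\}$ to produce a Hamiltonian isotopy $\psi_t^j$ of $(X,\omega)$ supported in a small neighborhood of the image of $\wt I_{j,0}$ and satisfying $\psi_t^j\circ \wt I_{j,0}=\wt I_{j,t}$ on a slightly smaller ball. Because the centers $p_{j,0}$ are isolated, the supports of the $\psi_t^j$ can be chosen disjoint, so their composition gives a single Hamiltonian isotopy $\psi_t$. Setting $D_t':=\psi_t^{-1}(D_t)$, the new isotopy agrees with $D_0$ on a fixed neighborhood $U$ of each intersection point for every $t$; it suffices to build a symplectomorphism sending $D_0$ to $D_1'$, since $\psi_1$ then carries $D_1'$ to $D_1$.

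Now I invoke the Siebert–Tian symplectic isotopy extension theorem, in the form upgraded to symplectic divisors by Ri–Golla–Ivan (cited in the text): a smooth isotopy of a symplectic divisor that is constant on an open neighborhood of the intersection points extends to a Hamiltonian isotopy of $(X,\omega)$ whose generating Hamiltonian vanishes on a smaller neighborhood of those points. Applying this to $D_t'$ yields a Hamiltonian isotopy $\Psi_t$ of $(X,\omega)$ with $\Psi_0=\id$ and $\Psi_1(D_0)=D_1'$. Composing, $\psi_1\circ \Psi_1$ is a symplectomorphism of $(X,\omega)$ carrying the initial divisor to the final one.

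The one non-Hamiltonian contribution in this chain sits in the Moser step, where the interpolating diffeomorphism $\phi_t$ is constructed from a primitive $\alpha_t$ of $\tfrac{d}{dt}\omega_t$; this primitive is only determined up to a closed $1$-form, so in general $\phi_t$ is only a symplectic isotopy. When $H^1(X;\R)=0$, however, we may choose $\alpha_t=dH_t$ exact, which makes the Moser isotopy Hamiltonian, and then the entire composition $\psi_1\circ \Psi_1$ is Hamiltonian, giving the ``in particular'' clause. The main delicacy is arranging Step~2 and Step~3 to interact correctly near the intersection points—namely ensuring $\Psi_t$ really can be taken to equal the identity on a neighborhood $U$ where $D_t'=D_0$—but this is exactly the open-set refinement built into the Siebert–Tian statement recalled earlier in the section.
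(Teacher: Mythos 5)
Your proposal is correct and follows essentially the same route as the paper: Moser's trick to transfer the varying forms to a varying divisor in $(X,\omega)$, Lemma~\ref{lemma:ball isotopy} to freeze the isotopy near the intersection points, and the Siebert--Tian argument (in its divisor version, which the paper writes out explicitly via the local Hamiltonian $H_t=-(\partial_t g_t)(u-f_t)+(\partial_t f_t)(v-g_t)$ rather than citing it as a black box) to generate the remaining motion Hamiltonianly. The only slip is directional: with $\phi_t^*\omega_t=\omega$ one should set $D_t:=\phi_t^{-1}(D)$ and $\wt I_{j,t}:=\phi_t^{-1}\circ I_{j,t}$ to land in $(X,\omega)$, not $\phi_t(D)$ and $\phi_t\circ I_{j,t}$.
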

	\begin{proof}
		Let $ (X,\omega_t,D,\{p_j\},\{I_{j,t} \}) $ be a strong $ D $-symplectic isotopy. By Moser stability, there is an isotopy $ \psi_t:X\to X $ such that $ \psi_0 =id$ and $ \psi_t^*\omega_t=\omega_0 $. Let $ D_t=\psi_t^{-1}(D) $ be the image of $ D $ in $ (X,\omega_0) $. Denote by $ \tilde{I}_{j,t}=\psi_t^{-1}\circ I_{j,t}:B(\epsilon_j)\to (X,\omega_0) $ the family of symplectic ball embeddings such that $ (\tilde{I}_{j,t})^{-1}(D_t)=(\{ z=0 \}\cup \{w=0\})\cap B(\epsilon_j) $. Note that $ \psi_t  $ is actually Hamiltonian if $ H^1(X;\RR)=0 $. So it suffices to show $ (X,\omega,D_0) $ and $ (X,\omega,D_1) $ are Hamiltonian diffeomorphic.
		
		By Lemma \ref{lemma:ball isotopy}, there is a Hamiltonian isotopy $ \phi_t $ of $ (X,\omega_0) $ such that $ \phi_t\circ \tilde{I}_{j,0}=\tilde{I}_{j,t} $, after possibly shrinking to a smaller $ \epsilon_j $. Replacing $ D_t $ by $ \phi_t^{-1}(D_t) $, we may assume that $ D_t\cap \tilde{I}_{j,0}(B(\epsilon_j)) $ is fixed during the isotopy.
		At a smooth point $ p $ on $ D_{t_0} $, choose a complex Darboux coordinate $ (z=x+iy,w=u+iv) $. For $ t $ close to $ t_0 $, there are functions $ f_t,g_t $ such that $ D_t $ is the graph $ w=f_t(z)+ig_t(z) $. Then following the argument of Siebert-Tian, we define \[
		H_t=-(\partial_t g_t)\cdot (u-f_t)+(\partial_t f_t)\cdot (v-g_t),
		\]
		which generates the isotopy.
		At each intersection point $ p_j\in D_{t_0} $, take the Darboux ball $ \tilde{I}_{j,0}:B(\epsilon_j)\to (X,\omega_0) $ and define $ H_t=0 $ in this chart, which generates the constant isotopy.
		
		Since each $ H_t $ vanishes along $ D_{t_0} $, locally defined $ H_t $ can be patched via a partition of unity of $ D_{t_0} $ to a function defined near $ D_{t_0} $ and then extend to $ M $ arbitrarily. Finally extend the construction to all $ t\in [0,1] $ via a partition of unity in $ t $.
		%
	\end{proof}
	
	\begin{prop}\label{prop:def to symp}
		Two $ \omega $-orthogonal symplectic divisors $ D $ and $ D' $ in $ (X,\omega) $ are strictly symplectic deformation equivalent if and only if they are symplectomorphic. Moreover, two general symplectic divisors are strictly symplectic deformation equivalent if and only if they are symplectomorphic after a small symplectic isotopy locally supported near the intersection points.
	\end{prop}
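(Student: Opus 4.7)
The easier direction---symplectomorphic implies strict symplectic deformation equivalent---is immediate: given $\phi\in\Symp(X,\omega)$ with $\phi(D)=D'$, the constant family $D^t\equiv D$ together with $\phi$ realizes the strict deformation equivalence. The nontrivial direction is the converse, and in the $\omega$-orthogonal case the plan is to recast the ``divisor-varies, form-fixed'' family given by Definition \ref{def:deformation} as a ``divisor-fixed, form-varies'' $D$-symplectic isotopy, then invoke Lemmas \ref{lemma:isotopy to strong isotopy} and \ref{lemma:strong isotopy to Ham}. After composing with the symplectomorphism built into the definition, we may assume a smooth family $\{D^t\}_{t\in[0,1]}$ of $\omega$-symplectic divisors with $D^0=D$ and $D^1=D'$.

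First I would produce a smooth ambient diffeotopy $\Psi_t\co X\to X$ with $\Psi_0=\id$ and $\Psi_t(D^0)=D^t$, using the isotopy extension theorem on the smooth strata of $D^t$ and patching with local normal-form models at the intersection points. The map $\Psi_t$ is only smooth, not symplectic, and the intermediate $D^t$ need not remain $\omega$-orthogonal. Setting $\tilde\omega_t=\Psi_t^*\omega$, one has $\tilde\omega_0=\omega$, each $\tilde\omega_t$ is cohomologous to $\omega$ (since $\Psi_t$ is smoothly isotopic to $\id$), and $D^0$ is $\tilde\omega_t$-symplectic because $\Psi_t(D^0)=D^t$ is $\omega$-symplectic. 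Thus $(X,\tilde\omega_t,D^0,\{p_j\})$ is a $D$-symplectic isotopy of the marked divisor built from $D^0$. Lemma \ref{lemma:isotopy to strong isotopy} upgrades it to a strong $D$-symplectic isotopy $(X,\tilde\omega_t,D^0,\{p_j\},\{I_{j,t}\})$, and Lemma \ref{lemma:strong isotopy to Ham} then yields a symplectomorphism of $(X,\omega)$ carrying $D^0$ to $\psi_1^{-1}(D^0)$, where $\psi_t$ is the Moser flow with $\psi_t^*\tilde\omega_t=\omega$. Since $\psi_1^*\tilde\omega_1=\omega$ and $\tilde\omega_1=\Psi_1^*\omega$, the composition $\Psi_1\circ\psi_1$ is a self-symplectomorphism of $(X,\omega)$ sending $\psi_1^{-1}(D^0)$ to $D^1=D'$, and chaining gives the desired symplectomorphism $D\to D'$.

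For the general (not necessarily orthogonal) case, I would first observe that at each intersection point of a symplectic divisor one can, inside a small Darboux ball and using a local symplectic isotopy that equals the identity outside, straighten one sheet to be $\omega$-orthogonal to the other; applied inductively at the finitely many intersection points this produces orthogonal divisors $\widetilde D,\widetilde D'$ obtained from $D,D'$ by small symplectic isotopies locally supported near the intersections. Since such local isotopies are themselves strict symplectic deformation equivalences, $D\sim D'$ forces $\widetilde D\sim\widetilde D'$, and the orthogonal case just proved yields a symplectomorphism $\widetilde D\to\widetilde D'$; the reverse implication is trivial. The main technical obstacle is Step~1, the construction of $\Psi_t$: at intersection points the standard tubular-neighborhood version of the isotopy extension theorem does not directly apply, and one must instead work in local symplectic normal-form charts for a pair of transversely intersecting symplectic surfaces, smoothly interpolate the family of intersection points and the family of tangent 2-planes along each component, and patch with a cutoff function into the smooth-stratum isotopy extension---essentially the same analytic input that underlies \cite{SibTian05} and \cite{RiGoIv16} adapted to the divisor setting.
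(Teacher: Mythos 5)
Your proposal is correct and follows essentially the same route as the paper's proof: extend the symplectic isotopy of divisors to an ambient smooth isotopy, pull back $\omega$ to turn the moving-divisor family into a fixed-divisor $D$-symplectic isotopy, apply Lemmas \ref{lemma:isotopy to strong isotopy} and \ref{lemma:strong isotopy to Ham}, and compose with the time-one extension map; the general case is handled, as in the paper, by a local symplectic isotopy near the intersection points making the divisors $\omega$-orthogonal (Gompf's lemma). The extra care you take with the construction of $\Psi_t$ at the nodes is a reasonable elaboration of what the paper leaves implicit.
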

	\begin{proof}
		Up to symplectomorphism, there is a symplectic isotopy $ D_t $ with $ D_0=D $ and $ D_1=D' $.
		Regard $ D $ and $ D' $ as marked symplectic divisors with markings centered at the intersection points. Extend the isotopy $ D_t $ smoothly to an ambient isotopy $ \psi_t:X\to X $ so that $ \psi_t^{-1}(D_t)=D $ and let $ \omega_t=\psi_t^*\omega  $. Then $ (X,\omega_t,D,\{ p_j \}) $ is a $ D $-symplectic isotopy. By Lemma \ref{lemma:isotopy to strong isotopy}, $ (X,\omega,D) $ and $ (X,\omega_1,D) $ are strong $ D $-symplectic isotopic and thus symplectomorphic by Lemma \ref{lemma:strong isotopy to Ham}. So $ D $ is symplectomorphic to $ D' $ by composing with $ \psi_1 $.
		
		Two general symplectic divisors can be perturbed to be symplectic orthogonal by a symplectic isotopy locally supported near the intersection points (Lemma 2.3 of \cite{Gom95-fibersum}). After the local perturbation, they are still strictly symplectic deformation equivalent and thus symplectomorphic.
	\end{proof}

	In the holomorphic cateogory, we have the a version of Torelli theorem which basically says two log Calabi-Yau pairs are isomorphic if there is an integral isometry of the second cohomology, which maps the homology of one divisor to the other (Theorem 8.5 of \cite{Fr}). Here an integral isometry means an automorphism preserving the intersection form. In this subsection, we give a symplectic analogue of this Torelli theorem.

	On a closed 4-manifold $ X $, each diffeomorphism induces an automorphism of the lattice of the second integral cohomology, which gives a natural map $ \Diff(X)\to \aut (H^2(X;\ZZ)) $. We denote the image of this map by $ D(X) $. This map is not always surjective, i.e. not every integral isometry of the second cohomology is generated by a diffeomorphism. For rational surfaces with Euler characteristic $ \chi\le 12 $, every integral isometry is induced from a diffeomorphism (\cite{Wall64-diffeo}). But this is not true when $ \chi>12 $ by \cite{FrMo88-diffeo}.
	
	Theorem \ref{thm: symplectic deformation class=homology classes} can be regarded as a symplectic analogue of the Torelli theorem, but only in the weak sense. Firstly, it only concerns symplectic deformation equivalence as compared to isomorphism in the holomorphic case. Secondly, as discussed above, not every integral isometry of the second cohomology is generated by a diffeomorphism. However, we observed that Theorem \ref{thm: symplectic deformation class=homology classes} can be improved to the following much stronger analogue of the Torelli theorem using Proposition \ref{prop:def to symp} and a characterization of $ D(X) $ in \cite{LiLi-genus}.
	
	\begin{prop}[Symplectic Torelli Theorem]\label{prop:torelli}
		Let $ D $ and $ D' $ be  two symplectic log Calabi-Yau divisors in $ (X,\omega) $ with component $C_i$ and $C_i'$ respectively, and an integral isometry \[
		\gamma:H^2(X;\ZZ)\to H^2(X;\ZZ)
		\]
		such that $ \gamma(PD[C_i'])=PD[C_i] $, and $ \gamma([\omega])=[\omega] $. Then $ D $ and $ D' $ are strictly symplectic deformation equivalent. They are symplectomorphic if they are both $ \omega $-orthogonal.
	\end{prop}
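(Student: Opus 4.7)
The plan is to realize the algebraic isometry $\gamma$ by a geometric diffeomorphism, and then combine Theorem \ref{thm: symplectic deformation class=homology classes} with Proposition \ref{prop:def to symp} to upgrade it successively to deformation equivalence and (in the $\omega$-orthogonal case) to a symplectomorphism.

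First I would observe that since $\sum_i PD[C_i]=PD(c_1(X,\omega))=\sum_i PD[C_i']$, the hypothesis $\gamma(PD[C_i'])=PD[C_i]$ automatically implies $\gamma(PD(c_1))=PD(c_1)$, equivalently $\gamma(K_X)=K_X$. Combined with $\gamma([\omega])=[\omega]$, this means that $\gamma$ preserves both the canonical class and a class of positive square, so in particular it preserves the connected component of the positive cone containing $[\omega]$.

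Next I would invoke the characterization of $D(X)\subseteq \aut(H^2(X;\ZZ))$ given in \cite{LiLi-genus}: on a rational surface, an integral isometry lies in $D(X)$ provided it preserves $K_X$ and the component of the positive cone containing $[\omega]$. Hence $\gamma^{-1}\in D(X)$ (equivalently $\gamma$, as $D(X)$ is a subgroup), so there is an orientation-preserving diffeomorphism $\Phi\co X\to X$ with $\Phi^*=\gamma^{-1}$. Translating back to homology via Poincar\'e duality gives $\Phi_*[C_i']=[C_i]$ and $\Phi^*[\omega]=[\omega]$, which is exactly a strict homological equivalence between $D$ and $D'$ in the sense of Definition \ref{def:deformation}. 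Theorem \ref{thm: symplectic deformation class=homology classes} then promotes this to strict symplectic deformation equivalence, and when $D$ and $D'$ are both $\omega$-orthogonal, Proposition \ref{prop:def to symp} further upgrades it to a symplectomorphism.

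The main obstacle is the appeal to \cite{LiLi-genus}: for rational surfaces with $\chi(X)\le 12$ every integral isometry is realized by a diffeomorphism (Wall \cite{Wall64-diffeo}), but for larger Euler characteristic Friedman--Morgan \cite{FrMo88-diffeo} show that this fails in general, and one has to verify that the specific constraints on $\gamma$ (preservation of $K_X$ together with a fixed positive class $[\omega]$) are exactly the conditions cutting out $D(X)$ in the Li--Li description. Granted this, the rest of the argument is formal, reducing the Torelli statement to the two preceding results.
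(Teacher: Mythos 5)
Your proposal is correct and follows essentially the same route as the paper's proof: deduce $\gamma(c_1)=c_1$ from the hypothesis on the components, use $\gamma([\omega])=[\omega]$ together with $[\omega]\cdot c_1>0$ and the Li--Li characterization of $D(X)$ to realize $\gamma$ by an orientation-preserving diffeomorphism, and then apply Theorem \ref{thm: symplectic deformation class=homology classes} and Proposition \ref{prop:def to symp}. The only cosmetic difference is that you work with $\gamma^{-1}$ and phrase the Li--Li condition via the positive cone component, whereas the paper cites Theorem 2.8(1) of \cite{LiLi-genus} directly with the fixed class $[\omega]$.
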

	\begin{proof}
		Notice that $ [\omega]\cdot c_1(X,\omega)>0 $ by the existence of symplectic log Calabi-Yau divisors.
		Since $ \gamma([\omega])=[\omega] $ and $ \gamma(c_1(X,\omega))=c_1(X,\omega) $, we have $ \gamma\in D(X) $ by Theorem 2.8 (1) of \cite{LiLi-genus}, i.e. there is a diffeomorphism $ \phi:X\to X $ such that $ \phi^*=\gamma  $. Since $ \phi^* $ preserves the symplectic class $ [\omega] $, it must be orientation-preserving and therefore a strict homological equivalence.
		Then the two divisors are strictly symplectic deformation equivalent by Theorem \ref{thm: symplectic deformation class=homology classes}. If they are both $ \omega $-orthogonal, then they are actually symplectomorphic by Proposition \ref{prop:def to symp}.
	\end{proof}
	
	An automorphism of $ H^2(X;\ZZ ) $ as in Proposition \ref{prop:torelli} is called a {\bf strict $ H^2 $-automorphism}.
	Denote by $ p\mathcal{LCY}^\perp(X,\omega)\subset p\mathcal{LCY}(X,\omega) $ the subspace of $ \omega $-orthogonal symplectic divisors in $ (X,\omega) $. Then the two main results of this subsection, Proposition \ref{prop:def to symp} and \ref{prop:torelli}, give two more equivalent definitions of the isotopy class space of symplectic log Calabi-Yau divisors \begin{align*}
	\mc{LCY}(X,\omega)&=p\mathcal{LCY}^\perp(X,\omega)/\Symp(X,\omega )\\
	&=p\mathcal{LCY}(X,\omega)/\text{strict $ H^2 $-automorphism}.
	\end{align*}

	\subsection{Operations and minimal models}\label{section:operations}
	
	In this subsection, we review various facts about the operations and minimal models of symplectic log Calabi-Yau pairs studied in \cite{LiMa16-deformation}, which will become useful later.

	For a symplectic divisor $ D\subset (X,\omega) $, a \textbf{toric blow-up} (resp. \textbf{non-toric blow-up}) of $ D $ is the total (resp. proper) transform of a symplectic blow-up centered at an intersection point (resp. a smooth point) of $ D $.
	A \textbf{toric blow-down} refers to blowing down an exceptional sphere contained in $ D $, intersecting exactly once with exactly two other irreducible components of $ D $. A \textbf{non-toric blow-down} refers to blowing down an exceptional sphere not contained in $ D $, intersecting exactly once with exactly one irreducible component of $ D $.
	These operations preserve the symplectic log Calabi-Yau condition and have analogues in the holomorphic category.
	
	\begin{remark}
		Theorem \ref{thm: symplectic deformation class=homology classes} implies the following. \begin{itemize}
			\item Let $ D_1,D_2 $ be two strictly symplectic deformation equivalent symplectic log Calabi-Yau divisors in $ (X,\omega) $ such that a nodal point $ p_1\in D_1 $ corresponds to $ p_2\in D_2 $ and a component $ C_1\subset D_1 $ corresponds to $ C_2\subset D_2 $. For $ i=1,2 $, let $ \tilde{D}_i $ be the non-toric blow-up of $ D_i $, of the same size, at some point on $ C_i $. Then $ \tilde{D}_1 $ and $ \tilde{D}_2 $ are strictly symplectic deformation equivalent. The same holds if $ \tilde{D}_i $ is toric blow-up of $ D_i $, of the same size, at $ p_i $.
			\item Similarly, let $ e_1 $ be a toric/non-toric exceptional class of $ D_1 $ and let $ \overline{D}_1 $ be the blow-down of $ D_1 $. Then there is a corresponding toric/non-toric exceptional class $ e_2 $ of $ D_2 $ under the strict homology equivalence, such that the corresponding blow-down $ \overline{D}_2 $ of $ D_2 $ is strictly symplectic deformation equivalent to $ \overline{D}_1 $.
		\end{itemize}
	\end{remark}
	
	\begin{definition}
		A  symplectic log Calabi-Yau pair  $(X,D,\omega)$ is called
		minimal
		if $(X, \omega)$ is minimal,  or
		$(X, D, \omega)$ is a symplectic Looijenga pair
		with $X=\mathbb{CP}^2 \# \overline{\mathbb{CP}^2}$.
	\end{definition}
	Through a maximal sequence of non-toric blow-downs and then a maximal sequence of toric blow-downs, we get a minimal pair from any symplectic log Calabi-Yau pair $ (X,D,\omega ) $ and is called a minimal model of $ (X,D,\omega ) $.
	
	
	We recall here the homology types of minimal symplectic log Calabi-Yau pairs (modulo cyclic symmetry and homological equivalence). The following result stated in \cite{LiMa16-deformation} (but with no detailed account) is obtained by considering the intersection numbers of the components and adjunction formula. We will give a detailed explanation of this in the proof of Lemma \ref{lemma:minimal surjectivity} in the next section.
	
	\begin{theorem}[\cite{LiMa16-deformation}]\label{thm:minimal model}
		Any minimal symplectic log Calabi-Yau pair $ (X,D,\omega) $ is homological equivalent to one of the following.
		
		$\bullet$  Case $(A)$: $X$ is a symplectic ruled surface with base genus $1$. $D$ is a torus.
		
		$\bullet$  Case $(B)$: $X=\mathbb{CP}^2$,
		$c_1=3h$.

		$(B1)$ $D$ is a torus,
		
		$(B2)$ $D$ consists of a $h-$sphere and a $2h-$sphere, or
		
		$(B3)$ $D$ consists of three $h-$spheres.
		
		$\bullet$ Case $(C)$: $X=S^2 \times S^2$, $c_1=2f_1+2f_2$, where $f_1$ and $f_2$ are the homology classes
		of the two factors.
		
		$(C1)$ $D$ is a torus.
		
		$(C2)$ $k(D)=2$ and  $[C_1]=bf_1+f_2, [C_2]=(2-b)f_1+f_2$.
		
		$(C3)$ $k(D)=3$ and  $[C_1]=bf_1+f_2, [C_2]=f_1, [C_3]=(1-b)f_1+f_2$.
		
		$(C4)$ $k(D)=4$ and $[C_1]=bf_1+f_2,  [C_2]=f_1, [C_3]=-bf_1+f_2, [C_4]=f_1$.
		
		The graphs in (C1), (C2), (C3) and (C4) are given respectively  by
		\[
		\begin{tikzpicture}
		\node (x) at (-0.5,0) [circle,fill,outer sep=5pt, scale=0.5] [label=above:$ 8 $]{};
		
		\node (x1) at (1,0) [circle,fill,outer sep=5pt, scale=0.5] [label=above:$ 2b $]{};
		\node (y1) at (2.5,0) [circle,fill,outer sep=5pt, scale=0.5] [label=above:$ 4-2b $]{};
		\draw (x1) to[bend right] (y1);\draw (y1) to[bend right] (x1);
		
		\node (x2) at (4,0) [circle,fill,outer sep=5pt, scale=0.5] [label=above:$ 2b $]{};
		\node (y2) at (5.5,0) [circle,fill,outer sep=5pt, scale=0.5] [label=above:$ 0 $]{};
		\node (z2) at (4,-1) [circle,fill,outer sep=5pt, scale=0.5] [label=below:$ 2-2b $]{};
		\draw (x2) -- (y2);\draw (y2) -- (z2);\draw (z2) -- (x2);
		
		\node (x3) at (7,0) [circle,fill,outer sep=5pt, scale=0.5] [label=above:$ 2b $]{};
		\node (y3) at (8.5,0) [circle,fill,outer sep=5pt, scale=0.5] [label=above:$ 0 $]{};
		\node (z3) at (8.5,-1) [circle,fill,outer sep=5pt, scale=0.5] [label=below:$ -2b $]{};
		\node (w3) at (7,-1) [circle,fill,outer sep=5pt, scale=0.5] [label=below:$ 0 $]{};
		\draw (x3) -- (y3);\draw (y3) -- (z3);\draw (z3) -- (w3);\draw (w3) to (x3);
		\end{tikzpicture}
		\]	

		$\bullet$ Case $(D)$: $X=\mathbb{C}P^2 \# \overline{\mathbb{C}P^2}$, $c_1=f+2s$, where $f$ and $s$ are the fiber class and section class
		with $f\cdot f=0$, $f\cdot s=1$ and $s\cdot s=1$.

		$(D1)$ $D$ cannot be a torus because it would not be minimal.
		
		$(D2)$ $k(D)=2$,  and either
		$([C_1],[C_2])=(af+s,(1-a)f+s)$ or $([C_1],[C_2])=(2s, f)$.
		
		$(D3)$  $k(D)=3$ and  $[C_1]=af+s, [C_2]=f, [C_3]=-af+s$.
		
		$(D4)$ $k(D)=4$ and  $[C_1]=af+s, [C_2]=f,  [C_3]=-(a+1)f+s, [C_4]=f$.
		
		The graphs in (D2), (D3) and (D4) are given respectively by
		\[
		\begin{tikzpicture}
		\node (x) at (-2,0) [circle,fill,outer sep=5pt, scale=0.5] [label=above:$ 2a+1 $]{};
		\node (y) at (-0.5,0) [circle,fill,outer sep=5pt, scale=0.5] [label=above:$ 3-2a $]{};
		\draw (x) to [bend right] (y);\draw (y) to [bend right] (x);
		
		\node (x1) at (1,0) [circle,fill,outer sep=5pt, scale=0.5] [label=above:$ 4 $]{};
		\node (y1) at (2.5,0) [circle,fill,outer sep=5pt, scale=0.5] [label=above:$ 0 $]{};
		\draw (x1) to[bend right] (y1);\draw (y1) to[bend right] (x1);
		
		\node (x2) at (4,0) [circle,fill,outer sep=5pt, scale=0.5] [label=above:$ 2a+1 $]{};
		\node (y2) at (5.5,0) [circle,fill,outer sep=5pt, scale=0.5] [label=above:$ 0 $]{};
		\node (z2) at (4,-1) [circle,fill,outer sep=5pt, scale=0.5] [label=below:$ 1-2a $]{};
		\draw (x2) -- (y2);\draw (y2) -- (z2);\draw (z2) -- (x2);
		
		\node (x3) at (7,0) [circle,fill,outer sep=5pt, scale=0.5] [label=above:$ 2a+1 $]{};
		\node (y3) at (8.5,0) [circle,fill,outer sep=5pt, scale=0.5] [label=above:$ 0 $]{};
		\node (z3) at (8.5,-1) [circle,fill,outer sep=5pt, scale=0.5] [label=below:$ -2a-1 $]{};
		\node (w3) at (7,-1) [circle,fill,outer sep=5pt, scale=0.5] [label=below:$ 0 $]{};
		\draw (x3) -- (y3);\draw (y3) -- (z3);\draw (z3) -- (w3);\draw (w3) to (x3);
		\end{tikzpicture}
		\]	
		%
		
	\end{theorem}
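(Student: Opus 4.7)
The plan is to first identify the possible underlying manifolds $X$, and then, for each such $X$, enumerate the homological configurations of $D = \cup_i C_i$ using adjunction, the identity $\sum_i [C_i] = PD(c_1(X))$, and the constraint that $D$ is either a torus or a cycle of spheres, working throughout modulo cyclic symmetry and integral isometries of $H^2$ preserving $c_1$.

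First I would classify $X$. The existence of $D \in p\mc{LCY}(X,\omega)$ forces $c_1(X) \cdot [\omega] = [D] \cdot [\omega] > 0$, so by the classification of minimal symplectic $4$-manifolds (together with the extra clause in the definition of minimal pair allowing $X = \CP^2 \# \overline{\CP^2}$ in the Looijenga setting) the possibilities reduce exactly to an $S^2$-bundle over $T^2$ (case A), $\CP^2$ (case B), $S^2 \times S^2$ (case C), and $\CP^2 \# \overline{\CP^2}$ (case D). Minimal irrational ruled surfaces of base genus $\geq 2$ are excluded since $c_1^2 = 8(1-g) < 0$ obstructs the existence of a symplectic representative of $-K$.

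Next, in each case I would fix the standard integral basis of $H_2$ (hyperplane $h$ for $\CP^2$; two ruling classes $f_1, f_2$ for $S^2 \times S^2$; fiber $f$ and section $s$ for $\CP^2 \# \overline{\CP^2}$) and impose three constraints: (i) $\sum_i [C_i] = PD(c_1)$; (ii) adjunction, giving $[C_i]^2 + K \cdot [C_i] = -2$ for a sphere component and $= 0$ for a torus; (iii) the cycle intersection pattern ($k = 1$ torus; $k = 2$ components meeting in two points; $k \geq 3$ cyclic with adjacent components meeting once and non-adjacent ones disjoint). Adjunction reduces in each case to a Diophantine condition restricting each sphere component to a short list: $d_i^2 - 3 d_i + 2 = 0$ for $\CP^2$, forcing $d_i \in \{1, 2\}$; $(a_i-1)(b_i-1) = 0$ for $S^2 \times S^2$; and $(b_i-1)(2 a_i + b_i - 2) = 0$ for $\CP^2 \# \overline{\CP^2}$. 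Enumerating over $k = 1, 2, 3, 4$ and checking (i) and (iii) produces precisely the lists (B1)--(B3), (C1)--(C4), and (D2)--(D4); for $k \geq 5$ the total coefficient bound together with non-negativity (from $[C_i] \cdot [\omega] > 0$) yields a contradiction.

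Two items require separate treatment. For case (A), every sphere in an $S^2$-bundle over $T^2$ is a fiber class, so no sum of fibers represents $-K$ and hence $D$ must be a torus. For case (D1), a smooth torus $D$ representing $-K$ on $\CP^2 \# \overline{\CP^2}$ would satisfy $E \cdot D = E \cdot c_1 = 1$ for the unique (up to reflection) exceptional class $E = s - f$, providing a non-toric blow-down and contradicting minimality. The main obstacle is the exhaustiveness of the enumeration in cases (C) and (D): one must carefully account for how cyclic relabeling together with integral isometries of $H^2$ preserving $c_1$ and $[\omega]$ collapse a priori distinct parametrizations, and verify that the tabulated forms are both complete and pairwise inequivalent.
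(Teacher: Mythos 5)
Your core enumeration --- pinning down the sphere classes on $\CP^2$, $S^2\times S^2$ and $\CP^2\#\overline{\CP^2}$ by the adjunction Diophantine conditions and then assembling them subject to $\sum_i[C_i]=PD(c_1)$ and the cyclic intersection pattern --- is exactly the route the paper takes: it defers the details of this theorem to the proof of Lemma \ref{lemma:minimal surjectivity}, where the same three Diophantine reductions appear (in the basis $\{H,E\}$ rather than $\{f,s\}$ for $\CP^2\#\overline{\CP^2}$). However, two of your stated justifications would not survive being written out. First, you exclude irrational ruled surfaces of base genus $g\ge 2$ on the grounds that $c_1^2=8(1-g)<0$ ``obstructs the existence of a symplectic representative of $-K$.'' Negative square is not an obstruction to symplectic representability (exceptional spheres are the standard counterexample), and adjunction for a torus in class $-K$ is satisfied identically, so this step needs a genuine argument: the cycle case dies because every embedded symplectic sphere in a minimal irrational ruled surface is a multiple of the fiber class (as you note for genus $1$), but the torus case requires a genus bound for symplectic surfaces in section-type classes, not a square count. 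The paper sidesteps this entirely by citing \cite{LiMa16-deformation} for the statement and only detailing cases (B)--(D).

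Second, your exclusion of $k\ge 5$ via ``the total coefficient bound together with non-negativity (from $[C_i]\cdot[\omega]>0$)'' fails: positive symplectic area does not force non-negative coefficients, and indeed your own lists contain classes such as $-bf_1+f_2$ in $(C4)$ and $-(a+1)f+s$ in $(D4)$ with arbitrarily negative coefficients and positive area. Consequently the identity $k+\sum_i b_i=4$ on $S^2\times S^2$ (say) does not bound the number of components. The exclusion of $k\ge 5$ must instead come from your constraint (iii): the proof of Lemma \ref{lemma:minimal surjectivity} records exactly the observations needed (on $\CP^2\#\overline{\CP^2}$, three consecutive ``odd'' classes are impossible for $l>3$; an ``even'' class forces both neighbours to be odd and itself to equal $H-E$; an $H-E$ component forces all non-neighbours to equal $H-E$), from which $l\le 4$ follows because two copies of $H-E$ would have to be adjacent, and similarly for $S^2\times S^2$. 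With these two steps repaired the proposal is sound and coincides with the paper's argument.
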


	We now introduce the \textbf{smoothing} operation, which corresponds to the nodal trade operation of the almost toric fibrations (see Section \ref{section:ATF intro}). Geometrically, given $D=C_1\cup\cdots C_{k}\cup C_{k+1}\subset (X,\omega)$ in the space of symplectic log Calabi-Yau divisors $p\mathcal{LCY}(X,\omega)$, let $p$ be the intersection point of $C_{k}$ and $C_{k+1}$ and $U\subset X$ an arbitrarily small neighborhood of $p$. Then one can always perform the local surgery to get another $D'=C_1\cup\cdots C_{k-1}\cup C_k'\subset (X,\omega)$ in the space $p\mathcal{LCY}(X,\omega)$ such that outside $U$, $C_k'$ coincides with $C_k\cup C_{k+1}$ by smoothing the singular point $p$.
	Homologically, this operation transfers the homological sequence $([C_1],\cdots,[C_k],[C_{k+1}])$ representing $D$ into $([C_1],\cdots,[C_{k-1}],[C_k']=[C_k]+[C_{k+1}])$ representing $D'$. Remember that $\mathcal{LCY}(X,\omega)$ is equal to $p\mathcal{LCY}(X,\omega)$ modulo strict symplectic deformation equivalence and $\widetilde{\mathcal{LCY}}(X)$ is equal to the union of $p\mathcal{LCY}(X,\omega)$ with $\omega$ being all the symplectic forms modulo symplectic deformation equivalence. By passing to the quotient space, we can define two elements in the space $\mathcal{LCY}(X,\omega)$ or $\mathcal{\widetilde{LCY}}(X)$ can be related by the smoothing operation if and only if there exists representatives in their equivalent classes which can be related by the smoothing operation.
	Thus we can view smoothing as the operation on the space $\mathcal{LCY}(X,\omega)$ or $\widetilde{\mathcal{LCY}}(X)$.
	
	For example, consider the symplectic log Calabi-Yau pair $(S^2\times S^2,C_1\cup C_2\cup C_3 \cup C_4,\omega)$ in case $(C4)$ of Theorem \ref{thm:minimal model}. We can get case $(C3)$(or $(C2)/(C1)$) if we perform the smoothing operation at $C_2\cap C_3$(or $C_2\cap C_3,C_3\cap C_4/C_1\cap C_2,C_2\cap C_3,C_3\cap C_4,C_4\cap C_1$).		
	
	We have the following observation:
	
	\begin{lemma}\label{lemma:smoothing}
		Assuming $X$ is rational, then any element in $\widetilde{\mathcal{LCY}}(X)$ can be obtained by performing smoothing operations on some element in $\widetilde{t \mathcal{LCY}}(X)$.
		\begin{proof}
			It is easy to see the conclusion holds for the minimal cases. From Theorem \ref{thm:minimal model} we see that cases $(B3),(C4),(D4)$ are toric, and all the other cases in $(B),(C),(D)$ can be obtained by smoothing the toric ones.
			Therefore, by the Lemma 3.4 of \cite{LiMa16-deformation}, we only need to show if $(\omega,D)\in\widetilde{ \mathcal{LCY}}(X)$ comes from smoothing an element in $\widetilde{t \mathcal{LCY}}(X)$, then its toric or non-toric blow-up will come from smoothing some element in $\widetilde{t \mathcal{LCY}}(X\#\overline{\mathbb{C}P^2})$.
			
			We may assume $D=C_1\cup\cdots C_{k-1}\cup C_k$ is obtained from smoothing the toric divisor $(A_{11}\cup\cdots\cup A_{1a_1})\cup\cdots\cup(A_{k1}\cup\cdots\cup A_{ka_k})$ at the intersection points $A_{il}\cap A_{i(l+1)}$ such that $[C_i]=\Sigma_{p=1}^{a_i}[A_{ip}]$ for all $1\leq i\leq k$. For the toric or non-toric blow up of $D$, without loss of generality, suppose their homology sequences are $([C_1]-E,E,[C_2]-E,\cdots,[C_k])$ and $([C_1]-E,[C_2],\cdots,[C_k])$ respectively, where $E$ is the exceptional class representing the generator of $\overline{\mathbb{C}P^2}$. Then we can perform small toric blow-up on the toric divisor $(A_{11}\cup\cdots\cup A_{1a_1})\cup\cdots\cup(A_{k1}\cup\cdots\cup A_{ka_k})$ at $ A_{1a_1}\cap A_{21} $ to get another toric divisor $(B_{11}\cup\cdots\cup B_{1a_1})\cup e\cup(B_{21}\cup\cdots\cup B_{2a_2})\cup\cdots$ on $X\#\overline{\mathbb{C}P^2}$ whose homological sequence is $$([A_{11}],\cdots, [A_{1a_1}]-E,E,[A_{21}]-E,\cdots,[A_{k1}],\cdots, [A_{ka_k}])$$
			So performing the smoothing operation at the intersection points $B_{il}\cap B_{i(l+1)}$ will lead to the toric blow-up of $D$ and if we continue to do one more smoothing operation at $e\cap B_{21}$ we will get the non-toric blow-up of $D$.
		\end{proof}
		
	\end{lemma}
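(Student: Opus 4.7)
The plan is a two-step argument: first settle the minimal case using the explicit list in Theorem \ref{thm:minimal model}, then reduce the general case to the minimal case by an induction on the number of blow-ups, invoking Lemma 3.4 of \cite{LiMa16-deformation} (which asserts that every symplectic log Calabi-Yau pair reaches a minimal model through a sequence of toric and non-toric blow-downs).

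For the minimal case, since $X$ is rational, only cases $(B)$, $(C)$, $(D)$ of Theorem \ref{thm:minimal model} are relevant. In each family the ``longest'' representative is already toric: $(B3)$ is three lines in $\CP^2$, $(C4)$ is a four-cycle of spheres in $S^2\times S^2$, and $(D4)$ is a four-cycle in $\CP^2\#\ov{\CP^2}$. A direct check on the homology sequences given in the theorem shows that the remaining cases $(B1),(B2)$, $(C1)$–$(C3)$, $(D2),(D3)$ are recovered from $(B3),(C4),(D4)$ respectively by successively adding adjacent classes in the cycle, which is exactly the homological effect of the smoothing operation.

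For the inductive step, assume that $(\omega,D)\in\widetilde{\mathcal{LCY}}(X)$ is obtained by smoothing some toric divisor $T\in\widetilde{t\mathcal{LCY}}(X)$, and let $\tilde D$ be either a toric or a non-toric blow-up of $D$. Writing $T=(A_{11}\cup\cdots\cup A_{1a_1})\cup\cdots\cup(A_{k1}\cup\cdots\cup A_{ka_k})$ with $[C_i]=\sum_p [A_{ip}]$, I would, without loss of generality, assume that the new exceptional class $E$ enters the component $C_1$. Perform a small toric blow-up of $T$ at the node $A_{1a_1}\cap A_{21}$ to produce a toric cycle $\tilde T$ on $X\#\ov{\CP^2}$ whose consecutive homology sequence contains $[A_{1a_1}]-E,\,E,\,[A_{21}]-E$. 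Smoothing every node internal to each block $\{A_{i1},\dots,A_{ia_i}\}$ then recovers exactly the toric blow-up of $D$; smoothing one additional node adjacent to the new exceptional component recovers the non-toric blow-up. By Theorem \ref{thm: symplectic deformation class=homology classes}, this homological matching lifts to a genuine (strict) symplectic deformation equivalence, which closes the induction.

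The main obstacle is the combinatorial bookkeeping in the inductive step: one must verify that the geometric blow-up point on $D$ can always be arranged (up to strict symplectic deformation equivalence, again via Theorem \ref{thm: symplectic deformation class=homology classes}) to sit on the component $C_1$ near the right node of $T$, and that the node of $T$ chosen for the small toric blow-up is compatible with the smoothings that will be performed afterwards, so that the resulting homology sequence matches that of $\tilde D$ on the nose. Once this local matching of homology sequences is carried out, the conclusion follows directly from the already-quoted results.
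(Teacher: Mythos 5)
Your proposal is correct and follows essentially the same route as the paper's proof: the same base case via the minimal models $(B3),(C4),(D4)$ of Theorem \ref{thm:minimal model}, the same induction via Lemma 3.4 of \cite{LiMa16-deformation}, and the same key construction of a small toric blow-up of the toric divisor at the node $A_{1a_1}\cap A_{21}$ followed by smoothings to recover both the toric and non-toric blow-ups of $D$. The only addition is your explicit appeal to Theorem \ref{thm: symplectic deformation class=homology classes} to upgrade the homological matching, which the paper leaves implicit.
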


	\subsection{$\mathcal{HLCY}(X,\omega)$, realizability and stability}
	In this section we introduce a more combinatorical description of the isotopy class space of symplectic log Calabi-Yau divisors, which is given by the underlying homological configurations. In the rest of this paper, whenever we talk about symplectic log Calabi-Yau divisors, these two descriptions will be used interchangeably.
	
	Fix a closed oriented 4-manifold $ X $. Let $ T=(V ,E, \{A_\alpha  \}_{\alpha \in V}) $ be a decorated graph with an underlying finite connected undirected graph $ (V,E) $. Then there is the map \[\phi:E\rightarrow \{\{x,y\}|x,y\in V,x\neq y\}\]
	Each vertex $ \alpha $ is labeled by a homology class $ A_\alpha \in H_2(X;\ZZ ) $, satisfying $ A_\alpha \cdot A_\beta = \# \phi^{-1}(\{\alpha,\beta\})$ for $ \alpha \neq \beta $.
	Such $ T $ is called a {\bf homological configuration} in $ X $ and we write $ [T]=\sum_{\alpha\in V} A_\alpha  $. Two homological configurations $ (V,E,\{ A_\alpha\}) $ and $ (V',E',\{ A'_\beta \}) $  are called equivalent if there is a graph isomorphism $ f: (V,E)\to (V',E') $ such that $ A_{\alpha}=A_{f(\alpha)} $ for all $ \alpha\in V $. We will always be talking about homological configurations up to equivalences.
	Note that for a fixed symplectic structure $ \omega $, we can associate to each vertex $ v_\alpha  $ an $ \omega  $-symplectic genus \[ g_\alpha =1+\dfrac{1}{2}(A_\alpha^2-c_1(X,\omega)\cdot A_\alpha ),\] which we also take as part of $ T $.
	When the decorated graph $T$ is a cycle, namely $V=\{1,\cdots,l\},E=\{e_1,\cdots,e_l\}$ with $ l\ge 2 $, $\phi(e_l)=\{1,l\}$, $\phi(e_i)=\{i,i+1\}$ for $1\leq i\leq l-1$ or $ V=\{ 1 \} $, $ E=\emptyset $, we call it a {\bf cyclic homological configuration}.
	We usually denote a cyclic homological configuration by a sequence of homology classes $ T=(A_1,\dots,A_l) $ and (anti)cyclic permutations of such sequences produce equivalent cyclic homological configurations.
	

	\begin{definition}\label{def:hlcy}
		For any symplectic rational manifold $(X,\omega)$, we define the space of \textbf{pre-homological log Calabi-Yau divisor} p$\mathcal{HLCY}(X,\omega)$ to be the set of equivalent classes of cyclic homological configurations $T=(V,E,\{ A_1,\cdots,A_l\} )$ satisfying that:
		\begin{itemize}
			\item Each $A_i\in H_2(X;\mathbb{Z})$ has an $\omega$-symplectic surface representative;
			\item $[T]=PD(c_1(X,\omega))$

			
		\end{itemize}
		There is a natural action of the the symplectomorphism group $\Symp(X,\omega)$ on p$\mathcal{HLCY}(X,\omega)$, factoring through $ D(X) $. And the space of \textbf{homological log Calabi-Yau divisor} $\mathcal{HLCY}(X,\omega)$ is defined to be p$\mathcal{HLCY}(X,\omega)/\Symp(X,\omega)$.
	\end{definition}
	
	\begin{rmk}
		If $\omega$ and $\omega'$ are two symplectic forms on $X$ with the same class $[\omega]=[\omega']$, the set of homology classes which can be represented by $\omega$-symplectic surfaces and  $\omega'$-symplectic surfaces are the same (\cite{ALLP-stability}). Therefore we can further define $p\mathcal{HLCY}(X,u)$ to be any $p\mathcal{HLCY}(X,\omega)$ with $u=[\omega]$. Moreover, from \cite{LiWu12-lagrangian} we know that the homological action $\Symp(X,\omega)/Symp_h(X,\omega)=D_{K_{\omega},[\omega]}$ by the symplectomorphism group also only depends on the symplectic class $[\omega]$. As a result, we could also define $\mathcal{HLCY}(X,u)=\mathcal{HLCY}(X,\omega)$ for any $u=[\omega]$.
	\end{rmk}
	
	Our goal in this section is to identify the geometric object, the isotopy class space $\mathcal{LCY}(X,\omega)$ with the homologcial object, the space $\mathcal{HLCY}(X,\omega)$. There is a natural map
	$$F:\mathcal{LCY}(X,\omega)\rightarrow \mathcal{HLCY}(X,\omega),$$
	which sends the geometric divisor $D=\cup_{i=1}^k C_i$ to its homological configuration $T=([C_1],\cdots,[C_k])$, and we say $T$ has a {\bf realization} by $D$.
	We would often write $ (D) $ instead of $ T $ when it is the underlying homological configuration of the divisor $ D $.
	Obviously the homological configurations obtained by the image of $F$ satisfy all the conditions in Definition \ref{def:hlcy}. We will prove $F$ is a bijection in the rest of this section.
	
	\begin{lemma}
		$F$ is well-defined and injective.
	\end{lemma}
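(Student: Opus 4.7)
The plan is to verify well-definedness and injectivity separately, with both reducing to Theorem \ref{thm: symplectic deformation class=homology classes} combined with the symplectic Torelli theorem (Proposition \ref{prop:torelli}).

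For well-definedness, I would start with a divisor $D=\cup_{i=1}^k C_i\in p\mathcal{LCY}(X,\omega)$ and produce its homological configuration $T=([C_1],\ldots,[C_k])$. Since $D$ is either a smooth torus or a cycle of spheres intersecting positively transversally with no triple intersections, $T$ is a cyclic homological configuration with $A_i\cdot A_j=\#\phi^{-1}(\{i,j\})$ automatic. Each $[C_i]$ has $C_i$ itself as an $\omega$-symplectic representative, and $[T]=\sum [C_i]=PD(c_1(X,\omega))$ by definition of symplectic log Calabi-Yau. Reordering the labeling of components produces an (anti)cyclic permutation of the sequence, hence an equivalent cyclic homological configuration, so the element of $p\mathcal{HLCY}(X,\omega)$ depends only on $D$ itself. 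Suppose next that $D$ and $D'$ are strictly symplectic deformation equivalent. By Theorem \ref{thm: symplectic deformation class=homology classes} they are strictly homologically equivalent, so there is an orientation-preserving diffeomorphism $\Phi\co X\to X$ with $\Phi^*[\omega]=[\omega]$ and $\Phi_*[C_i]=[C'_{\sigma(i)}]$ for some (anti)cyclic permutation $\sigma$. Then $\Phi^*\in D(X)$ preserves both $c_1$ and $[\omega]$, and by Proposition \ref{prop:torelli} (whose proof invokes the characterization of $D(X)$ from \cite{LiLi-genus}) this isometry is realized by a symplectomorphism of $(X,\omega)$. Hence the cyclic configurations $(D)$ and $(D')$ lie in the same $\Symp(X,\omega)$-orbit in $p\mathcal{HLCY}(X,\omega)$, so $F([D])=F([D'])$.

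For injectivity, suppose $F([D])=F([D'])$. Then by definition there exists $\phi\in\Symp(X,\omega)$ such that $\phi_*([C_1],\ldots,[C_k])=([C'_{\sigma(1)}],\ldots,[C'_{\sigma(k)}])$ for some (anti)cyclic permutation $\sigma$. In particular $\phi$ is an orientation-preserving diffeomorphism with $\phi^*[\omega]=[\omega]$ intertwining the homology classes of components. This is precisely a strict homological equivalence between $D$ and $D'$ in the sense of Definition \ref{def:deformation}. Applying the second half of Theorem \ref{thm: symplectic deformation class=homology classes}, $D$ and $D'$ are strictly symplectic deformation equivalent, which is the equivalence defining $\mathcal{LCY}(X,\omega)$. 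Therefore $[D]=[D']$ in $\mathcal{LCY}(X,\omega)$.

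The main subtlety, and the only nontrivial input beyond bookkeeping, is matching the two equivalence relations: the $\Symp(X,\omega)$-action on $p\mathcal{HLCY}$ factors through the subgroup of $D(X)$ preserving $[\omega]$, while strict homological equivalence a priori allows any such diffeomorphism. The symplectic Torelli theorem bridges this gap by asserting that the image of $\Symp(X,\omega)\to \aut(H^2(X;\ZZ))$ coincides with the stabilizer of $[\omega]$ in $D(X)$; without it the well-definedness half would not follow.
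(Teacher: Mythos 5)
Your proof is correct, and the injectivity half is essentially identical to the paper's: a symplectomorphism matching the component classes is in particular a strict homological equivalence, and Theorem \ref{thm: symplectic deformation class=homology classes} converts this back to strict symplectic deformation equivalence. For well-definedness, however, you take a genuinely longer route than necessary. The paper argues directly from the definition: strict symplectic deformation equivalence \emph{means} that, after applying a symplectomorphism, the two divisors are joined by a family of symplectic divisors; such an isotopy preserves the homology classes of the components, so the two cyclic configurations already differ by an element of $\Symp(X,\omega)$ and are identified in the quotient $\mathcal{HLCY}(X,\omega)=p\mathcal{HLCY}(X,\omega)/\Symp(X,\omega)$. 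Your detour through Theorem \ref{thm: symplectic deformation class=homology classes} and then upgrading the resulting diffeomorphism to a symplectomorphism does work, but the upgrading step is not literally the content of Proposition \ref{prop:torelli} (which concludes strict deformation equivalence, not realizability of the isometry by a symplectomorphism); what you actually need there is the identification $\Symp(X,\omega)/\Symp_h(X,\omega)=D_{K_\omega,[\omega]}$ from \cite{LiWu12-lagrangian}, quoted in the remark following Definition \ref{def:hlcy}. Relatedly, your closing claim that well-definedness ``would not follow'' without the symplectic Torelli theorem is not accurate: the paper's direct argument shows it follows from the definitions alone, and the Torelli-type input is only needed elsewhere (e.g.\ for the equivalences in Theorem \ref{thm:all equivalent}), not for this lemma.
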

	\begin{proof}
		If two geometric divisors are strictly deformation equivalent, then up to a symplectomorphism, they will be isotopic and thus have the same homological configurations. Now since we have taken the quotient of $\Symp(X,\omega)$ in the definition of $\mathcal{HLCY}(X,\omega)$, there is no ambiguity of the image of $F$. We see that $F$ is well-defined.
		
		If we have two geometric divisors $D=\cup_{i=1}^k C_i$ and $D'=\cup_{i=1}^k C'_i$ in $X$ whose image under $F$ are the same, then by definition, we have a symplectomorphism $h\in \Symp(X,\omega)$ such that $h_*([C_i])=[C_i']$ for all $1\leq i\leq k$. This implies $(X,\omega,D)$ and $(X,\omega,D')$ are strictly homologically equivalent and thus will be identified in the space $\mathcal{LCY}(X,\omega)$ according to our definition. This shows that $F$ is injective.
	\end{proof}
	
	The proof of surjectivity requires more work. Firstly we prove it for minimal models, that is, when $X=\CC\PP^2,\CC\PP^2\#\overline{\CC\PP^2}$ or $S^2\times S^2$. We use the standard basis $\{H\},\{H,E\},\{F,B\}$ of their second homology groups. Note that it's a classical result that any symplectic forms can be diffeomorphic to the ones satisfying $\omega(H)>0,\omega(H)>\omega(E)>0,\omega(F)\geq\omega(B)>0$ and the first Chern classes are $3H,3H-E,2F+2B$. Therefore we may make these assumptions in the following lemma.
	\begin{lemma}\label{lemma:minimal surjectivity}
		For $X=\CC\PP^2$ with symplectic form $\omega$ such that $\omega(H)>0$ and $c_1(X,\omega)=3H$, \[p\mathcal{HLCY}(X,\omega)=\mathcal{HLCY}(X,\omega)=\{(3H),(2H,H),(H,H,H)\}.\]
		For $X=\CC\PP^2\#\overline{\CC\PP^2}$ with symplectic form $\omega$ such that $\omega(H)>\omega(E)>0$ and $c_1(X,\omega)=3H-E$, elements of $p\mathcal{HLCY}(X,\omega)$ are of the following forms:
		
		\begin{enumerate}
			\item $(3H-E)$
			\item $(2H,H-E),((a+1)H-aE,(-a+2)H+(a-1)E)_{a\in\ZZ_+}$
			\item $(aH+(-a+1)E,H-E,(-a+2)H+(a-1)E)_{a\in\ZZ_+}$
			\item $(aH+(-a+1)E,H-E,(-a+1)H+aE,H-E)_{a\in\ZZ_+}$
		\end{enumerate}
		For $X=S^2\times S^2$ with symplectic form $\omega$ such that $\omega(F)\geq\omega(B)>0$ and $c_1(X,\omega)=2B+2F$, elements of $p\mathcal{HLCY}(X,\omega)$ are of the following forms:
		
		\begin{enumerate}
			\item $(2F+2B)$
			\item $(2F+B,B),((F+bB,F+(2-b)B)_{b\in\ZZ_+}$
			\item $(F+bB,B,F+(1-b)B)_{b\in\ZZ_+}$
			\item $(F+(b-1)B,B,F+(1-b)B,B)_{b\in\ZZ_+}$
		\end{enumerate}
		As a result, $F$ is surjective when $X$ is one of these three manifolds.
	\end{lemma}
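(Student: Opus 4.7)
The plan is to enumerate cyclic homological configurations by length $l$ in each minimal model, using adjunction on components together with the cycle intersection pattern, and then to produce explicit holomorphic realizations in order to obtain surjectivity of $F$. For $T=(A_1,\dots,A_l)\in p\mathcal{HLCY}(X,\omega)$ the condition $[T]=PD(c_1(X,\omega))$ gives $\sum A_i=c_1$, and the cycle structure imposes $A_i\cdot A_{i+1}=1$ for consecutive indices (with multiplicity two when $l=2$) and $A_i\cdot A_j=0$ otherwise. The length $l=1$ case is a single torus $A_1=c_1$, accounting directly for the first items $(3H)$, $(3H-E)$, $(2F+2B)$ in each list. For $l\geq 2$ adjunction forces each $A_i$ to be a sphere class, i.e. $A_i^2=c_1\cdot A_i-2$, and it is this sphere condition combined with the intersection/sum constraints that I solve to produce the remaining items.

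I then parametrize sphere classes in each minimal model and solve the resulting system. In $\CC\PP^2$ the sphere classes are exactly $H$ and $2H$, and positivity of all pairwise intersections bounds $l\leq 3$, yielding immediately $(2H,H)$ and $(H,H,H)$. In $S^2\times S^2$ the sphere equation $(a_i-1)(b_i-1)=0$ forces one of the two $(F,B)$-coefficients of each component to equal $1$; a short case analysis on which coefficient is fixed along the cycle, combined with $\sum A_i=2F+2B$ and the cycle intersections, produces the listed families for $l=2,3,4$. In $\CC\PP^2\#\overline{\CC\PP}^2$ the sphere equation becomes $(a-1)(a-2)=b(b+1)$; solving as a quadratic in $a$ gives $a=\tfrac{3\pm(2b+1)}{2}$, so the integer solutions are exactly the two branches $a=b+2$ and $a=1-b$. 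Substituting both branches into the cycle conditions for $l=2,3,4$ and imposing $\omega$-positivity of every component under $\omega(H)>\omega(E)>0$ produces precisely the families in items $(2),(3),(4)$. The bound $l\leq 4$ in every case, together with completeness of the lists, follows from the minimal-model classification of Theorem \ref{thm:minimal model} after the change of basis $s=H$, $f=H-E$ and $f_1=F$, $f_2=B$.

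Finally, surjectivity of $F$ is proved by exhibiting an explicit realization for each listed class. The longest configurations $(H,H,H)$ in $\CC\PP^2$, the length-four family in $S^2\times S^2$ and item $(4)$ in $\CC\PP^2\#\overline{\CC\PP}^2$ are boundary divisors of standard toric actions on the respective manifold, so are realized tautologically. Every shorter listed configuration is obtained from one of these by a sequence of smoothings at nodes as recalled in Section \ref{section:operations}; on homology this operation replaces two consecutive classes by their sum, and by Theorem \ref{thm: symplectic deformation class=homology classes} the resulting geometric divisor realizes the prescribed homology type. The infinite families parametrized by $a$ or $b$ in $\CC\PP^2\#\overline{\CC\PP}^2$ and $S^2\times S^2$ can alternatively be realized inside the appropriate Hirzebruch surface as a section together with its transform and a fiber, which makes the parameter visible as the twisting between the two sections.

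I expect the main obstacle to be the $\CC\PP^2\#\overline{\CC\PP}^2$ enumeration: the sphere-class equation has two integer branches $a=b+2$ and $a=1-b$, and for $l=3,4$ several infinite families coexist whose classes must be matched branch by branch up to cyclic and anticyclic relabeling of the cycle. The delicate point is tracking $\omega$-positivity of every component simultaneously under the reduced-form assumption, which is what eliminates the apparent extra solutions and leaves exactly the families listed.
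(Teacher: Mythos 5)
Your proposal follows essentially the same route as the paper: adjunction forces every component of a length-$\geq 2$ configuration to be a sphere class, the sphere-class equation in each minimal model is solved explicitly (your two branches $a=b+2$ and $a=1-b$ are exactly the paper's ``even'' and ``odd'' classes $(k+1)H+(k-1)E$ and $aH+(1-a)E$, and your $(a_i-1)(b_i-1)=0$ is the paper's dichotomy $F+bB$ versus $bF+B$), the cyclic intersection pattern then pins down the lists, and surjectivity is obtained by realizing the longest configurations as boundary divisors of toric actions and all shorter ones as smoothings. The one place you diverge, and where the argument as written is defective, is the exclusion of $l\geq 5$ and the completeness of the lists, which you defer to Theorem \ref{thm:minimal model}. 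The paper states, immediately before that theorem, that it appears in the original reference ``with no detailed account'' and that the detailed explanation is to be given \emph{in the proof of this very lemma}; so invoking Theorem \ref{thm:minimal model} for the length bound is circular within the paper's logical structure. The fix is cheap and uses only the intersection numbers you already computed: in $\CC\PP^2\#\overline{\CC\PP}^2$, three consecutive odd classes cannot occur when $l>3$, an even class in a cycle of length $>2$ must equal $H-E$ with odd neighbours, and a component equal to $H-E$ forces every non-adjacent component to equal $H-E$ as well; these observations (and their analogues for $S^2\times S^2$) rule out $l\geq 5$ directly without appeal to the minimal-model classification.
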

	\begin{proof}
		The case when $l=1$ is always trivial, next we will assume $l\geq 2$. Note that when $l\geq 2$, by condition, $A_i\cdot(c_1-A_i)=A_i\cdot(A_1+\cdots+A_{i-1}+A_{i+1}+\cdots+A_l)=2$. So by adjunction we see that all the classes are actually represented by symplectic surfaces of genus $g_i=0$.
		
		For $\CC\PP^2$, the result is obvious since we know the classes of symplectic surfaces must be $kH$ with positive $k$.
		
		For $X=\CC\PP^2\#\overline{\CC\PP^2}$, suppose $A=xH+yE$ with $ A^2=t $ has a symplectic sphere representative, then by adjunction formula we have that
		\[\begin{cases}
		x^2-y^2=t\\
		t=3x+y-2
		\end{cases}.\]
		It follows that
		$\begin{cases}
		x=\frac{t+1}{2}\\
		y=\frac{-t+1}{2}
		\end{cases}$ or
		$\begin{cases}
		x=\frac{t}{4}+1\\
		y=\frac{t}{4}-1
		\end{cases}$.
		In the former case we get $A=aH+(1-a)E$ with odd self intersection $2a-1=t$, and in the later cases we get $A=(k+1)H+(k-1)E$ with even self intersection $4k=t$. Note that the intersection between two odd classes $a_1H+(1-a_1)H$ and $a_2H+(1-a_2)E$ is $a_1+a_2-1$, the intersection between two even classes $(k_1+1)H+(k_1-1)E$ and $(k_2+1)H+(k_2-1)E$ is $2(k_1+k_2)$, and the intersection between an odd class $aH+(1-a)E$ and an even class $(k+1)H+(k-1)E$ is $2ak-k+1$.
		Using the fact that the homological configuration is cyclic, we have some simple observations.
		\begin{itemize}
			\item When $l>3$, $A_i$, $A_{i+1}$ and $A_{i+2}$ can not all be odd classes.
			\item When $l>2$, if $A_i$ is an even class, $A_{i-1},A_{i+1}$ must be odd classes and $A_i=H-E$.
			\item If $A_i=H-E$, all the other $A_j$ with $j\neq i+1,i-1$ must also be $H-E$.
		\end{itemize}
		Now we could easily exclude the cases when the length $l>4$ by considering the arrangement of odd classes and even classes according to the first two bullets above. Then it's possible to list all the homological configurations for $l\leq 4$ which will coincide with our statement.
		
		The analysis for $S^2\times S^2$ is similar to $\CC\PP^2\#\overline{\CC\PP^2}$. By adjunction formula, if $xF+yB$ has a self intersection $t$-spherical symplectic representative, then we have
		$\begin{cases}
		2xy=t\\
		t=2x+2y-2
		\end{cases}$. Since we made the assumption $\omega(F)\geq\omega(B)\geq0$ it follows that all the possible classes are either $F+bB$ for arbitrary $b$ or $bF+B$ for non negative $b$. Note that the intersection between $F+b_1B$ and $F+b_2B$ or $b_1F+B$ and $b_2F+B$ is $b_1+b_2$, and the intersection between $F+b_1B$ and $b_2F+B$ is $1+b_1b_2$. We then see that:
		\begin{itemize}
			\item When $l>3$, if $A_i=bF+B$, then $b\leq 1$ and all the other $A_j$ with $j\neq i+1,i-1$ must be $B$ if $b=0$ or $F-B$ if $b=1$
			\item If $A_i=F+b_1B,A_{i+1}=F+b_2B,A_{i+2}=F+b_3B$, then $l>3$ and $b_1=b_3=0,b_2=1$.
			\item If $A_i=B$ or $F$, then all the other $A_j$ with $j\neq i+1,i-1$ must be $B$ or $F$.
		\end{itemize}
		Again, the above observations are enough for us to exclude $l>4$ and list all the possible cases for $l\leq 4$.\\
		Note that the homological configurations $(H,H,H),(aH+(-a+1)E,H-E,(-a+1)H+aE,H-E),(F+(b-1)B,B,F+(1-b)B,B)$ can be realized as the boundary divisors of the toric actions (moment map's preimage of the boundary of Delzant polygon). And all the other homological configurations come from the smoothings of the toric ones. Therefore the claim of $F$ is surjective has been verified for these three cases.
	\end{proof}
	
	We need some results before proving the surjectivity for all rational manifolds.
	A symplectic form $ \omega  $ on $ X $ is called {\bf $ T $-positive} if $ \omega (A_\alpha)>0 $ for all $ \alpha  $. The following result is crucial for our proof of realizability.
	\begin{lemma}[\cite{DoLiWu18-stability}]\label{lemma:stability}
		Let $ (X,\omega ) $ be a symplectic manifold with $ b^+=1 $ and $ T $ a homological configuration realized by a symplectic divisor $ D $. Then $ D $ is $ \omega $-stable, that is, for any $ T $-positive symplectic form $ \omega' $ deformation equivalent to $ \omega  $, there is a symplectic divisor $ D' $ realizing $ T $ with respect to $ \omega' $.
	\end{lemma}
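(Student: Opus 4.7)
The plan is to reduce $\omega$-stability to a local deformation statement and then run a pseudo-holomorphic curve argument along a $T$-positive path of symplectic forms connecting $\omega$ to $\omega'$. Since $\omega$ and $\omega'$ are symplectic deformation equivalent and both $T$-positive, I would first argue that the path of symplectic forms connecting them can be chosen so that every intermediate $\omega_t$ is $T$-positive: the condition $\omega_t(A_\alpha) > 0$ for every vertex $\alpha$ of $T$ is an open condition in $t$, and we can perturb the given path within its deformation class (using the convexity of the set of symplectic forms with prescribed positivity on a finite set of classes, together with the fact that $b^+ = 1$ gives the symplectic cone a simple forward/backward-cone structure) to arrange this. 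Hence it suffices to prove the statement locally in $t$.

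Next I would choose an $\omega_0$-tame almost complex structure $J_0$ making the given divisor $D$ pseudo-holomorphic. This is possible since the components of $D$ are $\omega$-symplectic surfaces intersecting positively and transversally with no triple points, so the standard local models can be patched together. Extend $J_0$ to a smooth family $J_t$ with each $J_t$ tame with respect to $\omega_t$, using contractibility of the space of tame almost complex structures. For each component class $A_\alpha$, I would then seek a $J_t$-holomorphic representative $C_{\alpha,t}$ extending $C_{\alpha,0}$. Since each $A_\alpha$ is represented by an embedded symplectic sphere or torus with $c_1(X,\omega)\cdot A_\alpha - A_\alpha\cdot A_\alpha = 2 - 2g_\alpha$, the virtual dimension and adjunction ensure the relevant moduli spaces are of the expected dimension, and an implicit function / Gromov compactness argument gives the existence of $C_{\alpha,t}$ for small $t$.

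The main obstacle is \emph{bubbling}: as $t$ varies, a sequence $C_{\alpha,t_n}$ could Gromov-degenerate into a configuration $A_\alpha = B_1 + \cdots + B_m$ of $J_{t_\infty}$-holomorphic classes. Ruling this out requires the $T$-positivity and the $b^+=1$ hypothesis: by $T$-positivity, each $\omega_t(A_\alpha) > 0$, so the total area is bounded away from $0$ along the path; and by the structure theory of pseudo-holomorphic curves in rational/ruled surfaces (Gromov, McDuff), bubbling off of a nonzero class $B_j$ must involve a class of non-negative self-intersection or an exceptional class, and the explicit combinatorics of the cyclic configuration $T$ together with the adjunction formula constrain what such decompositions can look like. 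Here the $b^+=1$ hypothesis is used through the fact that one can appeal to the wall-and-chamber description of the symplectic cone and the stability of the set $\mathcal{S}_\omega$ of spherical symplectic classes under deformation, which rules out the problematic degenerations.

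Finally, positivity of intersections for pseudo-holomorphic curves, applied to the family $\{C_{\alpha,t}\}_\alpha$, guarantees that the intersection pattern of the resulting collection matches the combinatorial type $T$ for all $t$, since intersection numbers are topological and the components cannot acquire extra intersections without losing transversality. A standard perturbation (as in the proof of Proposition~\ref{prop:def to symp}) then arranges that $D' = \bigcup_\alpha C_{\alpha,1}$ is a genuine symplectic divisor with respect to $\omega'$ realizing $T$. The only step I expect to require real work is the bubbling analysis in the middle paragraph; once $J_t$-holomorphic representatives exist, assembling them into a divisor is formal.
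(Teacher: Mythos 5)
This lemma is imported from \cite{DoLiWu18-stability}; the paper you are working from gives no proof of it, so the only meaningful comparison is with the argument in that reference, which is Seiberg--Witten/Taubes-theoretic and proceeds by inflation rather than by the direct $J$-holomorphic continuation you propose.

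The genuine gap in your proposal is the continuation step for components of negative self-intersection, which is precisely where the content of the theorem lies. A log Calabi--Yau configuration typically contains embedded spheres in classes $A_\alpha$ with $A_\alpha^2=-n$ for arbitrarily large $n$ (e.g.\ $-kH+(k+1)E_1$). By adjunction $c_1(A_\alpha)=2-n$, so the unparametrized moduli space of spheres in $A_\alpha$ has virtual dimension $2(1-n)<0$ once $n\ge 2$; concretely, the normal deformation operator is a $\bar\partial$-type operator on $\mathcal{O}(-n)\to\mathbb{CP}^1$ with cokernel of dimension $n-1$. Existence of a $J_t$-holomorphic representative of such a class is therefore neither an open condition in $t$ nor deformation invariant, and ``implicit function theorem plus Gromov compactness'' does not propagate the representative along the path: for generic tame $J$ no such curve exists unless some external mechanism forces it. The proof in \cite{DoLiWu18-stability} supplies that mechanism via Seiberg--Witten wall-crossing on $b^+=1$ manifolds and Taubes' SW$\Rightarrow$Gr, producing the negative-square components as constrained pieces of a stable decomposition and then using (relative) inflation, rather than continuing each $C_\alpha$ separately. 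Two further steps are asserted rather than proved in your write-up: the existence of a $T$-positive path of symplectic forms joining $\omega$ to $\omega'$ (plausible for rational surfaces, where the $T$-positive part of the symplectic cone is convex in cohomology, but the lemma is stated for arbitrary $b^+=1$ manifolds and one must also lift a path of classes to a path of forms), and the bubbling analysis, where ``the combinatorics of $T$ and adjunction constrain the decompositions'' is not an argument. The final assembly step (positivity of intersections plus a local perturbation to restore $\omega$-orthogonality/transversality and remove triple points) is the one part that is genuinely routine.
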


	\begin{lemma}\label{lemma:induction}
		Suppose $(X,\omega)$ is the symplectic blow up of $(X',\omega')$ with the canonical identification $H_2(X,\ZZ)=H_2(X',\ZZ)\oplus \ZZ E$ and the homological configuration $(A_1,\cdots,A_l)\in \mathcal{HLCY}(X',\omega')$ has a realization. Then if $(A_1,\cdots,A_i-E,\cdots,A_l)$ or $(A_1,\cdots,A_{i}-E,E,A_{i+1}-E,\cdots,A_l)$ is in $\mathcal{HLCY}(X,\omega)$, they also have a realization.
	\end{lemma}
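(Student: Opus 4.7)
The plan is to first realize the target configuration in $X$ with an auxiliary blow-up symplectic form, and then transport the realization to $(X,\omega)$ using the stability result Lemma \ref{lemma:stability}.

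To begin, I would use the hypothesis to pick a symplectic divisor $D' = C_1 \cup \cdots \cup C_l$ in $(X',\omega')$ realizing $(A_1,\ldots,A_l)$. After a small symplectic isotopy supported near the intersection points (Lemma 2.3 of \cite{Gom95-fibersum}), one may assume $D'$ is $\omega'$-orthogonal, so the symplectic blow-up construction applies cleanly at any point of $D'$. For the non-toric case, I would choose a smooth point $p$ on $C_i$ disjoint from the other components and perform a symplectic blow-up at $p$ of sufficiently small size; the proper transform of $C_i$ then represents $A_i - E$ while the remaining components are unchanged, producing a symplectic divisor $\tilde D$ realizing $(A_1,\ldots,A_i - E,\ldots,A_l)$ with respect to the blown-up form $\tilde\omega$ on $X$. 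For the toric case, I would instead blow up at the transverse intersection $C_i\cap C_{i+1}$ with small size; the new exceptional sphere joins the proper transforms of $C_i$ and $C_{i+1}$ as an additional component, yielding a divisor realizing $(A_1,\ldots,A_i - E,\,E,\,A_{i+1} - E,\ldots,A_l)$ in $(X,\tilde\omega)$.

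Next, I would observe that $\tilde\omega$ and the prescribed form $\omega$ are both symplectic forms on $X$ with the same first Chern class, and that the blow-up size and location can be varied continuously. Combined with the connectedness of the normalized reduced symplectic cone of a rational surface, this shows that $\tilde\omega$ and $\omega$ are symplectic deformation equivalent. Moreover, the hypothesis that the target configuration $T$ lies in $\mathcal{HLCY}(X,\omega)$ forces $\omega$ to be $T$-positive, because every class in $T$ must have an $\omega$-symplectic representative; the form $\tilde\omega$ is $T$-positive by construction, and I expect to be able to arrange the interpolating family so that every intermediate form remains $T$-positive.

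Since $X$ is rational and hence has $b^+(X)=1$, Lemma \ref{lemma:stability} then applies to $\tilde D$ along this $T$-positive family, producing a symplectic divisor $D \subset (X,\omega)$ realizing the prescribed homological configuration. The main obstacle is maintaining $T$-positivity throughout the deformation from $\tilde\omega$ to $\omega$: a naive linear interpolation of cohomology classes could drive some $\omega_t(A_j - E)$ or $\omega_t(E)$ to zero. This is precisely where the freedom to vary the blow-up size and location, together with the built-in $T$-positivity of both endpoints, is used to find a good path.
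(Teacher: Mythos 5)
Your overall strategy (small symplectic blow-up of a realization of $(A_1,\ldots,A_l)$, followed by the stability result Lemma \ref{lemma:stability}) is the same as the paper's, and the first step is fine. The gap is in the middle: you assert that the auxiliary blow-up form $\tilde\omega$ and the given form $\omega$ are ``symplectic deformation equivalent,'' citing connectedness of the normalized reduced symplectic cone. But to apply Lemma \ref{lemma:stability} to the \emph{fixed} configuration $T$, you need $\tilde\omega$ and $\omega$ to be joined by a genuine path of symplectic forms on $X$, with no diffeomorphism inserted. The general fact available (Theorem 3.5 and 3.18 of \cite{Li2008}, as quoted in the paper) is only that two symplectic forms on a rational manifold are deformation equivalent \emph{up to an orientation-preserving diffeomorphism} $g$; moreover the classes $[\tilde\omega]$ and $[\omega]$ need not be reduced, and bringing them into the reduced cone itself costs a diffeomorphism. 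Such a $g$ acts nontrivially on $H_2(X;\ZZ)$ in general, so transporting your divisor $\tilde D$ by $g$ realizes $g_*^{-1}T$ rather than $T$, and the argument as written does not produce a realization of the configuration you actually want.

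The paper closes exactly this gap with an extra step you are missing: after choosing $g\in\Diff^+(X)$ so that $g^*\tilde\omega$ is deformation equivalent to $\omega$ by a path, it uses the identification $D_{K,[\tilde\omega]}\cong \Symp(X,\tilde\omega)/\Symp_h(X,\tilde\omega)$ from \cite{LiWu12-lagrangian} to find a symplectomorphism $h$ of $(X,\tilde\omega)$ with $h_*g_*=\mathrm{id}$; the composite $h\circ g$ then pulls $\tilde\omega$ back to the same form $g^*\tilde\omega$ while acting trivially on homology, so $T$ itself is realized with respect to a form path-deformation equivalent to $\omega$, and Lemma \ref{lemma:stability} applies. (Separately, your worry about maintaining $T$-positivity along the interpolating path is unnecessary: as stated, Lemma \ref{lemma:stability} only requires the target form to be $T$-positive and deformation equivalent to the source, not that every intermediate form be $T$-positive.)
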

	
	\begin{proof}
		For the realization $(X',\omega',D')$ of $(A_1,\cdots,A_l)$ we could always perform small blow up to obtain $(X'',\omega'',D'')$ realizing $(A_1,\cdots,A_i-E',\cdots,A_l)$ or $(A_1,\cdots,A_{i}-E',E',A_{i+1}-E',\cdots,A_l)$. Here we identify $H_2(X'',\ZZ)=H_2(X',\ZZ)\oplus \ZZ E'$. Choose a diffeomorphism $f:X\rightarrow X''$ such that $f_*(E)=E'$ and $f_*|_{H_2(X',\ZZ)}$ is identity. We see that $(A_1,\cdots,A_i-E,\cdots,A_l)$ or $(A_1,\cdots,A_{i}-E,E,A_{i+1}-E,\cdots,A_l)$ has a realization under the symplectic form $(f^{-1})^*\omega''$.
		
		It is well known (for example, Theorem 3.5 and Theorem 3.18 of \cite{Li2008}) that the diffeomorphism group acts transitively on the space of symplectic forms in a fixed class and any two symplectic forms on a rational manifold are deformation equivalent up to a diffeomorphism fixing the symplectic class.
		We can pick $g\in \Diff^+(X)$ such that $g^*(f^{-1})^*\omega''$ is deformation equivalent to $\omega$. Since $g_*\in D_{K_{(f^{-1})^*\omega''},[(f^{-1})^*\omega'']}\cong \Symp(X,(f^{-1})^*\omega'')/Symp_h(X,(f^{-1})^*\omega'')$, we could pick $h\in \Symp(X,(f^{-1})^*\omega'')$ such that $h_*g_*$ is identity. It then follows that \begin{align*}(A_1,\cdots,A_i-E,\cdots,A_l)&=h_*g_*(A_1,\cdots,A_i-E,\cdots,A_l)\text{ or}\\
		(A_1,\cdots,A_{i}-E,E,A_{i+1}-E,\cdots,A_l)&=h_*g_*(A_1,\cdots,A_{i}-E,E,A_{i+1}-E,\cdots,A_l)
		\end{align*} has a realization under the form $g^*((f^{-1})^*\omega'')$. Of course the assumption that the blown-up configuration is in $\mathcal{HLCY}(X,\omega)$ guarantees the $\omega$-positivity. By lemma \ref{lemma:stability} we see that it then also has a realization under the form $\omega$.
	\end{proof}
	
	\begin{remark}\label{rmk:positive area}
		Note that one can run the above argument for a single class to get the following useful statement that will be used later: with the same assumption in Lemma \ref{lemma:induction}, if $A$ can be represented by $\omega'$-symplectic surface and $A-E$ has positive $\omega$-area, then it can be represented by $\omega$-symplectic surface.
	\end{remark}
	
	As we want to prove by induction, the following useful result will enable us to perform the blow down process which reduces the arbitrary case to the case of minimal models.
	
	\begin{lemma}[\cite{Pi-max}]\label{lemma:minimal area}
		Let $ (X,\omega ) $ be a symplectic manifold with $X\neq\CC\PP^2\#\overline{\CC\PP^2}$, then for any $\omega$-tame almost complex structure $J$, all symplectic exceptional classes of minimal area have $J$-holomorphic representatives.
	\end{lemma}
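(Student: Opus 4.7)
The plan is to argue by contradiction via a Gromov compactness argument along a path of tame almost complex structures. For a generic $\omega$-tame $J'$, the Gromov--Taubes invariant of any symplectic exceptional class in a rational surface equals $1$, so there exists a (unique, embedded) $J'$-holomorphic sphere realizing $E$. Connect the given $J$ to such a generic $J'$ by a smooth path $\{J_t\}_{t\in[0,1]}$ with $J_0=J$, $J_1=J'$. If $E$ admits no $J$-holomorphic representative, let $t_*$ be the infimum of those $t$ for which $E$ is $J_s$-representable for all $s>t_*$, so $t_*>0$, and apply Gromov compactness to a sequence of $J_{t_n}$-holomorphic embedded spheres in class $E$ with $t_n\searrow t_*$. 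The limit is a stable cusp curve yielding a homological decomposition $E=\sum_i m_i A_i$ with $m_i\ge 1$, where each $A_i$ is represented by a simple $J_{t_*}$-holomorphic curve $v_i$.

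Next I would extract the key estimates. Positivity of intersections for distinct simple $J_{t_*}$-holomorphic curves gives $A_i\cdot A_j\ge 0$ for $i\ne j$; additivity of area yields $\omega(A_i)>0$ with $\sum_i m_i\omega(A_i)=\omega(E)$; and the generalized adjunction inequality for a simple $J$-holomorphic sphere gives $A_i^2\ge c_1(A_i)-2$. If some $A_i$ is itself a symplectic exceptional class ($A_i^2=-1$, $c_1(A_i)=1$), then $\omega(A_i)\le\omega(E)$; equality forces $A_i=E$ and $m_i=1$ with all other components absent (since areas are strictly positive), giving a $J_{t_*}$-representative of $E$ and contradicting the definition of $t_*$. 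Strict inequality contradicts the minimality of $\omega(E)$ among symplectic exceptional classes.

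It remains to handle the case in which no $A_i$ is exceptional. Then $A_i^2\ne -1$ for every $i$, and the identity
\[
-1 = E^2 = \sum_i m_i^2 A_i^2 + 2\sum_{i<j} m_i m_j (A_i\cdot A_j)
\]
together with $A_i\cdot A_j\ge 0$ forces $\sum_i m_i^2 A_i^2 \le -1$, so at least one component has $A_i^2\le -2$; by adjunction $c_1(A_i)\le 0$. The main obstacle, and the point at which the assumption $X\ne \CC\PP^2\#\overline{\CC\PP^2}$ enters, is excluding the existence of such a simple $J_{t_*}$-holomorphic component inside a bubble tree of exceptional classes. The strategy here is to use the structure theory of $J$-holomorphic curves in rational surfaces (Taubes--McDuff) together with the fact that $v_i$ must have non-negative intersection with every other component of $E$'s bubble tree, producing strong numerical constraints on $A_i$. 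Combined with the classification of the $\omega$-symplectic cone and of negative self-intersection classes, these constraints can be shown to be unsatisfiable unless $X$ has Picard rank $\le 2$ and admits a Hirzebruch fibration structure; that is, unless $X=\CC\PP^2\#\overline{\CC\PP^2}$ (the case $X=\CC\PP^2$ being vacuous since it has no exceptional classes). Thus excluding $X=\CC\PP^2\#\overline{\CC\PP^2}$ by hypothesis closes the contradiction and yields the desired $J$-holomorphic representative.
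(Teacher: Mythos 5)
The paper does not actually prove this lemma---it is quoted from \cite{Pi-max} without proof---so your argument can only be measured against the standard proof in that reference. Your overall strategy (Gromov compactness producing a cusp-curve decomposition $E=\sum_i m_i A_i$ into simple $J$-holomorphic components, positivity of intersections, area additivity, and minimality of $\omega(E)$ ruling out exceptional components) is the right one and matches the cited source in outline. Two smaller defects: the $t_*$-bootstrapping is unnecessary (take the cusp-curve limit at the given $J$ directly, along a sequence of generic $J_n\to J$), and the contradiction at $t_*$ silently uses automatic transversality for embedded spheres with $c_1=1$; also, ``no $A_i$ is exceptional'' does not imply $A_i^2\neq -1$, since a simple but non-embedded sphere can have $A_i^2=-1$ with $c_1(A_i)=-1$ (adjunction gives only $c_1(A_i)\le A_i^2+2$, with equality iff embedded). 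Neither of these is fatal.

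The genuine gap is your final paragraph. The entire content of the lemma---and the only place the hypothesis $X\neq\CC\PP^2\#\overline{\CC\PP^2}$ can enter---is the exclusion of a nontrivial decomposition whose negative-square components all have $c_1\le 0$. You do not prove this; you assert that ``structure theory'' and ``numerical constraints'' make it ``unsatisfiable unless $X$ has Picard rank $\le 2$ and admits a Hirzebruch fibration,'' which is not an argument and is not even the correct dichotomy: $S^2\times S^2$ has Picard rank $2$ and a fibration but no exceptional classes at all, and the actual failure mode in $\CC\PP^2\#\overline{\CC\PP^2}$ is the concrete degeneration $E_1=(2E_1-H)+(H-E_1)$, where $2E_1-H$ is a $(-3)$-sphere class that becomes $J$-holomorphic for $\mathbb{F}_3$-type structures once $\omega(E_1)>\tfrac{1}{2}\omega(H)$. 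A complete proof must use $\sum_i m_i c_1(A_i)=c_1(E)=1$ and adjunction to find a component $A_j$ with $c_1(A_j)\ge 1$, hence $A_j^2\ge -1$; the case $A_j^2=-1$ forces $A_j$ exceptional (already excluded by minimality), and the case $A_j^2\ge 0$ must then be played off against the remaining negative components using $b^+=1$ (light-cone lemma) and the classification of spherical classes---precisely the step that fails for $\CC\PP^2\#\overline{\CC\PP^2}$ and is carried out in \cite{Pi-max}. As written, your proposal stops exactly where the work begins.
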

	
	\begin{prop}\label{prop:realization}
		$F$ is surjective for all $(X,\omega)$. Therefore $\mathcal{LCY}(X,\omega)=\mathcal{HLCY}(X,\omega)$.
	\end{prop}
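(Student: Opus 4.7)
The plan is to induct on the non-minimality of $X$, namely on $b_2(X) - b_2(X_{\min})$ where $X_{\min}$ is a minimal model. The base case, when $X$ is one of $\CC\PP^2$, $\CC\PP^2\#\overline{\CC\PP^2}$, or $S^2\times S^2$, is exactly Lemma \ref{lemma:minimal surjectivity}. For the inductive step, given $T=(A_1,\dots,A_l)\in\mathcal{HLCY}(X,\omega)$ with $X$ non-minimal, I would locate a symplectic exceptional class $E\in H_2(X;\ZZ)$, blow it down to get $(X',\omega')$, produce a descended configuration $T'\in\mathcal{HLCY}(X',\omega')$, apply the inductive hypothesis to realize $T'$, and then invoke Lemma \ref{lemma:induction} to lift the realization to $(X,\omega)$.

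There are two cases for the choice of $E$. In the first, some $A_i$ itself is a symplectic exceptional class (i.e., $A_i^2=-1$, $c_1\cdot A_i=1$); set $E=A_i$ and blow down, obtaining $T'=(A_1,\dots,A_{i-1}+A_i,A_{i+1}+A_i,\dots,\widehat{A_i},\dots,A_l)$ of length $l-1$. A direct intersection-number check (using $A_{i-1}\cdot A_{i+1}=0$ for $l\geq 4$ and $=1$ for $l=3$) together with positivity of $\omega'$-areas on the new classes (which follows from Remark \ref{rmk:positive area}) shows $T'\in\mathcal{HLCY}(X',\omega')$. In the second case, no component of $T$ is exceptional; here I would choose $E$ to be a symplectic exceptional class of minimal $\omega$-area, guaranteed to exist by non-minimality. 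Then $T'=(A_1,\dots,A_j+E,\dots,A_l)$ where $j$ is the unique index with $E\cdot A_j\neq 0$. Blowing down $E$ identifies $T'$ with a cyclic configuration in $X'$ with the correct intersection numbers and class sum equal to $c_1(X',\omega')$.

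The main obstacle is establishing in the second case that $E\cdot A_i\geq 0$ for every $i$, so that the single intersection $E\cdot c_1=\sum_iE\cdot A_i=1$ forces exactly one positive intersection and disjointness from the others. The strategy is to apply Lemma \ref{lemma:minimal area} to obtain a $J$-holomorphic representative of $E$ for an $\omega$-tame $J$, and then argue that each $A_i$ is also representable by a $J$-holomorphic or $J$-curve-intersection-positive object: since every $A_i$ has a symplectic surface representative (as $T\in\mathcal{HLCY}(X,\omega)$), one may deform $J$ to be compatible with such a representative and invoke positivity of intersections with the minimal-area exceptional $J$-sphere, noting that $E\neq A_i$ by the assumption of case two. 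Irreducibility of the minimal-area exceptional curve and the fact that a minimal area $E$ cannot contain any $A_i$ as a component (by comparison of areas) rules out shared components.

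Once $T'\in\mathcal{HLCY}(X',\omega')$ is obtained in either case, the inductive hypothesis gives a symplectic realization $D'\subset (X',\omega')$ of $T'$. Applying Lemma \ref{lemma:induction} to $D'$ yields a realization of $T$ in $(X,\omega)$ as either a toric or non-toric blow-up of $D'$; the $\omega$-positivity required by the lemma is automatic because $T\in\mathcal{HLCY}(X,\omega)$ already. This completes the induction and shows $F$ is surjective; combined with the previously established injectivity, we conclude $\mathcal{LCY}(X,\omega)=\mathcal{HLCY}(X,\omega)$.
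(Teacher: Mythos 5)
Your overall scheme coincides with the paper's: induct on the number of blow-downs with Lemma \ref{lemma:minimal surjectivity} as the base case, take the minimal-area exceptional class $E$, use Lemma \ref{lemma:minimal area} together with positivity of intersections to force $E\cdot A_i\ge 0$ (with the symplectic-genus argument ruling out $A_i=cE$ for $c\ge 2$), descend the configuration to the blow-down, and lift the realization back via Lemma \ref{lemma:induction}. Your dichotomy (some component is exceptional versus none is) is essentially the paper's (the minimal-area class is, or is not, a component), and your descended class $A_j+E$ in the second case is the correct one.

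The gap is the length-two case with an exceptional component, i.e.\ configurations $(A_1,A_2)$ with $A_2$ exceptional and $A_1\cdot A_2=2$, such as $(3H-E_1-\cdots-E_{j-1}-2E_j,\,E_j)$; these genuinely occur in $\mathcal{HLCY}(M_l,\omega)$ and are exactly the irreducible ``germs'' of Lemma \ref{lemma:reduced induction}. Your first-case recipe degenerates here: the two ``neighbors'' of $A_2$ are both $A_1$, meeting the $A_2$-sphere twice, so blowing down $A_2$ collapses everything to the single genus-one class $c_1(X')$, and Lemma \ref{lemma:induction} (which only lifts realizations through toric/non-toric blow-ups of cyclic configurations) cannot recover a two-component cycle meeting at two points from a smooth torus. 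The paper treats this case separately: since $n\ge 2$ there is a second exceptional class $E'$ with $E'\cdot A_2=0$, hence $E'\cdot A_1=1$; Theorem 1.2.7 of \cite{McOp13-nongeneric} gives a tame $J$ for which both $E'$ and $A_1$ have $J$-holomorphic representatives, and one blows down $E'$ instead, keeping the length equal to two and descending to $(A_1+E',A_2)$, which terminates at the minimal model $(3H-2E,E)$ in $\CC\PP^2\#\overline{\CC\PP^2}$. You should add this argument (and record that $l=1$ requires the existence of a symplectic torus representing $c_1$). A smaller point: in your first case the exceptional component you blow down need not have minimal area, so Lemma \ref{lemma:minimal area} no longer produces a $J$-holomorphic representative of it for a $J$ adapted to the adjacent components; to see that the neighbors descend to symplectic surfaces in classes $A_{i\pm1}+A_i$ you need either the uniqueness of symplectic exceptional spheres up to ambient isotopy or a simultaneous-representation result of the McDuff--Opshtein type.
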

	\begin{proof}
		Suppose $X=\CC\PP^2\#n\overline{\CC\PP^2}$ with $n\geq 2$ and let $E$ be the minimal area symplectic excetional class. For $(A_1,\cdots,A_l)\in \mathcal{HLCY}(X,\omega)$, the case when $l=1$ is trivial and we firstly assume $l\geq3$. By adjunction formula, $E\cdot(A_1+\cdots+A_l)=1$.
		Since each $ A_i $ has a symplectic representative and thus a $ J $-holomorphic representative for some tame $ J $, by Lemma \ref{lemma:minimal area}, we have $ E\cdot A_i\ge 0 $ or $ A_i= cE $ for some positive integer $ c $. Note that the symplectic genus of $ A_i $ being non-negative forces $ c=1 $.
		Thus either $E\cdot A_i=1$ for some $i$ and $E\cdot A_j=0$ for all $j\neq i$, or $E= A_i<0$ for some $i$.
		
		In the former case, we could pick a tame $J$ such that the symplectic surface representing $A_i$ is $J$-holomorphic. By lemma \ref{lemma:minimal area}, $E$ also has a $J$-holomorphic representative. By positivity of intersection, they intersect transversally at one point. Thus if we blow down the exceptional sphere $C_i$ representing $E$, we get another symplectic manifold $(X',\omega')$ with $A_i-E$ having $\omega'$ representative. For other $A_j$, one can also find exceptional sphere $C_j$ representing $E$ such that they don't intersect. Since all symplectic exceptional spheres are symplectic isotopic, and there exists an ambient symplectic isotopy inducing that, there must be a representative of $A_j$ not intersecting $C_i$. Thus $A_j$ also have $\omega'$ representative in $X'$. Now we see that $(A_1,\cdots,A_i-E,\cdots,A_l)\in \mathcal{HLCY}(X',\omega')$. By induction and lemma \ref{lemma:induction},  $(A_1,\cdots,A_l)$ has a realization.
		
		For the later case,
		we have $A_{i-1}\cdot E=A_{i+1}\cdot E=1$ and $A_j\cdot E=0$ for $j\neq i-1,i,i+1$. Now by a similar blow down argument as above, $(A_1,\cdots,A_{i-1}+E,A_{i+1}+E,\cdots,A_l)\in \mathcal{HLCY}(X',\omega')$ for some blow down manifold $(X',\omega')$. Again, the result follows from induction and lemma \ref{lemma:induction}.
		
		Now assume $l=2$, the homological configuration becomes $(K-E,E)$. Since $n\geq2$, we could always find another exceptional class $E'$ such that $E'\cdot E=0$ and thus $E'\cdot (K-E)=1$. By Theorem 1.2.7 of \cite{McOp13-nongeneric}, there exists some tame almost complex structure $J$ such that both $E'$ and $K-E$ has $J$-holomorphic representative. By positivity of intersection, these two curves meet transversally at one point. If we blow down the $E'$ curve to get the manifold $(X',\omega')$, it follows that $(K-E-E',E)\in p\mathcal{HLCY}(X',\omega')$. Now the result also follows from lemma \ref{lemma:induction}.
		
	\end{proof}

	From Proposition \ref{prop:realization} we also see that $ \mc{LCY}(X,\omega) $ should be controlled by the set of symplectic sphere classes in $ (X,\omega) $, which leads to the stability result (Theorem \ref{thm:stability}).
	
	\begin{proof}[Proof of Theorem \ref{thm:stability}]
		By Proposition \ref{prop:realization}, it suffices to show $ \mc{HLCY}(X,\omega)=\mc{HLCY}(X,\omega') $. Note that by Proposition 4.1 of \cite{LiLiu01-uniqueness} if $ \mc{S}^{-1}_\omega=\mc{S}^{-1}_{\omega'} $ we must have $ K_\omega=K_{\omega'}=:K $. For any homological configuration $ T=(A_1,\dots,A_k)\in p\mc{HLCY}(X,\omega) $, we would have $ \sum A_i=c_1(X,\omega)=c_1(X,\omega') $ and each $ A_i\in \mc{S}_\omega=\mc{S}_{\omega'} $. So $ T\in p\mc{HLCY}(X,\omega') $ and $ p\mc{HLCY}(X,\omega)=p\mc{HLCY}(X,\omega')\subset H_2(X;\Z) $. By Proposition 4.1 of \cite{LiWu12-lagrangian}, $ D_{K,\omega} $ is generated by reflections along classes in $ \mc{L}_{K,\omega} $. Note that \[ \mc{L}_{K,\omega}\bigsqcup \mc{S}^{-2}_\omega\bigsqcup (-\mc{S}^{-2}_\omega) =\mc{L}_{K,\omega'}\bigsqcup \mc{S}^{-2}_{\omega'}\bigsqcup (-\mc{S}^{-2}_{\omega'}), \]
		we must have $ \mc{L}_{K,\omega}=\mc{L}_{K,\omega'} $ and thus $ \mc{HLCY}(X,\omega)=\mc{HLCY}(X,\omega') $.
	\end{proof}

	\subsection{Finiteness}

	In the holomorphic category, we have the following finiteness result.
	\begin{thm}[\cite{Fr}]\label{thm:holomorphic finite deformation}
		There are only finitely many deformation types of holomorphic log Calabi-Yau pairs $ (Y,D) $ with the same self-intersection sequence.
	\end{thm}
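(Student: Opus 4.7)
The plan is to reduce the classification to a finite list of toric models together with bounded discrete blow-up data. First, I would observe that the self-intersection sequence $(d_1,\dots,d_k)$ already determines the abstract isomorphism type of $D$ (a cycle of smooth rational curves with the given self-intersections, or a nodal cubic when $k=1$), together with the charge $q(D)=12-k-D^2$. In the holomorphic setting $q(D)$ coincides with the minimal number of non-toric blow-downs needed to reach a toric model, and in particular it depends only on the self-intersection sequence.

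Next I would invoke the existence of a toric model in the sense of Looijenga (and reproved in Gross--Hacking--Keel): every smooth anticanonical pair $(Y,D)$ admits a birational morphism $\pi\colon (Y,D)\to (\ov Y,\ov D)$ contracting a disjoint collection of $(-1)$-curves, each meeting $D$ transversally at a single smooth point, with $(\ov Y,\ov D)$ a smooth projective toric surface together with its toric boundary. The number of contracted curves is exactly $q(D)$. Because each contraction changes the self-intersection sequence of $D$ by a single predictable decrement, the self-intersection sequence $(\bar d_1,\dots,\bar d_{\bar k})$ of $\ov D$ lies in an explicit finite set determined by $(d_1,\dots,d_k)$, and hence there are only finitely many possible toric models $(\ov Y,\ov D)$ up to isomorphism, since a labeled toric anticanonical pair is rigid and is determined by the self-intersection sequence of its cycle (this is the tautness of toric pairs).

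Then I would argue that, for each fixed toric model $(\ov Y,\ov D)$, the pairs $(Y,D)$ one can build by performing $q(D)$ non-toric blow-ups so as to recover the prescribed self-intersection sequence form only finitely many deformation families. The discrete data of such a blow-up pattern, namely how many blow-ups fall on each component of $\ov D$ (and in what infinitesimal order at repeated smooth points), is finite and bounded by $q(D)$. For a fixed combinatorial pattern, the parameter space of blow-up configurations is an open subset of a product of components of $\ov D$ minus diagonals and nodes, quotiented by $\mathrm{Aut}(\ov Y,\ov D)$; this parameter space is connected since each $\ov D_i\cong \mathbb{P}^1$ is connected and the complement of finitely many points stays connected. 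So each combinatorial pattern contributes a single deformation class.

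Combining these three finiteness inputs (finitely many toric models, finitely many combinatorial blow-up patterns, one deformation class per pattern) gives the desired statement. The main obstacle is the last step: ensuring that genuinely different positions of the blow-up points yield analytically deformation-equivalent pairs. This requires knowing that the Kuranishi/local moduli of $(Y,D)$ is smooth with tangent space $H^1(T_Y(-\log D))$ of the expected dimension so that a connected configuration space of points produces a connected family of pairs. Equivalently, one may appeal to the Torelli-type theorem of Gross--Hacking--Keel identifying deformation types with the combinatorial data of the lattice embedding $\langle [\ov D_i]\rangle\hookrightarrow \mathrm{Pic}(Y)$, which is manifestly finite once the self-intersection sequence is fixed.
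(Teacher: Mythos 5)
First, a point of comparison: the paper does not prove this statement at all --- it is imported from Friedman's survey \cite{Fr}, where it is deduced from the Torelli-type description of deformation classes (the marking $([D_1],\dots,[D_k])\subset H^2(Y;\mathbb{Z})$ up to integral isometry) together with a lattice-theoretic finiteness argument. So there is no in-paper proof to measure yours against, and your sketch must stand on its own.

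It does not, because your second step is false as stated. It is not true that every anticanonical pair $(Y,D)$ contracts directly to a toric pair by blowing down $q(D)$ disjoint interior $(-1)$-curves. Take $(\mathbb{P}^2, C+L)$ with $C$ a conic and $L$ a transverse line: here $q=12-2-9=1$, yet $\mathbb{P}^2$ contains no $(-1)$-curve whatsoever; more generally a non-toric blow-down preserves the length of the cycle, so no pair of length $\le 2$ can reach a toric model (which has length $\ge 3$) by interior blow-downs alone, and the elliptic case has no toric model in any sense. The correct statement (Gross--Hacking--Keel) is that some \emph{toric blow-up} $(\tilde Y,\tilde D)\to (Y,D)$ admits a toric model. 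To salvage your argument you would need to (i) bound the number of corner blow-ups required in terms of the self-intersection sequence, so that the set of candidate toric models stays finite, and (ii) descend deformation equivalence from $(\tilde Y,\tilde D)$ back to $(Y,D)$; neither is addressed. Your fallback --- that finiteness is ``manifestly'' visible from the lattice embedding --- is also not manifest: the isometry group of $H^2(Y;\mathbb{Z})$ fixing $K_Y$ is an infinite Weyl group once $D^2\le 1$, and finiteness of its orbits on tuples of classes with prescribed Gram matrix summing to $-K_Y$ is exactly the nontrivial content one must prove (it is the holomorphic analogue of the explicit class list $\mathcal{H}_l$ in Corollary \ref{cor:ingredients}). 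By contrast, your remaining steps --- tautness of toric pairs, finiteness of combinatorial blow-up patterns, and one deformation class per pattern via connectedness of the configuration space of interior centres --- are sound and do not need Kuranishi theory, since blowing up a continuously varying configuration already produces an explicit smooth family of pairs.
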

	Since there is always a Kahler pair in a symplectic deformation class of symplectic log Calabi-Yau pairs (\cite{LiMa16-deformation}), we have the following analogous finiteness result in the symplectic category.
	\begin{thm}[\cite{LiMaMi-logCYcontact}] \label{thm: finite deformation}
		There are only finitely many symplectic deformation types of symplectic log Calabi-Yau  pairs with the same self-intersection sequence.
	\end{thm}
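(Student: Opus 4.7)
The plan is to deduce the symplectic finiteness from its holomorphic analogue (Theorem \ref{thm:holomorphic finite deformation}) by using Theorem \ref{thm: symplectic deformation class=homology classes}, which guarantees that every symplectic deformation class contains a K\"ahler representative.

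First, fix a self-intersection sequence $s=(s_1,\dots,s_k)\in\mathbb{Z}^k$, and let $\mathcal{S}_s$ denote the set of symplectic deformation classes of symplectic log Calabi-Yau pairs with self-intersection sequence $s$, and $\mathcal{H}_s$ denote the set of deformation types of holomorphic log Calabi-Yau pairs with self-intersection sequence $s$. By Theorem \ref{thm:holomorphic finite deformation} we know $|\mathcal{H}_s|<\infty$, so it suffices to exhibit a surjection $\Psi\co \mathcal{H}_s\to\mathcal{S}_s$.

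Next, I would define $\Psi$ by sending a holomorphic log Calabi-Yau pair $(Y,D)$ equipped with any K\"ahler form $\omega$ on $Y$ to the symplectic deformation class of the symplectic log Calabi-Yau pair $(Y,D,\omega)$. To see this is well-defined, I would check two things: (i) the class is independent of the choice of K\"ahler form, since the K\"ahler cone of $(Y,D)$ is convex so any two K\"ahler forms are joined by a smooth family of K\"ahler forms giving a symplectic deformation of the pair; (ii) the class depends only on the holomorphic deformation type, since a smooth family $(Y_t,D_t)$ of holomorphic pairs (together with a smoothly varying family of K\"ahler forms $\omega_t$, which exists after shrinking the parameter space using relative K\"ahler deformation theory, or simply by gluing local families via a partition of unity) is in particular a family of symplectic pairs in the sense of Definition \ref{def:deformation}.

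Finally, the surjectivity of $\Psi$ is exactly the content of Theorem \ref{thm: symplectic deformation class=homology classes}, which asserts that each symplectic deformation class contains a K\"ahler pair. Since the self-intersection sequence is a deformation invariant on both sides, $\Psi$ restricts to a surjection $\mathcal{H}_s\twoheadrightarrow\mathcal{S}_s$ and the finiteness of $|\mathcal{S}_s|$ follows.

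The main (mildly) technical point is verifying well-definedness of $\Psi$ on holomorphic deformation types, i.e. producing a smoothly varying family of K\"ahler forms along an arbitrary holomorphic family. This is a standard consequence of the openness of the K\"ahler condition together with convexity of the K\"ahler cone on each fiber, but it is the only place where the passage from the holomorphic to the symplectic category requires care; everything else is formal.
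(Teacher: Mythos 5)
Your argument is correct and is exactly the route the paper takes: it deduces the symplectic statement from the holomorphic finiteness of Theorem \ref{thm:holomorphic finite deformation} together with the fact (from Theorem \ref{thm: symplectic deformation class=homology classes}) that every symplectic deformation class contains a K\"ahler pair. The paper gives only this one-line justification and cites \cite{LiMaMi-logCYcontact} for details; your write-up simply makes the induced surjection from holomorphic deformation types to symplectic deformation classes explicit, including the (standard) well-definedness checks.
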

	
	Now we show that such finiteness also holds for strict symplectic deformation classes. We need to make a digression first.
	
	Having established the equivalence between $\mathcal{LCY}(X,\omega)$ and $\mathcal{HLCY}(X,\omega)$, we also want to describe it in a more convenient way. The finiteness of $\mathcal{LCY}(X,\omega)$ will be shown by that description in this section. In order to do that we need to introduce the concepts of reduced cones and blowup forms for rational manifolds. Denote by $ M_l $ the $ l $-fold blow-up $ \CC\PP^2\# l\overline{\CC\PP}^2 $. Fix a standard basis $ \{ H,E_1,\dots,E_l \} $ of $ H_2(M_l;\ZZ) $. A class $ \lambda H-\sum \delta_iE_i $ is called {\bf reduced} if \[
	\delta_1\ge \delta_2\ge \dots \ge \delta_k>0\quad and \quad \lambda \ge \delta_1+\delta_2+\delta_3.
	\]
	We also say a second cohomology class is reduced if its Poincare dual is reduced.
	\begin{definition}
		A symplectic form $ \omega  $ is {\bf reduced} if $ [\omega ] $ is reduced, in which case $ [\omega ] $ is called a reduced symplectic class and $ (\lambda,\delta_1,\dots,\delta_k) $ is called the reduced vector of $ [\omega] $. The space $ \tilde{P}_l=\tilde{P}(M_l) $ of reduced symplectic classes is called the {\bf reduced symplectic cone}.
		The {\bf normalized reduced symplectic cone} $ P_l=P(M_l) $ is the space of reduced symplectic classes on $ M_l $ with $ \lambda=1 $.
	\end{definition}
	
	For $M_{1}'=S^2\times S^2 $, let $B=[S^2\times pt],F=[pt\times S^2]$. The reduced symplectic classes are those of the forms satisfying $\omega(F)=\lambda,\omega(B)=\mu $ with $ \lambda \geq \mu> 0 $ and it is normalized if $ \lambda=1 $. The crucial fact we need about reduced symplectic cones is that, for symplectic rational surfaces, the reduced symplectic cone is the fundamental domain of the action of orientation-preserving diffeomorphism on the $K_0$-symplectic cone  (\cite{Taubes95-SW=Gr}, \cite{LiLiu95-ruled}). This means whenever we have a symplectic rational manifold $(X,\omega)$ in hand, then up to scaling, we may always assume it is $(M_l,\omega_{\delta_1,\cdots,\delta_l})$ with reduced or $(M_1',\omega'_\mu)$.

	A {\bf blowup form} on $M_l$ is a symplectic form for which there exist pairwise disjoint embedded symplectic spheres in the class $H,E_1,\cdots,E_l$. The result in \cite{KaKe17-blowup} shows that every normalized reduced class encoded by $(\delta_1,\cdots,\delta_l)$ with $1>\delta_1^2+\cdots+\delta_l^2$ contains a blowup form. Therefore, given $(M_l,\omega_{\delta_1,\cdots,\delta_l})$ with some reduced blowup form, we could blow down those disjoint exceptional spheres to obtain a sequence of symplectic manifolds $(M_{l-1},\omega_{\delta_1,\cdots,\delta_{l-1}}),\cdots,(M_{1},\omega_{\delta_1})$, all of which are equipped with reduced blowup forms. Note that in $(M_2,\omega_{\delta_1,\delta_2})$, $H-E_1-E_2$ also has exceptional spherical representative, whose blow down manifold is $(M_1',\omega'_{\frac{1-\delta_1}{1-\delta_2}})$ after scaling. Note that in this case the inclusion $H_2(M_1',\ZZ)\hookrightarrow H_2(M_2,\ZZ)$ sends $F$ to $H-E_2$ and $B$ to $H-E_1$ so that $H_2(M_2,\ZZ)=H_2(M_1',\ZZ)\oplus \ZZ \langle H-E_1-E_2\rangle$.
	
	\begin{remark}
		Since any symplectic form is diffeomorphic to some blowup form and the spaces of divisors (both $\mathcal{LCY}$ and $\mathcal{HLCY}$) for diffeomorphic symplectic forms can be identified naturally, our strategy for proving the finiteness in this section and counting results later is by only considering the spaces under reduced blowup forms.
	\end{remark}
	
	Now we need the following results:
	
	\begin{lemma}(\cite{KaKe17-blowup})\label{lemma:kk17}
		If $l\geq 3$, $E_l$ has the minimal area among all symplectic exceptional classes.
	\end{lemma}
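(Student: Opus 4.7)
The plan is to verify $\omega_{\delta_1,\ldots,\delta_l}(E)\ge\delta_l=\omega(E_l)$ for every symplectic exceptional class $E\in H_2(M_l;\mathbb{Z})$ under the assumption that the form is reduced and $l\ge 3$. Writing $E=aH-\sum_{i=1}^l b_iE_i$, adjunction together with $E^2=-1$ forces $a^2-\sum b_i^2=-1$ and $3a-\sum b_i=1$, and the area is $\omega(E)=a-\sum b_i\delta_i$.

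First I would use positivity of intersection: for $E\ne E_j$ (any $j$), picking a tame $J$ for which both $E$ and $E_j$ have $J$-holomorphic representatives (possible since each is a symplectic sphere class and distinct exceptional classes can be simultaneously represented, cf.~\cite{McOp13-nongeneric}) yields $b_j=E\cdot E_j\ge 0$. After relabeling we may therefore assume $b_1\ge b_2\ge\cdots\ge b_l\ge 0$. Since $\delta_i$ is also decreasing, the rearrangement inequality means this labelling is already the ``worst case'' for the area.

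The heart of the argument is Cremona reduction. For a triple $i<j<k$, reflection in $L=H-E_i-E_j-E_k$ (which has $L^2=-2$) sends $E\mapsto E+(a-b_i-b_j-b_k)L$, preserves the exceptional condition, and changes the area by $(a-b_i-b_j-b_k)(1-\delta_i-\delta_j-\delta_k)$. Reducedness gives $\delta_i+\delta_j+\delta_k\le\delta_1+\delta_2+\delta_3\le 1$, so the second factor is non-negative; thus whenever some triple violates $a\ge b_i+b_j+b_k$, applying this reflection strictly decreases $a$ while weakly decreasing $\omega(E)$. Iterating with reorderings terminates (the integer $a\ge 0$ is bounded below), producing an exceptional $\tilde{E}=\tilde{a}H-\sum\tilde{b}_iE_i$ with $\tilde{b}_1\ge\cdots\ge\tilde{b}_l\ge 0$, $\tilde{b}_1+\tilde{b}_2+\tilde{b}_3\le\tilde{a}$, and $\omega(\tilde{E})\le\omega(E)$.

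It remains to solve the elementary system on $\tilde{E}$: from $\tilde{a}^2-\sum\tilde{b}_i^2=-1$ and $\sum\tilde{b}_i=3\tilde{a}-1$ with non-negative integers $\tilde{b}_i$ constrained by $\tilde{b}_1+\tilde{b}_2+\tilde{b}_3\le\tilde{a}$, integrality forces $\tilde{a}\in\{0,1\}$: for $\tilde{a}\ge 2$ one has $\tilde{b}_1\le\tilde{a}/3<\tilde{a}$ but $\sum\tilde{b}_i=3\tilde{a}-1$ is incompatible with $\tilde{b}_i\le\lfloor\tilde{a}/3\rfloor$ once integrality is imposed. The base cases are then $\tilde{E}=E_j$, with $\omega(\tilde{E})=\delta_j\ge\delta_l$, or $\tilde{E}=H-E_i-E_j$, with $\omega(\tilde{E})=1-\delta_i-\delta_j\ge 1-\delta_1-\delta_2\ge\delta_3\ge\delta_l$ — the last inequality is precisely the reducedness condition $\delta_1+\delta_2+\delta_3\le 1$ and uses $l\ge 3$ to ensure $\delta_3\ge\delta_l$ makes sense. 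The main obstacle is the bookkeeping to ensure Cremona moves stay within symplectic (not merely homological) exceptional classes at every stage and that the reduction terminates; once that is secured, the final arithmetic is straightforward.
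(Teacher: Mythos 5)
The paper gives no proof of this lemma --- it is quoted directly from \cite{KaKe17-blowup} --- so there is no internal argument to compare against. Your Cremona-reduction strategy is the standard route and is essentially the right one: positivity of intersections gives $b_i\ge 0$, sorting is harmless by rearrangement, reflection in $L=H-E_i-E_j-E_k$ changes the area by $(E\cdot L)\,\omega(L)$ with $\omega(L)\ge 0$ by reducedness, and the two base cases $E_j$ and $H-E_i-E_j$ are handled correctly (the latter, via $1-\delta_1-\delta_2\ge\delta_3\ge\delta_l$, is exactly where $l\ge 3$ enters). Note also that none of the intermediate classes need to be symplectic: the whole proof is just the chain of inequalities $\omega(E)\ge\omega(R_LE)\ge\cdots$, so the worry you flag about staying within symplectic classes is not the real issue.

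The genuine problem is the terminal arithmetic. From $\tilde b_1\ge\tilde b_2\ge\tilde b_3$ and $\tilde b_1+\tilde b_2+\tilde b_3\le\tilde a$ you may conclude $\tilde b_3\le\tilde a/3$, but certainly not $\tilde b_1\le\tilde a/3$, and the resulting counting bound $\sum\tilde b_i\le\tilde a+(l-2)\tilde a/3$ is compatible with $\sum\tilde b_i=3\tilde a-1$ once $l\ge 7$, so the claimed contradiction for $\tilde a\ge 2$ does not follow as written. What you actually need is the standard but nontrivial fact that an exceptional class $aH-\sum b_iE_i$ with $a\ge 2$ and all $b_i\ge 0$ has its three largest coefficients summing to at least $a+1$; this uses the quadratic relation $\sum b_i^2=a^2+1$ essentially and should be proved or cited (it appears in the Cremona-reduction literature, e.g. Li--Li). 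Relatedly, the ``bookkeeping'' you defer is what closes the induction: when $E\cdot L<0$ the three affected coefficients increase by $|E\cdot L|$ and the others are unchanged, so nonnegativity of the $b_i$ is preserved, whence $3a'=1+\sum b_i'\ge 1$ forces $a'\ge 1$; this is why the strictly decreasing integer $a$ must terminate at $a=1$ (never at a negative value, and never at an $E_j$, which has a negative coefficient) and why each intermediate class is again eligible for a sorted reflection. With that lemma supplied and the preservation of nonnegativity made explicit, your argument is complete.
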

	
	\begin{lemma}(\cite{Zhang-curve})\label{lemma:zhang}
		For any tame $J$ on $M_2$, if $E_2$ has no $J$-holomorphic representative, then $E_1$ and $H-E_1-E_2$ must have $J$-holomorphic representatives.
	\end{lemma}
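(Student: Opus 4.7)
My plan is to combine Taubes' Seiberg--Witten $=$ Gromov identity with positivity of intersections between distinct irreducible $J$-holomorphic curves, reducing the question to a finite case analysis on the exceptional classes of $M_2$.

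A routine adjunction check shows that the only exceptional sphere classes on $M_2$ are $E_1$, $E_2$, and $H-E_1-E_2$, and each has Gromov invariant equal to $1$. Hence for every $\omega$-tame $J$ each of these three classes is represented by some $J$-holomorphic subvariety. Assume $E_2$ admits no irreducible (somewhere-injective) $J$-representative, and write the Taubes subvariety in class $E_2$ as $Z = \sum_\alpha n_\alpha C_\alpha$ with $C_\alpha$ distinct irreducible $J$-curves and $n_\alpha \geq 1$. By hypothesis $[C_\alpha] \neq E_2$ for every $\alpha$, by adjunction $[C_\alpha]^2 \geq -1$, and by positivity $[C_\alpha] \cdot [C_\beta] \geq 0$ for $\alpha \neq \beta$.

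Expanding the identity
\[
-1 = [Z]^2 = \sum_\alpha n_\alpha^2 [C_\alpha]^2 + 2\sum_{\alpha<\beta} n_\alpha n_\beta \,[C_\alpha] \cdot [C_\beta]
\]
then forces at least one component $C_*$ to be exceptional, i.e.\ $[C_*]^2 = -1$ with $n_* = 1$, and since $[C_*] \neq E_2$ it must be $E_1$ or $H - E_1 - E_2$. Thus at least one of the other two exceptional classes already has an irreducible $J$-representative. To promote this to both, examine the residual cycle $Z - C_*$ in class $E_2 - [C_*]$. Its self-intersection is $(E_2-E_1)^2=-2$ when $[C_*] = E_1$ and $(2E_2+E_1-H)^2=-4$ when $[C_*] = H-E_1-E_2$, so the residual must itself carry a further exceptional component; using that each exceptional class on $M_2$ admits at most one irreducible $J$-representative (two distinct irreducible $J$-curves in the same exceptional class would give cohomological intersection $-1$ in violation of positivity), a short case analysis forces the remaining exceptional class to also appear as a component of the subvariety, producing the second $J$-representative.

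The main obstacle is the arithmetic and combinatorial control of the residual cycle. One must exclude decompositions that involve only non-exceptional irreducible components (which have $[C]^2 \geq 0$ and cannot collectively produce the required negative self-intersection), higher-multiplicity covers (handled via a separate adjunction bound $[C]^2 \geq c_1\cdot[C]-2$), and mixed decompositions with low-area irreducible $J$-classes such as $H$, $H-E_1$, or $H-E_2$. The explicit description of the effective cone of $M_2$ together with the area positivity $\omega\cdot[C] > 0$ makes the case analysis finite, but delicate; this is the technical heart of the argument in \cite{Zhang-curve}.
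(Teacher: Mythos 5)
The paper itself offers no proof of this lemma --- it is imported verbatim from \cite{Zhang-curve} --- so the only question is whether your argument stands on its own, and it does not. The first problem is the step ``by adjunction $[C_\alpha]^2\ge -1$.'' Adjunction for a somewhere-injective irreducible $J$-curve gives only $[C_\alpha]^2\ge c_1\cdot[C_\alpha]-2$, so your bound requires $c_1\cdot[C_\alpha]\ge 1$. But $c_1\cdot E_2=1$, so in any decomposition $E_2=\sum_\alpha n_\alpha[C_\alpha]$ with at least two components (counted with multiplicity) some component must satisfy $c_1\cdot[C_\alpha]\le 0$, and precisely such components may have square $\le -2$ (for instance $E_1-E_2$, or $-H+2E_1$ when $\delta_1>\tfrac12$). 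Hence the expansion of $-1=[Z]^2$ does not force an exceptional component, and the pivotal step of your argument fails. (You even quote the correct bound $[C]^2\ge c_1\cdot[C]-2$ later, in tension with the bound you actually use.)

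The second problem is structural and fatal to the overall plan: the components of a $J$-holomorphic cycle $Z$ in class $E_2$ can never include representatives of both $E_1$ and $H-E_1-E_2$, so no analysis of the residual $Z-C_*$ will yield the two required curves. Indeed, for a reduced form one has $\omega(E_1)+\omega(H-E_1-E_2)=1-\delta_2>\delta_2=\omega(Z)$ away from the degenerate case $\delta_1=\delta_2=\tfrac12$; worse, $E_1$ cannot occur as a component at all, since the residual class $E_2-E_1$ has area $\delta_2-\delta_1\le 0$ while a nonempty $J$-holomorphic cycle must have positive area. The argument has to be run in the contrapositive direction: apply Taubes' SW$\,=\,$Gr and the decomposition analysis to the classes $E_1$ and $H-E_1-E_2$ themselves, and show that any nontrivial degeneration of either one is forced (by positivity of intersections, adjunction, and area positivity) to contain an irreducible component in class $E_2$ --- the model case being $E_1=E_2+(E_1-E_2)$ --- contradicting the hypothesis. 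Your toolkit is the right one, but it must be aimed at those two classes rather than at $E_2$.
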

	
	Using the above lemmas, by the same argument in Proposition \ref{prop:realization} it can be shown that:
	\begin{lemma}\label{lemma:reduced induction}
		Assume $\omega_{\delta_1,\cdots,\delta_l}$ is a reduced blowup form on $M_l$. Suppose $(A_1,\cdots,A_k)\in p\mathcal{HLCY}(M_l,\omega_{\delta_1,\cdots,\delta_l})$, when $k\geq3,l\geq2$, one of the followings occurs:
		\begin{itemize}
			\item  There is exactly one $1\leq i\leq k$ such that $A_i\notin H_2(M_{l-1};\mathbb{Z})$, in which case $A_i+E_l\in H_2(M_{l-1};\mathbb{Z})$ and $(A_1,\cdots,A_{i-1},A_{i}+E_l,A_{i+1},\cdots,A_k)\in p\mathcal{HLCY}(M_{l-1},\omega_{\delta_1,\cdots,\delta_{l-1}})$;
			\item  There is exactly one $1\leq i\leq k$ such that $A_i=E_l$, in which case $A_j+E_l\in H_2(M_{l-1};\mathbb{Z})$ if $|i-j|\equiv 1$ (mod $k$), $A_j\in H_2(M_{l-1};\mathbb{Z})$ for all the other $j\neq i$ and $(A_1,\cdots,A_{i-2},A_{i-1}+E_l,A_{i}+E_l,A_{i+1},\cdots,A_k)\in p\mathcal{HLCY}(M_{l-1},\omega_{\delta_1,\cdots,\delta_{l-1}})$.
		\end{itemize}
		When $k=2$, if none of the above cases happens, $(A_1,A_2)=(3H-E_1-\cdots-E_{l-1}-2E_l,E_l)$.
	\end{lemma}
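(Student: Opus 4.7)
The plan is to follow the blow-down strategy of Proposition \ref{prop:realization}, now insisting that the distinguished exceptional class being blown down is $E_l$ itself. First, pick a tame almost complex structure $J$ on $M_l$ for which every $A_i$ admits a $J$-holomorphic representative. To ensure $E_l$ is also $J$-holomorphic, I combine Lemma \ref{lemma:kk17} with Lemma \ref{lemma:minimal area} when $l\geq 3$, and invoke Lemma \ref{lemma:zhang} when $l=2$; in the alternative branch of Lemma \ref{lemma:zhang} where $E_2$ is not $J$-holomorphic, no $A_i$ can equal $E_2$ (else we could have chosen $J$ to make its $E_2$-sphere representative $J$-holomorphic), and positivity of intersection against the pair $E_1$ and $H - E_1 - E_2$ provided by the lemma still forces $A_i\cdot E_2\geq 0$ for every $i$.

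Once $E_l$ is realized by an irreducible $J$-holomorphic sphere, positivity of intersection combined with non-negativity of the symplectic genus (which excludes $A_i = c E_l$ for $c\geq 2$, since then $g = 1 + (-c^2-c)/2 < 0$) implies that for every $i$, either $A_i = E_l$ or $A_i\cdot E_l\geq 0$. Since $\sum A_i = c_1(M_l) = 3H - E_1 - \cdots - E_l$, one has $\sum_i A_i\cdot E_l = 1$, and I split into cases accordingly.

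If no $A_i$ equals $E_l$, exactly one index $i$ carries the unit of intersection: then $A_i + E_l$ has zero $E_l$-coefficient, so $A_i + E_l \in H_2(M_{l-1};\Z)$, while the other $A_j$ already lie in $H_2(M_{l-1};\Z)$, giving the first bullet. If some $A_i$ equals $E_l$, then $A_i\cdot E_l = -1$ and the remaining intersections sum to $2$; for $j\neq i$, $A_j\cdot E_l = A_j\cdot A_i$ is the number of edges between $i$ and $j$ in the cyclic graph (at most $1$ when $k\geq 3$, at most $2$ when $k=2$). For $k\geq 3$ the only distribution compatible with the bound is $A_{i-1}\cdot E_l = A_{i+1}\cdot E_l = 1$ with the rest zero, yielding the second bullet; for $k=2$ the sole neighbor $A_{3-i}$ must absorb both units, forcing $A_{3-i} = 3H - E_1 - \cdots - E_{l-1} - 2E_l$. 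To confirm that each blow-down sequence actually belongs to $p\mathcal{HLCY}(M_{l-1}, \omega_{\delta_1,\dots,\delta_{l-1}})$, I note that its total sums to $c_1(M_{l-1})$ and apply Remark \ref{rmk:positive area} to produce symplectic sphere representatives of positive $\omega_{\delta_1,\dots,\delta_{l-1}}$-area for each new class.

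The main obstacle I anticipate is the $l=2$ sub-case in which $E_2$ fails to be $J$-holomorphic: the substitute classes supplied by Lemma \ref{lemma:zhang} only indirectly constrain $A_i\cdot E_2$, so a short combinatorial bookkeeping (using $E_2$-coefficients in the basis $\{H, E_1, E_2\}$ together with the adjunction formula for each $A_i$) is required to recover the non-negativity that drives the remainder of the argument, after which the case analysis above proceeds uniformly.
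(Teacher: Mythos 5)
Your proposal follows essentially the same route as the paper's proof: for $l\ge 3$ Lemmas \ref{lemma:kk17} and \ref{lemma:minimal area} make $E_l$ pseudoholomorphic, for $l=2$ Lemma \ref{lemma:zhang} plus the intersection bounds against $E_1$ and $H-E_1-E_2$ combined with adjunction recover $A_i\cdot E_2\ge 0$, and the case split on $\sum_i A_i\cdot E_l=1$ together with the cyclic graph structure yields exactly the stated alternatives. The only slips are cosmetic: the paper chooses a tame $J_i$ for each class separately rather than one $J$ realizing all $A_i$ at once, and the final descent of representatives to $(M_{l-1},\omega_{\delta_1,\cdots,\delta_{l-1}})$ is supplied by the blow-down construction in Proposition \ref{prop:realization} rather than by Remark \ref{rmk:positive area}, which runs in the opposite (blow-up) direction.
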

	\begin{proof}
		By lemma \ref{lemma:kk17}, unless $l=2$ and $E_2$ is not of minimal area, $E_l$ can always be represented by $J$-holomorphic sphere for any tame almost complex structure $J$ due to lemma \ref{lemma:minimal area}, in which case the result will follow from the argument in Proposition \ref{prop:realization}. So we only need to consider the case when $l=2$ and $E_2$ is not of minimal area.
		
		If $E_2$ is in one of the component of $(A_1,\cdots,A_k)$, it will fall into the second bullet without any pseudoholomorphic argument. So we further assume there is no $A_i$ equal to $E_2$. We want to show any component $A_i=aH-bE_1-cE_2$ has non negative intersection with $E_2$ so that we can apply Theorem 1.2.7 of \cite{McOp13-nongeneric} to find tame $J_i$ such that $E_2,A_i$ both have $J_i$-holomorphic representatives. If $A_i^2<0$, according to the classification of negative symplectic sphere classes in $M_2$ in Proposition 3.4 of \cite{LiLi20-pacific} (See also Proposition \ref{prop:spherical classes}),it's immediate that $E_2\cdot A_i\geq0$. So next we assume $A_i^2\geq0$. Firstly we pick an arbitrary tame $J$ such that $A_i$ can be represented by $J$-holomorphic sphere. By lemma \ref{lemma:zhang}, if $E_2$ has no $J$-holomorphic representative, then $E_1,H-E_1-E_2$ must have. By positivity of intersection and the assumption that $A_i^2\geq0$, it follows that $E_1\cdot A_i\geq 0$ and $(H-E_1-E_2)\cdot A_i\geq 0$ for arbitrary $i$. Since $E_1\cdot(A_1+\cdots+ A_k)=(H-E_1-E_2)\cdot(A_1+\cdots+ A_k)=1$ by adjunction,
		we can conclude that $1\geq E_1\cdot A_i\geq 0$ and $1\geq (H-E_1-E_2)\cdot A_i\geq 0$. Now we have $1\geq b\geq0$, $1\geq a-b-c\geq0$ and $3a-b-c\geq 2$ by adjunction. Now it's easy to see these three inequalities imply $c\geq0$.
		
		Therefore, even if $E_2$ is not of minimal area, we are still able to find $J_i$ such that $E_2,A_i$ both have $J_i$-holomorphic representatives. And we can use the argument in Proposition \ref{prop:realization} to get the conclusion.
	\end{proof}
	
	With the help of this lemma and the list in Lemma \ref{lemma:minimal surjectivity}, we can write down all the homology classes ingredients for homological log Calabi-Yau divisors under the reduced blowup form on $M_l$ with pairwise disjoint symplectic spherical classes $H,E_1,\cdots,E_l$:
	
	The homology classes appearing in the homological log Calabi-Yau divisors must belong to
	\[
	\mathcal{H}_l=\left\{
	\begin{array}{l}
	kH-(k-1)E_1-\sum_{i=2}^l \epsilon_i E_i, k\in\ZZ,\epsilon_i\in\{0,1\}\\
	H-E_1-\sum_{i=2}^l \epsilon _i E_i, \epsilon_i\in\{0,1\}\\
	E_p-\sum_{i=p+1}^l\epsilon _i E_i, 2\leq p\leq l,\epsilon_i\in\{0,1\}\\
	2H-\sum_{i=2}^l \epsilon _i E_i, \epsilon_i\in\{0,1\}\\
	3H-E_1-\cdots-E_{l}\\
	3H-E_1-\cdots-E_{p-1}-2E_p-\sum_{i=p+1}^l\epsilon _i E_i, 2\leq p\leq l,\epsilon_i\in\{0,1\}
	\end{array}
	\right\}
	\]
	
	The homology classes appearing in the toric homological log Calabi-Yau divisors must belong to
	\[
	t\mathcal{H}_l=\left\{
	\begin{array}{l}
	kH-(k-1)E_1-\sum_{i=2}^l \epsilon_i E_i, k\in\ZZ,\epsilon_i\in\{0,1\}\\
	H-E_1-\sum_{i=2}^l \epsilon _i E_i, \epsilon_i\in\{0,1\}\\
	E_p-\sum_{i=p+1}^l\epsilon _i E_i, 2\leq p\leq l,\epsilon_i\in\{0,1\}
	\end{array}
	\right\}
	\]

	Moreover, by Remark \ref{rmk:positive area} and induction, we see that $A\in\mathcal{H}_l$ can be represented by a symplectic surface if and only if $\omega_{\delta_1,\cdots,\delta_l}(A)>0$. The immediate corollaries are the followings:
	\begin{corollary}\label{lemma:elliptic case}
		For each symplectic rational surface $ (X,\omega) $ with $ c_1(X,\omega)\cdot [\omega]>0 $, there is a unique elliptic symplectic log Calabi-Yau divisor in $ \mc{LCY}(X,\omega) $.
	\end{corollary}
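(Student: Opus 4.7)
The plan is to deduce the corollary directly from the identification $\mc{LCY}(X,\omega) = \mc{HLCY}(X,\omega)$ of Proposition \ref{prop:realization}, together with the catalogue of homology classes $\mc{H}_l$ established immediately before the corollary.

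First I would observe that an elliptic symplectic log Calabi-Yau divisor is, by definition, a single smooth symplectic torus, so the associated cyclic homological configuration has length one. Its unique homology class must be $PD(c_1(X,\omega))$, the anticanonical class. Since this class is fixed by every diffeomorphism of $X$, there is at most one length-one element of $\mc{HLCY}(X,\omega)$, regardless of the quotient by $\Symp(X,\omega)$. This immediately yields uniqueness.

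For existence, after applying an orientation-preserving diffeomorphism I would reduce to the case of a normalized reduced blowup form on $M_l$ (or on $M_1'=S^2\times S^2$). In the first case, the anticanonical class $3H - E_1 - \cdots - E_l$ appears explicitly as the fifth entry of the list $\mc{H}_l$ compiled from Lemmas \ref{lemma:reduced induction} and \ref{lemma:minimal surjectivity}. By the remark immediately following that list, a class in $\mc{H}_l$ admits a symplectic surface representative if and only if it has positive $\omega$-area, and the hypothesis $c_1(X,\omega)\cdot[\omega]>0$ is precisely this positivity. Thus the anticanonical class is realized by an embedded symplectic surface, which by the adjunction formula has genus $1+\tfrac{1}{2}((-K)\cdot(-K) - c_1\cdot(-K))=1$, as required. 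The case $M_1'$ is handled identically using case $(C1)$ of Lemma \ref{lemma:minimal surjectivity}.

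There is no substantive obstacle here; the statement is essentially a direct reading of the material developed in the preceding pages. The only point worth a brief verification is that the representative furnished by $\mc{H}_l$ is genuinely an irreducible smooth torus rather than a reducible configuration, but this follows from the fact that the associated homological configuration has only a single vertex and the adjunction genus of $-K$ equals one.
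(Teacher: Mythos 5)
Your proposal is correct and follows essentially the same route as the paper: the corollary is stated there as an immediate consequence of the identification $\mathcal{LCY}(X,\omega)=\mathcal{HLCY}(X,\omega)$, the appearance of the anticanonical class in the list $\mathcal{H}_l$, and the remark that classes in $\mathcal{H}_l$ are represented by symplectic surfaces exactly when they have positive $\omega$-area. Your additional checks (uniqueness of the length-one configuration and the adjunction computation giving genus one) are exactly the points the paper leaves implicit.
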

	
	\begin{corollary}\label{cor:ingredients}
		Under the reduced blowup form $\omega$ on $M_l$ with pairwise disjoint symplectic spherical classes $H,E_1,\cdots,E_l$, the cyclic homological configuration $T$ given by $(A_1,\cdots,A_k)$ is in $\mathcal{HLCY}(M_l,\omega)$ if and only if:
		\begin{itemize}
			\item $A_i\in \mathcal{H}_l$, for all $1\leq i\leq k$;
			\item $[T]=PD(c_1(M_l,\omega))$;
			\item $\omega$ is $T$-positive.
		\end{itemize}
	\end{corollary}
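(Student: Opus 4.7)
The plan is to extract this corollary essentially as a packaging of the classification machinery already built, with an explicit enumeration of admissible homology classes. Two implications need to be checked, and I will address each by induction on $l$, with the minimal model list in Lemma \ref{lemma:minimal surjectivity} serving as the base case.

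For the ``only if'' direction, the conditions $[T]=PD(c_1(M_l,\omega))$ and $\omega$-positivity of every component are immediate from Definition \ref{def:hlcy}, since each $A_i$ must admit a symplectic surface representative. The nontrivial content is to show $A_i\in\mathcal{H}_l$. Here I would induct on $l$, using Lemma \ref{lemma:reduced induction} as the reduction step: given $T=(A_1,\dots,A_k)\in p\mathcal{HLCY}(M_l,\omega_{\delta_1,\dots,\delta_l})$ with $l\geq 2$, either a unique $A_i$ lies outside $H_2(M_{l-1};\mathbb{Z})$ and adding $E_l$ puts it there, or exactly one $A_i$ equals $E_l$, and in both cases the ``blown-down'' configuration lies in $p\mathcal{HLCY}(M_{l-1},\omega_{\delta_1,\dots,\delta_{l-1}})$. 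The inductive hypothesis places the blown-down classes in $\mathcal{H}_{l-1}$; I then need to verify by a direct case check that adding $E_l$ (or inserting $E_l$ itself) keeps each class in the explicit list $\mathcal{H}_l$. The base case $l\leq 1$ and $M_1'=S^2\times S^2$ is handled by Lemma \ref{lemma:minimal surjectivity}, whose output matches $\mathcal{H}_1$ and the toric sublist $t\mathcal{H}_l$ after translating the $\CC\PP^2\#\overline{\CC\PP^2}$ and $S^2\times S^2$ descriptions. The length-$2$ edge case $(3H-E_1-\cdots-E_{l-1}-2E_l,E_l)$ from Lemma \ref{lemma:reduced induction} is recorded separately as an element of $\mathcal{H}_l$.

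For the ``if'' direction, suppose $T=(A_1,\dots,A_k)$ is a cyclic homological configuration with $A_i\in\mathcal{H}_l$, $[T]=PD(c_1)$, and $\omega(A_i)>0$ for every $i$. Since $\mathcal{HLCY}(M_l,\omega)\subset p\mathcal{HLCY}(M_l,\omega)/\Symp(M_l,\omega)$, all that remains to verify is that each $A_i$ admits an $\omega$-symplectic surface representative. I would argue this one class at a time, by induction on $l$. For each $A\in\mathcal{H}_l$ with $\omega(A)>0$, the explicit form of $\mathcal{H}_l$ shows that either $A\in\mathcal{H}_{l-1}\subset H_2(M_{l-1};\mathbb{Z})$ (in which case induction and Lemma \ref{lemma:stability} apply, after verifying $\omega_{\delta_1,\dots,\delta_{l-1}}$-positivity on the blow-down), or $A+E_l\in\mathcal{H}_{l-1}$ has positive $\omega_{\delta_1,\dots,\delta_{l-1}}$-area, in which case Remark \ref{rmk:positive area} promotes the representative from downstairs to upstairs. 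The class $E_l$ itself is represented by the chosen exceptional sphere.

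The main obstacle is the bookkeeping in the inductive step of the forward direction: one must verify that the two blow-down patterns of Lemma \ref{lemma:reduced induction} genuinely interchange the six families defining $\mathcal{H}_l$ with those defining $\mathcal{H}_{l-1}$, up to the degenerate length-$2$ case, and similarly that in the ``if'' direction the relevant $\omega_{\delta_1,\dots,\delta_{l-1}}$-positivity on the blown-down class follows from $\omega_{\delta_1,\dots,\delta_l}(A)>0$ together with $A\in\mathcal{H}_l$. This is a case-by-case verification over the six families in $\mathcal{H}_l$, which is routine but slightly tedious; the inequality $\delta_1+\delta_2+\delta_3\leq 1$ from the reduced condition is what guarantees the positivity of area is preserved under the relevant manipulations.
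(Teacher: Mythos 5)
Your proposal follows essentially the same route as the paper: the paper presents this corollary as an immediate consequence of Lemma \ref{lemma:reduced induction} (inductive blow-down to the minimal models of Lemma \ref{lemma:minimal surjectivity}, which yields the list $\mathcal{H}_l$) together with Remark \ref{rmk:positive area} and induction (which gives that $A\in\mathcal{H}_l$ is represented by an $\omega$-symplectic surface if and only if $\omega(A)>0$), and offers no further written proof. Your write-up is simply a more explicit unpacking of that same two-step argument, including the correct identification of the length-$2$ edge case and the role of the reduced condition in preserving positivity of area.
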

	
	\begin{corollary}\label{lemma:finiteness}
		$\mathcal{LCY}(X,\omega)$ is finite.
	\end{corollary}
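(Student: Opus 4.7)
The plan is to translate the problem via Proposition \ref{prop:realization} from the geometric side to the homological side, and then exploit the explicit list in Corollary \ref{cor:ingredients} together with the area budget imposed by $\omega \cdot c_1$. Since diffeomorphic symplectic forms induce naturally identified isotopy class spaces, I may reduce to the case where $X=M_l$ and $\omega=\omega_{\delta_1,\dots,\delta_l}$ is a reduced blowup form (the $S^2\times S^2$ case is either handled in the same way using the $(F,B)$-basis or absorbed into $M_2$ via the blow-up identification $H_2(M_2;\ZZ)=H_2(M_1';\ZZ)\oplus\ZZ\langle H-E_1-E_2\rangle$ described earlier). After that, by Corollary \ref{cor:ingredients}, any element of $\mathcal{HLCY}(M_l,\omega)$ is represented by a cyclic sequence $(A_1,\dots,A_k)$ with $A_i\in\mathcal{H}_l$, $\sum A_i = \mathrm{PD}(c_1)$, and $\omega(A_i)>0$ for every $i$.

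The key step will be to show that the set $\mathcal{H}_l(\omega)=\{A\in\mathcal{H}_l : 0<\omega(A)\le \omega(c_1)\}$ is finite. Inspecting the six families comprising $\mathcal{H}_l$, all but one are parametrized by the index $p\in\{2,\dots,l\}$ and the binary vector $(\epsilon_i)$, which already range over finite sets, so each of those families contributes only finitely many classes satisfying $\omega(A)\le \omega(c_1)$. The only family with an unbounded integer parameter is $A_k^{(\epsilon)}=kH-(k-1)E_1-\sum_{i\ge 2}\epsilon_i E_i$, for which
\[
\omega(A_k^{(\epsilon)})=1+(k-1)(1-\delta_1)-\sum_{i\ge 2}\epsilon_i\delta_i.
\]
The fact that $\omega$ is a reduced blowup form forces $1-\delta_1>0$ (since $H-E_1$ carries an embedded symplectic sphere), so both $\omega(A_k^{(\epsilon)})>0$ and $\omega(A_k^{(\epsilon)})\le\omega(c_1)$ together confine $k$ to a bounded interval of integers. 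This yields $|\mathcal{H}_l(\omega)|<\infty$.

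With $\mathcal{H}_l(\omega)$ finite, each component of any admissible sequence lies in a finite set, and the length $k$ is bounded by
\[
k\;\le\;\frac{\omega(c_1)}{\min\{\omega(A):A\in\mathcal{H}_l(\omega)\}},
\]
so there are only finitely many cyclic configurations satisfying the constraints of Corollary \ref{cor:ingredients}. Combined with Proposition \ref{prop:realization}, this proves $|\mathcal{LCY}(X,\omega)|=|\mathcal{HLCY}(X,\omega)|<\infty$.

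The main (and essentially only nonroutine) obstacle is verifying the finiteness of $\mathcal{H}_l(\omega)$ by the case analysis above; the rest is bookkeeping. A conceptual way to phrase the same observation is that the reduced blowup hypothesis places $[\omega]$ in the interior of a suitable cone where $H-E_1$ is $\omega$-positive, which is exactly what is needed to bound the single unbounded family of classes in $\mathcal{H}_l$.
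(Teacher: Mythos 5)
Your proposal is correct and follows essentially the same route as the paper: both reduce via Proposition \ref{prop:realization} and Corollary \ref{cor:ingredients} to counting cyclic configurations of classes in $\mathcal{H}_l$, use the inequality $\omega(c_1)\geq\omega(A)>0$ to bound the single unbounded integer parameter $k$ in the family $kH-(k-1)E_1-\sum\epsilon_iE_i$ (possible because $1-\delta_1>0$), and bound the length of the configurations. The paper's version is terser (it simply asserts the length bound, which also follows from $k(D)=3+l-q(D)\le 3+l$), whereas you derive it from the minimal area of the finitely many admissible classes; this is a presentational difference only.
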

	\begin{proof}
		Since the length of the divisors in $\mathcal{LCY}(X,\omega)$ has an upper bound, it's enough to show for a fixed $\omega$, only fintely many classes $A\in\mathcal{H}_l$ are in the homological configurations. This is implied by the obvious inequality $\omega\cdot c_1(X,\omega)\geq \omega(A)>0$, thus a bound of $k$.
	\end{proof}

	\subsection{Tautness and toric symplectic log Calabi-Yau divisors}\label{section:toric divisors}
	In the holomorphic category, a labeled anticanonical pair is called taut if its isomorphism class is determined by the self-intersection sequence. The tautness of holomorphic anticanonical pairs is used to prove Torelli Theorem in \cite{Fr}. Now we introduce two kinds of symplectic analogy of tautness as follows. A label on a symplectic log Calabi-Yau divisor $D\in p\mathcal{LCY}(X,\omega)$ is a bijection from $\{1,\cdots,k\}$ to the irreducible components of $D$ such that each two consecutive numbers are mapped to two components which intersect. For simplicity, we will just use $(C_1,\cdots,C_k)$ to denote a labeled symplectic log Calabi-Yau divisor. Two labeled divisors $(D=(C_1,\cdots,C_k),\omega)$ and $(D'=(C_1',\cdots,C_k'),\omega')$ with the same number of components are symplectic deformation equivalent if they are connected by a family of symplectic divisors $ (D^t=(C_1^t,\cdots,C_k^t),\omega^t) $, up to an orientation-preserving diffeomorphism. Clearly, two unlabeled divisors are symplectic deformation equivalent in the sense of Definition \ref{def:deformation} if and only if there exist labels on them such that they are symplectic deformation equivalent as labeled divisors. Similarly, we can also define the strict deformation equivalence for labeled divisors.
	
	\begin{definition}
		A labeled symplectic log Calabi-Yau divisor $(D=(C_1,\cdots,C_k),\omega)$ on $X$ is called {\bf def-taut} if all the labeled symplectic log Calabi-Yau divisors $(D'=(C_1',\cdots,C_k'),\omega')$ on $X$ with $[C_i]^2=[C_i']^2$ for all $1\leq i\leq k$ are symplectic deformation equivalent to $D$. It's called {\bf iso-taut} if all the labeled symplectic log Calabi-Yau divisors $(D'=(C_1',\cdots,C_k'),\omega')$ on $X$ with $[C_i]^2=[C_i']^2,\omega(C_i)=\omega'(C_i')$ for all $1\leq i\leq k$ are strictly symplectic deformation equivalent to $D$.
	\end{definition}
	
	Next we will focus on the iso-tautness which will be used in the proof of Proposition \ref{prop:CY to toric}. We put the discussion of def-tautness into the appendix since there is no direct application in the remaining of this paper.

	Recall that a symplectic Looijenga pair $ (X,\omega,D) $ is called a \textbf{toric symplectic log Calabi-Yau pair} if $ q(D)=0 $, where $q(D)=12-k(D)-D^2=12-3k-\sum_{i=1}^k s_i  $. When we fix the ambient symplectic manifold $ (X,\omega) $, we also call $ D $ a {\bf toric symplectic log Calabi-Yau divisor}. From the definition we see that being toric only depends on the self-intersection sequence $ s $.
	In the following we discuss some basic properties of toric symplectic log Calabi-Yau divisors.
	
	The following lemma
	shows that this definition of being toric coincides with that in \cite{Fr}.
	\begin{lemma}\label{lemma:toric=toric blow up}
		A symplectic log Calabi-Yau divisor $ D\subset (X,\omega) $ is toric if and only if it is an iterated toric blow-up of toric symplectic log Calabi-Yau divisors in $ \CP^2 $, $ \CP^2\# \overline{\CP}^2 $ and $ S^2\times S^2 $.
	\end{lemma}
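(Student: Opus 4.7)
The plan is to track the invariant $q(D) = 12 - k(D) - D^2$ under the blow-up operations reviewed in Section \ref{section:operations} and match it against the classification of minimal pairs in Theorem \ref{thm:minimal model}. The key behavioral claims are that toric blow-up preserves $q$ while non-toric blow-up raises $q$ by $1$ (equivalently, toric blow-down preserves $q$ and non-toric blow-down drops it by $1$). These follow directly from the definitions: a toric blow-up at a node $C_i \cap C_{i+1}$ increases $k$ by $1$, drops $[C_i]^2$ and $[C_{i+1}]^2$ each by $1$, and inserts a new $(-1)$-component, while a non-toric blow-up fixes $k$ and drops a single $[C_i]^2$ by $1$. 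Combining with $D^2 = \sum [C_j]^2 + 2 \cdot (\text{number of nodes})$ for Looijenga divisors (so $\sum [C_j]^2 + 2k$ for $k \ge 3$ and $\sum [C_j]^2 + 4$ for $k = 2$), the net change in $q$ is $0$ and $+1$ respectively.

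Next I would verify, by direct case-by-case computation against Theorem \ref{thm:minimal model}, that among the rational minimal pairs the equality $q = 0$ holds exactly for (B3), (C4), (D4), while (B1), (B2), (C1), (C2), (C3), (D2), (D3) all satisfy $q \ge 1$. These three distinguished cases are precisely the minimal toric symplectic log Calabi-Yau divisors on $\CP^2$, $S^2 \times S^2$, and $\CP^2 \# \ov{\CP^2}$. The direction $(\Leftarrow)$ of the lemma follows immediately from the $q$-invariance under toric blow-up applied to these three base cases.

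For the direction $(\Rightarrow)$, I would invoke the minimal model procedure from Section \ref{section:operations}: every pair $(X, D, \omega)$ reduces to some minimal $(X_{\min}, D_{\min})$ via $b$ non-toric blow-downs followed by toric blow-downs, and the invariance computation then gives $q(D_{\min}) = q(D) - b$. Since $q(D_{\min}) \ge 0$ by the case analysis, this yields the universal bound $q(D) \ge 0$; moreover, when $q(D) = 0$ the nonnegativity forces both $b = 0$ and $q(D_{\min}) = 0$. Hence $D_{\min}$ belongs to (B3), (C4), or (D4) and $D$ is obtained from it by a sequence of toric blow-ups only.

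The main obstacle is the enumerative verification isolating $q = 0$ to exactly (B3), (C4), (D4); this is a finite check but requires care with the intersection data in cycles of spheres, especially the length-$2$ cases where $C_1$ and $C_2$ meet twice so $D^2 = s_1 + s_2 + 4$, and the subcase of (D2) where the ambient manifold is $\CP^2\#\ov{\CP^2}$ rather than a Hirzebruch surface with a $4$-component toric boundary.
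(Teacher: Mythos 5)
Your proposal is correct and follows essentially the same route as the paper's own proof: both track $q(D)=12-k(D)-D^2$, use that toric blow-up/down preserves $q$ while non-toric blow-down decreases it, reduce to a minimal model, and conclude from the case check that $q\ge 0$ on minimal models with equality exactly for (B3), (C4), (D4). Your write-up is in fact slightly more explicit than the paper's about the length-$2$ intersection convention, but the argument is the same.
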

	\begin{proof}
		The 'if' part is obvious since $ q(D) $ is preserved under toric blow-ups and $ q(D)=0 $ for minimal models B3, C4 and D4. Now suppose $ D $ is toric. By minimal reduction (\cite{LiMa16-deformation}), we can do a maximal sequence of non-toric blow-downs and then a maximal sequence of toric blow-downs to get a minimal model $ (X',\omega',D') $. Note that non-toric blow-down decreases $ q(D) $, we have $ q(D')\le q(D)=0 $. By checking the minimal models in Theorem \ref{thm:minimal model}, we see that all of them have $ q(D')\ge 0 $. So we must have $ q(D') =0$ and $ D $ is a toric blow-up of $ D' $. The only possible $ D' $ are B3, C4 and D4, which are toric ones among minimal models.
	\end{proof}
	
	The above lemma implies that a Kahler log Calabi-Yau pair $ (X,\omega,D) $ is a toric symplectic log Calabi-Yau pair if and only if $ (X,D) $ is a toric anticanonical pair in the holomorphic category.
	
	
	A toric anticanonical divisor is taut while a general anticanonical divisor might not be taut in the holomorphic category (\cite{Fr}). Such rigidity of toric divisors persists to hold in the symplectic category.
	\begin{lemma}\label{lemma:taut}
		Any labeled toric symplectic log Calabi-Yau divisor is both def-taut and iso-taut.
	\end{lemma}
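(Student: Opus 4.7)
The plan is to reduce the tautness statement to a purely combinatorial/homological question, and then to exploit the Delzant polygon picture afforded by the toric hypothesis. By the labeled version of Theorem \ref{thm: symplectic deformation class=homology classes}, two labeled symplectic log Calabi-Yau divisors are (strictly) symplectic deformation equivalent if and only if they are (strictly) homologically equivalent. So def-tautness (resp.\ iso-tautness) will follow once I show that the labeled homological configuration $([C_1],\ldots,[C_k])$ on $(X,\omega)$ is determined by $s(D)$ (resp.\ by the pair $(s(D), a(D))$) up to the relevant class of diffeomorphisms.

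Now I exploit the toric structure: by Lemma \ref{lemma:toric=toric blow up} (or Theorem \ref{thm:toric=tLCY}), $D$ arises as the boundary $\mu^{-1}(\partial\Delta)$ of a Delzant polygon $\Delta\subset \mathbb{R}^2$, and the labeled components $C_i$ correspond to the cyclically ordered primitive inner edge normals $v_i\in \mathbb{Z}^2$. The classical toric relation
\[ v_{i-1} + v_{i+1} = -s_i v_i, \qquad s_i = [C_i]^2, \]
combined with the Delzant basis condition that any two cyclically adjacent $v_i, v_{i+1}$ form a $\mathbb{Z}$-basis of $\mathbb{Z}^2$, yields the recurrence $v_{i+1} = -s_i v_i - v_{i-1}$. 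Normalizing $v_1 = e_1$, $v_2 = e_2$ by a $GL(2,\mathbb{Z})$-transformation, this recurrence reconstructs the entire cyclic sequence of rays from $s(D)$ alone. Running the same normalization on a second toric divisor $D'$ with $s(D') = s(D)$ matches the two labeled fans as subsets of $\mathbb{Z}^2$, which by the equivariant classification of smooth toric surfaces produces a diffeomorphism $\Phi : X \to X'$ with $\Phi_*[C_i] = [C_i']$. This gives def-tautness.

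For iso-tautness I upgrade by incorporating the areas. Under the moment-map normalization, $a_i = \omega(C_i)$ equals the lattice length of the edge dual to $v_i$, so given the fan plus the edge lengths $\{a_i\}$, the polygon $\Delta$ is determined up to translation in $\mathbb{R}^2$. Matching fans and edge lengths for $D$ and $D'$ identifies $\Delta$ with $\Delta'$ up to the $GL(2,\mathbb{Z})\ltimes \mathbb{R}^2$-action, and then Delzant's classification of symplectic toric manifolds promotes $\Phi$ to an equivariant symplectomorphism intertwining $(X,\omega)$ with $(X',\omega')$ and the labelings; via Proposition \ref{prop:torelli} this is exactly a strict $H^2$-automorphism and hence gives strict symplectic deformation equivalence.

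The main obstacle I expect is a bookkeeping one: verifying that the abstract $GL(2,\mathbb{Z})$-matching of fans really produces a labeled diffeomorphism, not merely an isomorphism at the level of $H^2$. In particular, I must pin down the choice of normalizing adjacent pair $(v_1, v_2)$ consistently, and handle the ambiguity arising from reversing the cyclic orientation of the boundary. A possible fallback, if this bookkeeping becomes cumbersome, is to instead induct on the number of toric blow-ups: apply the remark following Lemma \ref{lemma:toric=toric blow up} to reduce to a minimal toric model and then check the base cases (B3), (C4), (D4) of Theorem \ref{thm:minimal model} directly, where the homology classes are explicitly parametrized by a single integer readable off from the self-intersection sequence.
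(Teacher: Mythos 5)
Your main route has a circularity problem. The hypothesis on $D$ is only the numerical condition $q(D)=0$; the fact that such a divisor is the boundary divisor $\mu^{-1}(\partial\Delta)$ of an honest Hamiltonian $T^2$-action on $(X,\omega)$ is Theorem \ref{thm:toric=tLCY} (via Proposition \ref{prop:CY to toric}), and the paper proves that statement \emph{using} Lemma \ref{lemma:taut}: one builds an abstract Delzant polygon out of $(s(D),a(D))$, obtains a toric manifold $(X',\omega',D')$ with the same invariants, and then needs tautness to identify $(X,\omega,D)$ with $(X',\omega',D')$. Lemma \ref{lemma:toric=toric blow up} only provides an iterated toric blow-up structure, not a torus action with $D$ as boundary divisor, so it cannot substitute. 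Consequently your fan reconstruction $v_{i+1}=-s_iv_i-v_{i-1}$ determines homology classes only inside the abstract toric model $X'$; transporting that information to the given classes $[C_i]\in H_2(X;\ZZ)$ is precisely the content of the lemma being proved, and the same objection applies to your use of Delzant's classification in the iso-taut step.

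The paper's argument avoids this entirely: by Theorem \ref{thm: symplectic deformation class=homology classes} each symplectic deformation class contains a Kahler pair, and toric anticanonical pairs are taut in the holomorphic category (Lemma 2.15 of \cite{Fr}), which yields an orientation-preserving diffeomorphism with $\Phi_*[C_i]=[C_i']$, i.e.\ def-tautness. Iso-tautness then follows because the classes $\{[C_i]\}$ of a toric divisor contain a basis of $H_2(X;\ZZ)$ (checked by induction from the minimal models B3, C4, D4), so matching areas forces $\Phi^*[\omega']=[\omega]$ and $\Phi$ is a strict homological equivalence. Your fallback --- inducting on toric blow-downs to the minimal models --- is in the right spirit but is not carried out; note that the analogous determination of the homological configuration by the self-intersection sequence genuinely fails for non-toric divisors (see the appendix and the Golla--Lisca example), so the inductive bookkeeping required to track the blow-up pattern is not a formality.
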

	\begin{proof}
		Let $ D,D'\subset (X,\omega) $ be two toric symplectic log Calabi-Yau divisors with the same self-intersection sequence $ s(D)=s(D') $. Then there are two Kahler log Calabi-Yau divisors $ (\overline{\omega},\overline{D}) $ and $ (\overline{\omega}',\overline{D}') $ in $ X $ symplectic deformation equivalent to $ (\omega,D) $ and $ (\omega,D') $ respectively by Theorem \ref{thm: symplectic deformation class=homology classes}. The toric Kahler log Calabi-Yau divisors are taut (in the holomorphic sense) by Lemma 2.15 of \cite{Fr} and there is an isomorphism $ \phi:(X,\overline{D}) \to (X,\overline{D}')$. The isomorphism $ \phi  $ is in particular an orientation-preserving diffeomorphism and thus a homological equivalence. So $ (X,\omega,D) $ is also def-taut as a symplectic Calabi-Yau pair.
		
		Suppose $ (X,\omega,D) $ and $ (X',\omega',D') $ satisfy $ (s,a)=(s',a') $, then by above discussion there is a diffeomorphism $ \Phi:X\to X' $ such that $ \Phi_*[C_i]=[C'_i] $. Then we have $ \Phi^*[\omega']\cdot [C_i]=\Phi^*[\omega']\cdot (\Phi_*)^{-1}[C'_i]=[\omega']\cdot [C'_i]=\sum a_i'=\sum a_i=[\omega]\cdot [C_i] $, i.e $ (\Phi^*[\omega']-[\omega])\cdot [C_i] $ for all $ i $. Then $ \Phi^*[\omega']=[\omega] $ and $ \Phi  $ is a strictly homological equivalence, which follows from the fact that $ \{ [C_i] \} $ contains a basis of $ H_2(X;\ZZ) $.
		
		We include a proof of this fact for completeness. For each of the minimal models B3, C4 and D4, we can see that the homology classes of the components generate the second homology. We proceed by induction and assume that $ \{ [C_i] \} $ generates $ H_2(X;\ZZ) $ for every toric symplectic log Calabi-Yau pair $ (X,\omega,D) $. Denote by $ (X',\omega',D') $ a toric blow-up of $ (X,\omega,D) $. Then $ H_2(X';\ZZ)=H_2(X;\ZZ)\oplus \ZZ E $ where $ E $ is the exceptional class, such that \[
		\{ [C'_1],\dots,[C'_{k+1}] \}=\{ [C_1],\dots,[C_{k-1}]-E,E,[C_k]-E \},
		\]
		which is linearly equivalent to $ \{ [C_1],\dots,[C_{k-1}],[C_k],E \} $, and thus generates $ H_2(X';\ZZ) $.
	\end{proof}
	%
	
	\begin{lemma}\label{lemma:q(D)=0 imply b^+=1}
		Let $ D $ be a toric symplectic log Calabi-Yau divisor, we have $ b^+(D)=1 $.
	\end{lemma}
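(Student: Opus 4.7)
The plan is to reduce the signature question on $D$ to a signature question on the ambient manifold $X$ by a purely linear-algebra argument, and then use $b^{+}(X)=1$. The one nontrivial input is the fact, already extracted in the proof of Lemma \ref{lemma:taut}, that for a toric symplectic log Calabi-Yau divisor $D=\bigcup_{i=1}^{k}C_i$ the classes $\{[C_i]\}_{i=1}^{k}$ generate $H_2(X;\mathbb{Z})$; this was proved by direct inspection of the three minimal toric models B3, C4, D4 and induction over toric blow-ups using that $\{[C_1],\dots,[C_{k-1}]-E,\,E,\,[C_k]-E\}$ is $\mathbb{Z}$-equivalent to $\{[C_1],\dots,[C_k],E\}$.

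Granted this, I would set up the linear map
\[
\phi\co \mathbb{R}^{k}\longrightarrow H_2(X;\mathbb{R}),\qquad e_i\longmapsto [C_i],
\]
and observe that the intersection form $Q_D$ is by definition the pullback $\phi^{\ast}Q_X$, since $(Q_D)_{ij}=[C_i]\cdot_X [C_j]=Q_X(\phi(e_i),\phi(e_j))$. By the generation statement above, $\phi$ is surjective after tensoring with $\mathbb{R}$, so it factors as $\mathbb{R}^{k}\twoheadrightarrow \mathbb{R}^{k}/\ker\phi\;\xrightarrow{\sim}\;H_2(X;\mathbb{R})$. A standard fact in bilinear algebra is that pulling back a form along a surjection leaves the positive and negative indices of inertia unchanged and only enlarges the nullity by $\dim\ker\phi$. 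Hence $b^{+}(Q_D)=b^{+}(Q_X)$ and $b^{-}(Q_D)=b^{-}(Q_X)$.

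Finally, $X$ is a symplectic rational surface, so $b^{+}(X)=1$, which gives $b^{+}(Q_D)=1$ as required. There is no real obstacle in this argument: the content has essentially been carried out in the proof of Lemma \ref{lemma:taut}, and all that remains is the two-line linear-algebra observation about pullback forms. If one wanted a self-contained proof avoiding the reference to Lemma \ref{lemma:taut}, the alternative route would be to induct directly on the length $k$ using Lemma \ref{lemma:toric=toric blow up}: check $b^{+}=1$ explicitly on B3, C4, D4 (which is immediate since on each the sum of components equals $c_1$ with $c_1^{2}>0$), and then verify that a toric blow-up, which replaces a $2\times 2$ block $\bigl(\begin{smallmatrix}a&1\\1&b\end{smallmatrix}\bigr)$ by the $3\times 3$ block $\bigl(\begin{smallmatrix}a-1&1&0\\1&-1&1\\0&1&b-1\end{smallmatrix}\bigr)$ with the appropriate framing, adds a negative direction only. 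The first approach is the cleaner one, and it is the one I would write up.
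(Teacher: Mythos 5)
Your proof is correct, but it follows a genuinely different route from the paper's. The paper argues by elimination: since $b^+(Q_D)\in\{0,1\}$ a priori, it rules out the negative definite case using Friedman's inequality $q(D)\ge 3$ for negative definite anticanonical pairs, and rules out the negative semi-definite case by noting that all components would then be $-2$-spheres, whereas Lemma \ref{lemma:toric=toric blow up} forces every non-minimal toric divisor to contain a $-1$-sphere (and the minimal toric models are not semi-definite). Your argument instead computes the inertia of $Q_D$ directly: since $\{[C_i]\}$ spans $H_2(X;\mathbb{R})$ (the fact already isolated in the proof of Lemma \ref{lemma:taut}, which is logically prior and does not depend on the present lemma, so there is no circularity), $Q_D$ is the pullback of $Q_X$ along a surjection and hence has $b^+(Q_D)=b^+(Q_X)=1$. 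The linear-algebra step is sound: writing $\mathbb{R}^k=\ker\phi\oplus U$, the kernel lies in the radical of $\phi^*Q_X$ and $\phi|_U$ is an isometry onto $(H_2(X;\mathbb{R}),Q_X)$, so the positive and negative indices are preserved and only the nullity grows by $\dim\ker\phi$. Your approach buys more than the statement asks for — it determines the full inertia $(1,\,b^-(X),\,k-b_2(X))$ of $Q_D$, not just $b^+$ — and it avoids the external input from \cite{Fr}; the paper's approach, conversely, does not need the spanning property of the components. One small caveat on your sketched alternative: the observation that $D^2=c_1^2>0$ immediately gives $b^+\ge 1$ works for the minimal models (where $c_1^2=8$ or $9$) but not for general toric divisors in $M_l$ with $l\ge 9$, so the induction step there is genuinely needed; since you only invoke positivity of $c_1^2$ for B3, C4, D4, this is fine as written.
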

	\begin{proof}
		If $ Q_D $ is negative definite, then $ q(D)\ge 3 $ (\cite{Fr}, Corollary 1.3) and cannot be toric. If $ Q_D $ is negative semi-definite, then $ D $ is a cycle of $ -2 $ spheres. By Lemma \ref{lemma:toric=toric blow up}, $ D $ is obtained from $ (1,1,1) $ or $ (n,0,-n,0) $ by toric blow-ups. In particular, there must be at least one $ -1 $ sphere in $ D $. So a symplectic circular spherical divisor $ D $ cannot be negative semi-definite. So we must have $ b^+(D)=1 $.
	\end{proof}
	\begin{rmk}
		A symplectic circular spherical divisor $ D\subset (X,\omega) $ is an embedded normal crossing symplectic divisor which is topologically a cycle of spheres (see \cite{LiMaMi-logCYcontact} for definition). It follows from Theorem 1.3 of \cite{LiMaMi-logCYcontact} that such $ D $ must be log Calabi-Yau if $ b^+(D)=1 $. So a symplectic circular spherical divisor $ D $ is a toric symplectic log Calabi-Yau divisor if and only if $ b^+(D)=1 $ and $ q(D)=0 $.
	\end{rmk}
	Another way to characterize toric symplectic log Calabi-Yau divisors is through their boundaries. To see this, we need to recall some notions on divisor neighborhoods in \cite{LiMa14-divisorcap}.
	Let $ (D=\cup_{i=1}^k C_i,\omega) $ be a symplectic divisor. A closed regular neighborhood $ P(D) $ of $ D $ is  called a \textbf{concave/convex plumbing} if it is a strong symplectic cap/filling of its boundary. A concave plumbing is also called a \textbf{divisor cap} of its boundary. Let $ Q_D $ be the intersection matrix of $ D $ and $ a=(C_i\cdot [\omega])\in (\RR_+)^k $ be the area sequence of $ D $. A symplectic divisor $ D $ is said to satisfy positive (resp. negative) \textbf{GS criterion} if there exists $ z\in (\RR_+)^k $ (resp. $ (\RR_{\le 0})^k $) such that $ Q_D z=a $.
	\begin{thm}[\cite{LiMa14-divisorcap},\cite{GaSt09}]\label{divisorcap}
		Let $ (D,\omega ) $ be an $ \omega $-orthogonal symplectic divisor. Then $ D $ has a concave (resp. convex) plumbing if $ (D,\omega ) $ satisfies the positive (resp. negative) GS criterion. Moreover, the induced contact boundary is unique up to contactomorphism.
	\end{thm}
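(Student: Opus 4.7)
The plan is to assemble the plumbing neighborhood from local models and verify that the GS criterion is exactly the algebraic compatibility condition needed for these models to glue with the correct boundary orientation. Since the result is already due to Gay--Stipsicz for spherical configurations and Li--Mark in general, my aim is to outline the construction; I will assume the conclusion and sketch the geometric picture.

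First I would build, for each component $C_i$, a symplectic disk bundle $P_i \to C_i$ with a prescribed ``radial size'' parameter $z_i$. The 1-form used to define the boundary contact structure of such a disk bundle determines, via the standard model calculation, a Liouville vector field that points outward precisely when $z_i$ has a definite sign relative to the symplectic area $\omega(C_i) = a_i$ and the self-intersection $C_i^2$. The upshot is that, on a single component, concavity corresponds to $z_i > 0$ and convexity to $z_i \le 0$.

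Next I would glue these local pieces. At each transverse intersection point $C_i \cap C_j$, $\omega$-orthogonality produces a symplectic Darboux chart $(B^4,\omega_{\mathrm{std}})$ in which $C_i$ and $C_j$ appear as the two coordinate axes; the two disk-bundle models near this node can be identified on the complement of small neighborhoods of the node using symplectic neighborhood theorems for symplectic surfaces with orthogonal crossings. The condition that the resulting global 1-form on the boundary of $P(D)$ is a contact form with the correct co-orientation translates, component by component, into the linear system
\begin{equation*}
\sum_j (C_i \cdot C_j)\, z_j = a_i, \qquad i = 1,\dots,k,
\end{equation*}
that is, $Q_D z = a$. Positivity (resp.\ non-positivity) of the solution $z$ is then equivalent to the Liouville vector field pointing inward (resp.\ outward) on all of $\partial P(D)$, giving a concave (resp.\ convex) plumbing.

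For uniqueness of the contact boundary, I would appeal to a Moser-type stability argument: any two such plumbings of the same $(D,\omega)$ with the same solution $z$ produce fiberwise identified disk bundles and identified intersection charts, and a parametrized Moser interpolation on the boundary yields a contactomorphism. The main obstacle is bookkeeping: carefully matching the size parameters at each node so that the two local disk-bundle models have compatible fiber radii, which is precisely what the linear equation $Q_D z = a$ encodes; the detailed verification of this gluing is what makes up the bulk of \cite{LiMa14-divisorcap} and \cite{GaSt09}.
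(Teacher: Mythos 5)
This theorem is quoted in the paper from \cite{LiMa14-divisorcap} and \cite{GaSt09} without proof; the authors explicitly decline to recall the construction and refer the reader to those papers. So there is no in-paper argument to compare against, and your proposal should be judged as a free-standing sketch of the known construction. As such, its outline is faithful to the Gay--Stipsicz/Li--Mark approach: local symplectic disk-bundle models over each $C_i$ with a size parameter $z_i$, gluing at the $\omega$-orthogonal nodes via Darboux charts in which the two branches are coordinate planes, and the linear system $Q_D z = a$ emerging as the compatibility condition for a globally defined Liouville vector field transverse to $\partial P(D)$ with a uniform (inward for concave, outward for convex) direction.

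Two points fall short of a proof even at the level of a sketch. First, the central claim --- that the contact/co-orientation condition on the glued boundary ``translates, component by component,'' into $Q_D z = a$ with $z$ positive (resp.\ non-positive) --- is exactly the content of the theorem and is asserted rather than derived; the actual verification requires writing down the explicit primitive $\eta$ of $\omega$ on the plumbing minus the divisor and computing $\eta$ against the Reeb/fiber directions near each node, which is where the intersection numbers $C_i\cdot C_j$ enter. Second, your uniqueness argument only treats two plumbings built from the \emph{same} solution vector $z$, but the statement requires that the contact boundary be independent of the choice of $z$ (and of the other auxiliary data). The standard way to close this is to observe that the set of solutions $z$ with the required sign is convex, interpolate linearly between two choices to get a path of contact structures on the fixed boundary $3$-manifold, and invoke Gray stability; a Moser argument on the symplectic plumbings themselves does not directly apply, since the two plumbings need not be symplectomorphic as filled/capped domains.
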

	We will not recall the detailed construction here, but refer the readers to \cite{GaSt09}, \cite{LiMa14-divisorcap} and \cite{LiMi-ICCM}.
	\begin{lemma}\label{lemma:T^3=toric}
		A symplectic log Calabi-Yau divisor $ D\subset (X,\omega) $ has boundary diffeomorphic to $ T^3 $ if and only if it is toric.
	\end{lemma}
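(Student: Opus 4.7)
The plan is to translate both conditions into a common combinatorial condition on the self-intersection sequence, namely the triviality of a monodromy matrix associated with the plumbing boundary. First I dispense with the elliptic case: when $D$ is a single symplectic torus of self-intersection $n = D^2 = c_1^2(X)$, $\partial P(D)$ is the circle bundle over $T^2$ with Euler number $n$, which is $T^3$ only when $n = 0$; in that case $Q_D$ is zero and $b^+(D) = 0$, contradicting Lemma \ref{lemma:q(D)=0 imply b^+=1}, so such a $D$ is never toric. Thus in the elliptic case both sides can be handled directly. So assume $D = C_1 \cup \dots \cup C_k$ is a cycle of symplectic spheres with self-intersection sequence $(s_1, \ldots, s_k)$.

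By the standard plumbing calculus for cyclic graphs, $\partial P(D)$ is a $T^2$-bundle over $S^1$ with monodromy the ordered product
\[
M \;=\; A(s_1)\, A(s_2)\, \cdots\, A(s_k)\; \in\; SL(2,\mathbb{Z}), \qquad A(s) = \begin{pmatrix} -s & -1 \\ 1 & 0 \end{pmatrix},
\]
so $\partial P(D) \cong T^3$ if and only if $M = I$. A direct $2\times 2$ computation gives the matrix identity $A(s)A(t) = A(s{-}1)\,A(-1)\,A(t{-}1)$, which shows that the toric blow-up move $(s_i, s_{i+1}) \mapsto (s_i{-}1,\,-1,\,s_{i+1}{-}1)$ on the cyclic sequence preserves $M$. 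One also checks from $q(D) = 12 - k - D^2$ that $q(D)$ is invariant under this move. Thus both conditions ``$M = I$'' and ``$q(D) = 0$'' descend to well-defined functions on toric-equivalence classes of cyclic sequences.

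For the forward direction, Lemma \ref{lemma:toric=toric blow up} presents any toric $D$ as an iterated toric blow-up of one of the minimal toric cases B3, C4, D4 of Theorem \ref{thm:minimal model}. In each of these minimal cases, $D$ is literally the toric boundary of a Delzant presentation of $\CP^2$, $S^2 \times S^2$, or $\CP^2 \# \overline{\CP^2}$, and $\partial P(D)$ is diffeomorphic to the moment map preimage of a smooth closed curve in the interior of $\Delta$ parallel to $\partial\Delta$; since the global $T^2$-action trivializes this bundle, the preimage is $T^3$. Equivalently, a short matrix calculation verifies $M = I$ directly for the three sequences $(1,1,1)$, $(2b,\,0,\,-2b,\,0)$, and $(2a{+}1,\,0,\,-2a{-}1,\,0)$. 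The invariance of ``$M = I$'' under toric blow-ups then yields $\partial P(D) \cong T^3$ for every toric $D$.

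For the converse, suppose $\partial P(D) \cong T^3$, equivalently $M = I$. Perform a maximal sequence of toric blow-downs on $D$ (each preserving $M$ and $q$) to reach a cycle $D_0$ whose self-intersection sequence $(s_1', \ldots, s_{k'}')$ contains no entry $-1$. The heart of the argument, and its main obstacle, is then the classification: cyclic sequences without a $-1$ entry that represent a symplectic log Calabi-Yau divisor on a rational surface (so satisfying the anticanonical constraint $\sum s_i' = 9 - l - 2k'$ with $l = c_1^2(X)^{-1}$-type invariant) and with $M = I$ are, up to cyclic rotation and reflection, exactly the three minimal toric sequences $(1,1,1)$, $(2b,\,0,\,-2b,\,0)$, $(2a{+}1,\,0,\,-2a{-}1,\,0)$, all of which have $q = 0$. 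This classification can be carried out by combining the trace and rigidity constraints forced by $M = I$ with the length constraint $k = l + 3$ that $q = 0$ is equivalent to; the structure of $SL(2,\mathbb{Z})$ words in $A(s)$ with $s \neq -1$ is rigid enough to leave only these cases. Once this is in place, $q(D) = q(D_0) = 0$, so $D$ is toric.
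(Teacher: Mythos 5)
Your forward direction is correct and, if anything, more explicit than the paper's: the identity $A(s)A(t)=A(s{-}1)A(-1)A(t{-}1)$ does show that the monodromy $M$ is a toric-blow-up invariant, $M=I$ is readily checked for the minimal toric models B3, C4, D4 of Theorem \ref{thm:minimal model}, and Lemma \ref{lemma:toric=toric blow up} then gives ``toric $\Rightarrow$ boundary $T^3$.'' The genuine gap is in the converse, where the entire content of the lemma has been compressed into the unproved assertion that a $-1$-free cyclic sequence realizing a log Calabi--Yau divisor with $M=I$ must be one of $(1,1,1)$, $(2b,0,-2b,0)$, $(2a{+}1,0,-2a{-}1,0)$. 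Your proposed route to this classification is circular: you invoke ``the length constraint $k=l+3$ that $q=0$ is equivalent to,'' but $q=0$ is exactly what the converse must establish. Worse, a maximal sequence of toric blow-downs does not land anywhere near a three-element list: negative definite cycles (all $s_i\le -2$ with some $s_i\le -3$) and the blown-up $b^+=1$ families (e.g.\ Golla--Lisca's $(1,-2,-3,-3,-2,-3,-2)$, which has $q=5$) are already $-1$-free, survive your reduction, and must each be shown to have $M\ne I$. The paper disposes of these with two inputs for which you would need substitutes: the isomorphism $\mathrm{Coker}(Q_D)\cong \mathrm{Coker}(A(D)-I)$ from \cite{GoLi14}, which kills every negative definite case at once (a finite cokernel forces $A(D)\neq I$), and the finite list, up to toric equivalence, of $b^+=1$ circular spherical self-intersection sequences from Theorem 1.3 of \cite{LiMaMi-logCYcontact}, on which $q$ and $M$ are checked family by family. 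An appeal to ``rigidity of $SL(2,\Z)$ words in $A(s)$ with $s\neq -1$'' is not a proof of either.

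Two smaller problems. Your handling of the elliptic case backfires: when $D$ is a torus with $D^2=0$ (the fiber class of $\CP^2\#9\overline{\CP}^2$), the boundary of a tubular neighborhood \emph{is} $T^3$ while $q(D)=11\neq 0$, so what you have actually exhibited is a counterexample to the literal statement, not a disposal of the case; the lemma (and the paper's proof, which tacitly assumes $D$ is a cycle of spheres) must be read as restricted to Looijenga pairs, and you should say so explicitly. Finally, both the formula $M=\prod_i A(s_i)$ and the availability of toric blow-downs require $k\ge 3$; the length-two configurations $(0,p)$, $(1,p)$, $(-1,-p)$ occurring in the $b^+=1$ classification cannot be toric blown down and need their monodromies computed separately, so the claim that the reduction always terminates at a $-1$-free sequence is also not quite right.
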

	\begin{proof}
		By \cite{GoLi14}, we have $ Coker(Q_D)=Coker(A(D)-I) $, where $ A(D) $ is the monodromy of boundary torus bundle of $ D $. If $ Q_D $ is negative definite, then $ A(D)\neq I $ and $ Y_D $ cannot be $ T^3 $.
		If $ Q_D $ is negative semi-definite, then all components of $ D $ has self-intersection $ -2 $. One can compute the monodromy to be $\begin{pmatrix}
		k+1 & k\\ -k & -k+1
		\end{pmatrix}$, which is conjugate to $ \begin{pmatrix}
		1 & 0 \\ -k & 1
		\end{pmatrix} $, where $ k $ is the length of $ D $. So the boundary cannot be $ T^3 $.
		
		If $ b^+(Q_D)=1 $, then $ D $ has self-intersection sequence $ s(D) $ to be one listed in Theorem 1.3 of \cite{LiMaMi-logCYcontact}, up to toric blow-ups and blow-downs. Such equivalence is called a toric equivalence and preserves $ q(D) $. We can compute their $ q(D) $ by hand as follows.
		\begin{itemize}
			\item For $ s(D)=(1,p) $ or $ (-1,-p) $ with $ p=1,2,3 $, we have $ q(D)=6\pm (1+p)\ge 2 $.
			\item For $ s(D)=(1,1,p) $ with $ p\le 1 $, we have $ q(D)=3-(2+p)\ge 0 $ and $ q(D)=0 $ if and only if $ p=1 $.
			\item For $ s(D)=(0,p) $ with $ p\le 4 $, we have $ q(D)=6-p\ge 2 $.
			\item For $ s(D)=(1,p) $ with $ p\le -1 $, we have $ q(D)=6-1-p\ge 6 $.
			\item The sequence $ s(D)= (1,1-p_1,p_2,\dots,p_{l-1},1-p_l)$ with $ p_i\ge 2, l\ge 2$ is called blown-up if it is non-toric blow-up of a divisor $ D' $, where $ D' $ is toric equivalent to $ D'' $ with $ s(D'')=(1,1,1) $. Since non-toric blow-up increases $ q(D) $, so we have $ q(D)>q(D')=0 $.
		\end{itemize}
		So $ q(D)=0 $ if and only if $ D $ is toric equivalent to $ (1,1,1) $. And by computing the monodromy of $ Y_D $ listed above, we see that $ Y_D $ is $ T^3 $ if and only if $ D $ is toric equivalent to $ (1,1,1) $.
	\end{proof}

	\section{$ c_1 $-nef cone and counting symplectic log Calabi-Yau divisors}\label{section:counting}
	Now we investigate the enumerative aspect of symplectic log Calabi-Yau divisors. From now on, we make the convention that the homology class $H-\sum_{i\in I} E_i$ can be denoted by $H_I$. For example, $H-E_1-E_2-E_3$ can be written as $H_{123}$.
	\subsection{$ c_1 $-nef subcone and counting}\label{section:c1-nef}


	To ensure the existence of log Calabi-Yau divisors, the symplectic forms must satisfy $ c_1(M_l,\omega )\cdot [\omega ]>0 $. In the case of symplectic rational surfaces, $ c_1(M_l,\omega ) $ is unique up to orientation-preserving diffeomrophism (\cite{LiLiu01-uniqueness}). Since the reduced symplectic cone $ \tilde{P}_l $ is the fundamental domain inside the symplectic cone under the action of orientation-preserving diffeomorphisms, we have a unique first Chern class for symplectic classes in $ \tilde{P}_l $. We denote this class by $ c_1(M_l) $ and define the \textbf{$ c_1 $-nef} subcone to be \[
	\tilde{N}_l=\tilde{N}(M_l)=\{ A\in H^2(M_l;\RR)| PD(A)\text{ is reduced, }A\cdot c_1(M_l)>0 \},
	\]
	and the normalized $ c_1 $-nef subcone $ N_l $ to contain those normalized reduced classes in $ \tilde{N}_l $.
	We can see below that $ \tilde{N}_l $ is indeed a subcone of the reduced symplectic cone $ \tilde{P}_l $ and thus $ N_l $ a subcone of $ P_l $.
	\begin{lemma}\label{lemma:nef cone}
		For $ A\in \tilde{N}_l $, we have $ A\cdot A>0 $ and $ A $ is a symplectic class.
	\end{lemma}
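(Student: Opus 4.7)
Write the reduced vector as $(\lambda,\delta_1,\dots,\delta_l)$ with $\delta_1\geq\cdots\geq\delta_l>0$ and $\lambda\geq \delta_1+\delta_2+\delta_3$ (using the convention $\delta_i=0$ for $i>l$ when $l<3$); the $c_1$-nef condition reads $3\lambda>\sum_{i=1}^l \delta_i$. The plan is to first establish $A\cdot A>0$ by a direct algebraic manipulation of these inequalities, and then to conclude that $A$ lies in the symplectic cone by invoking the Li--Liu characterization of the symplectic cone of a rational surface from \cite{LiLiu01-uniqueness}.

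For the first step, introduce the slack $\epsilon:=\lambda-(\delta_1+\delta_2+\delta_3)\geq 0$ and expand
\[
A\cdot A \;=\; \lambda^2 - \sum_{i=1}^l \delta_i^2 \;=\; 2(\delta_1\delta_2+\delta_1\delta_3+\delta_2\delta_3) + 2\epsilon(\delta_1+\delta_2+\delta_3) + \epsilon^2 - \sum_{i\geq 4}\delta_i^2.
\]
For the tail, $\delta_i\leq\delta_3$ for $i\geq 4$ together with the $c_1$-nef inequality, rewritten as $\sum_{i\geq 4}\delta_i<2(\delta_1+\delta_2+\delta_3)+3\epsilon$, gives the strict bound $\sum_{i\geq 4}\delta_i^2\leq \delta_3\sum_{i\geq 4}\delta_i < 2\delta_3(\delta_1+\delta_2+\delta_3)+3\delta_3\epsilon$. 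Substituting and collecting terms yields
\[
A\cdot A \;>\; 2(\delta_1\delta_2-\delta_3^2) + \epsilon(2\delta_1+2\delta_2-\delta_3) + \epsilon^2,
\]
and each summand on the right is manifestly nonnegative since $\delta_1,\delta_2\geq\delta_3>0$, so $A\cdot A>0$. The cases $l\leq 3$ (where the $c_1$-nef inequality is automatic) follow more directly from expanding $(\delta_1+\delta_2+\delta_3)^2$.

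For the second step, by \cite{LiLiu01-uniqueness} a class with $A\cdot A>0$ and $c_1\cdot A>0$ is a symplectic class provided that $A\cdot E>0$ for every $K$-symplectic exceptional class $E$. The reduced condition already gives $A\cdot E_i=\delta_i>0$ and $A\cdot (H-E_i-E_j)\geq \lambda-\delta_1-\delta_2\geq \delta_3>0$. For a general exceptional class $E=aH-\sum b_iE_i$ with $a\geq 2$, after a permutation of indices one may assume $b_1\geq b_2\geq\cdots$; the structural identities $\sum b_i=3a-1$ and $\sum b_i^2=a^2+1$ combined with the reduced inequality $\lambda\geq \delta_1+\delta_2+\delta_3$ then yield $a\lambda-\sum b_i\delta_i>0$ by a case-by-case check that mirrors the bounds for $E_i$ and $H-E_i-E_j$.

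The main obstacle lies in this last verification for large $l$ (in particular $l\geq 9$), where the set of $K$-exceptional classes is infinite and a uniform argument is needed. The cleanest route is to exploit the fact that the reduced inequalities characterize precisely the closed positive chamber of the Weyl group generated by reflections in the classes $E_i-E_{i+1}$ and $H-E_1-E_2-E_3$, so that positivity of $A$ on the ``simple'' exceptional representatives $E_i$ and $H-E_i-E_j$ propagates to all exceptional classes in the orbit; combined with $A\cdot A>0$ this places $A$ in the symplectic cone.
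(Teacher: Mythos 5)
Your argument is correct but takes a genuinely different route from the paper's on both halves. For $A\cdot A>0$ the paper splits into two cases: for $l\le 9$ it introduces the auxiliary class $B=\delta_1(H-E_1-E_2-E_3)+\delta_4(H-E_4-E_5-E_6)+\delta_7(H-E_7-E_8-E_9)$, checks $A\cdot B\ge 0$ and $A\cdot(A-B)\ge 0$ to get $A\cdot A\ge 0$, and then upgrades to strict positivity via the light cone lemma using $A\cdot c_1>0$; for $l\ge 10$ it minimizes $1-\sum\delta_i^2$ by Lagrange multipliers. Your single computation with the slack $\epsilon=\lambda-(\delta_1+\delta_2+\delta_3)$ and the tail bound $\sum_{i\ge4}\delta_i^2\le\delta_3\sum_{i\ge4}\delta_i<\delta_3\bigl(2(\delta_1+\delta_2+\delta_3)+3\epsilon\bigr)$ is uniform in $l\ge 4$, entirely elementary, and makes the role of the $c_1$-nef inequality transparent; I have checked the algebra and it is right, so this buys a cleaner proof than the paper's case split. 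For the second assertion the paper simply quotes Proposition 4.9 of \cite{LiLiu01-uniqueness} (a reduced class with positive square is symplectic), whereas you re-derive it from the symplectic-cone characterization; the only step you leave as a sketch is exactly the hard one, namely $A\cdot E>0$ for the infinitely many exceptional classes when $l\ge 9$. Your Weyl-chamber remark is the right idea but is not yet a proof; the quickest way to close it with tools already in the paper is Lemma \ref{lemma:kk17} (from \cite{KaKe17-blowup}): for a reduced class, $E_l$ minimizes the pairing among all exceptional classes, so $A\cdot E\ge A\cdot E_l=\delta_l>0$. One shared caveat: both your argument and the paper's degenerate at the literal boundary case $l=1$, $\lambda=\delta_1$, where $A=\lambda(H-E_1)\in\tilde N_1$ has square zero; the strict conclusion really requires $l\ge 2$ or excluding that ray from the reduced cone.
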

	\begin{proof}
		When $ l\le 9 $, it is previously known that any reduced class has positive square. We give a brief argument here. Let \[
		A=H-\delta_1E_1-\dots -\delta_9E_9
		\]
		be a normalized reduced class in $ M_l $, where $ \delta_i= 0 $ for $ i>l $. Then we have $ A\cdot (H-E_i-E_j-E_k)\ge 0 $ for any distinct $ i,j,k $ since $ 1\ge \delta_i+\delta_j+\delta_k $. Let \[
		B=\delta_1(H-E_1-E_2-E_3)+\delta_4(H-E_4-E_5-E_6)+\delta_7(H-E_7-E_8-E_9),
		\]
		and we have $ A\cdot B\ge 0 $. Consider \begin{align*}
		A-B=&(1-\delta_1-\delta_4-\delta_7)H+(\delta_1-\delta_2)E_2+(\delta_1-\delta_3)E_3\\&+(\delta_4-\delta_5)E_5+(\delta_4-\delta_6)E_6+(\delta_7-\delta_8)E_8+(\delta_7-\delta_8)E_9.
		\end{align*}
		Then we have $ A\cdot (A-B)\ge 0 $ and thus $ A\cdot A\ge 0 $. Since $ A\cdot c_1>0 $, we must have $ A\cdot A>0 $ by light cone lemma (\cite{McDuff97-gromov}).
		
		When $ l\ge 10 $, we use Lagrangian multiplier to find minimum of $ A\cdot A $. Let $ f(\delta_1,\dots,\delta_l)=A\cdot A=1-\sum \delta_i^2 $ and $ g(\delta_1,\dots,\delta_l)=A\cdot c_1(X)=3-\sum \delta_i $. By Lagrangian multiplier, we know $ f $ attains minimum when $ \delta_1=\dots=\delta_l=\delta $. Since $ A\cdot c_1(X)>0 $, we have $ \delta<\dfrac{3}{l} $. Then $ A\cdot A\ge min(f)=1-l\delta^2>1-\dfrac{9}{l}>0 $ as $ l\ge 10 $.
		
		Since $ A $ is reduced, we have $ A $ is a symplectic class by Proposition 4.9 of \cite{LiLiu01-uniqueness}.
	\end{proof}
	
	Since $ c_1(M_l) $ is fixed by orientation-preserving diffeomorphisms, the $ c_1 $-nef cone $ N_l $ is also preserved. This provides a natural and simplified domain for the existence of log Calabi-Yau divisors. One advantage of this $ c_1 $-nef cone over the larger reduced symplectic cone is that $ c_1 $-nef cone is a  convex linear cone with finitely many faces while the reduced symplectic cone can have boundaries cut out by quadratic equations (\cite{LiLiWu19-symp}).
	
	%

	We are interested in counting the number of symplectic log Calabi-Yau divisors in a symplectic rational surface $ (M_k,\omega ) $ with $ [\omega]\in N_k $, the normalized $ c_1 $-nef subcone. Recall Corollary \ref{lemma:elliptic case} shows that there is always a unique elliptic log Calabi-Yau divisor in the $ c_1 $-nef cone. So we will only consider cycles of spheres in the following subsections and automatically plus $ 1 $ to each counting result.
	
	Proposition \ref{prop:realization} implies that counting symplectic log Calabi-Yau divisors is equivalent to counting log Calabi-Yau homological configurations up to symplectomorphisms. But for a symplectic class in the interior of the reduced symplectic cone, the only orientation-preserving diffeomorphism preserving the symplectic class is the identity. So we have the following convenient lemma.
	\begin{lemma}\label{lemma:divisor = homology type}
		For a symplectic class $ [\omega] $ in the interior of the reduced symplectic cone, we have that $ D_{K_0,[\omega]}=\{id\} $.
		In particular, two symplectic log Calabi-Yau divisors $ D=\bigcup C_i $ and $ D'=\bigcup C_i' $ are strictly symplectic deformation equivalent if and only if $ [C_i]=[C_i'] $ for all $ i$, up to a cyclic or anti-cyclic relabelling of the components. In other words, $p\mathcal{HLCY}(X,\omega)=\mathcal{HLCY}(X,\omega)$.
	\end{lemma}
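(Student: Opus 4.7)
The plan is to reduce the lemma to establishing $D_{K_0,[\omega]}=\{\mathrm{id}\}$, from which the rest of the statement follows quickly. Granting this, if $D=\bigcup C_i$ and $D'=\bigcup C'_i$ are strictly symplectic deformation equivalent, then by the strict $H^2$-automorphism description in Theorem~\ref{thm:all equivalent} (Proposition~\ref{prop:torelli}) there is an integral isometry $\gamma$ of $H^2(X;\ZZ)$ with $\gamma(PD[C'_i])=PD[C_i]$ and $\gamma[\omega]=[\omega]$; since $-K_0=PD\big(\sum_i[C_i]\big)$, such a $\gamma$ automatically preserves $K_0$ and therefore lies in $D_{K_0,[\omega]}$. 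Once that group is shown to be trivial we get $[C_i]=[C'_i]$ for all $i$, up only to the cyclic or anti-cyclic relabelling inherent to the labelling of a cycle of spheres. The converse is immediate with $\gamma=\mathrm{id}$, and the identification $p\mathcal{HLCY}(X,\omega)=\mathcal{HLCY}(X,\omega)$ then follows because the $\Symp(X,\omega)$-action on $p\mathcal{HLCY}(X,\omega)$ factors through its image in $D_{K_0,[\omega]}$, hence is trivial.

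To prove $D_{K_0,[\omega]}=\{\mathrm{id}\}$ I would invoke Proposition~4.1 of~\cite{LiWu12-lagrangian}, already used in the proof of Theorem~\ref{thm:stability}: $D_{K_0,[\omega]}$ is generated by reflections along classes in
\[
\mathcal{L}_{K_0,[\omega]}=\{L\in H_2(X;\ZZ)\,:\,L^2=-2,\ L\cdot K_0=0,\ [\omega]\cdot L=0\}.
\]
It thus suffices to show $\mathcal{L}_{K_0,[\omega]}=\emptyset$. On $X=M_l$ with standard basis $\{H,E_1,\dots,E_l\}$, take the simple roots $\alpha_i=E_i-E_{i+1}$ for $1\le i\le l-1$, together with $\alpha_0=H-E_1-E_2-E_3$ when $l\ge 3$. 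The reduced conditions $\delta_1\ge\cdots\ge\delta_l>0$ and $\lambda\ge\delta_1+\delta_2+\delta_3$ are exactly $[\omega]\cdot\alpha_j\ge 0$ for each $j$, and in the interior all these inequalities become strict. By the standard positive/negative decomposition of roots, any integral class $L$ with $L^2=-2$ and $L\cdot K_0=0$ is of the form $L=\pm\sum_j c_j\alpha_j$ with $c_j\in\ZZ_{\ge 0}$ not all zero; pairing with $[\omega]$ gives $[\omega]\cdot L=\pm\sum_j c_j([\omega]\cdot\alpha_j)\ne 0$, so $L\notin\mathcal{L}_{K_0,[\omega]}$. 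For $l\le 2$ the only roots are $\pm(E_1-E_2)$ (when $l=2$), handled by the strict inequality $\delta_1>\delta_2$; and there are none for $l\le 1$. The remaining case $X=S^2\times S^2$ is checked directly: the only $(-2)$-classes orthogonal to $K_0=-2F-2B$ are $\pm(F-B)$, and $[\omega]\cdot(F-B)=\lambda-\mu\ne 0$ in the interior of the reduced cone.

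The step requiring the most care is the positive/negative root decomposition above, which for $l\ge 9$ takes place in an affine ($\widetilde{E}_8$) or hyperbolic root system inside the Lorentzian lattice $H_2(M_l;\ZZ)$. While this decomposition is classical in the Kac--Moody setting, a more hands-on alternative (avoiding any heavy machinery) is to verify $[\omega]\cdot L\ne 0$ directly: the conditions $L^2=-2$ and $L\cdot K_0=0$ pin down $L=aH+\sum b_i E_i$ with $\sum b_i=-3a$ and $\sum b_i^2=a^2+2$, and bounding $|a|$ via a light-cone estimate as in Lemma~\ref{lemma:nef cone} reduces the claim to finitely many elementary inequalities against the strict reduced conditions, which can be checked by the same sort of rearrangement used to prove $A\cdot A>0$ in that lemma.
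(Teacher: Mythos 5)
Your reduction of the lemma to $D_{K_0,[\omega]}=\{\mathrm{id}\}$, and the way you then extract the three assertions, are fine and consistent with how the paper uses Proposition 4.1 of \cite{LiWu12-lagrangian} in the proof of Theorem \ref{thm:stability}. The gap is in the key step. You define $\mathcal{L}_{K_0,[\omega]}$ as the set of \emph{all} classes with $L^2=-2$, $L\cdot K_0=0$, $[\omega]\cdot L=0$, and then assert that every $L$ with $L^2=-2$ and $L\cdot K_0=0$ is $\pm$ a non-negative integral combination of the simple roots $\alpha_0,\dots,\alpha_{l-1}$. That assertion fails once $l\ge 11$: the class $L=3H-E_1-\cdots-E_{10}+E_{11}$ in $M_{11}$ has $L^2=9-11=-2$ and $L\cdot K_0=0$, but in the basis $\{\alpha_j\}$ of $K_0^\perp$ its coefficient of $\alpha_0$ is $3$ while its coefficient of $\alpha_{10}=E_{10}-E_{11}$ equals $b_1+\cdots+b_{10}+9=-1$, so the signs are mixed. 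The Kac--Moody fact you invoke says every \emph{root} is positive or negative; in the hyperbolic range a norm $-2$ vector of $K_0^\perp$ need not be a root at all, and this $L$ is such a vector. Moreover the ``hands-on alternative'' in your last paragraph cannot repair this, because the statement it is meant to verify is false: taking $\delta_1>\cdots>\delta_{10}$ all close to $0.31$ with $\sum_{i\le 10}\delta_i=3.1$ and $\delta_{11}=0.1$ gives a point in the interior of the reduced symplectic cone of $M_{11}$ (all reduced inequalities strict, and $\sum\delta_i^2\approx 0.97<1$, so it is a symplectic class by the Karshon--Kessler criterion quoted in the paper) at which $[\omega]\cdot L=3-\sum_{i\le 10}\delta_i+\delta_{11}=0$. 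So with your definition of $\mathcal{L}_{K_0,[\omega]}$ the set is nonempty at an interior point and your argument stops.

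The honest repair is to use the set that actually appears in \cite{LiWu12-lagrangian}: $\mathcal{L}_{K_0,[\omega]}$ there consists of classes \emph{represented by Lagrangian spheres}, which is in general strictly smaller than the set of all $\omega$-null $(-2)$-classes orthogonal to $K_0$. A Lagrangian sphere class is $K_0$-null spherical, and Li--Wu show such classes form the single Weyl orbit of $E_1-E_2$, i.e.\ they are genuine real roots; for real roots the positive/negative decomposition you want is valid, and your pairing computation with the strict reduced inequalities then finishes the proof. (One must make this restriction somewhere: otherwise one would still owe an argument that the reflection in a non-root class such as the $L$ above is not realized by a symplectomorphism. Note also that in the $c_1$-nef cone, where the lemma is actually applied, this $L$ causes no trouble since $\sum_i\delta_i<3$ forces $[\omega]\cdot L>2\delta_{11}>0$; but the lemma as stated covers the whole interior of the reduced symplectic cone.) The rest of your write-up --- the cases $l\le 2$, the $S^2\times S^2$ case, and the deduction of the ``in particular'' statements --- is correct.
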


	The linear boundaries of the reduced cone are given by equations of the form $ \delta_i=\delta_j  $ or $ \delta_i+\delta_j+\delta_k=1 $. Then the class $ E_i-E_j $ or $ H-E_i-E_j-E_k $ is represented by a Lagrangian sphere (\cite{LiWu12-lagrangian}) and the Dehn twist along this Lagrangian sphere is a symplectomorphism generating a strict homological equivalence between some symplectic log Calabi-Yau divisors, which causes repetitions of log Calabi-Yau homological configurations.
	
	So in order to count symplectic log Calabi-Yau divisors, it suffices to count Calabi-Yau homological configurations for each symplectic structure and subtract repetitions along the boundary of the normalized reduced symplectic cone. This is the strategy we will employ in the rest of this section.
	
	\subsection{Count of minimal models}
	We start by counting symplectic log Calabi-Yau divisors in $\CC\PP^2, \CC\PP^2\#\overline{\CC\PP}^2$ and $S^2\times S^2$.
	They are all minimal and their count can be read off from Theorem \ref{thm:minimal model} or Lemma \ref{lemma:minimal surjectivity} as follows.
	
	On $ \CC\PP^2 $, the only normalized symplectic class is $ [\omega_{FS}] $ such that $ \omega_{FS}(H)=1 $.
	On $\CC\PP^2\#\overline{\CC\PP}^2$, we consider the normalized symplectic form $ \omega_\delta $ with $\omega_{\delta}(H)=1, \omega_{\delta}(E)=\delta<1 $. Similarly on $S^2\times S^2$, we consider $ \omega_\mu $ with $  \omega_{\mu}(B)=1, \omega_{\mu}(F)=\mu\geq 1$. Then we have the following counts.
	
	\begin{lemma}
		\begin{align*}
		| \mc{LCY}(\CC\PP^2,\omega_{FS}) |&=3,\\
		| \mc{LCY}(S^2\times S^2, \omega_\mu)|&=\begin{cases}
		1+(1+\lceil \mu +1 \rceil)+\lceil \mu \rceil+\lceil \mu \rceil=3\lceil \mu \rceil +3 , \text{when}\,\,\, \mu>1\\
		1+\lceil \mu +1 \rceil+\lceil \mu \rceil+\lceil \mu \rceil=3\lceil \mu \rceil +2 , \text{when}\,\,\, \mu=1
		\end{cases}\\ \
		| \mc{LCY}(\CC\PP^2\#\overline{\CC\PP}^2, \omega_{\delta})|&=1+(1+\lceil \dfrac{1}{1-\delta} \rceil)+\lceil \dfrac{1}{1-\delta} \rceil+\lceil \dfrac{\delta}{1-\delta} \rceil=3\lceil \dfrac{\delta}{1-\delta} \rceil+4.
		\end{align*}
	\end{lemma}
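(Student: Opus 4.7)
The plan is to apply Proposition \ref{prop:realization} to identify $\mathcal{LCY}(X,\omega)$ with $\mathcal{HLCY}(X,\omega)$, then read the raw list of cyclic homological configurations from Lemma \ref{lemma:minimal surjectivity}, and finally count each parametric family by imposing $\omega$-positivity and the intrinsic cyclic/anti-cyclic equivalence of the configurations, adjusting for the action of $D_{K_0,[\omega]}$. For symplectic forms in the interior of the reduced cone, Lemma \ref{lemma:divisor = homology type} guarantees $D_{K_0,[\omega]}=\{\id\}$, so positivity together with the internal symmetry are all that matter.

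For $\CC\PP^2$ the three items in Lemma \ref{lemma:minimal surjectivity} give the count $3$ immediately. For $\CC\PP^2\#\overline{\CC\PP}^2$ with $\omega_\delta$ and $\delta\in(0,1)$, the length-$2$ family $((a+1)H-aE,(2-a)H+(a-1)E)$ carries the anti-cyclic involution $a\mapsto 1-a$, and the positivity $\omega_\delta((2-a)H+(a-1)E)>0$ reads $a<1+1/(1-\delta)$, so I count integers $a\geq 1$ in $[1,1+1/(1-\delta))$, uniformly obtaining $\lceil 1/(1-\delta)\rceil$. Parallel computations for the length-$3$ family (involution $a\mapsto 2-a$, lower bound from $\omega_\delta(aH+(1-a)E)>0$) and the length-$4$ family (involution $a\mapsto 1-a$) yield $\lceil 1/(1-\delta)\rceil$ and $\lceil\delta/(1-\delta)\rceil$; adding the elliptic $(3H-E)$ and the sporadic length-$2$ divisor $(2H,H-E)$, and simplifying via $\lceil 1/(1-\delta)\rceil=1+\lceil\delta/(1-\delta)\rceil$, gives the stated $3\lceil\delta/(1-\delta)\rceil+4$. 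The same template applied to $S^2\times S^2$ with $\omega_\mu$ and $\mu>1$, using the families $(F+bB,F+(2-b)B)$, $(F+bB,B,F+(1-b)B)$, and $(F+(b-1)B,B,F+(1-b)B,B)$ with respective involutions $b\mapsto 2-b$, $b\mapsto 1-b$, $b\mapsto 2-b$ and positivity $\mu+b>0$, produces $\lceil\mu+1\rceil,\lceil\mu\rceil,\lceil\mu\rceil$; together with the elliptic $(2F+2B)$ and the sporadic $(2F+B,B)$, the total is $3\lceil\mu\rceil+3$.

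The subtle point is the boundary case $\mu=1$: there $F-B$ has zero area and admits a Lagrangian sphere representative by the results of \cite{LiWu12-lagrangian}, whose Dehn twist gives a nontrivial element of $D_{K_0,[\omega]}$ swapping $F$ and $B$. I would trace this swap through the previously enumerated list. The crucial identification is in length $2$: the swap sends $(2F+B,B)$ to $(F+2B,F)$, and one anti-cyclic permutation of the latter is the $b=0$ representative of $(F+bB,F+(2-b)B)$ already on the list, so $(2F+B,B)$ ceases to contribute, producing precisely the drop from $3\lceil\mu\rceil+3$ to $3\lceil\mu\rceil+2$. For lengths $3$ and $4$ I would check that the image under the $F\leftrightarrow B$ swap of each representative coincides, up to a cyclic or anti-cyclic permutation, with a configuration already counted (essentially because $B$ appears in symmetric positions within those cycles), so those counts are unchanged. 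The main obstacle of the whole argument is exactly this boundary bookkeeping: one must rule out any further hidden identification coming from the enlarged diffeomorphism group at $\mu=1$, since a missed or spurious identification would spoil the formula.
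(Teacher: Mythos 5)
Your proposal is correct and follows essentially the same route as the paper: read off the $p\mathcal{HLCY}$ lists from Lemma \ref{lemma:minimal surjectivity}, count each parametric family by imposing area-positivity on its components, and quotient by the homological action of $\Symp$, which is trivial except at $\mu=1$ where the $F\leftrightarrow B$ swap identifies $(2F+B,B)$ with $(F+2B,F)$. (One trivial indexing slip: since the length-two family is parametrized by $b\in\ZZ_+$, the member of the list matching $(F+2B,F)$ is the $b=2$ representative rather than $b=0$; this does not affect the argument.)
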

	
	\begin{proof}
		Their $p\mathcal{HLCY}$ spaces has been listed in Lemma \ref{lemma:minimal surjectivity}. Note that those homological configurations can be realized if and only if all the homology classes have positive symplectic areas.
		
		For $\CC\PP^2$, there is nothing to check since the symplectomorphism must act trivially on the homology group.
		
		For $ S^2\times S^2 $, if $\mu>1$, then there is no Lagrangian $(-2)$-sphere so that any symplectomorphism must act trivially on the homology group (\cite{LiWu12-lagrangian}). This implies the space $p\mathcal{HLCY}$ is the same as the space $\mathcal{HLCY}$. So the count for the type $1,2,3,4$ in Lemma \ref{lemma:minimal surjectivity} should be $1,1+\lceil \mu +1 \rceil,\lceil \mu \rceil,\lceil \mu \rceil$ respectively. And the total count is $3\lceil \mu \rceil +3$. When $\mu=1$, the homological action by symplectomorphism group is generated by switching $F$ and $B$. Thus $(2F+B,B),(F+2B,F)$ will be identified after passing from $p\mathcal{HLCY}$ to its quotient $\mathcal{HLCY}$. In this case, the count should be $3\lceil \mu \rceil +2$.

		For $ \CC\PP^2\#\overline{\CC\PP}^2 $, the symplectomorphism must act trivially on the homology group. Thus we only need to count the space $p\mathcal{HLCY}$. The count for the type $1,2,3,4$ in Lemma \ref{lemma:minimal surjectivity} should be $1,1+\lceil \dfrac{1}{1-\delta} \rceil,\lceil \dfrac{1}{1-\delta} \rceil,\lceil \dfrac{\delta}{1-\delta} \rceil$ respectively. So the total count is given by $3\lceil \dfrac{\delta}{1-\delta} \rceil+4$.
		
	\end{proof}

	\subsection{Counting symplectic log Calabi-Yau divisors in $ M_2 $ and $M_3$}
	The remaining rational symplectic surfaces are $ M_l=\CC\PP^2\# l\overline{\CC\PP}^2,l\geq2 $ with normalized reduced symplectic form $ \omega $ given by the reduced basis $ (\delta_1,\dots,\delta_l) $, where $ [\omega] $ lies in the normalized $ c_1 $-nef cone $ N_l $.
	In the subsection, we give a detailed count of symplectic log Calabi-Yau divisors in $ M_2 $ and toric symplectic log Calabi-Yau divisors in $M_3$ for all symplectic forms in the $ c_1 $-nef cone. Our counting in $ M_2 $ and also $ M_3 $ depends crucially on the following homological classification of negative symplectic spheres of log Calabi-Yau divisors by Corollary \ref{cor:ingredients}. It also follows from the Proposition 3.4 in \cite{LiLi20-pacific}.

	\begin{prop} \label{prop:spherical classes}
		All possible negative symplectic spherical classes in $ M_2 $ and $ M_3 $ can be enumerated as follows.
		
		For $M_2$, we have that	for integer $ k\ge 1 $, \begin{itemize}
			\item $ (-1) $ classes are $E_1, E_2, H-E_1-E_2$,
			\item $ (-2k-1) $ class is  $-kH+(k+1)E_1$,
			\item $ (-2k) $ class is $-(k-1)H+kE_1-E_2$.
		\end{itemize}
		For $M_3$, we have that for integer $ k\ge 1 $, \begin{itemize}
			\item $ (-1) $ classes are $E_1, E_2, E_3, H-E_1-E_2, H-E_1-E_3, H-E_2-E_3;$
			\item $ (-2) $ classes are $E_1-E_2, E_1-E_3, E_2-E_3, H-E_1-E_2-E_3;$
			\item $ (-2k-1) $ classes are $-kH+(k+1)E_1, -(k-1)H+kE_1-E_2-E_3;$
			\item $ (-2k-2) $ classes are $-kH+(k+1)E_1-E_2, -kH+(k+1)E_1-E_3.$
		\end{itemize}
		Moreover, these classes can be realized by symplectic spheres if and only if their pairings with the symplectic class are positive.
	\end{prop}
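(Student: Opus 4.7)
The plan is to split the proposition into two parts: (i) the classification of integer classes $A\in H_2(M_l;\ZZ)$, $l\in\{2,3\}$, compatible with the genus-zero sphere-adjunction identity, and (ii) the realizability criterion. These are handled by elementary Diophantine analysis and pseudoholomorphic methods respectively.

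For (i), write $A=aH+\sum_{i=1}^{l}b_iE_i$ in the standard basis. The adjunction identity $A\cdot A-c_1\cdot A=-2$ translates into the two constraints $a^2-\sum b_i^2=A^2$ and $3a+\sum b_i=A^2+2$, which combine into the convenient identity
\[(a-1)(a-2)=\sum_{i=1}^{l} b_i(b_i+1).\]
Each summand on the right is a non-negative integer, equal to $0$ exactly when $b_i\in\{0,-1\}$ and at least $2$ otherwise, so for each fixed $a\in\ZZ$ only finitely many tuples $(b_i)$ are admissible. A direct case analysis on $a$ then recovers all integer solutions. One then discards those classes which have $\omega\cdot A\leq 0$ for every reduced symplectic $[\omega]$ (for instance $-2E_1$ in $M_2$, or $-E_1-E_2$ in $M_3$), since those can never be represented by a symplectic sphere; the surviving solutions match the lists in the statement.

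For (ii), necessity is immediate from the positivity of symplectic area. For sufficiency, the $(-1)$ classes are settled by Taubes' theorem, which supplies an embedded $J$-holomorphic sphere for any tame $J$ whenever the pairing is positive. For classes with $A^2\leq -2$, I would argue by induction on $-A^2$: for each listed class $A$ pick an exceptional class $E$ (from the $(-1)$ sublist) with $E\cdot A=1$ and sufficiently small $\omega$-area, invoke Theorem 1.2.7 of \cite{McOp13-nongeneric} to find a tame $J$ making both $A$ and $E$ holomorphic, and blow down $E$ to produce a class of strictly larger self-intersection in a smaller rational surface, where the inductive hypothesis gives a symplectic representative. The stability results underlying Lemma~\ref{lemma:induction} then lift this representative back to $M_l$. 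Alternatively, the entire realization claim can be quoted directly from Proposition~3.4 of \cite{LiLi20-pacific}.

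The main obstacle is producing, for each class of self-intersection $\leq -2$ in the two lists, a concrete exceptional partner $E$ and checking that the blow-down argument goes through. The bookkeeping is bounded because the candidate lists form one- or two-parameter families (e.g.\ $E=H-E_1-E_2$ works for $-kH+(k+1)E_1$ in $M_2$, and $E=E_2$ or $E_3$ works for the $(-2)$ classes in $M_3$), but verifying that every listed class admits such a partner and that no class is missed from the enumeration is where most of the care is needed.
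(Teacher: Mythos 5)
Your proposal is essentially correct, but it takes a genuinely different route from the paper on the enumeration half. The paper gives no self-contained proof of this proposition at all: it derives the candidate list from Corollary~\ref{cor:ingredients} (i.e.\ from the inductive blow-down structure of homological log Calabi--Yau configurations built on Lemma~\ref{lemma:reduced induction} and Lemma~\ref{lemma:minimal surjectivity}) and otherwise quotes Proposition~3.4 of \cite{LiLi20-pacific}. Your Diophantine reduction $(a-1)(a-2)=\sum_i b_i(b_i+1)$ is an elementary, self-contained alternative that directly classifies \emph{all} genus-zero adjunction solutions, which actually matches the literal statement (``all possible negative symplectic spherical classes'') better than the paper's own reference to classes occurring in log CY divisors. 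What the paper's route buys is that the realizability claim comes for free from the same induction; what your route buys is independence from the configuration machinery and an explicit completeness check. Your second half (realizability iff positive pairing) is in substance the paper's own mechanism: induction down to the minimal models plus the lifting statement of Remark~\ref{rmk:positive area} and the stability Lemma~\ref{lemma:stability}, or simply the citation of \cite{LiLi20-pacific}.

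Two points need tightening. First, the induction for sufficiency cannot be ``on $-A^2$'' terminating at $(-1)$-classes: the class $-kH+(k+1)E_1$ in $M_2$ has square $-2k-1$ with $k$ arbitrary, yet only two blow-downs are available, so the induction must be on the number of blow-downs with base case the known sphere classes in $\CC\PP^2$, $\CC\PP^2\#\overline{\CC\PP^2}$ and $S^2\times S^2$ (Lemma~\ref{lemma:minimal surjectivity}). Moreover, as phrased (``find a tame $J$ making both $A$ and $E$ holomorphic, and blow down'') the step is circular for \emph{existence}, since you do not yet know $A$ is represented; the correct direction is to realize $A+E$ downstairs by induction and lift $A=(A+E)-E$ back using Remark~\ref{rmk:positive area}, which you do also invoke. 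Second, your illustrative discard $-E_1-E_2$ in $M_3$ is not an adjunction solution in the first place ($(a-1)(a-2)=2\neq 0$), so it never enters the candidate list; $-2E_1$ is the right kind of example. Neither issue is fatal, but both should be corrected, and the discarding step should explicitly use the reduced-cone inequalities $\delta_1\ge\delta_2\ge\cdots$ and $\delta_1+\delta_2+\delta_3\le 1$ (e.g.\ to rule out $-kH+(k+1)E_2$ and $E_2-E_1$).
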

	
	The normalized $ c_1 $-nef cone $ N_2 $ coincides with the reduced symplectic cone $ P_2 $ and is the convex hull in $ \RR^2 $ of the points $ O=(0,0),M=(\dfrac{1}{2},\dfrac{1}{2}) $ and $ A=(1,0) $ with line segments $ OA $ and $MA$ removed. The cone $ N_2 $ is cut by hyperplanes corresponding to negative sphere classes Proposition \ref{prop:spherical classes} into infinitely many regions, which we describe as follows.
	
	Consider points $P_i=(\dfrac{i}{i+1},\dfrac{1}{i+1})$ on edge $MA$ and $Q_i=(\dfrac{i}{i+1},0)$ on edge $OA$, where $i\geq 1$ is an integer. And we also let $ Q_0=O $. The region $\triangle P_{i}P_{i+1}Q_{i}$ contains the interior of this triangle and the interior of the edge $Q_{i}P_{i+1}$. The region $\triangle Q_{i-1}Q_{i}P_{i}$ contains the interior of this triangle and the interior of the edge $Q_{i}P_{i}$.
	Then we have \[
	N_k=(\bigsqcup_{i=1}^\infty \triangle P_{i}P_{i+1}Q_{i})\sqcup(\bigsqcup_{i=1}^\infty  \triangle Q_{i-1}Q_{i}P_{i})\sqcup OM.
	\]
	
	\begin{prop}\label{prop:general count M2}
		The count of log Calabi-Yau divisors  $ \mc{LCY}(2;1, \delta_1, \delta_2) $ in $ (M_2;1,\delta_1,\delta_2 ) $ is given by \begin{enumerate}[label=$ (\arabic*) $]
			\item $ 14i+19 $ in region $\triangle P_iP_{i+1}Q_i ,i\ge 1$,
			\item $ 14i+26 $ in region $ \triangle Q_iQ_{i+1}P_{i+1}, i\ge 0 $,
			\item $ 13 $ in the interior of the line segment $ OM $.
		\end{enumerate}
		In other words, in the interior of reduced symplectic cone, we have \[
		|\mc{LCY}(2;1,\delta_1,\delta_2)|=7(\lceil\dfrac{\delta_1}{1-\delta_1}\rceil +\lceil \dfrac{\delta_1-\delta_2}{1-\delta_1}\rceil)+12,
		\]
		On the boundary of reduced symplectic cone, that is, when $ 0<\delta_1=\delta_2<\dfrac{1}{2} $, we have $ |\mc{LCY}(2;1,\delta_1,\delta_2)|=13 $ . The results are shown in Figure \ref{fig:M2}.
	\end{prop}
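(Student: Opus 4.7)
The plan is to reduce the counting to a combinatorial enumeration. By Proposition \ref{prop:realization}, $\mc{LCY}(M_2,\omega)=\mc{HLCY}(M_2,\omega)$, and by Lemma \ref{lemma:divisor = homology type}, in the interior of the reduced cone $\mc{HLCY}(M_2,\omega)=p\mc{HLCY}(M_2,\omega)$, so we need only enumerate cyclic homological configurations $(A_1,\dots,A_k)$ up to cyclic and anti-cyclic permutation. By Corollary \ref{cor:ingredients}, each $A_i\in\mc{H}_2$, $\sum A_i=c_1=3H-E_1-E_2$, the cyclic intersection pattern is enforced ($A_i\cdot A_{i\pm 1}=1$ and $A_i\cdot A_j=0$ otherwise for $k\geq 3$, while for $k=2$ we need $A_1\cdot A_2=2$), and every $A_i$ has positive $\omega$-area. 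The length is bounded by $1\leq k\leq 5$: either $D$ is the unique elliptic torus (Corollary \ref{lemma:elliptic case}), or $D$ is a cycle of spheres with $\sum A_i^2=c_1^2-2k=7-2k$, and the nonnegativity of the charge $q(D)=12-k-D^2=5-k$ (in the spirit of the case analysis in the proof of Lemma \ref{lemma:T^3=toric}) forces $k\leq 5$.

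The enumeration proceeds length by length. For $k=1$ the class $3H-E_1-E_2$ gives $1$. For $k=2$, writing $A_1=aH+bE_1+cE_2$ and using $A_1\cdot A_2=2$ with $A_2=c_1-A_1$ reduces via completion of squares to $(2a-3)^2+(2b+1)^2+(2c+1)^2=3$, forcing each summand equal to $1$ and yielding the four unordered pairs $\{H,2H-E_1-E_2\}$, $\{H-E_1,2H-E_2\}$, $\{H-E_2,2H-E_1\}$ and $\{H-E_1-E_2,2H\}$, all $\omega$-positive throughout the interior of $N_2$. For $k=3,4,5$, the systematic enumeration is controlled by the classification of sphere classes in Proposition \ref{prop:spherical classes}: the finitely many always-positive sphere classes in $\mc{H}_2$ together with the two infinite families $\{-pH+(p+1)E_1\}_{p\geq 0}$ and $\{-(p-1)H+pE_1-E_2\}_{p\geq 0}$. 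A direct calculation shows these have positive $\omega$-area exactly when $\lceil\tfrac{\delta_1}{1-\delta_1}\rceil\geq p+1$ and $\lceil\tfrac{\delta_1-\delta_2}{1-\delta_1}\rceil\geq p$ respectively, so the two ceiling functions count the number of ``available'' negative sphere classes from each family.

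For each length $3,4,5$ one fixes which negative sphere class(es) appear as components and uses the cyclic intersection conditions to determine the remaining components. Lemma \ref{lemma:smoothing} provides an organizing principle: every length-$k$ configuration arises by smoothing some length-$5$ toric configuration, and there are only a small number of toric types in $M_2$ (enumerable via the two toric blowup structures of $\CC\PP^2$ and $\CC\PP^2\#\overline{\CC\PP^2}$). The crucial combinatorial statement to verify is that as either ceiling $\lceil\tfrac{\delta_1}{1-\delta_1}\rceil$ or $\lceil\tfrac{\delta_1-\delta_2}{1-\delta_1}\rceil$ increases by $1$, exactly $7$ new configurations (spread across lengths $3,4,5$) become $\omega$-positive; combined with the $12$ always-present configurations ($1$ elliptic, $4$ length-$2$ pairs, and $7$ configurations of lengths $3$--$5$ built solely from always-positive classes), this yields the interior formula $7(\lceil\tfrac{\delta_1}{1-\delta_1}\rceil+\lceil\tfrac{\delta_1-\delta_2}{1-\delta_1}\rceil)+12$. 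Substituting the appropriate values of the two ceilings in each region gives $14i+19$ in $\triangle P_iP_{i+1}Q_i$ (where the ceilings sum to $2i+1$) and $14i+26$ in $\triangle Q_iQ_{i+1}P_{i+1}$ (where they sum to $2i+2$).

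The boundary case on the interior of $OM$ (where $\delta_1=\delta_2<1/2$) requires separate treatment, because Lemma \ref{lemma:divisor = homology type} fails there: the class $E_1-E_2$ has zero $\omega$-area and is represented by a Lagrangian sphere, so the Dehn twist along it contributes a nontrivial element of $D_{K_0,[\omega]}$ acting as the reflection $E_1\leftrightarrow E_2$ (cf.~\cite{LiWu12-lagrangian}). One first drops all configurations containing $E_1-E_2$ as a component (no longer $\omega$-positive) and then quotients the remaining pre-homological configurations by the $E_1\leftrightarrow E_2$ reflection; a direct accounting of invariant configurations versus swapped pairs produces the count $13$. The main obstacle throughout is precisely the ``$+7$ per threshold'' verification for $k=3,4,5$: one must enumerate each configuration containing a given negative sphere class and ensure no overcounting under cyclic/anti-cyclic permutation. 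This is elementary but combinatorially intricate, and is best handled via the toric-smoothing organization described above.
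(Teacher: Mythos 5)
Your overall strategy is the same as the paper's: reduce to homological configurations via Proposition \ref{prop:realization} and Lemma \ref{lemma:divisor = homology type}, enumerate using Corollary \ref{cor:ingredients} and Proposition \ref{prop:spherical classes}, and observe that the two ceiling functions count the available negative sphere classes from the two infinite families. However, there is a genuine error in your length-$2$ analysis. Writing $A_1=aH+bE_1+cE_2$ and $A_2=c_1-A_1$, the condition $A_1\cdot A_2=2$ reads $3a-a^2+b+b^2+c+c^2=2$, which upon completing squares becomes $(2b+1)^2+(2c+1)^2-(2a-3)^2=1$ --- the intersection form is indefinite, so this is \emph{not} $(2a-3)^2+(2b+1)^2+(2c+1)^2=3$ and has infinitely many integral solutions (e.g.\ $A_1=(k+2)H-(k+1)E_1$, $A_2=-(k-1)H+kE_1-E_2$ for every $k$). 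Consequently your claim that there are exactly four length-$2$ configurations, all always $\omega$-positive, is false: there are six always-present length-$2$ configurations (you also miss $(3H-2E_1-E_2,E_1)$ and $(3H-E_1-2E_2,E_2)$), plus one additional length-$2$ configuration for each threshold crossed, coming from the two infinite families $((k+2)H-(k+1)E_1,\,-(k-1)H+kE_1-E_2)$ and $((k+3)H-(k+2)E_1-E_2,\,-kH+(k+1)E_1)$.

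This error propagates into your ``$+7$ per threshold'' verification. In the paper's enumeration, each time a negative class $-kH+(k+1)E_1$ or $-(k-1)H+kE_1-E_2$ becomes available, the seven new configurations distribute as one of length $2$, two of length $3$, three of length $4$ and one of length $5$ --- not ``spread across lengths $3,4,5$'' as you assert. Carrying out your enumeration as described would yield only six new configurations per threshold and miss the growing family of length-$2$ divisors entirely, so the formula $7(\lceil\tfrac{\delta_1}{1-\delta_1}\rceil+\lceil\tfrac{\delta_1-\delta_2}{1-\delta_1}\rceil)+12$ would not come out. Your decomposition of the constant $12$ is likewise inconsistent with the actual count (in the minimal interior region there are $19$ configurations not tied to a $k\ge 1$ threshold, not $12$; the $12$ in the formula is a bookkeeping constant, not a count of ``always-present'' divisors). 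The fix is to redo the length-$2$ case honestly --- either by solving the indefinite quadratic subject to adjunction and positivity, or, as the paper does, by organizing all configurations by the level $n$ of their most negative component and listing the homology types at each level via Proposition \ref{prop:spherical classes}.
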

	\begin{proof}
		For $ n\ge 1 $, we can enumerate all possible self-intersection sequences for log Calabi-Yau divisors in $ M_2 $ as follows:
		\begin{enumerate}[label=$ (\arabic*) $]
			\item $ (n+3,-n), $
			\item $ (n+2,-1,-n),(n+1,0,-n) $,
			\item $ (0,n,-1,-n),(0,n-1,0,-n),(n+1,-1,-1,-n) $,
			\item $ (0,n-1,-1,-1,-n)$,
			\item $ (3,0),(2,1),(1,0,0),(1,-1,1), $
			\item $(7)$.
		\end{enumerate}
		
		For each normalized reduced vector $ (1,\delta_1,\delta_2) $, we denote by $ F_n(2;1,\delta_1,\delta_2),n\ge 1 $, the total count of log Calabi-Yau divisors with self-intersection sequence to be one of $ (1)-(4) $ and denote by $ F_0(2;1,\delta_1,\delta_2) $ the count corresponding to $ (5) $. We will also denote by $ \# s $ the count of log Calabi-Yau divisors with self-intersection sequence $ s $. By Proposition \ref{prop:spherical classes}, we consider cases $ n=2k $ and $ n=2k+1 $ with $ k\ge 1 $ for each $ s $ listed above.
		
		For $ n=2k $, we have that all possible homology types are \begin{enumerate}[label=$ (\arabic*) $]
			\item $ ((k+2)H-(k+1)E_1,-(k-1)H+kE_1-E_2 )$
			\item $ ((k+2)H-(k+1)E_1-E_2,E_2,-(k-1)H+kE_1-E_2),\\((k+1)H-kE_1,H-E_1,-(k-1)H+kE_1-E_2) $
			\item $ (H-E_1,(k+1)H-kE_1-E_2,E_2,-(k-1)H+kE_1-E_2),\\(H-E_1,kH-(k-1)E_1,H-E_1,-(k-1)H+kE_1-E_2),\\((k+1)H-kE_1,H-E_1-E_2,E_2,-(k-1)H+kE_1-E_2) $
			\item $ (H-E_1,kH-(k-1)E_1,H-E_1-E_2,E_2,-(k-1)H+kE_1-E_2) $
		\end{enumerate}
		These homology types can be realized by log Calabi-Yau divisors if and only if the symplectic area of each component is positive.
		So we have \[
		F_{2k}(2;1,\delta_1,\delta_2)=\begin{cases}
		7\quad \text{if } -(k-1) + k\delta_1-\delta_2>0\\
		0 \quad \text{otherwise}
		\end{cases}\]
		
		
		For $ n=2k+1 $, all possible homology types are \begin{enumerate}[label=$ (\arabic*) $]
			\item $ ((k+3)H-(k+2)E_1-E_2,-kH+(k+1)E_1 )$
			\item $ ((k+2)H-(k+1)E_1,H-E_1-E_2,-kH+(k+1)E_1),\\((k+2)H-(k+1)E_1-E_2,H-E_1,-kH+(k+1)E_1) $
			\item $ (H-E_1,(k+1)H-kE_1,H-E_1-E_2,-kH+(k+1)E_1),\\(H-E_1,(k+1)H-kE_1-E_2,H-E_1,-kH+(k+1)E_1),\\((k+2)H-(k+1)E_1-E_2,E_2,H-E_1-E_2,-kH+(k+1)E_1) $
			\item $ (H-E_1,(k+1)H-kE_1-E_2,E_2,H-E_1-E_2,-kH+(k+1)E_1) $
		\end{enumerate}
		So we have \[
		F_{2k+1}(2;1,\delta_1,\delta_2)=\begin{cases}
		7 \quad \text{if }-k + (k+1)\delta_1>0\\
		0\quad \text{otherwise}
		\end{cases}\]
		
		For $ n=1 $, we have the following homology types:\begin{enumerate}[label=$ (\arabic*) $]
			\item $ (4,-1)=(2H , H- E_1- E_2),$ $ (3H-2E_p-E_q,E_p) $,
			\item $ (3,-1,-1)=(2H-E_i,E_i,H-E_1-E_2),\\ (2,0,-1)=(2H-E_1-E_2,H-E_i,E_i) $,
			\item $ (0,1,-1,-1)=(H-E_i,H,H-E_1-E_2,E_i), \\  (0,0,0,-1)=(H-E_p,H-E_q,H-E_p,E_p), \\  (2,-1,-1,-1)=(2H-E_1-E_2,E_1,H-E_1-E_2,E_2) $,
			\item $ (0,0,-1,-1,-1)=(H-E_2,H-E_1,E_1,H-E_1-E_2,E_2) $,
		\end{enumerate}
		where $ i\in \{1,2\} $ and $ \{p,q\}=\{1,2\} $. So
		\[
		F_1(2;1,\delta_1,\delta_2)=\begin{cases}
		13, \quad \text{if }\delta_1>\delta_2;\\
		8,\quad \text{if } \delta_1=\delta_2.
		\end{cases}
		\]
		For those $ s $ in $ (5) $, we could also list their homology types as follows:
		\begin{itemize}
			\item $ (3,0)=(2H-E_p,H-E_q) $
			\item $ (2,1)=(2H-E_1-E_2,H) $
			\item $ (1,0,0)=(H,H-E_1,H-E_2) $
			\item $ (1,-1,1)=(H,H-E_1-E_2,H) $
		\end{itemize}
		where $ \{p,q\}=\{1,2\} $. So \[
		F_0(2;1,\delta_1,\delta_2)=\begin{cases}
		5, \quad \text{if }\delta_1>\delta_2;\\
		4,\quad \text{if } \delta_1=\delta_2.
		\end{cases}
		\]
		
		Also by Corollary \ref{lemma:elliptic case}, there is always a unique divisor in $ (7) $. Then we have \[
		|\mc{LCY}(2;1,\delta_1,\delta_2)|=1+ F_0(2;1,\delta_1,\delta_2)+F_1(2;1,\delta_1,\delta_2)+\sum\limits_{n=2}^\infty F_n(2;1,\delta_1,\delta_2)
		\]
		
		The result follows by recognizing \begin{align*}
		\triangle P_iP_{i+1}Q_i=& \{ -i+(i+1)\delta_1>0, -i+(i+1)\delta_1-\delta_2\le 0 \},\\
		\triangle Q_iQ_{i+1}P_{i+1}=& \{ -i+(i+1)\delta_1-\delta_2>0, -(i+1)+(i+2)\delta_1\le 0 \},\\
		OM = & \{ \delta_1=\delta_2 \}.
		\end{align*}
		Also we get the formula because when $ \delta_1>\delta_2 $ we have  \[
		\lceil\dfrac{\delta_1}{1-\delta_1}\rceil +\lceil \dfrac{\delta_1-\delta_2}{1-\delta_1}\rceil= \begin{cases}
		2i+1 \quad \text{if }-i+(i+1)\delta_1>0, -i+(i+1)\delta_1-\delta_2\le 0,\\
		2i+2 \quad \text{if } -i+(i+1)\delta_1-\delta_2>0, -(i+1)+(i+2)\delta_1\le 0.
		\end{cases}
		\]
	\end{proof}
	
	\begin{figure}[h]
		\centering
		\begin{minipage}{.5\textwidth}
			\centering
			\includegraphics[width=\linewidth]{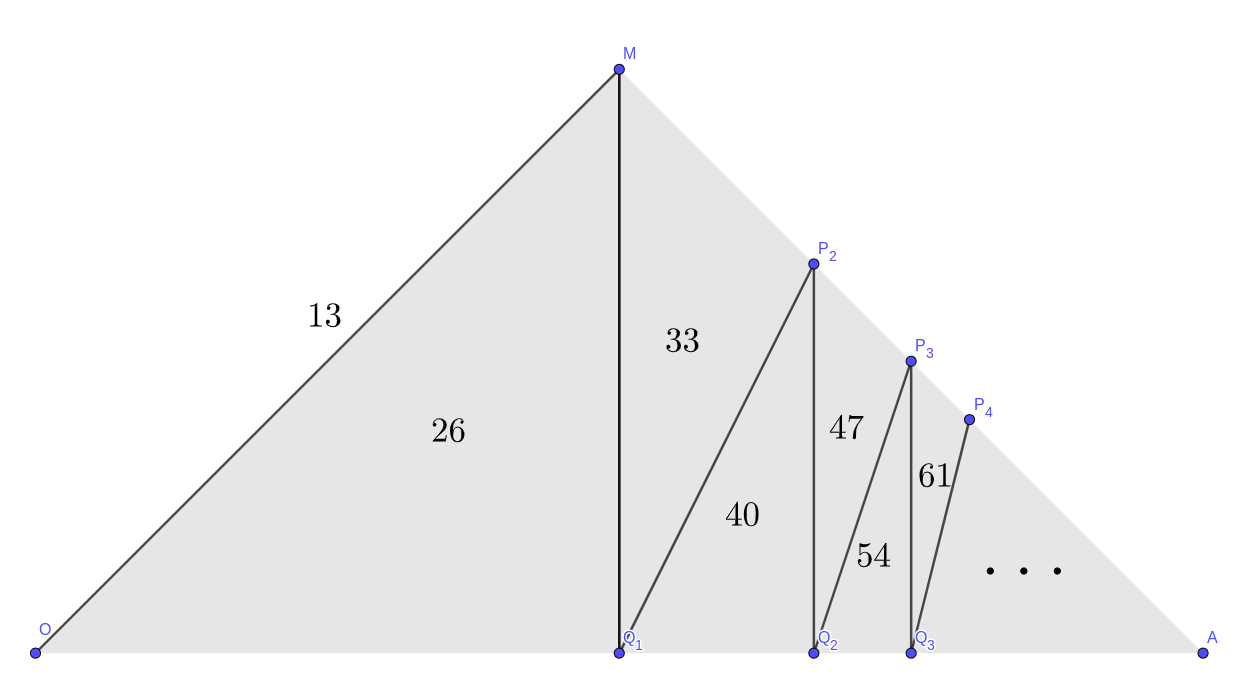}
			\caption{Counting of $\mathcal{LCY}$ for $M_2$.\label{fig:M2}}
		\end{minipage}%
		\begin{minipage}{.5\textwidth}
			\centering
			\includegraphics[width=\linewidth]{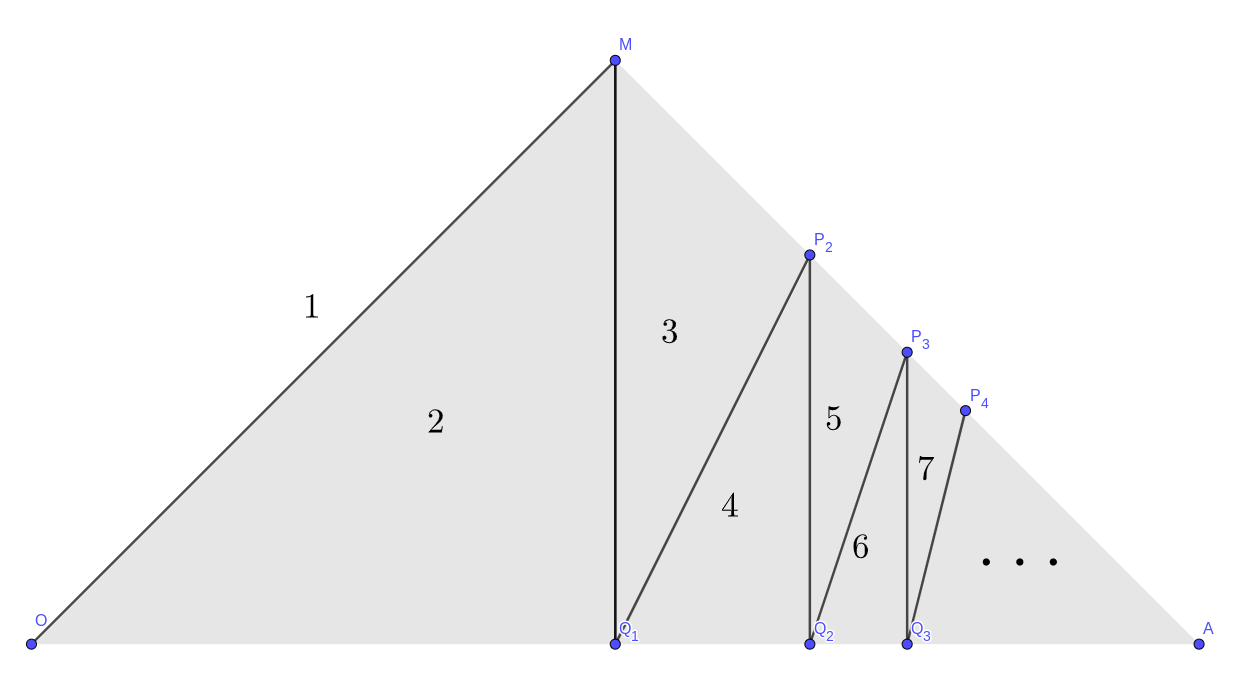}
			\caption{Counting of $t\mathcal{LCY}$ for $M_2$.\label{fig:M2toric}}
		\end{minipage}
	\end{figure}
	
	By only counting the toric log Calabi-Yau divisors in the proof of the above proposition, we get the following count of toric divisors.
	\begin{corollary}\label{cor:toric count M2}
		The count of toric log Calabi-Yau divisors on $ (M_2;1,\delta_1,\delta_2) $ is given by \begin{enumerate}[label=$ (\arabic*) $]
			\item $2i+1$ in the region $\triangle Q_{i}P_{i}P_{i+1}$;
			\item  $2i+2$  in the region $\triangle Q_{i}Q_{i+1}P_{i+1}$;
			\item $1$ in the interior of edge $OM=Q_{0}P_{1}$.
		\end{enumerate}
		In other words, we have \[
		|t\mc{LCY}(M_2;1,\delta_1,\delta_2)|=\lceil\dfrac{\delta_1}{1-\delta_1}\rceil +\lceil \dfrac{\delta_1-\delta_2}{1-\delta_1}\rceil
		\]The results are shown in Figure \ref{fig:M2toric}.
	\end{corollary}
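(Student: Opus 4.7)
The strategy is to harvest the toric contribution from the enumeration already carried out in the proof of Proposition~\ref{prop:general count M2}. The key reduction is that in $M_2$ we have $D^2 = c_1(M_2)^2 = 7$, so the toric condition $q(D) = 12 - k(D) - D^2 = 0$ simplifies to $k(D) = 5$. Scanning the list of self-intersection sequences in the proof of Proposition~\ref{prop:general count M2}, the only length-five sequences are the single family $(0, n-1, -1, -1, -n)$ for $n \geq 1$, each of which admits a unique homological realization (recorded there in item~(4) for each of $n = 1$, $n = 2k$, and $n = 2k+1$).

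From the same proof I read off the $T$-positivity conditions. For $n = 2k+1$ with $k \geq 0$ (which includes $n = 1$ as $k = 0$) the condition is $(k+1)\delta_1 > k$, i.e.\ $k < \delta_1/(1-\delta_1)$; for $n = 2k$ with $k \geq 1$ it is $k\delta_1 - \delta_2 > k-1$, which rearranges via $(1-\delta_2)/(1-\delta_1) = 1 + (\delta_1-\delta_2)/(1-\delta_1)$ to $k - 1 < (\delta_1-\delta_2)/(1-\delta_1)$. Counting admissible integer indices then gives $\lceil \delta_1/(1-\delta_1) \rceil$ odd-index toric divisors and $\lceil (\delta_1-\delta_2)/(1-\delta_1) \rceil$ even-index ones, summing to the closed-form expression in the statement (with the convention $\lceil 0 \rceil = 0$ taking care of the open edge $OM = \{\delta_1 = \delta_2\}$ where the second ceiling vanishes and only the $n = 1$ divisor survives).

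To convert this into the region-by-region tally, I substitute the defining inequalities of each region as already listed at the end of the proof of Proposition~\ref{prop:general count M2}. In $\triangle Q_i P_i P_{i+1}$ the bound $\delta_2 \geq (i+1)\delta_1 - i$ yields $(\delta_1-\delta_2)/(1-\delta_1) \leq i$, while the matching strict lower bound $(\delta_1 - \delta_2)/(1-\delta_1) > i - 1$ follows from the geometric observation that the line $\delta_2 = i\delta_1 - (i-1)$ passes through $P_i$ and has $Q_i, P_{i+1}$ strictly below it, so the whole triangle lies strictly below it. Combined with $\delta_1/(1-\delta_1) \in (i, i+1)$, this produces the sum $(i+1) + i = 2i+1$. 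The analogous bookkeeping in $\triangle Q_i Q_{i+1} P_{i+1}$, using the line $\delta_2 = (i+2)\delta_1 - (i+1)$ through $Q_{i+1}$ as the strict upper cutoff for $(\delta_1 - \delta_2)/(1-\delta_1)$, gives $(i+1) + (i+1) = 2i+2$. The only nonroutine ingredient is the geometric verification of these auxiliary cutoff lines passing through the triangle vertices; everything else is arithmetic and a direct specialization of the main proposition.
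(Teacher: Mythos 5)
Your proposal is correct and follows essentially the same route as the paper: both extract the unique length-five (hence toric) self-intersection sequence $(0,n-1,-1,-1,-n)$ with its unique homology type from the enumeration in Proposition~\ref{prop:general count M2}, translate the area-positivity conditions into the two ceiling expressions, and then specialize to the chamber decomposition. Your treatment of the $n=1$ case as the $k=0$ member of the odd family is in fact slightly cleaner than the paper's, whose stated condition $tF_1=1$ only ``if $\delta_1>\delta_2$'' is an apparent slip that would contradict item (3) of the corollary on the edge $OM$, where the single divisor $(H-E_2,H-E_1,E_1,H-E_1-E_2,E_2)$ does persist.
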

	\begin{proof}
		The toric symplectic log Calabi-Yau divisors has self-intersection sequence $ (0,n-1,-1,-1,-n) $. For $ n=2k $, the only homology type is $ (H-E_1,kH-(k-1)E_1,H-E_1-E_2,E_2,-(k-1)H+kE_1-E_2) $. So the number of toric symplectic log Calabi-Yau divisors is \[
		tF_{2k}(2;1,\delta_1,\delta_2)=1 \quad \text{if } -(k-1) + k\delta_1-\delta_2>0,\delta_1+\delta_2<1.
		\]
		For $ n=2k+1 $, the only homology type is $ (H-E_1,(k+1)H-kE_1-E_2,E_2,H-E_1-E_2,-kH+(k+1)E_1) $. Then \[
		tF_{2k+1}(2;1,\delta_1,\delta_2)=1 \quad \text{if }-k + (k+1)\delta_1>0,\delta_1+\delta_2<1.
		\]
		For $ n=1 $, the only homology type is $ (H-E_2,H-E_1,E_1,H-E_1-E_2,E_2) $. Then \[
		tF_1(2;1,\delta_1,\delta_2)=1 \quad \text{if }\delta_1>\delta_2,\delta_1+\delta_2<1.
		\]
		Above counts are $ 0 $ everywhere else. We get the results by summing over the counts.
	\end{proof}

	Next we give a detailed count of toric symplectic log Calabi-Yau divisors in $ M_3 $ for all symplectic forms in the normalized $ c_1 $-nef cone $ N_3 $.
	
	The normalized $ c_1 $-nef cone $ N_3 $ is the same as the reduced symplectic cone $ P_3 $ and is the convex hull in $ \RR^3 $ of the points $ O=(0,0,0),M=(\dfrac{1}{3},\dfrac{1}{3},\dfrac{1}{3}) , A=(1,0,0) $ and $ B=(\dfrac{1}{2},\dfrac{1}{2},0) $ with the triangle $ OAB $ removed. By Proposition \ref{prop:spherical classes}, there is a pattern of four walls in $ N_3 $ given by $-(k-1)H+kE_1, -(k-1)H+kE_1-E_2-E_3, -kH+(k+1)E_1-E_2, -kH+(k+1)E_1-E_3$. They never intersect with each other except on the edges. The cone $ N_3 $ is again cut by these walls into infinitely many regions, which we describe as follows.
	
	Consider points $P_i=(\dfrac{i+1}{i+3},\dfrac{1}{i+3},\dfrac{1}{i+3})$ on edge $MA$, $Q_i=(\dfrac{i}{i+1},0,0)$ on edge $OA$ and $R_{i}=(\dfrac{i+1}{i+2},\dfrac{1}{i+2},0)$ on edge $AB$, where $i\geq 0$ is an integer. The regions  $$Q_{i-1}R_{i-1}P_{2i-1}P_{2i-2}, \quad Q_{i-1}Q_{i}R_{i-1}P_{2i-1}, \quad Q_{i}R_{i-1}P_{2i-1}P_{2i}, \quad Q_{i}R_{i-1}R_{i}P_{2i}$$ contain the interior subcones and the interior of their faces $$Q_{i-1}R_{i-1}P_{2i-2},\quad Q_{i}R_{i-1}P_{2i-1}, \quad Q_{i}R_{i-1}P_{2i}, \quad Q_{i}R_{i}P_{2i}$$ respectively; the regions $$Q_{i-1}P_{2i-2}P_{2i-1}, \quad Q_{i-1}Q_{i}P_{2i-1}, \quad Q_{i}P_{2i-1}P_{2i}, R_{i-1}P_{2i-2}P_{2i-1}, \quad R_{i-1}P_{2i-1}P_{2i}, \quad R_{i-1}R_{i}P_{2i}$$ contain the interior triangles and the interior of their edges $$Q_{i-1}P_{2i-1}, \quad Q_{i}P_{2i-1}, \quad Q_{i}P_{2i}, \quad R_{i-1}P_{2i-1}, \quad R_{i-1}P_{2i}, \quad R_{i}P_{2i}$$ respectively; the region $P_{i-1}P_{i}$ contains the interior of this edge and the point $P_{i}$.
	
	These regions form the entire normalized reduced symplectic cone $ P_3=N_3 $, so we have listed all the possible symplectic classes on $M_3$. We claim the following counting results, which follows from the same method for $M_2$ cases. The proof is quite long, thus we put it in the appendix \ref{section:detail1}.
	
	\begin{prop}\label{prop:M3}
		Let $i\geq1$ be an integer. The number of toric symplectic log CY divisors on the rational surface $M_3$ is
		
		(1) $10i-2$ in the region $Q_{i-1}R_{i-1}P_{2i-1}P_{2i-2}$
		(2) $10i$  in the region $Q_{i-1}Q_{i}R_{i-1}P_{2i-1}$
		
		(3) $10i+3$ in the region $Q_{i}R_{i-1}P_{2i-1}P_{2i}$
		(4) $10i+5$ in the region $Q_{i}R_{i-1}R_{i}P_{2i}$
		
		(5) $4i-1$ in the region $Q_{i-1}P_{2i-2}P_{2i-1}$
		(6) $4i$ in the region $Q_{i-1}Q_{i}P_{2i-1}$
		
		(7) $4i+1$ in the region $Q_{i}P_{2i-1}P_{2i}$
		(8) $4i$ in the region $R_{i-1}P_{2i-2}P_{2i-1}$
		
		(9) $4i+2$ in the region $R_{i-1}P_{2i-1}P_{2i}$
		(10) $4i+3$ in the region $R_{i-1}R_{i}P_{2i}$
		
		(11) $i+1$ in the region $P_{i-1}P_{i}$
		(12) $3$ in the region $OBM=Q_{0}R_{0}P_{0}$
		
		(13) $2$ in the region $BM=R_{0}P_{0}$
		(14) $1$ in the region $OM=Q_{0}P_{0}$ and the monotone point $M=P_{0}=(\dfrac{1}{3},\dfrac{1}{3},\dfrac{1}{3})$.
		The results are shown in Figure \ref{fig:M3}.

	\end{prop}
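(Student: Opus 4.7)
The plan is to adapt the strategy used for $M_2$ in Corollary \ref{cor:toric count M2} to the richer combinatorics of $M_3$. By Lemma \ref{lemma:divisor = homology type}, once we are in the interior of the reduced symplectic cone, the symplectomorphism group acts trivially on homology, so counting $t\mathcal{LCY}(M_3,\omega)$ reduces to counting cyclic homological configurations $(A_1,\dots,A_k)$ with $A_i \in t\mathcal{H}_3$, $\sum A_i = c_1(M_3) = 3H - E_1 - E_2 - E_3$, satisfying the toric constraint $q(D)=0$, and $\omega$-positive on each $A_i$. On the bounding walls $\delta_i=\delta_j$ and $\delta_1+\delta_2+\delta_3=1$, we will subsequently quotient by the Dehn twists along the Lagrangian classes $E_i-E_j$ and $H-E_1-E_2-E_3$ to remove the resulting identifications.

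First I would enumerate the self-intersection sequences. By Lemma \ref{lemma:toric=toric blow up}, every toric divisor is an iterated toric blow-up of B3, C4 or D4, so the sequences are constrained by $\sum s_i = 12 - k$ with prescribed bounds on $k$. In parallel I would list all classes in $t\mathcal{H}_3$ together with their self-intersections, using Proposition \ref{prop:spherical classes} for the negative sphere classes and adding the non-negative toric classes $H, H-E_i, 2H-E_i-E_j,$ etc. Then for each admissible sequence of self-intersections, I would enumerate the cyclic arrangements of elements of $t\mathcal{H}_3$ summing to $c_1(M_3)$. This is essentially the $M_3$ analogue of the list given in the proof of Proposition \ref{prop:general count M2}, but now organized by the length of the divisor and by the choice of the "large" class $-kH+(k+1)E_1$ (or one of its companions $-(k-1)H+kE_1-E_2-E_3$, $-kH+(k+1)E_1-E_2$, $-kH+(k+1)E_1-E_3$) which forces the remaining classes.

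Next, for each homology type I would determine its realization region. The wall pattern described before the statement, with the four infinite families of walls $-(k-1)+k\delta_1$, $-(k-1)+k\delta_1-\delta_2-\delta_3$, $-k+(k+1)\delta_1-\delta_2$, $-k+(k+1)\delta_1-\delta_3$, exactly partitions $N_3$ into the regions $Q_{i-1}R_{i-1}P_{2i-1}P_{2i-2}$, $Q_{i-1}Q_iR_{i-1}P_{2i-1}$, etc., and each homology type is realizable precisely on one side of each relevant wall. For each of the fourteen regions in the statement, I would tabulate which homology types are realized, sum the contributions, and then for the boundary regions subtract duplications coming from the diagonal reflections induced by Dehn twists along Lagrangian $(-2)$-spheres, exactly as in the $M_2$ case where $\delta_1=\delta_2$ halves certain counts.

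The main obstacle will be the bookkeeping: the list of homology types for toric sequences in $M_3$ is substantially longer than in $M_2$ because for each self-intersection sequence the four "large" classes above each yield several cyclic arrangements of the remaining small classes $E_2, E_3, H-E_1-E_i, 2H-E_1-E_2-E_3$, etc. I expect the number of homology types per sequence to be governed by a small combinatorial pattern (essentially by how $E_2$ and $E_3$ are distributed around the cycle), which is what produces the linear-in-$i$ counts with slopes $10$, $4$, and $1$ in the various regions. Once the full enumeration is organized into tables indexed by the "depth" parameter $i$, the region-by-region counts in the statement follow by counting walls crossed, and the boundary cases reduce as in the $M_2$ proof to accounting for the $E_1-E_2$, $E_2-E_3$ and $H-E_1-E_2-E_3$ Lagrangian reflections on the faces $AB$, the edge $OM$, and the triangle $OBM$ respectively. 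The detailed enumeration is deferred to Appendix \ref{section:detail1}.
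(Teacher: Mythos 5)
Your plan is essentially the paper's own proof: the appendix argument enumerates the four families of self-intersection sequences, lists the homology types for each using the classification of negative sphere classes in Proposition \ref{prop:spherical classes}, records for each type the positivity conditions cutting out its realization region, and on the walls $\delta_i=\delta_j$ and $\delta_1+\delta_2+\delta_3=1$ accounts both for classes losing positive area and for identifications under the Lagrangian reflections, before summing region by region. Carrying out the tabulation you describe reproduces the counts in the statement.
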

	
	\begin{figure}
		\centering
		\begin{minipage}{.5\textwidth}
			\centering
			\includegraphics[width=\linewidth]{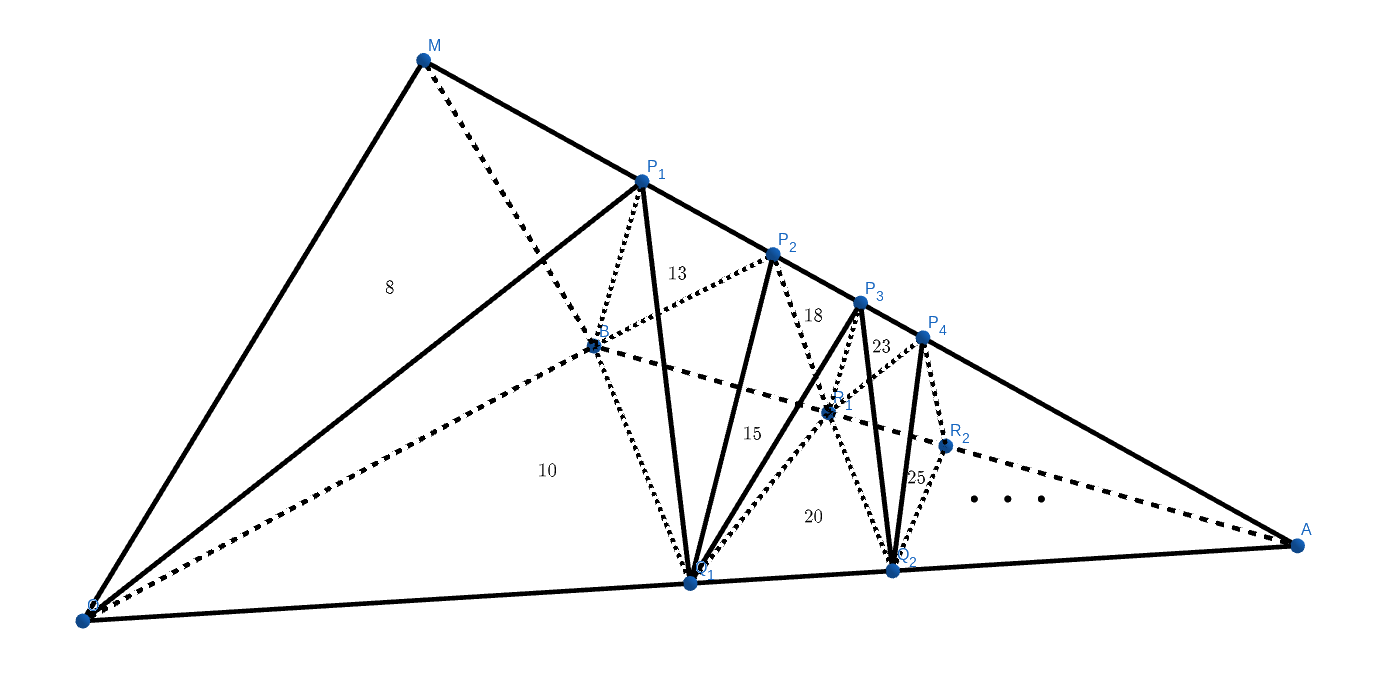}
		\end{minipage}%
		\begin{minipage}{.5\textwidth}
			\centering
			\includegraphics[width=\linewidth]{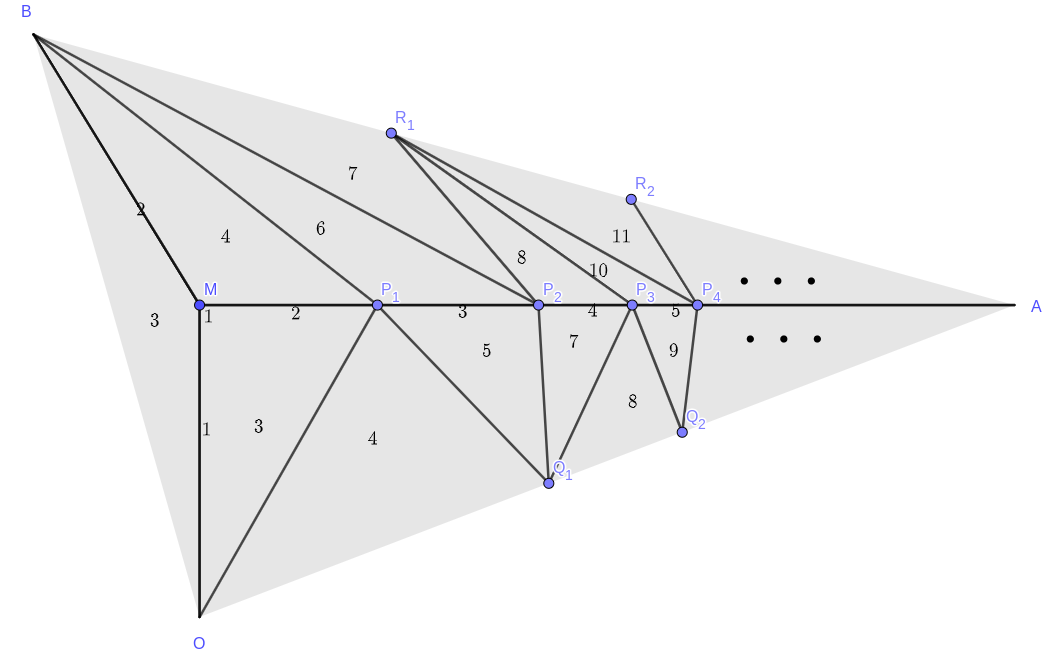}
		\end{minipage}
		\caption{Counting of $t\mathcal{LCY}$ for $M_3$ in the interior and boundary of the $c_1$-nef cone. \label{fig:M3}}
	\end{figure}


	

	\subsection{Toric regions}\label{section:toric cone}
	
	Instead of giving a counting formula of toric symplectic log Calabi-Yau divisors, we could also investigate the symplectic classes for which they could exist. This is called the {\bf toric region} $T_l$ of the manifold $ M_l $. By Theorem \ref{thm:toric=tLCY} that will be proved in the next section, this will also be the part of the normalized $c_1$-nef cone where there exists a toric action. From the counting results for $M_3,M_2$ and minimal models, we already know the following:
	
	\begin{corollary}
		When $l\leq 3$, the toric region $T_l$ will be the whole normalized reduced symplectic cone $P_l$.
	\end{corollary}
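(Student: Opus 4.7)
The plan is to simply collect and read off the enumerative results proved in the preceding subsections. Since $T_l \subset P_l = N_l$ is by definition the locus where $t\mathcal{LCY}(M_l,\omega)$ is nonempty, and bijection between toric actions and $t\mathcal{LCY}$ (Theorem \ref{thm:toric=tLCY}) will give the stated interpretation in terms of toric actions, it suffices to show that the previously computed count is strictly positive everywhere in $P_l$ for $l \leq 3$.

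First I would handle the minimal cases. For $\CP^2$ the unique toric symplectic log Calabi-Yau divisor in case (B3) of Theorem \ref{thm:minimal model} (three lines in general position) covers the entire cone, which is a single point. For $\CP^2 \# \overline{\CP^2}$, case (D4) of Theorem \ref{thm:minimal model} provides, for each admissible parameter $a$, a toric divisor; likewise for $S^2 \times S^2$ from case (C4). In each normalized reduced symplectic class one checks directly from the $\omega$-positivity conditions on these families that at least one member realizes.

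Next, for $M_2$, I would invoke Corollary \ref{cor:toric count M2}: the count
\[ |t\mathcal{LCY}(M_2;1,\delta_1,\delta_2)| = \left\lceil\frac{\delta_1}{1-\delta_1}\right\rceil + \left\lceil\frac{\delta_1-\delta_2}{1-\delta_1}\right\rceil \]
is at least $1$ whenever $\delta_1 > 0$, which holds throughout the (normalized) reduced cone $P_2$. For $M_3$, I would inspect each of the fourteen regions listed in Proposition \ref{prop:M3}: the counts $10i-2$, $10i$, $10i+3$, $10i+5$ for $i \geq 1$, the counts $4i-1, 4i, 4i+1, 4i, 4i+2, 4i+3$ for $i \geq 1$, together with $i+1$, $3$, $2$, $1$, are all strictly positive. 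Thus every point of $N_3 = P_3$ admits at least one toric symplectic log Calabi-Yau divisor.

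There is no real obstacle here: the statement is a direct corollary of the detailed case-by-case enumerations already proved. The only minor subtlety is to check that the enumeration indeed exhausts all of $P_l$ (i.e.\ the regions listed in the preceding paragraphs cover the whole normalized reduced cone, including faces and edges), which is visible from the decomposition of $P_2$ into the triangles $\triangle P_i P_{i+1} Q_i$, $\triangle Q_{i-1} Q_i P_i$ plus the segment $OM$, and from the analogous subdivision of $P_3$ by the four families of walls described before Proposition \ref{prop:M3}.
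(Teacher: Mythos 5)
Your proposal is correct and follows exactly the route the paper takes: the corollary is stated as an immediate consequence of the positivity of the counts in the minimal-model lemma, Corollary \ref{cor:toric count M2}, and Proposition \ref{prop:M3}, together with the fact that the listed regions exhaust $P_l$. Your write-up simply makes explicit the case-by-case positivity check that the paper leaves implicit.
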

	
	Now we discuss the cases for rational surfaces $M_4,M_5,M_6$. We start with the following simple observation:
	
	\begin{fact}\label{fact}
		For $4\leq l\leq 6$, the toric regions $T_l$ contains the interior of normalized reduced symplectic cones $P_l$.
		\begin{proof}
			Consider the following homological configurations for $M_4,M_5,M_6$ respectively:
			$$(H_{134},E_4,E_1-E_4,H_{12},E_2,H_{23},E_3)$$
			$$(H_{134},E_4,E_1-E_4,H_{12},E_2-E_5,E_5,H_{235},E_3)$$
			$$(H_{134},E_4,E_1-E_4,H_{125},E_5,E_2-E_5,H_{236},E_6,E_3-E_6)$$
			Note that they can be realized by Corollary \ref{cor:ingredients} since it is easy to see that each homology class in the above sequences has positive pairing with any $[\omega]$ in the interior of $P_l$. Also the realization of them are toric log Calabi-Yau since $q(D)=12-k(D)-D^2=12-(l+3)-(9-l)=0$.
			
		\end{proof}	
		
	\end{fact}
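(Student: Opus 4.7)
My plan is to prove the fact by exhibiting, for each $l \in \{4,5,6\}$, a single cyclic homological configuration $T_l$ that (i) lies in $\mathcal{HLCY}(M_l,\omega)$ for every symplectic class $[\omega]$ in the interior of the normalized reduced symplectic cone $P_l$, and (ii) has charge $q=0$, hence is toric. By Proposition 2.18 (identification of $\mathcal{LCY}$ with $\mathcal{HLCY}$) and Theorem 1.4 (identification of $t\mathcal{LCY}$ with $\mathcal{T}$), this will place the entire interior of $P_l$ inside the toric region $T_l$.

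The candidate configurations are
\[
(H_{134},\,E_4,\,E_1{-}E_4,\,H_{12},\,E_2,\,H_{23},\,E_3)
\]
for $l=4$,
\[
(H_{134},\,E_4,\,E_1{-}E_4,\,H_{12},\,E_2{-}E_5,\,E_5,\,H_{235},\,E_3)
\]
for $l=5$, and
\[
(H_{134},\,E_4,\,E_1{-}E_4,\,H_{125},\,E_5,\,E_2{-}E_5,\,H_{236},\,E_6,\,E_3{-}E_6)
\]
for $l=6$. First I would verify that each listed class lies in the ``ingredient'' set $\mathcal{H}_l$ from Section 2.4 (all are either $H_I$ classes, exceptional classes $E_p$, or differences $E_p - E_q$ with $p<q$, which appear there). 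Next I would check that the sum of the classes equals $PD(c_1(M_l)) = 3H - E_1 - \cdots - E_l$, which is a direct bookkeeping computation. Then I would verify that consecutive (and only consecutive, modulo cyclic order) classes have intersection number $1$ so that the configuration forms a genuine cycle.

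For the $\omega$-positivity condition in Corollary 2.22, I need to check that every class $A$ appearing above satisfies $\omega(A) > 0$ for all $[\omega]$ in the interior of $P_l$. Using the reduced vector $(\delta_1,\dots,\delta_l)$ with $1 > \delta_1 \geq \cdots \geq \delta_l > 0$ and $\delta_1 + \delta_2 + \delta_3 < 1$, the classes $E_p$ and $E_p - E_q$ (for $p<q$) pair positively with $[\omega]$ in the interior of the reduced cone (the latter because $\delta_p > \delta_q$ strictly in the interior), and any $H_I$ with $|I|\leq 3$ pairs to $1 - \sum_{i\in I}\delta_i > 0$. Each of the listed $H_I$ uses at most three indices, so all pairings are strictly positive. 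This verifies membership in $\mathcal{HLCY}(M_l,\omega)$ for every $[\omega]$ in the interior of $P_l$.

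Finally, to confirm toricity I would compute $q(D) = 12 - k(D) - D^2$. Here $k(D) = l+3$ for each $l$ (the cycles have lengths $7,8,9$ respectively), and since $D \in \mathcal{HLCY}$ represents $PD(c_1)$, we have $D^2 = c_1^2 = 9 - l$. Thus $q(D) = 12 - (l+3) - (9-l) = 0$, so the realization is a toric symplectic log Calabi-Yau divisor as required. I expect no serious obstacle here; the only mildly delicate point is double-checking the cyclic incidence pattern of the three configurations so that consecutive classes intersect exactly once and non-consecutive ones do not, which is routine verification from the intersection form on $M_l$.
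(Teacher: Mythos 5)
Your proposal is correct and follows essentially the same route as the paper: exhibiting the same three explicit cyclic homological configurations, invoking the realizability criterion (Corollary \ref{cor:ingredients}) via positivity of each class on the interior of $P_l$, and computing $q(D)=12-(l+3)-(9-l)=0$ to conclude toricity. The additional bookkeeping you flag (checking the sum equals $PD(c_1)$ and the cyclic incidence pattern) is implicit in the paper's appeal to that corollary.
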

	
	Recall that the normalized reduced symplectic cone $ P_4 $ for $M_4$ is the convex hull of $O=(0,0,0,0)$, $M=(\dfrac{1}{3},\dfrac{1}{3},\dfrac{1}{3},\dfrac{1}{3})$, $A=(1,0,0,0)$, $B=(\dfrac{1}{2},\dfrac{1}{2},0,0)$ and $C=(\dfrac{1}{3},\dfrac{1}{3},\dfrac{1}{3},0)$ not containing the face $OABC$.
	
	\begin{prop}
		The toric region $T_4$ for $M_4$ is equal to $P_4$ deleting two edges $MO,MA$.
	\end{prop}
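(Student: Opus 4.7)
The plan is to split the argument into existence for $[\omega]\in P_4\setminus(MO\cup MA)$ and non-existence for $[\omega]\in MO\cup MA$.

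For existence, I take the cyclic configuration
\[T=(H_{134},\,E_4,\,E_1-E_4,\,H_{12},\,E_2,\,H_{23},\,E_3)\]
exhibited in the proof of Fact~\ref{fact} and verify via Corollary~\ref{cor:ingredients} that it realizes a toric symplectic log Calabi-Yau divisor precisely on $P_4\setminus(MO\cup MA)$. A direct computation shows that $\omega(E_2),\omega(E_3),\omega(E_4)$, $\omega(H_{12})=1-\delta_1-\delta_2$, and $\omega(H_{23})=1-\delta_2-\delta_3$ are all strictly positive on all of $P_4$ by the reducedness $\delta_1\ge\delta_2\ge\delta_3\ge\delta_4>0$ together with $\delta_1+\delta_2+\delta_3\le 1$. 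The two delicate conditions are $\omega(E_1-E_4)=\delta_1-\delta_4>0$, which fails exactly when $\delta_1=\delta_2=\delta_3=\delta_4$ (that is, on $MO$), and $\omega(H_{134})=1-\delta_1-\delta_3-\delta_4>0$, which combined with the reducedness fails exactly when $\delta_2=\delta_3=\delta_4$ and $\delta_1+2\delta_2=1$ (that is, on $MA$).

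For non-existence on $MO\cup MA$, by Theorem~\ref{thm:toric=tLCY} it suffices to rule out a Delzant heptagon whose edge-area vector is prescribed by $[\omega]$. Any such heptagon arises from a minimal Delzant polygon (the triangle for $\CP^2$, or a quadrilateral for $S^2\times S^2$ or $\CP^2\#\overline{\CP^2}$) by a sequence of corner blow-ups whose sizes are determined by the basis identification. The key combinatorial principle is that a blow-up of size $s$ at a corner is admissible only when both adjacent edges have length strictly greater than $s$; in particular, every edge of length $\delta$ \emph{blocks} its two adjacent corners against any $\delta$-blow-up. On $MO$, where all $\delta_i=\delta$, the required blow-up sizes are $(\delta,\delta,\delta,\delta)$ starting from $\CP^2$, $(1-2\delta,\delta,\delta)$ from $S^2\times S^2$, and $(\delta,\delta,\delta)$ from $\CP^2\#\overline{\CP^2}$. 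A direct case check shows that in each starting model, after all but the last required blow-up the resulting polygon is a hexagon whose edges alternate between $\delta$ and $1-2\delta$, so every corner is blocked and the last blow-up cannot be carried out. On $MA$, where $\delta_1=1-2\delta$ and $\delta_2=\delta_3=\delta_4=\delta$, the required sizes are $(1-2\delta,\delta,\delta,\delta)$ from $\CP^2$ and $(\delta,\delta,\delta)$ from $S^2\times S^2$; an analogous case check shows each sequence likewise terminates at a hexagon in which every corner is adjacent to a $\delta$-edge.

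The main obstacle is to enumerate all blow-up orderings in each minimal starting model and confirm the uniform hexagonal obstruction. The combinatorial principle is uniform however: each $\delta$-blow-up introduces a $\delta$-edge which persists and blocks adjacent corners, forcing every corner to be adjacent to a $\delta$-edge before the heptagon stage is reached. The finiteness guaranteed by Corollary~\ref{lemma:finiteness} and the enumeration of ingredient classes in Corollary~\ref{cor:ingredients} ensure that the case analysis terminates cleanly.
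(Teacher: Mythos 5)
Your existence half is exactly the paper's: the same configuration $(H_{134},E_4,E_1-E_4,H_{12},E_2,H_{23},E_3)$ from Fact~\ref{fact}, with the correct identification that $\omega(E_1-E_4)=0$ cuts out $MO$ and $\omega(H_{134})=0$ (given reducedness) cuts out $MA$. Your non-existence half takes a genuinely different route — Delzant corner-chopping combinatorics via Theorem~\ref{thm:toric=tLCY} — whereas the paper stays homological: it lists the five families $A_n,\dots,E_n$ of self-intersection sequences for toric divisors in $M_4$ and checks that on $MO$ and $MA$ any two of the available $(-2)$- and $(-3)$-sphere classes pair negatively, which is incompatible with two such classes being distinct components of a normal-crossing cycle (pairing $0$ or $1$); the $B_n$ family is excluded separately using $\mc{S}_\omega^{\le -4}$. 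That argument is short and closes all cases at once.

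As written, your polygon argument has concrete gaps. (i) The admissible blow-up size sequences are asserted, not derived: they are the areas of a maximal pairwise-disjoint system of exceptional classes, so justifying your lists requires enumerating the $(-1)$-classes and the disjoint collections among them, and the minimal toric models are $\CP^2$ together with \emph{all} Hirzebruch trapezoids compatible with the blown-down form (you also drop $\CP^2\#\overline{\CP}^2$ as a starting model on $MA$). (ii) The claimed uniform intermediate shape — a hexagon alternating $\delta$ and $1-2\delta$ after all but the last chop — is false for some orderings: from the monotone square for $S^2\times S^2$ on $MO$, doing the two $\delta$-chops first can produce a hexagon with edges $\delta,1-3\delta,\delta,1-2\delta,1-\delta,1-2\delta$, where the final $(1-2\delta)$-chop is blocked for a different reason (no corner has both edges $>1-2\delta$), not because every corner touches a $\delta$-edge. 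So the "uniform combinatorial principle" does not discharge the case analysis, which you yourself flag as the main obstacle and leave undone. (iii) The degenerate point $M$, where $1-2\delta=\delta=\tfrac13$, needs separate treatment. The blocking principle itself (a size-$s$ chop requires both adjacent edges strictly longer than $s$) is correct, so the approach is salvageable, but completing it amounts to a full enumeration of exceptional systems and chop orderings — at which point the paper's intersection-theoretic argument is the shorter path.
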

	\begin{proof}
		If the homological sequence for $M_4$ in Fact \ref{fact} can not be realized by some $[\omega]\in P_l$, then either $\omega(H_{134})=0$ or $\omega(E_1-E_4)=0$. They lead to the restrictions $\delta_1+\delta_2+\delta_3=1,\delta_2=\delta_3=\delta_4$ or $\delta_1=\delta_2=\delta_3=\delta_4$, which exactly correspond to the edge $MA$ or $MO$. Thus we only need to show the nonexistence of toric symplectic log Calabi-Yau divisors on $MA$ and $MO$.
		
		Firstly by considering the toric blow up operation on the self-intersection sequcences of $M_3$, we can classify all the possible self-intersection sequences for $M_4$ which are:
		\begin{align*}
		&\{A_n=(-2,-1,-2,-n,-1,-1,n-2)\}_{n=1}^{\infty}\\
		&\{B_n=(-1,-2,-1,-n-3,-1,-1,n)\}_{n=1}^{\infty}\\
		&\{C_n=(-1,-2,-2,-1,-n-3,0,n)\}_{n=1}^{\infty}\\
		&\{D_n=(-1,-3,-1,-2,n,0,-n-2)\}_{n=-\infty}^{\infty}\\
		&\{E_n=(-3,-1,-2,-2,n,0,-n-1)\}_{n=-\infty}^{\infty}
		\end{align*}
		
		Next by Proposition 3.4 of \cite{LiLi20-pacific}, for $k\geq1$, $\mc{S}_{\omega}^{-2k-2}$ and $\mc{S}_{\omega}^{-2k-3}$ are subsets of the following sets respectively, with the only restrictions on positive pairing with $\omega$:
		\begin{align*}
		\mc{S}_{\omega}^{-2k-2}\subset &\{ -(k-1)H+kE_1-E_2-E_3-E_4, -kH+(k+1)E_1-E_2,\\
		&-kH+(k+1)E_1-E_3, -kH+(k+1)E_1-E_4\}\\
		\mc{S}_{\omega}^{-2k-3} \subset &\{-kH+(k+1)E_1-E_2-E_3, -kH+(k+1)E_1-E_2-E_4,\\
		&-kH+(k+1)E_1-E_3-E_4, -(k+1)H+(k+2)E_1\}
		\end{align*}
		
		For $\mc{S}_{\omega}^{-1}$, $\mc{S}_{\omega}^{-2}$ and $\mc{S}_{\omega}^{-3}$, their possible choices are as follows, again with the postive pairing condition as the only restriction:
		\begin{align*}
		&\mc{S}_{\omega}^{-1}\subset \{E_k, H-E_i-E_j|1\leq k\leq  4,1\leq i< j\leq  4 \}\\
		&\mc{S}_{\omega}^{-2}\subset\{E_p-E_m, H-E_i-E_j-E_k|1\leq p<m \leq  4,1\leq i< j< k \leq  4 \}\\
		&\mc{S}_{\omega}^{-3}\subset \{-H+2E_1, H-E_1-E_2-E_3-E_4, E_i-E_j-E_k|1\leq i< j< k\leq  4 \}
		\end{align*}

		
		Now we can explain the nonexistence of toric symplectic log Calabi-Yau divisor when the symplectic class is on the edge $OM$ or $MA$, which is the consequence of the lack of some $ (-2) $- and $ (-3) $-spheres.
		
		Along the edge $MO$, we have \begin{align*}
		\mc{S}_{\omega}^{-2}&=\{H-E_i-E_j-E_k|1\leq i< j< k \leq  4 \}\\
		\mc{S}_{\omega}^{-3}&=\{H-E_1-E_2-E_3-E_4\}.
		\end{align*} From this we can see that any two elements in $\mc{S}_{\omega}^{-2}$ or $\mc{S}_{\omega}^{-3}$ have negative intersection number, which excludes the existence of toric symplectic log Calabi-Yau divisors of type $A_n, C_n, D_n, E_n$ since the any two $(-2)$- or $(-3)$-components in the divisor must have intersection number $1$ if they are adjacent in the sequence or $0$ otherwise. Moreover, $B_n$ type is also impossible since $\mc{S}_{\omega}^{\leq-4}=\emptyset$ for $ [\omega]\in MO $.
		
		Along the edge $MA$, we have \begin{align*}
		\mc{S}_{\omega}^{-2}&=\{E_1-E_2,E_1-E_3,E_1-E_4,H-E_2-E_3-E_4\}\\
		\mc{S}_{\omega}^{-3}&=\{-H+2E_1,E_1-E_2-E_3,E_1-E_2-E_4,E_1-E_3-E_4\}.
		\end{align*} Still any two elements in $\mc{S}_{\omega}^{-2}$ or $\mc{S}_{\omega}^{-3}$ have negative intersection number, which excludes divisors of type $A_n, C_n, D_n, E_n$. And $B_n$ type is also excluded because although $\mc{S}_{\omega}^{\leq-4}$ may not be empty, the intersection numbers between the elements in  $\mc{S}_{\omega}^{-2}$ and $\mc{S}_{\omega}^{\leq-4}$ are negative. However the $(-2)$-component and $(-n-3)$-component in $B_n$ must have intersection number $0$ since they are nonadjacent.
	\end{proof}
	
	A similar but more complicated analysis can be done for $ M_5 $. The normalized reduced symplectic cone $P_5$ for $M_5$ is the convex hull of
	\begin{align*}
	O=(0,0,0,0,0), M=(\dfrac{1}{3},\dfrac{1}{3},\dfrac{1}{3},\dfrac{1}{3},\dfrac{1}{3}),
	A=(1,0,0,0,0),\\
	B=(\dfrac{1}{2},\dfrac{1}{2},0,0,0), C=(\dfrac{1}{3},\dfrac{1}{3},\dfrac{1}{3},0,0),
	D=(\dfrac{1}{3},\dfrac{1}{3},\dfrac{1}{3},\dfrac{1}{3},0),
	\end{align*}
	not containing the face $OABCD$. Let's introduce a special point $X=(\dfrac{1}{2},\dfrac{1}{4},\dfrac{1}{4},\dfrac{1}{4},\dfrac{1}{4})$ on the edge $MA$. We have the following result, the proof of which is quite similar to the case for $M_4$ and contained in the appendix \ref{section:M5region}.
	\begin{prop}\label{prop:toricconeM5}
		The toric region $T_5$ for $M_5$ is equal to $P_5$ deleting the closed faces $MOD$, $MAD$ and $MOX$ which is a quarter of the face $MOA$.
		
	\end{prop}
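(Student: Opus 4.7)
The plan is to follow closely the template of the $M_4$ case: first establish existence of a toric symplectic log Calabi-Yau divisor for every class outside $MOD \cup MAD \cup MOX$, then rule out existence on each of these three closed faces.

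For existence, Fact \ref{fact} already furnishes the configuration $(H_{134},E_4,E_1-E_4,H_{12},E_2-E_5,E_5,H_{235},E_3)$, which is strictly $\omega$-positive throughout the interior of $P_5$. On each boundary face of $P_5$ at least one component of this sequence degenerates (for example $E_2-E_5$ vanishes wherever $\delta_2=\delta_5$, and $E_1-E_4$ vanishes on $\{\delta_1=\delta_4\}$), so I would, face by face, exhibit a replacement toric configuration obtained either by permuting the indices $\{2,3,4,5\}$ or by applying a different sequence of toric blow-ups to the minimal toric models $(B3)$, $(C4)$, $(D4)$ of Theorem \ref{thm:minimal model}. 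In particular, on $MAX$ (the half of $MOA$ with $\delta_1 \geq 1/2$) the $(-3)$-sphere class $-H+2E_1$ is available and can be used to build a valid configuration. By Lemma \ref{lemma:toric=toric blow up} the list of candidate configurations is finite, so the existence step reduces to a finite positivity check on each face.

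For nonexistence on $MOD$, $MAD$ and $MOX$, I would first enumerate all cyclic self-intersection sequences that could arise from a toric symplectic log Calabi-Yau divisor on $M_5$, by applying one further toric blow-up to the $M_4$ list $A_n,B_n,C_n,D_n,E_n$. Then on each face I would compute the restricted sets $\mc{S}_\omega^{-k}$ for $k\geq 2$ via Proposition 3.4 of \cite{LiLi20-pacific}, and rule out each sequence by intersection-number constraints: adjacent components in a cyclic log Calabi-Yau divisor must intersect once and non-adjacent ones zero times, whereas pairs of classes in $\mc{S}_\omega^{-2}$ and $\mc{S}_\omega^{-3}$ on these faces turn out to have non-positive pairwise intersections in all the relevant pairings, or else are too few in number to fill out a length-$8$ cycle summing to $c_1(M_5)$.

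The main obstacle is the face $MOX$. Unlike $MOD$ and $MAD$, which are entire closed $2$-faces, $MOX$ is only the $\delta_1 \leq 1/2$ half of $MOA=\{\delta_2=\delta_3=\delta_4=\delta_5\}$; the separating hyperplane $\delta_1=1/2$ is precisely the condition $\omega(-H+2E_1)=-1+2\delta_1=0$, so the $(-3)$-sphere class $-H+2E_1$ is available on $MAX$ but not on $MOX$. The delicate part of the proof is showing that the loss of this single spherical class on $MOX$, combined with the enlarged set of Lagrangian classes $E_i-E_j$ for $i,j\in\{2,3,4,5\}$, obstructs every candidate toric cyclic sequence of length $8$ with $D^2=4$ and $q(D)=0$. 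I expect this to require a careful but finite case analysis running through each enumerated self-intersection sequence and verifying in each case that either a required component is not representable or the required intersection pattern is inconsistent with the restricted $\mc{S}_\omega^{-k}$.
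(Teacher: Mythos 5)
Your overall architecture matches the paper's proof in Appendix C.3: use the configuration from Fact \ref{fact} to reduce the problem to the faces $MAD$, $MOD$, $MOA$; enumerate the candidate self-intersection sequences by toric blow-up from the $M_4$ list; and kill them face by face using the restricted sets $\mc{S}_\omega^{-k}$ from Proposition 3.4 of \cite{LiLi20-pacific} together with the adjacency/intersection constraints. However, your treatment of the face $MOA$ contains a genuine error. The region $MOX$ is the closed triangle with vertices $M$, $O$, $X$ (a quarter of $MOA$ by area), not the half $\{\delta_1\le 1/2\}$ of $MOA$; the segment $OX$ separating it from $XOA$ is cut out by $\delta_1=\delta_2+\delta_3$ (equivalently $\delta_1=2\delta_2$ on this face), i.e.\ by the vanishing of $\omega(E_1-E_2-E_3)$, not by $\delta_1=1/2$, i.e.\ the vanishing of $\omega(-H+2E_1)$. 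These two loci agree only at the single point $X$ on the edge $MA$.

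This misidentification breaks the existence half of your argument on $MOA$. The region you must show to be toric is the interior of $XOA$, which contains points such as $(\delta_1,\dots,\delta_5)=(0.3,0.1,0.1,0.1,0.1)$ where $2\delta_2<\delta_1<1/2$; there $\omega(-H+2E_1)=-1+2\delta_1<0$, so any configuration built on $-H+2E_1$ is not representable and your construction does not cover this sub-region. The paper instead exhibits the explicit toric configuration $(E_2,\,E_1-E_2-E_3,\,E_3,\,H_{135},\,E_5,\,H_{45},\,E_4,\,H_{124})$, which is $\omega$-positive precisely on the interior of $XOA$ because the critical class is $E_1-E_2-E_3$. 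Correspondingly, your ``delicate part'' of the nonexistence analysis on $MOX$ is aimed at the wrong class: on the closed triangle $MOX$ one has $\delta_1\le 1/2$ anyway, so $-H+2E_1$ is never representable there, and the obstruction argument must instead exploit the absence of $E_1-E_p-E_q$ (all of which have nonpositive area on $MOX$) together with the pairwise intersection constraints among the remaining classes in $\mc{S}_\omega^{-2}$, $\mc{S}_\omega^{-3}$, $\mc{S}_\omega^{-4}$ and the emptiness of $\mc{S}_\omega^{\le -5}$. Until the dividing locus and the controlling class are corrected, both halves of the argument on the face $MOA$ are incomplete.
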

	

	\begin{rmk}
		After we establish the equivalence between toric divisors and toric actions later, we can also check the above results from the perspective of performing equivariant blow up, by analysing the possibilities of chopping the corners of Delzant polygons.
	\end{rmk}
	
	Next we give some qualitative characterizations of the toric regions for general $M_l$.
	\begin{lemma}\label{lemma:star shaped}
		The toric region $ T_l $ is star-shaped in the directions $ \delta_2,\dots,\delta_l $, i.e. for any $ (1;\delta_1,\delta_2,\dots,\delta_l)\in T_l $, $ (1;\delta_1,\lambda_2\delta_2,\dots,\lambda_l\delta_l) $ is also in $ T_l $ for all $ 0<\lambda_l\le \dots \le \lambda_2 \le 1 $.
	\end{lemma}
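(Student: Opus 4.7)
The plan is to use Corollary \ref{cor:ingredients} together with Proposition \ref{prop:realization} to reduce the lemma to a purely combinatorial positivity check. Start with a toric homological log Calabi-Yau configuration $T = (A_1, \ldots, A_k)$ on $M_l$ realized under $\omega = \omega_{\delta_1, \ldots, \delta_l}$, so each $A_j$ lies in the list $t\mathcal{H}_l$, $[T] = PD(c_1(M_l))$, and $\omega$ is $T$-positive. The goal is to show the \emph{same} cyclic configuration $T$ is realized under $\omega' = \omega_{\delta_1, \lambda_2\delta_2, \ldots, \lambda_l\delta_l}$. Since $[T]$, the length $k$, and the self-intersection sequence depend only on the homological data of $T$, the toric condition $q = 0$ is automatic; it therefore suffices to verify (i) $[\omega'] \in N_l$ and (ii) $\omega'(A_j) > 0$ for each $j$.

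Part (i) is direct: because both sequences $(\delta_i)$ and $(\lambda_i)$ are weakly decreasing with $\lambda_i \le 1$, one checks $\lambda_i\delta_i \ge \lambda_{i+1}\delta_{i+1}$, $\delta_1 \ge \delta_2 \ge \lambda_2\delta_2$, and $\delta_1 + \lambda_2\delta_2 + \lambda_3\delta_3 \le \delta_1 + \delta_2 + \delta_3 \le 1$, so $[\omega']$ is reduced; and $\omega' \cdot c_1 = 3 - \delta_1 - \sum_{i \ge 2}\lambda_i\delta_i \ge \omega \cdot c_1 > 0$, so it is $c_1$-nef.

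Part (ii) is a case-by-case check using the explicit description of $t\mathcal{H}_l$. For the two families with nonzero $H$-coefficient, $A_j = kH - (k-1)E_1 - \sum_{i \ge 2}\epsilon_i E_i$ or $A_j = H - E_1 - \sum_{i \ge 2}\epsilon_i E_i$, every coefficient on $E_i$ with $i \ge 2$ is non-positive, so replacing $\delta_i$ by $\lambda_i\delta_i$ only increases the area: $\omega'(A_j) \ge \omega(A_j) > 0$. The delicate case is $A_j = E_p - \sum_{i > p}\epsilon_i E_i$ with $2 \le p \le l$, because the positive term $\delta_p$ gets scaled by $\lambda_p$ while the negative terms get scaled by possibly different $\lambda_i$. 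It is precisely here that the ordering $\lambda_p \ge \lambda_i$ for $i > p$ is used: factoring out $\lambda_p$ gives
\[
\omega'(A_j) \;=\; \lambda_p\delta_p - \sum_{i > p}\epsilon_i\lambda_i\delta_i \;\ge\; \lambda_p\Bigl(\delta_p - \sum_{i > p}\epsilon_i\delta_i\Bigr) \;=\; \lambda_p\,\omega(A_j) \;>\; 0.
\]
This last case is the only mild obstacle and is where the monotonicity hypothesis on the $\lambda_i$ is essential; for the other two families the weaker hypothesis $\lambda_i \le 1$ would already suffice.

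Combining (i) and (ii), Corollary \ref{cor:ingredients} places $T$ in $\mathcal{HLCY}(M_l, \omega')$, and Proposition \ref{prop:realization} produces a geometric realization $D'$ of $T$ under $\omega'$. Since $q(D') = q(D) = 0$, the divisor $D'$ is toric, so $[\omega'] \in T_l$, completing the argument.
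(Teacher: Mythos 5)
Your proof is correct, and it is precisely the ``direct'' argument that the paper alludes to but deliberately does not carry out: the authors write that one \emph{could} prove the lemma directly from Corollary \ref{cor:ingredients}, but instead they establish the stronger containment $\mc{S}_{a}\subset \mc{S}_{a_\lambda}$ of \emph{all} $\omega$-symplectic sphere classes, using the classification of sphere classes of positive, negative and zero $H$-coefficient (Lemmas 3.3 and 3.4 of \cite{Chen-SCY} plus adjunction) and then invoking the stability theorem (Theorem \ref{thm:stability}) to conclude $t\mc{HLCY}(M_l,a)\subset t\mc{HLCY}(M_l,a_\lambda)$. The computational heart is identical in both arguments: your three families in $t\mathcal{H}_l$ correspond to their three cases $a>0$, $a<0$, $a=0$, and in the only delicate family ($E_p-\sum_{i>p}\epsilon_iE_i$, respectively the $a=0$ case) both proofs use the same factoring trick $\lambda_p\delta_p-\sum_{i>p}\epsilon_i\lambda_i\delta_i\ge\lambda_p\bigl(\delta_p-\sum_{i>p}\epsilon_i\delta_i\bigr)$, which is where the monotonicity of the $\lambda_i$ enters. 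What your route buys is economy: you only need positivity on the short explicit list $t\mathcal{H}_l$ together with Corollary \ref{cor:ingredients} and Proposition \ref{prop:realization}, and you keep the \emph{same} homological configuration throughout, so no stability theorem is needed. What the paper's route buys is a stronger conclusion: $\mc{S}_{a}\subset\mc{S}_{a_\lambda}$ gives monotonicity of the entire isotopy class space (toric and non-toric alike) along these rays, not merely non-emptiness of the toric part. Two small points you leave implicit but which are harmless: you should note that $[\omega']\in N_l$ is a genuine symplectic class with a reduced blowup form (Lemma \ref{lemma:nef cone} plus \cite{KaKe17-blowup}), so that Corollary \ref{cor:ingredients} applies to $\omega'$; and that the hypothesis $(1;\delta_1,\dots,\delta_l)\in T_l$ already places $[\omega]$ in $N_l$ so that the list $t\mathcal{H}_l$ is available for $\omega$ in the first place.
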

	\begin{proof}
		Let $ a:=[\omega_{(1;\delta_1,\delta_2,\dots,\delta_k)}] $ and $ a_\lambda:=[\omega_{(1;\delta_1,\lambda_2\delta_2,\dots,\lambda_l\delta_l)}] $. We could prove this lemma directly by using Corollary \ref{cor:ingredients} but we can actually show a stronger result: $ \mc{S}_{a}\subset \mc{S}_{a_\lambda} $.
		By Theorem \ref{thm:stability}, This implies $ t\mc{HLCY}(M_l,a)\subset t\mc{HLCY}(M_l,a_\lambda) $.
		
		For $ A=aH-\sum b_iE_i\in \mc{S}_a $, we see that  $ A\in \mc{S}_{a_\lambda} $ if and only if $ a_\lambda\cdot A>0 $ by Lemma \ref{lemma:stability}.\begin{itemize}
			\item If $ a> 0 $, then $ b_i\ge 0 $ for all $ i $ by Lemma 3.3 of \cite{Chen-SCY}, which implies \[
			a_\lambda\cdot A=a-b_1\delta_1- \sum \lambda_i b_i\delta_i>a-b_1\delta_1-\sum b_i\delta_i>0 .\]
			\item If $ a<0 $, then $ A$ must be of the form $-|a|H+(|a|+1)E_1-E_{j_2}-\dots -E_{j_s} $ by Lemma 3.4 of \cite{Chen-SCY}. Then we have \[
			a_\lambda\cdot A=-|a|+(|a|+1)\delta_1-\lambda(\delta_{j_2}+\dots +\delta_{j_s})\ge -|a|+(|a|+1)\delta_1-(\delta_{j_2}+\dots +\delta_{j_s})>0 .\]
			\item If $ a=0 $, then by adjunction formula we have $ \sum b_i(b_i-1)=2 $. So there is one $ k $ such that $ b_k=-1$ or $2 $ and $ b_j=0$ or $1 $ for $ j\neq k $. Since $ a\cdot A=\sum b_i\delta_i>0 $, we must have $ b_k=-1 $ and $ j<k $ if $ b_j=1 $. Then \[
			a_\lambda\cdot A=\lambda_k\delta_k-\sum_{j>k} \lambda_jb_j\delta_j\ge \lambda_k(\delta_k-\sum_{j>k}b_j\delta_j)=\lambda_k(a\cdot A)>0.  \]
		\end{itemize}
	\end{proof}
	\begin{proposition}\label{prop:toric region connected}
		Toric region  $ T_l $  is connected for each $ l $.
	\end{proposition}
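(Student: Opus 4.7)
The plan is to combine the star-shaped property of Lemma \ref{lemma:star shaped} with an explicit Delzant polygon construction exhibiting toric symplectic log Calabi-Yau divisors in a collar region near the axis $\{\delta_2 = \cdots = \delta_l = 0\}$ of the normalized reduced cone.

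First, I would specialize Lemma \ref{lemma:star shaped} to a common scalar $\lambda_2 = \cdots = \lambda_l = \lambda \in (0,1]$, which trivially satisfies the monotonicity requirement. This shows that for every $P = (1;\delta_1,\delta_2,\dots,\delta_l) \in T_l$, the straight segment $\{(1;\delta_1,\lambda\delta_2,\dots,\lambda\delta_l) : \lambda \in (0,1]\}$ lies in $T_l$. Hence $P$ is path-connected within $T_l$ to points with $\delta_2,\dots,\delta_l$ arbitrarily small while $\delta_1$ is kept fixed.

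Second, I would exhibit a continuous positive function $\epsilon : (0,1) \to (0,\infty)$ such that the collar
\[
U := \{(1;\delta_1,\delta_2,\dots,\delta_l) \in P_l : 0 < \delta_l \le \cdots \le \delta_2 < \epsilon(\delta_1)\}
\]
is contained in $T_l$. The construction is by Delzant polygon surgery: starting from the standard Delzant triangle for $\mathbb{CP}^2$ (whose boundary is the toric divisor $(H,H,H)$), perform $l$ successive toric blow-ups at distinct corners of the current polygon with chop sizes $\delta_1,\delta_2,\dots,\delta_l$. The first chop of size $\delta_1 < 1$ is automatically valid; each subsequent chop of size $\delta_i$ is valid provided $\delta_i$ is less than the integral lengths of the two current edges at the chosen corner, which holds whenever $\epsilon(\delta_1)$ is small enough depending on $\delta_1$ alone. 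By Lemma \ref{lemma:toric=toric blow up}, the boundary of the resulting Delzant polygon for $M_l$ is a toric symplectic log Calabi-Yau divisor realizing the class $(1;\delta_1,\delta_2,\dots,\delta_l)$ after relabeling the exceptional classes in decreasing order of chop size.

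Third, $U$ is itself path-connected: given $P_1, P_2 \in U$ with $\delta_1$-coordinates $a_1, a_2 \in (0,1)$, use Step~1 to scale the small coordinates of each down to a common value $\eta$ strictly smaller than $\min\{\epsilon(\delta_1) : \delta_1 \in [\min(a_1,a_2), \max(a_1,a_2)]\}$ (positive by continuity of $\epsilon$ and compactness), then linearly interpolate $\delta_1$ from $a_1$ to $a_2$ with the small coordinates fixed at $\eta$. Combining with Step~1, every point of $T_l$ is path-connected to $U \subset T_l$, so $T_l$ is path-connected.

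The main obstacle will be making Step~2 rigorous, namely verifying that the $l$ successive toric chops admit a common positive size for all $\delta_1 \in (0,1)$ (with $\epsilon(\delta_1)$ possibly vanishing at the endpoints $0$ and $1$) through a judicious choice of corners at each step so that no edge becomes degenerate. A secondary concern is that on boundary faces of $P_l$ (e.g., where $\delta_1+\delta_2+\delta_3=1$ or consecutive $\delta_i$ coincide) the Delzant chop parameters are constrained, but Lemma \ref{lemma:star shaped} and the continuity of $\epsilon$ still permit the same argument to go through.
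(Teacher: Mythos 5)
Your proposal is correct and follows essentially the same route as the paper: Lemma \ref{lemma:star shaped} pushes any point of $T_l$ into a thin collar near the $\delta_1$-axis, and that collar is placed inside $T_l$ by an explicit toric divisor --- the paper's configuration $(H-E_1,E_1-E_2,\dots,E_{l-1}-E_l,E_l,H-E_1-\cdots-E_l,H)$ is precisely the boundary of your iterated corner-chop polygon. The only divergence is that you scale by a common $\lambda$, so your collar must contain points with ties among $\delta_2,\dots,\delta_l$ (which the paper's configuration, containing the components $E_{i-1}-E_i$, cannot realize; the paper instead breaks ties by choosing strictly decreasing $\lambda_i$), and your deferred Step~2 does go through for tied points by chopping successive corners along the long $E_1$-edge.
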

	\begin{proof}
		For $ l\le 2 $, $ T_l=P_l $ is the entire normalized reduced cone and is thus connected. So we only consider the case $ l\ge 3 $.
		The region \[ T_l^\varepsilon:=\{ (1;\delta_1,\dots,\delta_l)\in P_l \, |\, \delta_{i-1}>\delta_i, \delta_i<\varepsilon, i\ge 2 \} \] is contained in the toric region $ T_l $ for sufficiently small $ \varepsilon $ since
		\[ (H-E_1, E_1-E_2, E_2-E_3, ..., E_{k-1}-E_{k}, E_k, H-E_1-E_2-... - E_k, H) \]
		is a toric log Calabi-Yau divisor in this region.
		Note that every $ (1;\delta_1,\delta_2,\dots,\delta_l) $ in $ T_l $ is connected to $ (1;\delta_1,\lambda_2\delta_2,\dots,\lambda_l\delta_l)\in T^\varepsilon_l $ for sufficiently small $ 0<\lambda_l<\dots<\lambda_2\le 1 $. So we conclude that $ T_l $ is connected.
	\end{proof}

	\subsection{General counting formulas for restrictive reduced symplectic classes}\label{section:general counting}
	When the blow up times are getting higher, both the chamber structure on the normalized $c_1$-nef cone and the homological action of symplectomorphism group become quite involved. It's hopeless to give a comprehensive description of the counting according to the division of the symplectic cone as Proposition \ref{prop:M3} for all higher blow ups. Nevertheless, when we only focus on a restrictive part of reduced symplectic classes, it's possible to write down a general counting formula.
	
	In this section, by a \textbf{restrictive} reduced symplectic class, we mean a reduced symplectic class encoded by $\vec\delta=(\delta_1,\cdots,\delta_l)$ which staconetisfies the extra assumptions:
	\[1>\delta_1+\cdots+\delta_l \]
	\[\delta_1>\delta_2\]
	\[\delta_k>\delta_{k+1}+\cdots+\delta_l\]
	for all $2\leq k\leq l-1$. There are two advantages of counting in this region. Firstly we don't need to consider the homological action of the symplectomorphism group by Lemma \ref{lemma:divisor = homology type} since this region is contained in the interior of the reduced symplectic cone. Secondly, the classes $H-\sum_{i=1}^l \epsilon_iE_i$ and $E_i-\sum_{j>i} \epsilon_jE_j$ can be used freely to create the homological configurations.
	
	Now we introduce the strategy for this general counting formula, which is different from the counting of $M_2,M_3$ in the previous sections. There, we firstly determine all the possible self-intersection sequence, according to which we then list all the possible homological configurations by Corollary \ref{cor:ingredients} and consider the quotient by homological action of the symplectomorphism group. We also make use of Propostion \ref{prop:spherical classes} which makes our counting more efficient. However, for the counting issue of general $M_l$, we are going to apply Lemma \ref{lemma:reduced induction} directly. The homological configurations in $M_l$ come from the homological toric and non-toric blow ups of either homological configurations in $M_1$ or $(3H-E_1-\cdots-E_{j-1}-2E_j,E_j)$ in $M_j$. So we will actually count the number of the possible blow up patterns modulo some symmtries resulting the same homological configurations in $M_l$.
	
	Note that by the restrictive condition, the only possible  classes having negative symplectic area in $\mathcal{H}_l$ (Corollary \ref{cor:ingredients}) are \[kH-(k-1)E_1-\sum_{i=2}^l \epsilon_i E_i, k\in\ZZ,\epsilon_i\in\{0,1\}\] with non positive $k$. We need some notations to record the number of such $k$ that can appear in the homological configurations:
	
	Let
	\[
	\mathcal{G}_l=\bigg\{
	g:\{2,\cdots\,l\}\rightarrow \{0,1\}
	\bigg\}
	\]
	\[
	\psi(\vec\delta,g)=\lceil\dfrac{\delta_1-\sum_{i=2}^l g(i)\delta_i}{1-\delta_1}\rceil
	\]
	
	We also need the following set of functions to record the length of the divisor at each step:
	\[
	\mathcal{F}_l^a=\bigg\{
	f:\{1,2,\cdots\,l\}\rightarrow\ZZ_+ \,\vert \,f(1)=a, f(i)-f(i-1)=0\,\,\text{or}\,\,1,\,\text{for all}\,\, 2\leq i\leq l
	\bigg\}
	\]
	In particular, for the purpose of merely counting toric divisors, we denote by $F_l^a$ the function in $\mathcal{F}_l^a$ such that $F_l^a(i)=a+i-1$ for all $i$.
	
	For any $g\in{\mathcal{G}_l}, f\in {\mathcal{F}_l^a}$ and reduced vector $\vec\delta=(\delta_1,\cdots,\delta_l)$, define
	\[
	\phi(f,g)=\prod_{i=2}^l ((1-g(i))f(i-1)-(-1)^{g(i)}(f(i)-f(i-1)+1))
	\]
	This number will represent the number of blow up patterns such that the length at each step is given by $f$ and there exists the class $kH-(k-1)E_1-\sum_{i=2}^l g(i) E_i$ in the ultimate configuration. (See its alternative expression in the proof of Lemma \ref{lemma:relation}).

	For the convenience of the statement, in the following we always use $\vec t$ to denote an ordered finite set $(t_1,\cdots,t_k)$ with all $t_i$ being positive integers. Let
	\[|\vec t|=t_1+\cdots+t_k
	\]
	\[a^{\vec t}=a^{t_1}(a+1)^{t_2}\cdots (a+k-1)^{t_k}
	\]
	where $a$ is an integer. It follows that we have the relation:
	\begin{lemma}\label{lemma:relation}
		\[
		\sum_{f\in{\mathcal{F}_l^a,g\in{\mathcal{G}_l}}}\phi(f,g)=2\sum_{|\vec t|=l-1}a^{\vec t}
		\]
	\end{lemma}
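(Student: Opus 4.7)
The plan is to evaluate the double sum on the left by exchanging the order of summation, doing the $g$-sum first and then the $f$-sum by a bijective argument.

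First I would observe that $\phi(f,g)$ factors over $i$, and for each $i$ the $i$-th factor depends only on $g(i)$ together with the fixed values $f(i-1),f(i)$. A direct check shows that the two contributions $g(i)=0$ and $g(i)=1$ evaluate respectively to $2f(i-1)-f(i)-1$ and $f(i)-f(i-1)+1$; these sum to $f(i-1)$, independently of $f(i)$. Consequently
\[
\sum_{g\in\mathcal{G}_l}\phi(f,g)=\prod_{i=2}^{l}f(i-1)=\prod_{i=1}^{l-1}f(i),
\]
so the $g$-dependence disappears and the inner sum involves only $f(1),\dots,f(l-1)$.

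Next, since $f(l)$ does not appear in this product and is unconstrained apart from $f(l)-f(l-1)\in\{0,1\}$, summing over this last free increment produces the factor of $2$ on the right. It remains to establish
\[
\sum_{f\in\mathcal{F}_{l-1}^a}\prod_{i=1}^{l-1}f(i)=\sum_{|\vec t|=l-1}a^{\vec t}.
\]
For this I would use a run-length bijection: any path $f\in\mathcal{F}_{l-1}^a$ is determined by the multiplicities $t_1,\dots,t_k$ of the consecutive integer values $a,a+1,\dots,a+k-1$ it visits. Because $f(1)=a$ and the jumps lie in $\{0,1\}$, each $t_j\geq 1$ and no intermediate value is skipped, so $\vec t=(t_1,\dots,t_k)$ is a composition of $l-1$; conversely every such composition arises uniquely this way. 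Under the bijection $f\leftrightarrow \vec t$, the product $\prod_{i=1}^{l-1}f(i)$ factors by runs into $a^{t_1}(a+1)^{t_2}\cdots(a+k-1)^{t_k}=a^{\vec t}$, yielding the identity.

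The only nonroutine step is spotting the telescoping in the first paragraph; once $\sum_g\phi(f,g)$ collapses to $\prod_{i=1}^{l-1}f(i)$, the free last-increment factor of $2$ and the run-length bijection are elementary bookkeeping.
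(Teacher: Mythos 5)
Your proof is correct and takes essentially the same route as the paper's: the crux in both is that summing the $i$-th factor of $\phi(f,g)$ over $g(i)\in\{0,1\}$ collapses to $f(i-1)$ independently of $f(i)$, so $\sum_{g\in\mathcal{G}_l}\phi(f,g)=f(1)f(2)\cdots f(l-1)$. You additionally spell out the factor-of-$2$ from the free last increment and the run-length correspondence between paths in $\mathcal{F}_{l-1}^a$ and compositions $\vec t$ of $l-1$, which the paper leaves implicit.
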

	\begin{proof}
		Note that
		\[((1-g(i))f(i-1)-(-1)^{g(i)}(f(i)-f(i-1)+1))=\begin{cases}
		2\,\,\,\,\,\,\text{if $g(i)=1,f(i)-f(i-1)=1$}\\
		1\,\,\,\,\,\,\text{if $g(i)=1,f(i)-f(i-1)=0$}\\
		f(i-1)-2\,\,\,\,\,\,\text{if $g(i)=0,f(i)-f(i-1)=1$}\\
		f(i-1)-1\,\,\,\,\,\,\text{if $g(i)=0,f(i)-f(i-1)=0$}
		\end{cases}
		\]
		If we fix some $f\in\mathcal{F}_l^a$ and sum over $g\in{\mathcal{G}_l}$, we will get
		\[\sum_{g\in{\mathcal{G}_l}}\phi(f,g)=f(1)f(2)\cdots f(l-1)
		\]
		Therefore
		\[\sum_{f\in{\mathcal{F}_l^a,g\in{\mathcal{G}_l}}}\phi(f,g)=\sum_{f\in{\mathcal{F}_l^a}}\sum_{g\in{\mathcal{G}_l}}\phi(f,g)=2\sum_{|\vec t|=l-1}a^{\vec t}\]
	\end{proof}
	
	To circumvent too many sigma notations in the general counting formulas, for any integer $a$ and $g\in\mathcal{G}_l$, we also introduce $a^{\vec g}$ to denote:
	\[\sum_{f\in\mathcal{F}_{l}^a}\phi(f,g)
	\]
	\begin{prop}\label{prop:general count}
		With the extra restrictive assumption on the reduced symplectic class encoded by $\vec\delta=(\delta_1,\cdots,\delta_l)$, we have
		\begin{equation*}
		\begin{split}
		|\mathcal{LCY}(M_l,\omega_{\delta_{1},\cdots,\delta_{l}})|&= 1+2^l+\sum_{|\vec t|=l-1}(2^{\vec t}+3^{\vec t})+\sum_{1\leq|\vec t|\leq j-1}2^{\vec t}\\
		&+\sum_{g\in{\mathcal{G}_l}}\bigg[1+\dfrac{1}{2}(2^{\vec g}+4^{\vec g})+3^{\vec g}\bigg]\psi(\vec\delta,g)\\
		\end{split}
		\end{equation*}
		
		\begin{equation*}
		\begin{split}
		|t\mathcal{LCY}(M_l,\omega_{\delta_{1},\cdots,\delta_{l}})|= \sum_{g\in{\mathcal{G}_l}}\dfrac{1}{2}\phi(F_l^4,g)\psi(\vec\delta,g)\\
		\end{split}
		\end{equation*}
		Moreover, when $\vec\delta$ doesn't satisfy the extra restrictive assumption, the above formulas will give a strictly upper bound of the counting.
	\end{prop}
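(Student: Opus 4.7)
The plan is to count $\mathcal{HLCY}(M_l, \omega_{\vec\delta})$ directly, using Proposition \ref{prop:realization} to identify it with $\mathcal{LCY}(M_l,\omega_{\vec\delta})$, and then enumerate cyclic homological configurations via the ingredient list $\mathcal{H}_l$ of Corollary \ref{cor:ingredients}. Because the restrictive hypothesis places $\vec\delta$ in the interior of the reduced cone, Lemma \ref{lemma:divisor = homology type} ensures that $p\mathcal{HLCY}=\mathcal{HLCY}$, so no quotient by a homological action of $\Symp$ is required --- each cyclic sequence that satisfies the conditions of Corollary \ref{cor:ingredients} contributes exactly one class, and (anti)cyclic relabeling is the only equivalence to mod out at the end.

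The first step is to check that the restrictive assumption makes the $\omega$-positivity conditions transparent. The inequalities $\delta_k>\delta_{k+1}+\cdots+\delta_l$ and $\delta_1+\cdots+\delta_l<1$ imply that every class of the form $H-E_1-\sum_{i\ge 2}\epsilon_iE_i$, $E_p-\sum_{i>p}\epsilon_iE_i$, $2H-\sum \epsilon_iE_i$, $3H-E_1-\cdots-E_l$, or $3H-E_1-\cdots-E_{p-1}-2E_p-\sum_{i>p}\epsilon_iE_i$ has strictly positive $\omega_{\vec\delta}$-area, automatically and for \emph{all} $\epsilon_i\in\{0,1\}$. The only classes in $\mathcal{H}_l$ whose positivity is not automatic are the negative ones $kH-(k-1)E_1-\sum g(i)E_i$ with $k\le 0$; their number, for a fixed choice of $g\in\mathcal{G}_l$, is exactly the quantity $\psi(\vec\delta,g)$ in the statement. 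This pins down the $\psi$-factor in both formulas and explains why the formulas degenerate to a strict upper bound when the restrictive inequalities fail (fewer homology classes are realized as symplectic spheres, but no new ones appear).

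Next I would organize configurations by a blow-down skeleton. Applying Lemma \ref{lemma:reduced induction} repeatedly, each $T=(A_1,\dots,A_k)\in\mathcal{HLCY}(M_l,\omega_{\vec\delta})$ descends either to a minimal configuration in $\mathcal{HLCY}(M_1,\omega_{\delta_1})$ (case B of Theorem \ref{thm:minimal model}) or to a two-term sequence $(3H-E_1-\cdots-E_{j-1}-2E_j, E_j)$ in some intermediate $M_j$ (the only $\mathcal{H}_l$-configuration whose reduction breaks the lemma). Each intermediate blow-up step either appends a new exceptional class to the sequence (increasing the length by $1$) or subtracts $E_i$ from an existing class (fixing the length); this is precisely the combinatorial data encoded by a function $f\in\mathcal{F}_l^a$ and the bit string $g\in\mathcal{G}_l$ telling which $E_i$ ended up glued onto the negative class. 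A direct count of ordered cyclic sequences obtained this way from each skeleton gives $\phi(f,g)$ choices, with the factor $\frac{1}{2}$ accounting for dihedral/anti-cyclic symmetry of the configurations whose skeleton is itself symmetric. The term $1$ is the elliptic case (Corollary \ref{lemma:elliptic case}); the $2^l$ is the length-$2$ skeleton $(2H,H-E_1)$ and its non-toric blow-ups (choices of which $E_i$ to subtract from the first class); the $(2^{\vec t}+3^{\vec t})$ sum collects the length-$3$ minimal skeletons of type $(aH+(1-a)E, H-E, (2-a)H+(a-1)E)$; the $2^{\vec t}$ sum with $|\vec t|\le j-1$ handles the $(3H-\cdots-2E_j,E_j)$ skeletons; and the bracket $[1+\tfrac12(2^{\vec g}+4^{\vec g})+3^{\vec g}]\psi(\vec\delta,g)$ collects everything coming from the length-$4$ toric minimal model together with non-toric and toric blow-up histories compatible with a prescribed negative class.

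The main obstacle is the combinatorial bookkeeping in this last step: one must match every homological configuration to a unique $(f,g)$ (or symmetric pair) and avoid double counting when the same cyclic sequence admits two different blow-down routes. To handle this I would introduce a canonical reduction (always blow down the exceptional class of largest index that can be blown down), verify that this produces a well-defined map from configurations to triples (skeleton, $f$, $g$), and then identify the fibers of this map. The toric formula is then the specialization obtained by forcing every blow-up to be toric, which constrains $f$ to be the linear function $F_l^4$ (starting length $4$ at the minimal toric model of case $D4/C4$) and divides by $2$ for the residual cyclic symmetry; the non-toric formula aggregates the additional contributions of non-toric blow-ups from each skeleton. Finally, the strict-upper-bound claim in the non-restrictive region follows because removing an inequality from the restrictive hypothesis only kills positivity of some classes in $\mathcal{H}_l$, hence at most removes, never adds, elements from $\mathcal{HLCY}(M_l,\omega_{\vec\delta})$.
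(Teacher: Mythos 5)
Your proposal follows essentially the same route as the paper's proof: identify $\mathcal{LCY}$ with $p\mathcal{HLCY}$ using the interior of the reduced cone, reduce each configuration to a germ via Lemma \ref{lemma:reduced induction}, count blow-up patterns through the data $(f,g)$ with $\phi$ as the weight and $\psi(\vec\delta,g)$ recording which negative classes survive, and correct for (anti)cyclic symmetry by the factor $\tfrac12$. The paper organizes the count by the same four families of germs, disposes of the double-counting issue by observing that distinct germs yield distinct divisors (so only patterns over the same germ can coincide, which your canonical-reduction device also achieves), and obtains the strict upper bound in the non-restrictive region exactly as you do.
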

	
	\begin{proof}
		Note that the assumption guarantees $\vec\delta$ is in the interior of the normalized reduced cone. Thus there is no difference among $\mathcal{LCY}$, $\mathcal{HLCY}$ and $p\mathcal{HLCY}$. Given any $(A_1,\cdots,A_k)\in\mathcal{HLCY}(M_l,\omega_{\delta_{1},\cdots,\delta_{l}})$ with $l\geq 2$, by Lemma \ref{lemma:reduced induction} we observe that it must be either the iterated toric or non-toric homological blow up of some element in $p\mathcal{HCLY}(M_1)=\cup_{\delta<1}p\mathcal{HCLY}(M_1,\omega_\delta)$ or $(3H-E_1-\cdots-E_{j-1}-2E_j,E_j)$ for some $2\leq j\leq l$. We call them the \textbf{germ} of $(A_1,\cdots,A_k)$. Conversely, blowing up different germs must give different divisors $M_l$. Only different toric or non-toric blow up patterns on the same germ may lead to the same divisor in $M_l$ due to the cyclic symmetry of the circular divisors.
		
		Therefore the counting is divided into the following parts according to the germ:
		\begin{itemize}
			\item The germ is some divisor $((k+1)H-kE_1,H_1,-kH+(k+1)E_1,H_1)$ with $k\geq 0$ in $p\mathcal{HCLY}(M_1)$. The assumption ensures that after $l-1$ times homological blow up, only $-kH+(k+1)E_1-\sum_{i=2}^l g(i) E_i$ may not in $\mathcal{S}_{
				\omega_{\delta}}$ for some $g\in\mathcal{G}_l$. Now we use the function $f$ to denote the length at each stage. So $\phi(f,g)$ actually denotes the number of blow up patterns which follows the length function $f$ and gives $-kH+(k+1)E_1-\sum_{i=2}^l g(i) E_i$ in the final homologcial configuration. And the sum over all possible $f,g$ and integers $k$, which is given by $\sum_{f\in{\mathcal{F}_l^{4}}, g\in{\mathcal{G}_l}}\phi(f,g)\psi(\vec\delta,g)$, leads to the count of blow up patterns starting from length $4$ divisor in $M_1$.
			
			Observe that whenever the blow up pattern is not performing $l-1$ non-toric blow up on $(k+1)H-kE_1$ and $-kH+(k+1)E_1$, there exist exactly two patterns resulting in the same divisor in $M_l$ due to the cyclic symmetry of $((k+1)H-kE_1,H_1,-kH+(k+1)E_1,H_1)$. The ultimate count should then be \[
			\sum_{g\in{\mathcal{G}_l}}\sum_{f\in{\mathcal{F}_l^{4}}}(\dfrac{1}{2}(\phi(f,g)-1)+1)\psi(\vec\delta,g)\]
			
			When counting only toric divisors, eliminating the symmetry is simply by dividing by $2$. So we could obtain the toric counting formula
			\[
			|t\mathcal{LCY}(M_l,\omega_{\delta_{1},\cdots,\delta_{l}})|= \sum_{g\in{\mathcal{G}_l}}\dfrac{1}{2}\phi(F_l^4,g)\psi(\vec\delta,g)
			\]
			\item The germ is some divisor $((k+2)H-(k+1)E_1,-kH+(k+1)E_1,H_1)$ with $k\geq -1$ in $p\mathcal{HCLY}(M_1)$. Note that if $k\geq 0$, there is no cyclic symmetry so that the count of patterns exactly gives the count of divisors, which is
			\[
			\sum_{g\in{\mathcal{G}_l}}\sum_{f\in{\mathcal{F}_l^{3}}}\phi(f,g)\psi(\vec\delta,g)
			\]
			When $k=-1$, the germ is $(H_1,H,H)$. Observe that exactly two patterns generate the same divisor unless the pattern is performing $l-1$ non-toric blow ups on $H_1$ or performing toric blow up once between $H$ and $H$ to get $H-E_j,E_j,H-E_j$ and non-toric blow ups on $H_1$ and $E_j$. The number of those patterns should be $1+\sum_{j=2}^l2^{l-j}=2^{l-1}$. The count of divisors thus should be
			\[
			\dfrac{1}{2}\bigg[-2^{l-1}+\sum_{f\in{\mathcal{F}_l^3},g\in{\mathcal{G}_l}}\phi(f,g)\bigg]+2^{l-1}
			\]
			\item The germ is some divisor $((k+3)H-(k+2)E_1,-kH+(k+1)E_1)$ with $k\geq -1$ or $(2H,H-E_1)$ in $p\mathcal{HCLY}(M_1)$. Observe that exactly two patterns generate the same divisor unless the pattern is performing $l-1$ non-toric blow ups. The count for $((k+3)H-(k+2)E_1,-kH+(k+1)E_1)$ with $k\geq 0$ is
			\[
			\sum_{g\in{\mathcal{G}_l}}\sum_{f\in{\mathcal{F}_l^{2}}}(\dfrac{1}{2}(\phi(f,g)-1)+1)\psi(\vec\delta,g)\]
			The count for $(2H,H-E)$ and $(2H-E,H)$ should both be
			\[	\sum_{g\in{\mathcal{G}_l}}\sum_{f\in{\mathcal{F}_l^{2}}}(\dfrac{1}{2}(\phi(f,g)-1)+1)\]
			\item The germ is $(3H-E_1-\cdots-E_{j-1}-2E_j,E_j)$ for some $2\leq j\leq l$. Similar to the cases of $(2H,H-E)$ and $(2H-E,H)$ above, the count should be
			\[
			\sum_{j=2}^l\bigg[\sum_{f\in{\mathcal{F}_{l-j+1}^2},g\in{\mathcal{G}_{l-j+1}}}\dfrac{1}{2}\phi(f,g)+2^{l-j-1}\bigg]
			\]
		\end{itemize}
		Adding the above four parts and one elliptic divisor together and applying Lemma  \ref{lemma:relation}, we get our desired counting formula. Moreover, we have enumerated all the possible homological types in the above analysis. If $[\omega]$ is not in the restrictive region, the number of divisors will be strictly less than the number given by the formula due to the lack of some classes $H-\sum_{i=1}^l \epsilon_iE_i$ or $E_i-\sum_{j>i} \epsilon_jE_j$.
	\end{proof}

	\begin{rmk}
		When $l=2,3$, this extra assumption only implies $\vec\delta$ is in the interior of the normalized reduced cone. We can compare the counting results and the above formulas:\\
		For the toric case, when $l=2$, $\mathcal{G}_2$ only contains two elements and $\dfrac{1}{2}\phi(F_2^4,g)$ is always equal to $1$. This corresponds to Corollary \ref{cor:toric count M2} :$\lceil\dfrac{\delta_1}{1-\delta_1}\rceil+\lceil\dfrac{\delta_1-\delta_2}{1-\delta_1}\rceil$. When $l=3$,
		\[
		\dfrac{1}{2}\phi(F_3^4,g)=\begin{cases}
		2\,\,\,\,g(2)=0,g(3)=1\\
		2\,\,\,\,g(2)=1,g(3)=1\\
		3\,\,\,\,g(2)=0,g(3)=0\\
		3\,\,\,\,g(2)=1,g(3)=0
		\end{cases}
		\]
		This gives the counting $3(\lceil\dfrac{\delta_1}{1-\delta_1}\rceil+\lceil\dfrac{\delta_1-\delta_2}{1-\delta_1}\rceil)+2(\lceil\dfrac{\delta_1-\delta_3}{1-\delta_1}\rceil+\lceil\dfrac{\delta_1-\delta_2-\delta_3}{1-\delta_1}\rceil)$ in the interior of the normalized reduced cone, which coincides with Proposition \ref{prop:M3} cases (1)-(4).\\
		For the general LCYs, when $l=2$, we see that \[\sum_{|\vec t|=1}2^{\vec t}=2\]
		\[
		\sum_{|\vec t|=1}3^{\vec t}=3
		\]
		\[
		1+\dfrac{1}{2}(2^{\vec g}+4^{\vec g})+3^{\vec g}=1+3+3=7 \,\,\,\,\,\,\,\text{for both $g\in\mathcal{G}_2$}
		\]
		This recovers $7(\lceil\dfrac{\delta_1}{1-\delta_1}\rceil+\lceil\dfrac{\delta_1-\delta_2}{1-\delta_1}\rceil)+12$ in Propostion \ref{prop:general count M2}.
	\end{rmk}

	\section{Symplectic log Calabi-Yau divisors and almost toric fibrations}
	
	In this section we will relate the almost toric fibrations with the symplectic log Calabi-Yau divisors studied in the previous sections. Let's start with the toric actions, which of course could be viewed as a special type of almost toric fibrations.
	\subsection{Toric manifolds and Delzant polytope}
	An action of the 2-torus $ \mb{T}\cong (S^1)^2 $ on a symplectic 4-manifold $ (X,\omega ) $ is a homomorphism \[
	\rho:\mb{T}\to \Symp(X,\omega)
	\]
	from the torus to the symplectomorphism group of $ (X,\omega) $, such that the map $ \rho^\sharp:\mb{T}\times X\to X $, defined by $ \rho^\sharp(t,x)=\rho(t)(x) $, is smooth. An effective $ \mb{T} $-action with generating vector fields $ \xi_1,\xi_2 $ is Hamiltonian if there exists a moment map $ \mu:X\to \RR^2 $ such that each component satisfies $ d\mu_j=-\iota_{\xi_j}\omega $, for $ j=1,2 $. Such a $ \mb{T} $-action is called a toric action and could be seen as a smooth injective map $ \rho:\mb{T}\to Ham(X,\omega ) $. The 4-tuple $ (X,\omega,\mb{T},\mu) $ is called a symplectic toric manifold.
	%
	\begin{definition}\label{def:toric equivalence}
		Two toric actions $ \rho_1,\rho_2 $ on a symplectic manifold $(X,\omega)$ are equivalent if there exists a symplectomorphism $\phi: X \rightarrow X$ and an automorphism $ h:\mb{T}\to \mb{T} $ such that the diagram \begin{center}
			\begin{tikzcd}
			\mb{T} \times X
			\ar[r,"{\rho_1^{\sharp}}"] \ar[d,"{(h,\phi)}"']
			& X \ar[d,"\phi"]\\
			\mb{T} \times X \ar[r,"{\rho_2^{\sharp}}"'] & X
			\end{tikzcd}
		\end{center}
		is commutative.
	\end{definition}
	Symplectic toric manifolds are interesting objects to study because much of their geometry and topology are determined by the combinatorial information of their moment image.
	
	\begin{definition}\label{def:delzant}
		A Delzant polytope $\Delta$ in $\mathbb{R}^2$ is a polytope satisfying:
		\begin{enumerate}[label=$ (\arabic*) $]
			\item simplicity, i.e. there are 2 edges meeting at each vertex;
			\item rationality, i.e. the edges meeting at each vertex p are of the form $p+tu_i,t\ge0, u_i\in \mathbb{Z}^2$;
			\item smoothness, i.e. for each vertex p, the corresponding $u_1,u_2$ can be choosen to be a $\mathbb{Z}$-basis of $\mathbb{Z}^2$.
		\end{enumerate}
	\end{definition}
	
	Recall that the moment image of a symplectic toric manifold is a Delzant polytope, which we call the moment polytope. Delzant's theorem (\cite{Delzant}) classifies equivalence classes of symplectic toric manifolds in terms of combinatorial data given by Delzant polytopes. In particular we have:
	\begin{prop}[\cite{KaKePi07-finite}]
		Two toric action on $ (X,\omega ) $ are equivalent if and only if their moment map images are $ AGL(2,\ZZ) $-congruent.
	\end{prop}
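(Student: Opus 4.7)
The plan is to deduce this proposition from Delzant's classification theorem, by carefully unpacking what equivalence of toric actions means at the level of moment polytopes.

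For the forward implication, suppose $\rho_1 \sim \rho_2$ via a symplectomorphism $\phi \co X \to X$ and a torus automorphism $h \co \mb{T}\to \mb{T}$ making the commutative diagram hold. Let $\mu_1, \mu_2 \co X \to \RR^2$ be moment maps for $\rho_1, \rho_2$ respectively. The diagram says that $\phi$ intertwines $\rho_1$ with $\rho_2 \circ h$, so $\phi$ pulls back each generating vector field of $\rho_2$ to a generating vector field of $\rho_1$, with the coefficients given by the matrix of $dh \co \mf{t}\to \mf{t}$ in $GL(2,\ZZ)$. Using Cartan's formula $d\mu_j = -\iota_{\xi_j}\omega$ and $\phi^*\omega = \omega$, one verifies directly that $\mu_2 \circ \phi = (dh^*)^{-1} \circ \mu_1 + c$ for some constant $c \in \RR^2$. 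In particular the images $\mu_2(X)$ and $\mu_1(X)$ differ by an element of $AGL(2,\ZZ)$, which proves the congruence.

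For the reverse implication, suppose the moment polytopes $\Delta_i = \mu_i(X)$ satisfy $\Delta_2 = A\cdot \Delta_1 + c$ for some $A \in GL(2,\ZZ)$ and $c \in \RR^2$. Delzant's theorem (\cite{Delzant}) associates to each Delzant polytope $\Delta$ a symplectic toric manifold $(X_\Delta, \omega_\Delta, \mb{T}, \mu_\Delta)$, unique up to equivariant symplectomorphism, and this association descends to a bijection between $AGL(2,\ZZ)$-congruence classes of Delzant polytopes and equivariant symplectomorphism classes of symplectic toric manifolds (where on the target side one also allows reparametrization of the torus action by automorphisms of $\mb{T}$). Applying this to our two toric actions, we obtain equivariant symplectomorphisms $(X,\omega,\rho_1) \simeq (X_{\Delta_1}, \omega_{\Delta_1}, \rho_{\Delta_1})$ and $(X,\omega,\rho_2) \simeq (X_{\Delta_2}, \omega_{\Delta_2}, \rho_{\Delta_2})$, while the $AGL(2,\ZZ)$-congruence between $\Delta_1$ and $\Delta_2$ produces an equivariant symplectomorphism between the two standard models (twisted by the automorphism $h$ of $\mb{T}$ induced by $A$). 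Composing the three identifications yields a symplectomorphism $\phi \co X \to X$ and an automorphism $h$ fitting into the commutative diagram of Definition \ref{def:toric equivalence}, establishing $\rho_1 \sim \rho_2$.

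The main conceptual point—already handled by Delzant's theorem—is the uniqueness of the symplectic toric manifold associated to a given polytope, so no serious obstacle remains; the work is just bookkeeping the role of the torus automorphism $h$ on both sides. One minor care point is that Delzant's original theorem is often stated modulo \emph{choices of torus parametrization}, so one must explicitly track the element $A \in GL(2,\ZZ)$ through the argument to ensure that it is realized by the automorphism $h$ appearing in the definition of equivalence, rather than absorbed silently.
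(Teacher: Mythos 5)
Your proof is correct and follows exactly the route the paper intends: the paper states this proposition as an immediate consequence of Delzant's classification (citing \cite{KaKePi07-finite} and \cite{Delzant}) and gives no independent argument, and your write-up is the standard unpacking of that deduction — the moment-map transformation law $\mu_2\circ\phi=(dh^*)^{-1}\circ\mu_1+c$ for the forward direction and Delzant uniqueness, with the $GL(2,\ZZ)$ element realized by the torus automorphism $h$, for the converse. Both computations check out, and your care point about not silently absorbing $A$ into the torus parametrization is exactly the right thing to watch.
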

	
	Let $ \Delta  $ be the moment polytope of a symplectic toric manifold $ (X,\omega,\mathbb{T},\mu) $ and $ e $ be an edge in the boundary $ \partial \Delta  $. Then the preimage $ \mu^{-1}(e) $ is a symplectic sphere $ C_e $. So the preimage $ D=\mu^{-1}(\partial \Delta) $ is a cycle of symplectic spheres intersecting $ \omega $-orthogonally, which we call it the boundary divisor of $ \Delta  $. Actually the Poincare dual of $ D $ is $ c_1(X,\omega ) $ (\cite{Sym02}, Proposition 8.2), so it is a symplectic log Calabi-Yau divisor.
	
	We can read off the symplectic area and self-intersection of each component of $ D $ from the moment polytope as follows. Let $ \bar{e}=re_0 $ be a vector based at origin representing $ e $, with $ r>0 $ and $ e_0 $ a primitive vector. Then the symplectic area of $ C_e $ is $ \omega([C_e])=r $, which is also called the affine length of $ e $. Suppose $ e_1,e_2 $ are the two edges intersecting $ e $ and denote by $ n_1,n_2,n $ the inward unit normal vector along $ e_1,e_2,e $ respectively. Then the self-intersection of $ C_e $ is equal to the unique integer $ s $ such that $ n_1+n_2-sn=0 $.

	\subsection{Moment polytope and toric symplectic log Calabi-Yau divisors}\label{section:toric action}
	
	The main result of the subsection is the following construction of a moment polytope from a toric symplectic log Calabi-Yau pair, which leads to the proof of Theorem \ref{thm:toric=tLCY} at the end of this subsection.
	\begin{prop}\label{prop:CY to toric}
		Given an $ \omega $-orthogonal toric symplectic Calabi-Yau divisor $ D\subset (X,\omega) $, there is a toric action on $ (X,\omega ) $ such that $ D $ is the boundary divisor. In particular, the map \[
		\mc{T}(X,\omega)\to t\mc{LCY}(X,\omega)
		\]
		taking a toric action to its boundary divisor is a surjection.
	\end{prop}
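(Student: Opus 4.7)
The plan is to first construct, by hand, a symplectic toric manifold $(X',\omega',\mathbb{T},\mu)$ whose boundary divisor $D':=\mu^{-1}(\partial\Delta)$ satisfies $s(D')=s(D)$ and $a(D')=a(D)$, and then to transfer its toric action onto $(X,\omega)$ using the iso-tautness of toric divisors. By Lemma \ref{lemma:toric=toric blow up}, $D$ is obtained from a minimal toric divisor $D_0\subset(X_0,\omega_0)$ (in $\CP^2$, $\CP^2\#\overline{\CP^2}$, or $S^2\times S^2$) by a sequence of toric blow-ups, and I would record both the combinatorial type and symplectic size of each blow-up along the way. Each minimal case admits a standard Delzant polygon (triangle, trapezoid, rectangle) whose edge lengths may be chosen so that its boundary divisor has area sequence $a(D_0)$; applying the corresponding sequence of corner-chops to this polygon, with the prescribed affine lengths, produces a Delzant polygon $\Delta$, and Delzant's theorem then yields the required $(X',\omega',\mathbb{T},\mu)$ with $D'$ satisfying the desired self-intersection and area sequences.

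To transfer the toric action, I would invoke Lemma \ref{lemma:taut}: since $D$ and $D'$ are toric with identical self-intersection and area sequences, its proof produces a diffeomorphism $\Phi\colon X'\to X$ with $\Phi^*[\omega]=[\omega']$ and $\Phi_*[C_i']=[C_i]$. Because any two cohomologous symplectic forms on a rational surface lie in a common path-component of the symplectic cone, Moser's stability upgrades $\Phi$ to a symplectomorphism $\Phi_1\colon(X',\omega')\to(X,\omega)$ still realizing $(\Phi_1)_*[C_i']=[C_i]$. Now $\Phi_1(D')$ and $D$ are both $\omega$-orthogonal symplectic log Calabi-Yau divisors in $(X,\omega)$ with identical homology configurations, so Proposition \ref{prop:torelli} yields a strict symplectic deformation equivalence between them, and Proposition \ref{prop:def to symp} then produces $\sigma\in\Symp(X,\omega)$ with $\sigma(\Phi_1(D'))=D$. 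Pulling back the $\mathbb{T}$-action on $(X',\omega')$ via $(\sigma\circ\Phi_1)^{-1}$ gives a toric action on $(X,\omega)$ whose boundary divisor is exactly $D$, which establishes surjectivity of the natural map $\mc{T}(X,\omega)\to t\mc{LCY}(X,\omega)$.

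The main technical obstacle I foresee is verifying that the iterated corner-chop construction can always be carried out with the prescribed affine sizes while keeping every intermediate polygon Delzant, i.e. with strictly positive edge lengths at every stage. The crucial observation that dissolves this worry is that each intermediate polygon is the moment image of a partial toric blow-down of $(X,\omega,D)$; the $\omega$-positivity of every component of $D$, and of each component of every toric blow-down of $D$, then translates directly into positivity of every intermediate edge length, so the Delzant conditions are preserved automatically throughout the construction.
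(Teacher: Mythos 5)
Your proposal is correct, and the transfer step (iso-tautness via Lemma \ref{lemma:taut}, then Proposition \ref{prop:torelli} and Proposition \ref{prop:def to symp} to upgrade to a symplectomorphism carrying $D'$ to $D$, then pulling back the torus action) is essentially identical to the paper's. Where you genuinely diverge is in how the Delzant polygon is produced. The paper constructs it in one shot from the pair $(s(D),a(D))$: it defines a ``generating set'' of primitive vectors $d_i$ by the recursion $d_{i+1}=-s_id_i-d_{i-1}$, and then proves the chain $Q_{i+1}=Q_i+a_id_i$ closes up ($\sum a_id_i=0$, via the GS-type solution of $Q_Dz=a$), is unimodular ($\det(d_i,d_{i+1})=1$), is convex, and winds around the origin exactly once (Lemmas \ref{lemma:continued fraction}, \ref{lemma:delzant}, \ref{lemma:winding = 1}, the first two resting on continued-fraction identities). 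You instead fix a blow-down sequence from Lemma \ref{lemma:toric=toric blow up}, start from a standard polygon for the minimal model with edge lengths $a(D_0)$, and perform iterated corner chops of the recorded sizes; positivity of all intermediate edge lengths follows because they agree, by construction, with the area sequences of the intermediate symplectic divisors, and unimodularity at new vertices is automatic for a corner chop. Your route buys a construction in which convexity, the closing-up condition, and the Delzant conditions never need to be checked globally --- they are inherited at each inductive step --- at the cost of choosing a blow-down sequence and bookkeeping the sizes; the paper's route buys an explicit, choice-free formula for the polygon directly from $(s(D),a(D))$, which is also what makes the injectivity argument in the proof of Theorem \ref{thm:toric=tLCY} clean. (Note that the paper's own proof of the winding-number lemma is likewise an induction over toric blow-ups from the minimal models, so the two arguments are cousins; yours simply pushes the entire construction into that induction.) One small point worth making explicit in your write-up: your intermediate ``partial blow-downs'' of $(X,\omega,D)$ exist as symplectic divisors independently of the polygon construction, so appealing to their area positivity is not circular --- you should say this, since as phrased it could be misread.
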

	
	Define a set of primitive vectors in $ \RR^2 $ as follows. Let $ d_1=\begin{pmatrix} 0\\-1 \end{pmatrix} $, $ d_2=\begin{pmatrix} 1\\0 \end{pmatrix} $ and define $ d_i=-s_{i-1}d_{i-1}-d_{i-2} $ for $ i=3,\dots,k $. Such set of vectors is called a \textbf{generating set} associated to $ s=s(D) $. Recall for real numbers $ b_1,\dots,b_k $, we define the continued fraction as \[ [b_1,\dots,b_k]=b_1-\cfrac{1}{b_2-\cfrac{1}{b_3-\cfrac{1}{\cfrac{\substack{\vdots}}{b_{k-1}-\cfrac{1}{b_k}}}}}. \]
	If we write $ d_i=\begin{pmatrix}x_i\\y_i\end{pmatrix} $ with $ gcd(x_i,y_i)=1 $, then it's easy to check $ -\dfrac{x_i}{y_i}=[s_2,\dots,s_{i-1}] $ for $ i=3,\dots,k $. So the vector $ d_i $ is determined up to sign by the continued fraction $ [s_2,\dots,s_{i-1}] $.
	\begin{rmk}
		Special care is needed when either $ x_i $ or $ y_i $ is $ 0 $. We take the computation of continued fraction to be entirely formal so that the appearance of $ 0 $ doesn't affect the outcome. If $ x_i=0 $, then $ y_i $ must be $ \pm 1 $ and vice versa. This is because $	x\pm \dfrac{1}{y}=\dfrac{0}{a} $ implies $ |x|=|y|=|a|=1 $ and $ x\pm \dfrac{1}{y}=\dfrac{a}{0} $ implies that $ y=0 $ and $ |a|=1 $. So each continued fraction corresponds uniquely to a primitive vector.
	\end{rmk}
	All indices below are taken to be modulo $  k $.
	\begin{lemma}\label{lemma:continued fraction}
		When $ D $ toric, we have $ d_{i-1}+s_id_i+d_{i+1}=0 $ for all $ i=1,\dots,k $.
	\end{lemma}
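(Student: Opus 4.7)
My plan is to first observe that the content of the lemma concentrates on two ``closure'' identities. By rearranging the defining recursion $d_i = -s_{i-1} d_{i-1} - d_{i-2}$, the relation $d_{i-1} + s_i d_i + d_{i+1} = 0$ holds automatically for all interior indices $i = 2, \ldots, k-1$. The real content is therefore the two remaining identities at $i = 1$ and $i = k$, namely
\[
d_k + s_1 d_1 + d_2 = 0 \quad \text{and} \quad d_{k-1} + s_k d_k + d_1 = 0,
\]
which express that the linear recursion, when extended cyclically, closes up correctly. Equivalently, writing $A_i = \begin{pmatrix} -s_i & 1 \\ -1 & 0 \end{pmatrix}$, we want $A_1 A_2 \cdots A_k = I$.

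The natural strategy is induction on the number of toric blow-ups, using Lemma \ref{lemma:toric=toric blow up} which says any toric symplectic log Calabi-Yau divisor is an iterated toric blow-up of one of the minimal toric models B3, C4, D4 of Theorem \ref{thm:minimal model}. For the base case, I would verify the two closure relations directly for the self-intersection sequences $(1,1,1)$, $(2b, 0, -2b, 0)$, and $(2a+1, 0, -2a-1, 0)$. For $(1,1,1)$ one computes $d_3 = (-1, 1)$ and checks $d_3 + d_1 + d_2 = 0$ and $d_2 + d_3 + d_1 = 0$; the two minimal cases on $S^2 \times S^2$ and $\CP^2\#\overline{\CP^2}$ are equally explicit.

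For the inductive step, a toric blow-up at the intersection of $C_j$ and $C_{j+1}$ yields a new sequence $s'$ of length $k+1$ obtained by inserting $-1$ between $s_j$ and $s_{j+1}$ and decreasing each of those entries by $1$. Since the closure identities are invariant under the global $GL(2, \Z)$-action on all $d_i$, we may cyclically relabel so the blow-up occurs away from the boundary, say $1 \le j \le k-1$. Then the new vectors $d'_i$ are readily computed from the recursion: $d'_i = d_i$ for $i \le j$, $d'_{j+1} = d_j + d_{j+1}$, $d'_{j+2} = d_{j+1}$, and $d'_i = d_{i-1}$ for $i \ge j+3$. In particular $d'_{k+1} = d_k$ and $s'_{k+1} = s_k$, $s'_1 = s_1$, so the two closure identities for $s'$ pull back to exactly the two closure identities for $s$. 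The main obstacle is mostly bookkeeping: carefully tracking how the closure relations transform under blow-ups that cross the ``boundary index'' $k$--$1$, which is handled cleanly by invoking the cyclic/$GL(2,\Z)$-invariance at the outset.
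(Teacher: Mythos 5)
Your proof is correct, and its overall skeleton matches the paper's: reduce to the two boundary identities at $i=1$ and $i=k$ (the interior cases being the defining recursion), verify them on the minimal toric models B3, C4, D4, and show they are preserved under toric blow-up. Where you diverge is in the device used for the last step. The paper translates the closure identities into the vanishing of the continued fractions $[s_2,\dots,s_k]$ and $[s_1,\dots,s_{k-1}]$ and proves blow-up invariance via the splitting property and the identity $[x,y]=[x-1,-1,y-1]$; you instead compute directly from the recursion how the generating vectors transform under a blow-up ($d'_{j+1}=d_j+d_{j+1}$, the rest shifting), which is in fact the content of the paper's subsequent Lemma \ref{lemma:winding = 1} --- there derived \emph{from} this lemma, but in your argument derived independently from the recursion, so there is no circularity. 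Your matrix reformulation $A_1\cdots A_k=I$ is a genuine gain: it makes the cyclic invariance needed to move the blow-up away from the index boundary transparent (since $PQ=I$ iff $QP=I$), and it sidesteps the degenerate cases $x_i=0$ or $y_i=0$ that force the paper to treat continued fractions formally in a separate remark. One small imprecision: the justification for cyclic relabelling should be the cyclic invariance of the product condition $A_1\cdots A_k=I$, not invariance of the closure identities under a ``global $GL(2,\ZZ)$-action on all $d_i$'' --- a global linear action does not permute indices, and the generating set of a cyclically shifted sequence is not a priori the linear image of the shifted original generating set. Since you state the matrix formulation explicitly, this is a wording issue rather than a gap.
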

	\begin{proof}
		It suffices to prove the case for $ i=1,k $. Note that \begin{align*}
		d_{k-1}+s_kd_k+d_1=0 &\quad\Leftrightarrow\quad  [s_2,\dots,s_k]=0\\
		d_k+s_1d_1+d_2=0 & \quad\Leftrightarrow\quad  [s_1,\dots,s_{k-1}]=0
		\end{align*}
		We claim that when $ D $ is toric, $ [s_{i+1},s_{i+2},\dots,s_k,s_1,\dots,s_{i-1}]=0 $ for all $ i $. It is easily checked to be true when $ s=(1,1,1) $ or $ s=(0,n,0,-n) $. So it suffices to prove that the continued fraction is invariance under toric blow-up, i.e. \[
		[b_1,\dots,b_i-1,-1,b_{i+1}-1,\dots,b_k]=[b_1,\dots,b_k].
		\]
		Since continued fractions splits like $ [b_1,\dots,b_k]=[[b_1,\dots,b_j],[b_{j+1},\dots,b_k]] $, it suffices to prove $ [x,y]=[x-1,-1,y-1] $, which is an easy computation.
	\end{proof}
	\begin{lemma}\label{lemma:delzant}
		Let $ D $ be toric symplectic log Calabi-Yau divisor, $ a $ its area vector and $ \{ d_i\} $ its generating set. Then we have \begin{enumerate}[label=(\arabic*)]
			\item $ \sum a_id_i=0 $,
			\item $ \det (d_i, d_{i+1}):=\det \begin{pmatrix}
			x_i & x_{i+1}\\y_i & y_{i+1}
			\end{pmatrix} =1$.
			\item The angle between two consecutive vectors $ d_i, d_{i+1} $ is less than $ \pi $. In particular, $ d_{i+1} $ is determined by either one of $ d_i,d_{i+2} $ and the continued fraction $ [s_2,\dots,s_{i}] $.
		\end{enumerate}
	\end{lemma}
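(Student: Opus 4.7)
The plan is to prove (2) first by a straightforward induction, deduce (3) as an immediate consequence of (2), and then establish (1) by induction on the toric blow-up hierarchy given by Lemma \ref{lemma:toric=toric blow up}.

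For (2), the base case $\det(d_1, d_2) = \det\begin{pmatrix} 0 & 1 \\ -1 & 0 \end{pmatrix} = 1$ is immediate. For the inductive step, apply the recurrence $d_{i+1} = -s_i d_i - d_{i-1}$ and bilinearity of the determinant:
\[
\det(d_i, d_{i+1}) = \det(d_i, -s_i d_i - d_{i-1}) = -\det(d_i, d_{i-1}) = \det(d_{i-1}, d_i) = 1.
\]
The remaining cyclic instance $\det(d_k, d_1) = 1$ follows from the relation $d_k + s_1 d_1 + d_2 = 0$ supplied by Lemma \ref{lemma:continued fraction}, which gives $d_2 = -s_1 d_1 - d_k$ and hence $1 = \det(d_1, d_2) = \det(d_k, d_1)$.

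Part (3) is a direct corollary: $\det(d_i, d_{i+1}) = 1 > 0$ forces $d_i, d_{i+1}$ to be linearly independent and oriented so that $d_{i+1}$ is strictly counterclockwise from $d_i$, putting the oriented angle in $(0, \pi)$. For the ``in particular'' clause, the continued fraction $[s_2, \dots, s_i]$ determines the rational slope $-x_{i+1}/y_{i+1}$ and hence the primitive vector $d_{i+1}$ up to sign, while the angle constraint of the first part of (3) selects the sign uniquely given either one of $d_i, d_{i+2}$.

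For (1), induct on the number of toric blow-downs needed to reach a minimal model. The base cases are the three toric minimal models (B3), (C4), (D4) from Theorem \ref{thm:minimal model}, where direct computation of the normals via the recurrence together with the explicit area formulas (e.g.\ $a_i = b\omega(f_1) + \omega(f_2)$ etc.\ for (C4)) verifies $\sum a_i d_i = 0$. For the inductive step, suppose $D$ is the toric blow-up of $D'$ between components with normals $d_i, d_{i+1}$. The recurrence $d_i + (-1) d' + d_{i+1} = 0$ at the new $(-1)$-edge forces $d' = d_i + d_{i+1}$, while the area vector changes by $a_i \mapsto a_i - \epsilon$, $a_{i+1} \mapsto a_{i+1} - \epsilon$ and insertion of $a_E = \epsilon$. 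Then
\[
\sum_{\text{new}} a'_j d^{\text{new}}_j = \sum_j a_j d_j \; + \; \bigl(-\epsilon d_i + \epsilon(d_i + d_{i+1}) - \epsilon d_{i+1}\bigr) = \sum_j a_j d_j,
\]
so the identity propagates to the blow-up. The main obstacle is organizing the case-by-case check of the three minimal models cleanly, since each one uses a different parametrization of areas by the symplectic class; everything else is bookkeeping with the recurrence.
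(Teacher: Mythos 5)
Your proofs of (2) and (3) are essentially the paper's: the same base case $\det(d_1,d_2)=1$, the same one-line determinant computation using the recurrence, and the same $\sin\theta>0$ argument for the angle and the sign-disambiguation. For (1), however, you take a genuinely different route. The paper exploits $b^+(Q_D)=1$: by Proposition 5.13 of \cite{LiMaMi-logCYcontact} there exists $z\in\RR^k$ with $Q_Dz=a$, i.e.\ $a_i=z_{i-1}+s_iz_i+z_{i+1}$, and then
\[
\sum a_id_i=\sum z_i\,(d_{i-1}+s_id_i+d_{i+1})=0
\]
by Lemma \ref{lemma:continued fraction} --- a two-line algebraic argument with no case analysis, at the cost of importing the positive GS-criterion input. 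Your argument instead inducts over the toric blow-up tree of Lemma \ref{lemma:toric=toric blow up}: the verification on the minimal models B3, C4, D4 checks out (e.g.\ for C4 one gets $d_3=(0,1)$, $d_4=(-1,2b)$ and both coordinates of $\sum a_id_i$ cancel), and your blow-up step is correct because the inserted normal is $d_i+d_{i+1}$ while the area correction $-\epsilon d_i+\epsilon(d_i+d_{i+1})-\epsilon d_{i+1}$ vanishes. The only thing to make explicit is that the unchanged normals really are unchanged and the inserted one really is $d_i+d_{i+1}$ relative to the new labeling; this is exactly Lemma \ref{lemma:winding = 1} (whose proof is independent of the present lemma, so no circularity), and you should cite it rather than assert the recurrence at the new $(-1)$-edge in passing. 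Your approach is more elementary and self-contained; the paper's is shorter and generalizes immediately to any divisor with $b^+(Q_D)=1$ rather than only toric ones.
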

	\begin{proof}
		Since $ b^+(D)=1 $, there exists $ z\in \RR^k $ such that $ Q_Dz=a $ (\cite{LiMaMi-logCYcontact}, Proposition 5.13), i.e. $ a_i=z_{i-1}+s_iz_i+z_{i+1} $ for all $ i $. Then we have \begin{align*}
		\sum a_id_i=&\sum (z_{i-1}d_i+z_{i-1}d_i+z_is_id_i)\\
		=&\sum z_i(d_{i-1}+s_id_i+d_{i+1})=0.
		\end{align*}
		
		Note when $ i=1 $, we have $ \det \begin{pmatrix}	0 & 1\\-1 & 0	\end{pmatrix}=1 $. Then (2) follows from \begin{align*}
		\det (d_i, d_{i+1})=\det (d_i, -s_id_i-d_{i-1}) =\det (d_i , -d_{i-1})=\det (d_{i-1}, d_i).
		\end{align*}
		For (3) suppose $ \theta $ is the angle between $ d_i,d_{i+1} $, then we have $ \sin \theta = \dfrac{\det (d_i, d_{i+1})}{||d_i||\cdot ||d_{i+1}||}>0 $ and thus $ \theta <\pi  $. Recall that $ d_i $ is determined up to sign by $ [s_2,\dots,s_i] $. The sign ambiguity disappears when $ d_i $ is determined because of the angle restriction.
	\end{proof}
	\begin{lemma}\label{lemma:winding = 1}
		Let $ \tilde{D} $ with self-intersection sequence $ \tilde{s}=(s_1,\dots,s_i-1,-1,s_{i+1}-1,\dots,s_k) $ be the toric blow-up of $ D $ with self-intersection sequence $ s=(s_1,\dots,s_k) $. Then the corresponding generating set $ \{ \tilde{d}_i \} $ has the property that $ \tilde{d}_j=d_j $ for $ j\le i $, $ \tilde{d}_{i+1}=d_i+d_{i+1} $ and $ \tilde{d}_j=d_{j-1} $ for $ j\ge i+1 $. In particular, if $ D $ is toric, then its associated generating set of vectors winds around the origin exactly once.
	\end{lemma}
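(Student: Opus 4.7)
My plan is to verify the recursive claim by direct computation on the defining recursion, and then use Lemma \ref{lemma:toric=toric blow up} together with the blow-up relation to obtain the winding statement.

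For the recursive identities, I would proceed by induction on $j$. Recall $\tilde{s}_j = s_j$ for $j\le i-1$, $\tilde{s}_i = s_i-1$, $\tilde{s}_{i+1}=-1$, $\tilde{s}_{i+2}=s_{i+1}-1$, and $\tilde{s}_j=s_{j-1}$ for $j\ge i+3$. Since $\tilde{d}_1=d_1$ and $\tilde{d}_2=d_2$ by definition and the recursion at step $j$ uses only $\tilde{s}_{j-1}$, the equality $\tilde{d}_j = d_j$ for $j\le i$ follows immediately from the agreement $\tilde{s}_{j-1}=s_{j-1}$ in that range. For $j=i+1$, I would compute directly
\[
\tilde{d}_{i+1}=-\tilde{s}_i\tilde{d}_i-\tilde{d}_{i-1}=-(s_i-1)d_i-d_{i-1}=(-s_id_i-d_{i-1})+d_i=d_{i+1}+d_i.
\]
Then $\tilde{d}_{i+2}=-(-1)(d_i+d_{i+1})-d_i=d_{i+1}$, and for $j\ge i+3$ an easy induction using $\tilde{s}_{j-1}=s_{j-2}$ yields $\tilde{d}_j=d_{j-1}$. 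This part is routine.

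For the winding statement, I plan to reduce to the minimal toric cases via Lemma \ref{lemma:toric=toric blow up}, which says any toric symplectic log Calabi-Yau divisor is an iterated toric blow-up of one of the three minimal toric models (B3), (C4), (D4), namely the divisors with self-intersection sequences $(1,1,1)$, $(0,n,0,-n)$, and their $\mathbb{CP}^2\#\overline{\mathbb{CP}^2}$ analogues. A direct computation of the generating sets in these base cases shows that the associated vectors wind around the origin exactly once. Then I would show the winding number is invariant under a toric blow-up: by the first part of the lemma, the effect of the blow-up on the cyclic sequence is to insert the vector $d_i+d_{i+1}$ between the consecutive vectors $d_i$ and $d_{i+1}$.

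The main (mild) obstacle is to argue that this insertion preserves the winding. The key input is Lemma \ref{lemma:delzant}(2)(3): we have $\det(d_i,d_{i+1})=1>0$, so the angle from $d_i$ to $d_{i+1}$ measured counterclockwise lies strictly in $(0,\pi)$, and consequently the positive combination $d_i+d_{i+1}$ lies strictly inside the open cone spanned by $d_i$ and $d_{i+1}$. Therefore inserting $d_i+d_{i+1}$ between $d_i$ and $d_{i+1}$ contributes no extra full turn to the cyclic sequence of angles, i.e., the winding number is unchanged. Combined with the base case computation and induction on the number of blow-downs in a minimal reduction of $D$, this gives the desired conclusion that the generating set winds around the origin exactly once.
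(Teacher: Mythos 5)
Your proof is correct and follows the same overall strategy as the paper's: verify the recursive identities, then reduce the winding claim to the minimal toric models via Lemma \ref{lemma:toric=toric blow up} and argue that each toric blow-up merely inserts $d_i+d_{i+1}$ between $d_i$ and $d_{i+1}$ without adding a turn. Two points of comparison. For the recursive identities, the paper routes through the continued-fraction characterization of the $d_j$ (Lemma \ref{lemma:continued fraction} and the blow-up invariance of $[s_2,\dots,s_i]$), first deducing $\tilde d_j=d_{j-1}$ for $j\ge i+3$ and then solving backwards for $\tilde d_{i+2}$ and $\tilde d_{i+1}$; you instead compute forward directly from the defining recursion $d_j=-s_{j-1}d_{j-1}-d_{j-2}$, which is more elementary and self-contained. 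One small correction there: at $j=i+3$ the relation $\tilde s_{j-1}=s_{j-2}$ that you invoke fails, since $\tilde s_{i+2}=s_{i+1}-1$, so the base case of that induction must be computed separately; it does close, because $-(s_{i+1}-1)d_{i+1}-(d_i+d_{i+1})=-s_{i+1}d_{i+1}-d_i=d_{i+2}$, after which $\tilde s_{j-1}=s_{j-2}$ genuinely holds for $j\ge i+4$ and the induction proceeds as you say. On the winding statement your argument is actually more complete than the paper's: the paper asserts without justification that the insertion ``doesn't add to the winding number,'' whereas you supply the missing step via $\det(d_i,d_{i+1})=1>0$ and the consequent containment of $d_i+d_{i+1}$ in the open cone spanned by $d_i$ and $d_{i+1}$, so that the two new angular increments sum to the old one.
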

	\begin{proof}
		The fact that $ \tilde{d}_j=d_{j-1} $ for $ j\ge i+3 $ follows easily from the blow-up invariance of continued fraction proved in Lemma \ref{lemma:continued fraction} and $ \tilde{d}_j=d_j $ for $ j\le i $ is trivial. Since $ [s_2,\dots,s_{i-1},s_i-1,-1]=[s_2,\dots,s_i] $ and $ \tilde{d}_{i+3}=d_{i+2} $, we have $ \tilde{d}_{i+2}=d_{i+1} $ and $ \tilde{d}_{i+1}=\tilde{d}_i+\tilde{d}_{i+2}=d_i+d_{i+1} $. For the second statement, let $ s $ be the self-intersection sequence of $ D $. As seen above, toric blow-up inserts a new vector to the generating set in between two vectors and doesn't add to the winding number. So it suffices to see when $ s=(1,1,1) $ or $ (0,n,0,-n) $, the associated vectors winds around the origin exactly once. But this is trivial because $ k\le 4 $ and angle between consecutive vectors is less than $ \pi  $.
	\end{proof}

	\begin{proof}[Proof of Proposition \ref{prop:CY to toric}]
		One can build a moment polytope for the induced toric action from divisor $ D $ as follows. Let $ Q_1=(0,0) $ and $ Q_{i+1}=Q_i+a_id_i $ for $ i=1,\dots,k-1 $. Lemma \ref{lemma:delzant} (1)(3) and Lemma \ref{lemma:winding = 1} imply that the polygonal chain with vertices $ Q_1,\dots,Q_k $ encloses a simple convex polygon $ P $. This polygon $ P $ is actually a Delzant polygon by Lemma \ref{lemma:delzant} (2). Then there is a symplectic toric manifold $ (X',\omega') $ with moment polytope $ P $ and boundary divisor $ D' $. From the construction we have $ (s(D'), a(D'))=(s(D),a(D)) $. So $ (X,\omega,D) $ is strictly homological equivalent to $ (X',\omega',D') $ by Lemma \ref{lemma:taut}. Since $ D $ is $ \omega $-orthogonal and $ D' $ is $ \omega' $-orthogonal, by Proposition \ref{prop:def to symp}, there is a symplectomorphism between $ (X,\omega,D) $ and $ (X',\omega',D') $. Composing this symplectomorphism with the moment map of $ (X',\omega') $, we get a toric action on $ (X,\omega) $ such that $ D $ is the boundary divisor by construction.
		

		%
	\end{proof}
	
	Recall that  $$ \mc{T}(X,\omega)=\{ \rho:T^2\to Ham(X,\omega ) \} / \sim^{t} $$ is the set of equivalence classes of toric actions on $ (X,\omega ) $ and  $$ t\mc{LCY}(X,\omega)=\{ D\in p\mathcal{LCY}(X,\omega)| D \text{ is a toric divisor} \}/ \sim^{s}$$
	is the set of strictly symplectic deformation classes of toric symplectic log Calabi-Yau divisors in $ (X,\omega) $. Now we are ready to establish the correspondence between $ \mc{T}(X,\omega) $ and $ t\mc{LCY}(X,\omega) $.
	
	
	\begin{proof}[Proof of Theorem \ref{thm:toric=tLCY}]

		Consider the map $ f:\mc{T}(X,\omega )\to t\mc{LCY}(X,\omega ) $, where for each toric action $ \rho:T^2\to \Symp(X,\omega)  $ with moment map $ \mu:X\to \RR^2 $ define $ f(\rho) $ to be the boundary divisor $ D $ of its moment polygon $ \mu(X) $. $ D $ is a cycle of symplectic spheres of length at least $ 3 $ and the homology class $ [D] $ is Poincare dual to the first Chern class $ c_1(X,\omega ) $. So $ (X,\omega,D) $ is a symplectic Looijenga pair. Take a small collar neighborhood $ R $ of the boundary in $ \mu(X) $, which lifts to a neighborhood $ P(D) $ of the divisor $ D $. We might assume the origin $ (0,0) $ is in the interior of $ \mu(X) $ by an affine translation. The outward radial vector field on $ \RR^2 $ lifts to a Liouville vector field near the boundary $ \partial P(D) $, which points into $ P(D) $. So $ (P(D),\omega) $ is a concave neighborhood of $ D $, which means $ D $ satisfies the positive GS criterion (\cite{LiMa14-divisorcap}) and must have $ b^+(Q_D)=1 $ (\cite{LiMaMi-logCYcontact}). The preimage of $ \mu(X)-Int(R) $ is diffeomorphic to $ D^2\times T^2 $. The boundary $ \partial P(D) $ is $ T^3 $ and thus $ q(D)=0 $ by Lemma \ref{lemma:T^3=toric}. So $ D $ is a toric symplectic log Calabi-Yau divisor.
		
		Let $ \rho,\rho' $ be two toric actions on $ (X,\omega) $ with boundary divisors $ D=f(\rho ) $ and $ D'=f(\rho') $.
		choose a cyclic labeling of both boundary divisors so that $ D=f(\rho)=\cup C_i $ and $ D'=f(\rho')=\cup C_i' $.
		Denote by $ (s,a) $ and $ (s',a') $ the self-intersection and area vectors of $ D $ and $ D' $ with respect some labeling. If $ \rho,\rho' $ are equivalent then their moment map images are $ AGL(2,\ZZ ) $-congruent (\cite{KaKePi07-finite}), which implies that $ (s,a) $ and $ (s',a') $ differ by cyclic and anti-cyclic permutations \cite{KaKePi14-count}. So $ D $ and $ D' $ are strictly symplectic deformation equivalent by Lemma \ref{lemma:taut} and $ f $ is well-defined.
		
		Suppose $ D , D'\in t\mc{LCY}(X,\omega) $ are strictly homological equivalent. Note $ (s,a) $ and $ (s',a') $ depend only on the homology classes of components of $ D $ and $ D' $. So they are the same up to cyclicly or anti-cyclicly relabeling the components. So the constructed moment polytopes in Proposition \ref{prop:CY to toric} are the same and the corresponding symplectic toric manifolds are equivariantly symplectomorphic. So $ f $ is injective.
		
		Again by Proposition \ref{prop:CY to toric}, $ f $ is also surjective and this finishes the proof.
	\end{proof}
	
	With Theorem \ref{thm:toric=tLCY}, properties of symplectic log Calabi-Yau divisors translate to properties of toric actions. Note that the proof of Theorem \ref{thm:stability} also implies the stability of toric symplectic log Calabi-Yau divisors. Combined with Theorem \ref{thm:toric=tLCY} we get the following stability result for toric actions. Recall that $ \mc{S}_\omega $ denotes the set of $ \omega $-symplectic sphere classes.
	\begin{corollary}
		Let $ X $ be a rational surface. Suppose $ \omega  $ and $ \omega' $ are two symplectic forms with $ \mc{S}_{\omega}=\mc{S}_{\omega'} $. Then \[ \mc{T}(X,\omega)=\mc{T}(X,\omega'). \]
	\end{corollary}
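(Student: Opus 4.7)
The plan is to combine Theorem~\ref{thm:toric=tLCY} with the stability result Theorem~\ref{thm:stability}. First, the bijection $\mathcal{T}(X,\omega) \cong t\mathcal{LCY}(X,\omega)$, together with its analogue for $\omega'$, reduces the problem to establishing $t\mathcal{LCY}(X,\omega) = t\mathcal{LCY}(X,\omega')$ as subsets of a common space of homological configurations on $X$. If both $\mathcal{T}(X,\omega)$ and $\mathcal{T}(X,\omega')$ are empty the claim is immediate, so I would assume one of them, say $\mathcal{T}(X,\omega)$, is nonempty. Picking any toric divisor $D = \bigcup C_i$ representing an element, every component class satisfies $[C_i] \in \mathcal{S}_\omega = \mathcal{S}_{\omega'}$, and since $\mathrm{PD}([D]) = c_1(X)$ one obtains $c_1 \cdot [\omega] = \sum_i \omega([C_i]) > 0$ and $c_1 \cdot [\omega'] = \sum_i \omega'([C_i]) > 0$. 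Both forms therefore satisfy the $c_1$-nef hypothesis required to invoke Theorem~\ref{thm:stability}.

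Next, I would apply Theorem~\ref{thm:stability} to conclude $\mathcal{LCY}(X,\omega) = \mathcal{LCY}(X,\omega')$, where by Proposition~\ref{prop:realization} both sides are canonically identified with $\mathcal{HLCY}$ via the underlying cyclic homological configuration $([C_1],\ldots,[C_k])$. The final observation is that the toric condition is purely homological: the charge $q(D) = 12 - k(D) - D^2 = 12 - k(D) - \sum_i [C_i]^2$ only depends on the length $k(D)$ and the self-intersection sequence, both of which are invariants of the cyclic homological configuration. Consequently the identification $\mathcal{LCY}(X,\omega) = \mathcal{LCY}(X,\omega')$ restricts to a bijection of toric subsets $t\mathcal{LCY}(X,\omega) = t\mathcal{LCY}(X,\omega')$, and then chaining with Theorem~\ref{thm:toric=tLCY} on both ends gives the desired equality $\mathcal{T}(X,\omega) = \mathcal{T}(X,\omega')$.

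This is essentially a bookkeeping argument; the real content already lives in Theorem~\ref{thm:toric=tLCY} and the proof of Theorem~\ref{thm:stability}. The only point that deserves a small verification — and the closest thing to an obstacle — is that the hypothesis $c_1 \cdot [\omega] > 0$ needed by Theorem~\ref{thm:stability} is \emph{not} assumed in the present corollary, so one must either dispatch the degenerate case where both sets of toric actions are empty (as above) or note that its automatic positivity follows from $\mathcal{S}_\omega = \mathcal{S}_{\omega'}$ once a single toric divisor is present.
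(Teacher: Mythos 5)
Your proof is correct and follows essentially the same route as the paper: the paper simply notes that the proof of Theorem~\ref{thm:stability} also gives stability of toric symplectic log Calabi-Yau divisors and combines this with Theorem~\ref{thm:toric=tLCY}. Your added care about the $c_1\cdot[\omega]>0$ hypothesis (which the corollary omits but Theorem~\ref{thm:stability} requires) is a small point the paper glosses over, and your treatment of it is sound.
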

	
	


	
	

	\subsection{Comparison with the counting results of Karshon-Kessler-Pinsonnault}
	In \cite{KaKePi07-finite}, \cite{KaKePi14-count} and \cite{KKP}, Karshon, Kessler and Pinsonnault first considered the question of counting inequivalent toric actions on symplectic four-manifolds. Their strategy relies on analyzing the combinatorics of Delzant polygons. Now since we have established the equivalence between toric divisors and toric actions, we could compare our counting results of divisors with their results.   
	
	Firstly, combining Theorem \ref{thm:toric=tLCY} with Corollary \ref{lemma:finiteness}, we recover the following finiteness result of toric actions by Karshon, Kessler and Pinsonnault.
	\begin{corollary}[\cite{KaKePi07-finite}]
		A fixed symplectic rational surface only admits finitely many inequivalent toric actions.
	\end{corollary}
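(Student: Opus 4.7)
The plan is to deduce the corollary directly from the two main structural results already available in the paper. By Theorem \ref{thm:toric=tLCY}, sending a toric action to its boundary divisor gives a bijection
\[
\mathcal{T}(X,\omega) \longleftrightarrow t\mathcal{LCY}(X,\omega),
\]
so the finiteness statement for toric actions is equivalent to the finiteness of $t\mathcal{LCY}(X,\omega)$ as a set.

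Next, I would observe that by Definition \ref{def: toric lcy} we have the inclusion $t\mathcal{LCY}(X,\omega) \subset \mathcal{LCY}(X,\omega)$. Thus it suffices to show $|\mathcal{LCY}(X,\omega)| < \infty$, and this is exactly the content of Corollary \ref{lemma:finiteness}. Stringing these facts together,
\[
|\mathcal{T}(X,\omega)| \;=\; |t\mathcal{LCY}(X,\omega)| \;\leq\; |\mathcal{LCY}(X,\omega)| \;<\; \infty.
\]

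There is essentially no obstacle here because the genuine work has already been done: the identification $\mathcal{LCY}(X,\omega) = \mathcal{HLCY}(X,\omega)$ via Proposition \ref{prop:realization}, the explicit list of allowed homology classes $\mathcal{H}_l$ coming from Lemma \ref{lemma:reduced induction}, and the area bound $\omega(A) \leq \omega \cdot c_1(X,\omega)$ for each component already force $\mathcal{HLCY}(X,\omega)$ to be finite, which via Theorem \ref{thm:toric=tLCY} immediately passes to toric actions. So the proof is a one-line invocation of the previously established bijection together with the finiteness corollary.
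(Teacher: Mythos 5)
Your proposal is correct and matches the paper's own argument exactly: the paper derives this corollary by combining Theorem \ref{thm:toric=tLCY} with Corollary \ref{lemma:finiteness}, using the inclusion $t\mathcal{LCY}(X,\omega)\subset\mathcal{LCY}(X,\omega)$ just as you do. Nothing is missing.
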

	
	By the general toric counting formula we could confirm the upper bound of toric actions given by Karshon-Kessler-Pinsonnault. This is  because according to Proposition \ref{prop:general count}:
	\begin{equation*}
	\begin{split}
	|t\mathcal{LCY}(M_l,\omega_{\delta_{1},\cdots,\delta_{l}})|&= \sum_{g\in{\mathcal{G}_l}}\dfrac{1}{2}\phi(F_l^4,g)\psi(\vec\delta,g)\\
	&=\sum_{g\in{\mathcal{G}_l},g(2)=0}\dfrac{1}{2}\phi(F_l^4,g)\psi(\vec\delta,g)+\sum_{g\in{\mathcal{G}_l},g(2)=1}\dfrac{1}{2}\phi(F_l^4,g)\psi(\vec\delta,g)\\
	&\leq\sum_{g\in{\mathcal{G}_l},g(2)=0}\dfrac{1}{2}\phi(F_l^4,g)\lceil\dfrac{\delta_1}{1-\delta_1}\rceil+\sum_{g\in{\mathcal{G}_l},g(2)=1}\dfrac{1}{2}\phi(F_l^4,g)\lceil\dfrac{\delta_1-\delta_2}{1-\delta_1}\rceil\\
	&=\dfrac{1}{2}(F_l^4(1)-2)F_l^4(2)\cdots F_l^4(l-1)\lceil\dfrac{\delta_1}{1-\delta_1}\rceil+\dfrac{1}{2}2F_l^4(2)\cdots F_l^4(l-1)\lceil\dfrac{\delta_1-\delta_2}{1-\delta_1}\rceil\\
	&=\dfrac{(l+2)!}{4!}\lceil\dfrac{\delta_1}{1-\delta_1}\rceil+\dfrac{(l+2)!}{4!}\lceil\dfrac{\delta_1-\delta_2}{1-\delta_1}\rceil
	\end{split}
	\end{equation*}

	\begin{corollary}[\cite{KaKePi14-count}]\label{cor:kkp upper bound}
		The number of toric actions on $ (M_l,\omega ) $ is at most \[
		(\lceil\dfrac{\delta_1}{1-\delta_1}\rceil +\lceil \dfrac{\delta_1-\delta_2}{1-\delta_1}\rceil)\cdot \dfrac{(l+2)!}{4!},
		\]
		where $ [\omega]= H-\delta_1E_1-\dots - \delta_lE_l  $ is a normalized reduced symplectic class.
	\end{corollary}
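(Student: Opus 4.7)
The plan is to combine Theorem \ref{thm:toric=tLCY} with the general toric counting formula in Proposition \ref{prop:general count}, and then perform the bookkeeping that is sketched in the display preceding the corollary. First, by Theorem \ref{thm:toric=tLCY} the number of equivalence classes of toric actions on $(M_l,\omega)$ equals $|t\mc{LCY}(M_l,\omega)|$, so it suffices to bound the latter. Second, Proposition \ref{prop:general count} provides the exact formula
\[
|t\mathcal{LCY}(M_l,\omega_{\delta_1,\dots,\delta_l})|=\sum_{g\in{\mathcal{G}_l}}\tfrac{1}{2}\phi(F_l^4,g)\psi(\vec\delta,g)
\]
when $\vec\delta$ is restrictive, and its final clause guarantees that the same right-hand side is a (strict) upper bound for arbitrary reduced $\vec\delta$. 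Thus one may work with this sum unconditionally.

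Next I would split the sum according to the value $g(2)\in\{0,1\}$. Using the definition of $\psi$ together with $\delta_i\ge 0$, one sees immediately that $\psi(\vec\delta,g)\le \lceil\delta_1/(1-\delta_1)\rceil$ when $g(2)=0$ (dropping all non-negative terms $g(i)\delta_i$, $i\ge 2$), and $\psi(\vec\delta,g)\le \lceil(\delta_1-\delta_2)/(1-\delta_1)\rceil$ when $g(2)=1$ (keeping only the $\delta_2$ contribution). The hypothesis that $\vec\delta$ is reduced ensures $1-\delta_1>0$, so these ceilings are well-defined.

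The remaining task is the purely combinatorial evaluation of the two partial sums of $\phi(F_l^4,g)$. For $f=F_l^4$ one has $f(i)-f(i-1)=1$ for all $i$, so by the case analysis in the proof of Lemma \ref{lemma:relation} the $i=2$ factor equals $F_l^4(1)-2=2$ when $g(2)=0$ and equals $2$ when $g(2)=1$. For $i\ge 3$, summing the factor over $g(i)\in\{0,1\}$ yields $F_l^4(i-1)$ via the same identity. Therefore
\[
\sum_{g\in\mathcal{G}_l,\,g(2)=\epsilon}\tfrac{1}{2}\phi(F_l^4,g)=\tfrac{1}{2}\cdot 2\cdot F_l^4(2)F_l^4(3)\cdots F_l^4(l-1)=5\cdot 6\cdots(l+2)=\frac{(l+2)!}{4!}
\]
for each $\epsilon\in\{0,1\}$, and assembling the two halves produces exactly the claimed bound.

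There is no serious obstacle beyond verifying the two ceiling inequalities and tracking the $i=2$ factor under the splitting $g(2)=0$ vs.\ $g(2)=1$; essentially all of the substantive work has already been absorbed into Proposition \ref{prop:general count} and Theorem \ref{thm:toric=tLCY}. The one place where a careless step could fail is forgetting that the Proposition only gives equality in the restrictive region—but since its non-restrictive statement explicitly asserts the formula remains an upper bound, the argument above applies to every reduced $\vec\delta$, and in particular the bound recovers the estimate of Karshon--Kessler--Pinsonnault for arbitrary $[\omega]$.
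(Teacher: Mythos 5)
Your proposal is correct and follows essentially the same route as the paper: identify toric actions with toric log Calabi-Yau divisors via Theorem \ref{thm:toric=tLCY}, apply the formula of Proposition \ref{prop:general count} (using its final clause to cover non-restrictive reduced classes), split the sum over $g(2)\in\{0,1\}$, bound $\psi(\vec\delta,g)$ by the two ceilings, and evaluate each partial sum of $\tfrac{1}{2}\phi(F_l^4,g)$ to $\tfrac{(l+2)!}{4!}$. The bookkeeping matches the displayed computation preceding the corollary exactly.
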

	\begin{rmk}
		By Proposition \ref{prop:M3},  for $ \CC\PP^2\# 3\overline{\CC\PP}^2 $ with $ \delta_1=\delta_2>\delta_3  $, the count is $ 3 $.  Explicitly, up to equivalence, these are
		\begin{align*}
		&(H-E_2-E_3,E_3,E_2-E_3,H-E_1-E_2,E_1,H-E_1)\\
		&(E_3,H-E_2-E_3,E_2,H-E_1-E_2,E_1,H-E_1-E_3)\\
		&(H-E_2,E_2-E_3,E_3,H-E_1-E_2-E_3,E_1,H-E_1)
		\end{align*}
		However, the count in \cite{KKP}  in this case is $5$ by Corollary 8.7 (2) and Example 8.9 (3). The error comes from not taking into account the symmetry of switching
		$E_1$ and $E_2$. Note there is a symplectomorphism switching $E_1$ and $E_2$ since $\delta_1=\delta_2$. In other words, $5$ is the result of counting the toric ones in $p\mathcal{LCY}$ but $3$ is the result of counting the toric ones in $\mathcal{LCY}$.
	\end{rmk}
	
	\subsection{Almost toric fibrations}\label{section:ATF intro}
	In this subsection, we introduce the basics of almost toric fibrations and their associated almost toric base diagrams. We refer to \cite{Sym02}, \cite{LeSy-ATF} and \cite{Evansnotes} for details.
	
	\begin{definition}[\cite{LeSy-ATF}, Definition 2.2]
		An almost toric fibration of a symplectic 4-manifold $ (X,\omega) $ is a Lagrangian fibration $ \pi:X \to B $ with only nodal and elliptic singularities. A toric fibration is a Lagrangian fibration induced by an effective Hamiltonian torus action.
		We denote by $ \mc{ATF}(X,\omega) $ the set of almost toric fibrations on $ (X,\omega) $ and write $ \mc{ATF}(X):=\bigcup \mc{ATF}(X,\omega) $, where the union ranges over all symplectic forms $ \omega $ of $ X $.
	\end{definition}
	The set $ B_0 $ of regular values of $ \pi $ carries an integral affine structure and there is a monodromy circling around a nodal singularity. With a suitable choice of cuts, we get an integral affine immersion $ B-\bigcup \text{cuts} \to \R^2 $.
	The image $ P $ of this immersion, together with nodal rays and nodes, is called an {\bf almost toric base diagram} representing the almost toric fibration, as an analogue to the moment polygon in the case of toric fibrations. The nodal rays are dotted rays representing the image of the cuts while the nodes are marks on the rays and represent focus-focus singularities.
	
	There are three important surgery operations on the base diagram that fix the symplectomorphism type of the manifold.
	The first is a \textbf{nodal trade}, which replaces a neighborhood of a corank $ 2 $ elliptic singularity with a local model of a focus-focus singularity. The effect on the base diagram is that we insert a nodal ray from the toric vertex, with a node representing the focus-focus singularity.
	The second is a \textbf{nodal slide}, which moves the node along the ray on the base diagram (see Figure \ref{fig:nodaltrade}). Nodal trades and slides change the almost toric fibration but not the underlying manifold.
	The third operation is called a \textbf{mutation} with respect to a nodal ray. It changes the base diagram in the following way. The base diagram is sliced into two parts by the nodal ray. One part is unchanged while the other part is acted on by an affine transformation in $AGL(2,\Z)$. Mutations change only the base diagram but not the almost toric fibration (see Figure \ref{fig:mutation}).
	
	\begin{figure}[h]
		\centering
		\begin{minipage}{.5\textwidth}
			\centering
			\includegraphics[width=\linewidth]{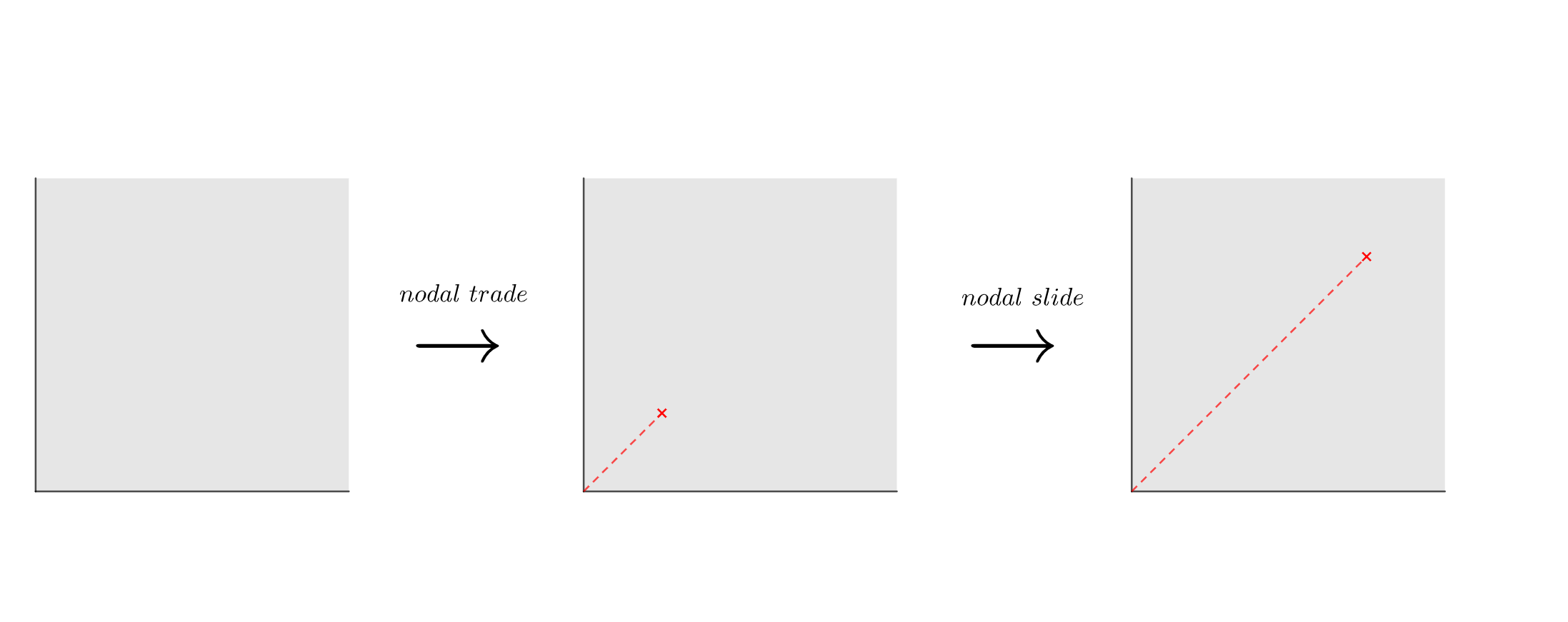}
			\caption{Nodal trade and nodal slide.\label{fig:nodaltrade}}
		\end{minipage}%
		\begin{minipage}{.5\textwidth}
			\centering
			\includegraphics[width=\linewidth]{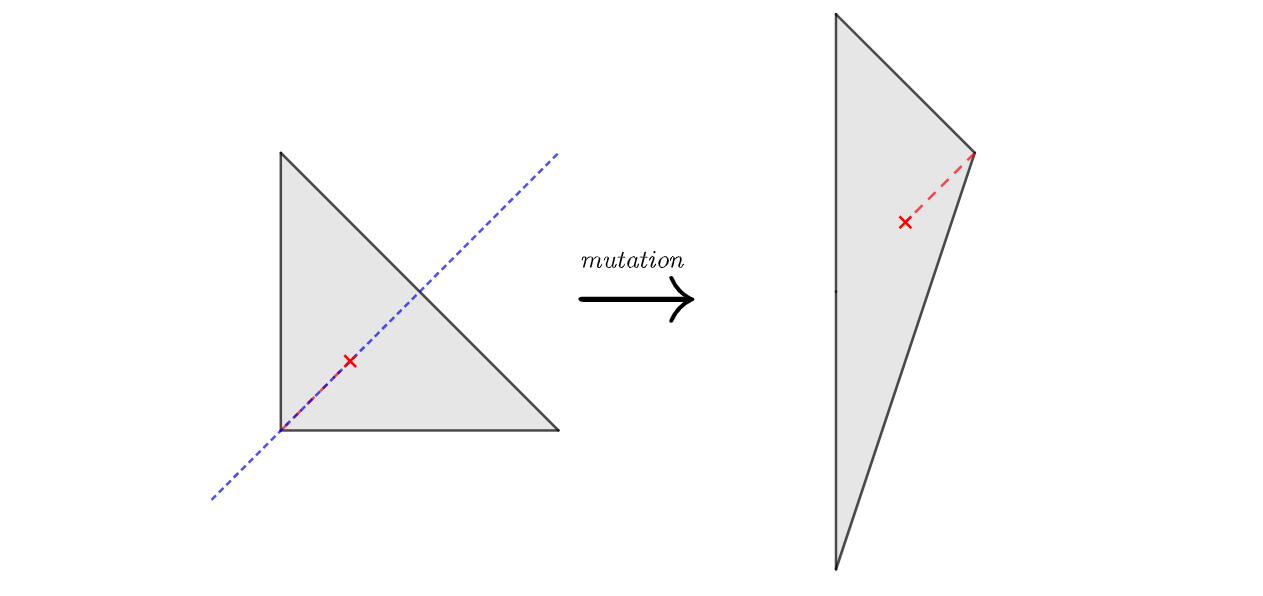}
			\caption{Mutation.\label{fig:mutation}}
		\end{minipage}
	\end{figure}

	Similar to the case of toric fibrations, the preimage $\pi^{-1}(\partial B)$ of the base is a symplectic divisor representing the Poincare dual of $ c_1(X,\omega) $ (Proposition 8.2 of \cite{Sym02}), i.e. a symplectic log Calabi-Yau divisor. We call it the boundary divisor of $ \pi $.
	
	As in toric fibrations, one can do toric blow-ups at elliptic corank $ 2 $ singularities of an almost toric fibration, which amounts to chopping corners centered at toric vertices on the base diagram (see Figure \ref{fig:toricblowup}).
	There is another fibration compatible way to blow up, called an \textbf{almost toric blow-up}. Consider an edge on the base diagram, which we may assume is in the $ (1,0) $-direction by an $ SL(2;\Z) $ transformation. An almost toric blow-up removes a right triangle with edge length $ \varepsilon $ and then adds a node at the top of the triangle with two dashed lines representing the cut (see Figure \ref{fig:nontoricblowup}). Since each edge represents a symplectic sphere, this has the effect of a non-toric blow-up at the corresponding symplectic sphere with size $ \varepsilon $.
	
	The smoothing operation introduced in Section \ref{section:operations} can also be realized as a nodal trade on the base diagram. Near the toric vertex, the boundary divisor comprises two transversely intersecting symplectic discs. After a nodal trade, the boundary divisor becomes a symplectic annulus which is a smoothing of the pair of discs.
	
	\begin{figure}[h]
		\centering
		\begin{minipage}{.5\textwidth}
			\centering
			\includegraphics[width=\linewidth]{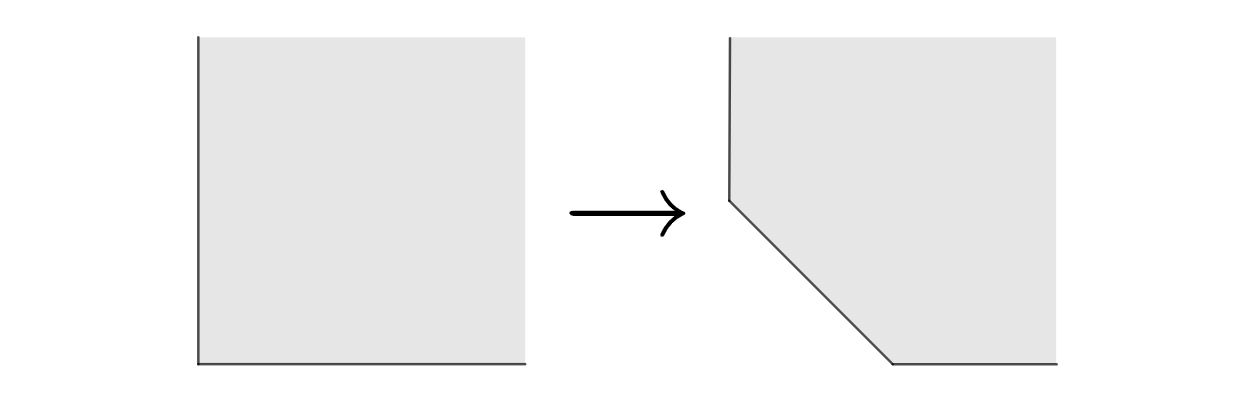}
			\caption{Toirc blow up.\label{fig:toricblowup}}
		\end{minipage}%
		\begin{minipage}{.5\textwidth}
			\centering
			\includegraphics[width=\linewidth]{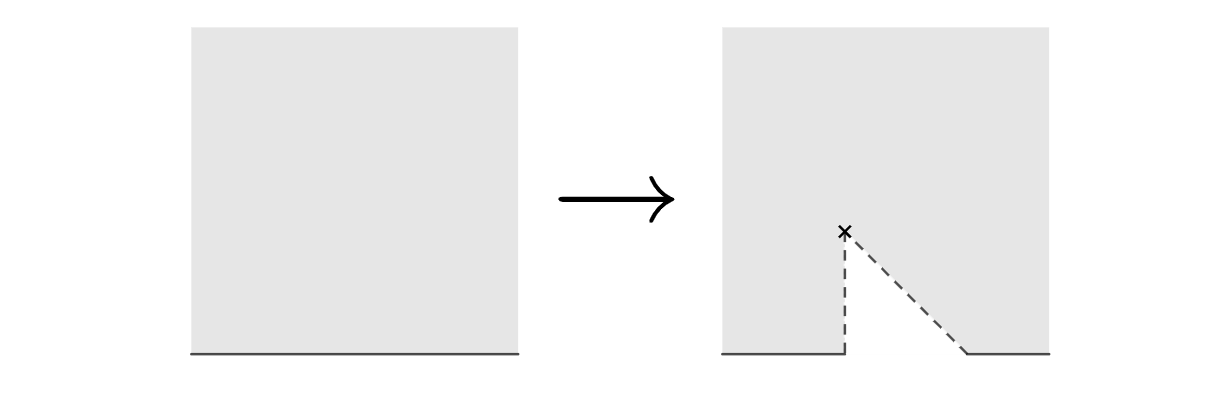}
			\caption{Almost toric blow up.\label{fig:nontoricblowup}}
		\end{minipage}
	\end{figure}
	
	\subsection{Realizing log Calabi-Yau divisors by ATF}
	For each almost toric fibration on a symplectic rational surface $ (X,\omega) $, its boundary divisor is a sympletic log Calabi-Yau divisor $ D $, which has a homology type $ (D) $. So we have a map \[
	\Phi:\mc{ATF}(X,\omega)\to \mc{LCY}(X,\omega),\]
	from the set of almost toric fibrations on $ (X,\omega) $ to the set of symplectic log Calabi-Yau divisors.
	
	In Engel's thesis (\cite{Engel-thesis}, Section 5.1 Part 1), he showed that every holomorphic log Calabi-Yau divisor is realized as the boundary divisor of some almost toric fibration. Since every deformation class of log Calabi-Yau divisor is realized by some holomorphic divisor, we have the following result.
	\begin{prop}[\cite{Engel-thesis}]\label{prop:engel}
		The map $ \tilde{\Phi}: \mathcal{ATF}(X)\to \widetilde{\mathcal{LCY}}(X) $ is surjective.
	\end{prop}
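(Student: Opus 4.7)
The plan is to reduce the symplectic statement to a holomorphic realization problem, then invoke Engel's construction. Given a deformation class $[(X,\omega,D)] \in \widetilde{\mathcal{LCY}}(X)$, the first step is to pass to a convenient representative. By Theorem \ref{thm: symplectic deformation class=homology classes}, every symplectic deformation class of symplectic log Calabi-Yau pairs contains a Kähler pair $(X, \omega_K, D_K)$, i.e.\ a holomorphic anticanonical pair equipped with its induced Kähler form. So we may replace the given $(X,\omega,D)$ by $(X,\omega_K, D_K)$ without leaving its class in $\widetilde{\mathcal{LCY}}(X)$.

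Next I would invoke the main construction from Engel's thesis (\cite{Engel-thesis}, Section~5.1, Part~1), which produces, for any holomorphic log Calabi-Yau pair $(X, D_K)$ on a rational surface, an almost toric fibration $\pi \co X \to B$ (with respect to some symplectic form cohomologous to $\omega_K$) whose boundary divisor $\pi^{-1}(\partial B)$ coincides with $D_K$ as a divisor in $X$. The strategy in Engel is to build the base diagram inductively by performing nodal trades, almost toric blow-ups, and toric blow-ups that mirror the operations relating $D_K$ to a toric minimal model; the combinatorics is controlled by the self-intersection data and the existence of toric models (cf.\ Theorem \ref{thm:minimal model} and Lemma \ref{lemma:toric=toric blow up}).

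Putting these two steps together, we obtain an almost toric fibration $\pi \in \mathcal{ATF}(X)$ with boundary divisor symplectically isotopic to $D_K$, and therefore representing the deformation class $[(X,\omega,D)]$ in $\widetilde{\mathcal{LCY}}(X)$. This gives $\tilde{\Phi}(\pi) = [(X,\omega,D)]$, proving surjectivity.

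The main obstacle is entirely contained in invoking Engel's theorem: one must verify that the local surgery moves used on the holomorphic side translate to the symplectic nodal trade / almost toric blow-up moves at the level of the base diagram, and that the resulting fibration's boundary divisor lies in the prescribed deformation class rather than merely the homological class. Since deformation equivalence and homological equivalence coincide by Theorem \ref{thm: symplectic deformation class=homology classes}, and since the operations listed in Section \ref{section:ATF intro} preserve the symplectic deformation type of the boundary divisor, this check is straightforward once Engel's construction is in place.
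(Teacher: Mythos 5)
Your proposal is correct and follows essentially the same route as the paper: pass to the Kähler representative guaranteed by Theorem \ref{thm: symplectic deformation class=homology classes}, then cite Engel's realization of every holomorphic log Calabi-Yau divisor as the boundary divisor of an almost toric fibration. The paper's proof is exactly this two-line reduction (it also notes, as an aside, that the statement can alternatively be deduced from Lemma \ref{lemma:smoothing} together with Theorem \ref{thm:toric=tLCY}, since smoothing corresponds to nodal trade).
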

	\begin{remark}
		The above Proposition can also be derived from Lemma \ref{lemma:smoothing}. This is simply because every element in $t\widetilde{\mathcal{LCY}}(X)$ has a toric realization by Theorem \ref{thm:toric=tLCY} and the smoothing operations in the category of $\mathcal{LCY}$ corresponds to the nodal trade operations in the category of $\mathcal{ATF}$.
	\end{remark}
	Motivated by Engel's result, we raise the following conjecture on the realization of symplectic log Calabi-Yau divisors by almost toric fibrations.
	\begin{conjecture}\label{conj:ATF}
		The map $ \Phi:\mc{ATF}(X,\omega)\to \mc{LCY}(X,\omega) $ is surjective.
	\end{conjecture}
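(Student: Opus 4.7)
The strategy is to leverage the correspondence between $\mathcal{LCY}(X,\omega)$ and its combinatorial description $\mathcal{HLCY}(X,\omega)$ established in Proposition \ref{prop:realization}, together with the inductive structure provided by toric symplectic log Calabi-Yau divisors and the correspondence between smoothings and nodal trades. The plan is to produce an almost toric fibration on $(X,\omega)$ by starting from a toric action and performing a sequence of nodal trades, arranging the parameters so that the resulting boundary divisor realizes the prescribed isotopy class of $D\in\mathcal{LCY}(X,\omega)$.

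First, given $D\in\mathcal{LCY}(X,\omega)$, we would use Lemma \ref{lemma:smoothing} to produce a toric divisor $D_0\in\widetilde{t\mathcal{LCY}}(X)$ such that $D$ is obtained from $D_0$ by a sequence of smoothing operations. By Theorem \ref{thm:toric=tLCY}, $D_0$ is realized as the boundary divisor of a toric action on some $(X,\omega_0)$, whose moment polytope is then an almost toric base diagram. Each smoothing in the sequence from $D_0$ to $D$ is modelled by a nodal trade on the base diagram, which preserves the symplectomorphism type of the underlying manifold. Composing these nodal trades produces an almost toric fibration on $(X,\omega_0)$ whose boundary divisor $\widetilde D$ has the same homological configuration as $D$.

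The main obstacle is that $\omega_0$ need not agree with $\omega$: Lemma \ref{lemma:smoothing} only asserts surjectivity at the level of $\widetilde{\mathcal{LCY}}(X)$, not at the level of $\mathcal{LCY}(X,\omega)$. Two approaches look promising for closing this gap. The first is to track symplectic parameters explicitly: the starting Delzant polytope has adjustable edge lengths constrained only by Delzant conditions, and each nodal trade introduces a size parameter equal to the area of the smoothed component. One would then show that these parameters can be tuned so that the area sequence of the resulting boundary divisor matches $a(D)$. The second is to appeal to the iso-tautness of toric divisors (Lemma \ref{lemma:taut}) combined with the stability result Lemma \ref{lemma:stability} to transport a realization across cohomologous symplectic forms while preserving the combinatorial type, together with Moser-type arguments to absorb the remaining discrepancy.

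The hard part is confirming that the parameter space of admissible nodal trade sizes is large enough to realize every $\omega$-area sequence arising from the homology type $(D)$. A reasonable working strategy is to induct on the number of blow-ups: the minimal model cases are already handled by Proposition \ref{prop:surjectivity conjecture minimal}, and if the conjecture holds for $(X',\omega')$, then one would try to deduce it for any symplectic blow-up $(X,\omega)$ by matching the almost toric blow-up operation on base diagrams with the toric/non-toric blow-ups of divisors used in Lemma \ref{lemma:induction}. The restrictive reduced cone regime treated in Section \ref{section:verify} should be a useful test case, since the explicit combinatorics developed in Section \ref{section:general counting} reduce the parameter-matching problem to a finite bookkeeping task that can plausibly be carried out class-by-class.
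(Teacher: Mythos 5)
This statement is a conjecture in the paper, not a theorem: the authors prove it only for $\CP^2$, $S^2\times S^2$, $\CP^2\#l\overline{\CP^2}$ with $l\le 3$ (Propositions \ref{prop:surjectivity conjecture minimal} and \ref{prop:surjective M3}) and for restrictive reduced classes (Proposition \ref{prop:proof of conjecture 5.3}). Your proposal does not close the general case, and its central mechanism fails for a reason the paper itself records. The plan rests on producing, for a given $D\in\mathcal{LCY}(X,\omega)$, a toric divisor in the \emph{same} $(X,\omega)$ from which $D$ is obtained by smoothings, so that nodal trades on its moment polygon yield the desired ATF. But Lemma \ref{lemma:smoothing} only works at the level of $\widetilde{\mathcal{LCY}}(X)$, where the symplectic form is allowed to vary, and the gap cannot be absorbed afterwards: nodal trades and nodal slides do not change the ambient symplectic form, so an ATF built on $(X,\omega_0)$ stays on $(X,\omega_0)$. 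For $l\ge 4$ the toric region $T_l$ is a proper subset of the reduced cone (e.g.\ $M_4$ with $\delta_1=\cdots=\delta_4<\tfrac14$ admits no toric divisor at all, yet $(H-E_1-\cdots-E_4,H,H)$ lies in $\mathcal{LCY}(M_4,\omega)$), so there is simply no toric starting point in $(X,\omega)$. Your two proposed repairs do not help here: iso-tautness and Lemma \ref{lemma:stability} transport realizations between forms with the same symplectic sphere classes or within a deformation class with positivity constraints, and Moser arguments require cohomologous forms; none of these turns an ATF on $(X,\omega_0)$ into one on $(X,\omega)$ when $[\omega]$ lies outside the toric region.

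Even when $[\omega]$ does lie in the toric region the strategy is still incomplete, because not every $D\in\mathcal{LCY}(X,\omega)$ is a smoothing of a toric divisor for that same form: the paper exhibits $(H_{12},E_2-E_3,E_1-E_2,H_1,H)$ in $M_3$ with $(\delta_1,\delta_2,\delta_3)=(\tfrac{6}{15},\tfrac{5}{15},\tfrac{4}{15})$, which is not obtained by smoothing any of the four toric configurations present for that class. This is exactly why the paper's proof for $M_3$ and for the restrictive region goes beyond nodal trades: it builds base diagrams from germs using almost toric blow-ups and mutations (Lemma \ref{lemma:ATFcorner}, Figures \ref{fig:ATF2}--\ref{fig:ATF5}), with the real work being the verification that triangles and parallelograms of the prescribed affine sizes embed disjointly in the diagram. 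Your closing suggestion to induct on blow-ups via Lemma \ref{lemma:induction} and almost toric blow-ups is the right direction and is essentially what the paper does in the restrictive region, but the parameter-matching step you defer is precisely where the argument currently cannot be pushed through for arbitrary $\omega$; as written, the proposal does not constitute a proof of the conjecture.
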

	
	Based on the observation that all log Calabi-Yau divisors in small rational manifolds are smoothing of toric log Calabi-Yau divisors and the fact that smoothing of boundary divisors can be realized by nodal trade in almost toric fibrations, we prove the following cases of Conjecture \ref{conj:ATF}.
	
	\begin{prop}\label{prop:surjectivity conjecture minimal}
		The map $ \mc{ATF} (X,\omega)\to \mc{LCY}(X,\omega) $ is surjective for $ X=\CP^2, S^2\times S^2 $, $ \CP^2\# \overline{\CP}^2 $ and $ \CP^2 \# 2\overline{\CP}^2 $.
		
	\end{prop}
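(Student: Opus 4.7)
The plan is to reduce the realization problem for general symplectic log Calabi-Yau divisors to the toric case, and then appeal to the fact that toric actions are tautologically almost toric fibrations. The key ingredients are Theorem \ref{thm:toric=tLCY} (toric LCY divisors arise as boundary divisors of toric actions), the fact that smoothing of LCY divisors corresponds to the nodal trade surgery on almost toric base diagrams (as explained in Section \ref{section:ATF intro}), and the explicit enumerations carried out for $M_l$ with $l\le 2$ in Lemma \ref{lemma:minimal surjectivity} and Propositions \ref{prop:general count M2}--\ref{cor:toric count M2}. We already have an analogue of the statement for the deformation-class space from Lemma \ref{lemma:smoothing} (or Proposition \ref{prop:engel}), so the real content is to upgrade this from $\widetilde{\mathcal{LCY}}(X)$ to $\mathcal{LCY}(X,\omega)$.

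First I would dispose of $\mathbb{CP}^2$, where $\mathcal{LCY}(\mathbb{CP}^2,\omega_{FS})=\{(3H),(2H,H),(H,H,H)\}$. The third is toric and realized by the standard moment triangle; the other two are smoothings of it at one or two vertices and are therefore realized by performing one or two nodal trades on the standard toric fibration. Next I would handle $S^2\times S^2$ and $\mathbb{CP}^2\#\overline{\mathbb{CP}^2}$ using the explicit lists of types $(1)$-$(4)$ from Lemma \ref{lemma:minimal surjectivity}: types $(4)$ are toric and realized by Theorem \ref{thm:toric=tLCY}, and types $(3),(2),(1)$ are obtained from type $(4)$ by performing $1,2,3$ smoothings at nodes respectively. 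Each smoothing is implemented as a nodal trade on the moment polygon, producing an almost toric fibration whose boundary divisor has exactly the required homological type.

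For $\mathbb{CP}^2\#2\overline{\mathbb{CP}^2}$ the argument is the same in spirit but requires more bookkeeping. Using the enumeration in the proof of Proposition \ref{prop:general count M2}, every homological configuration in $\mathcal{HLCY}(M_2,\omega)$ with $k(D)\ge 2$ is visibly the smoothing of one of the toric configurations counted in Corollary \ref{cor:toric count M2}: one checks directly that each length-$k$ sequence on the list is obtained from some length-$(k+1),(k+2),\dots$ toric sequence (realized as the boundary divisor of a toric action by Theorem \ref{thm:toric=tLCY}) by identifying specific nodes to smooth. The length-$1$ elliptic case is handled separately by starting from any toric fibration and performing nodal trades at every vertex. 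Since each smoothing on the LCY side lifts to a nodal trade on the almost toric side (preserving the symplectic form $\omega$ because nodal trade is a local surgery that can be performed at any desired scale inside a Darboux neighborhood of the toric vertex), the almost toric fibration obtained has boundary divisor in the prescribed strict homological class, so $\Phi$ hits every element of $\mathcal{LCY}(X,\omega)$.

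The main obstacle is verifying that the smoothing can actually be performed at the correct symplectic scale: a priori Lemma \ref{lemma:smoothing} produces a match only in $\widetilde{\mathcal{LCY}}(X)$, which forgets the symplectic form, whereas here we must keep $\omega$ fixed. The point is that the nodal trade is a purely local modification that does not change the symplectic isotopy class of the boundary divisor away from an arbitrarily small neighborhood of the vertex, so the resulting boundary divisor is strictly homologically equivalent to (hence, by Proposition \ref{prop:def to symp} applied after an $\omega$-orthogonalization, strictly symplectic deformation equivalent to) the target divisor in $\mathcal{LCY}(X,\omega)$. This local-to-global upgrade is what makes the argument work for these small $l$, and it is precisely the step that becomes difficult for higher $l$ where new toric divisors that cannot be smoothed from existing toric ones in the same $\omega$-class may appear.
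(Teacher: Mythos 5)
Your proposal is correct and follows essentially the same route as the paper's proof: identify $\mathcal{LCY}(X,\omega)$ with $\mathcal{HLCY}(X,\omega)$, check case by case (using the enumerations of Lemma \ref{lemma:minimal surjectivity} and Proposition \ref{prop:general count M2}) that every homological configuration is a smoothing of a toric configuration that is realizable for the same form $\omega$, and then implement each smoothing as a nodal trade on the corresponding toric fibration. The one point of emphasis to adjust is that the decisive step is not the ``local scale'' of the nodal trade but the index-matching verification that the toric configuration one smooths from actually lies in $t\mathcal{HLCY}(X,\omega)$ for the given $\omega$ (e.g.\ that the unique toric configuration with a $(-n)$-component in $M_2$ is $[\omega]$-positive whenever any configuration with a $(-n)$-component is) --- which you assert and which the paper checks explicitly.
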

	\begin{proof}
		It suffices to show every homological configuration $ (D)\in \mc{HLCY}(X,\omega) $ can be obtained from smoothing a toric homological configuration $ (\overline{D})\in t\mc{HLCY}(X,\omega) $. Then the corresponding almost toric fibration is obtained from the toric fibration of $ \overline{D} $ by nodal trades. Again it suffices to show this for normalized reduced symplectic classes. Also note that for $ \CP^2,S^2\times S^2 $ and $ \CP^2\# \overline{\CP}^2 $, all homological log Calabi-Yau divisors are listed in Theorem \ref{thm:minimal model}.
		
		(1) Consider $ X=\CP^2 $ with symplectic class $ [\omega_{FS}]=H $. Then \[ \mc{HLCY}(X,\omega_{FS})=\{ (3H), (2H,H),(H,H,H) \}\] and each of them is a smoothing of $ (H,H,H) $.
		
		(2) Consider $ X=S^2\times S^2 $ with a standard basis $ \{ B,F \} $ and a normalized reduced symplectic class $ [\omega_\mu]=B+\mu F $ with $ \mu\ge 1 $. Then $ t\mc{HLCY}(X,\omega_\mu)=\{ (B+kF,F,B-kF,F)\,|\, \mu>k\ge 0 \} $ and the non-toric homological log Calabi-Yau divisors are \[
		\{(B+(k+1)F,F,B-kF), (B+(k+2)F,B-kF)\,|\, \mu>k\ge 0\} \cup \{ (B+F,B+F),(2B+2F)\}.\]
		So it is clear they can be obtained from toric ones via smoothing.
		
		(3) The case for $ \CP^2\#\overline{\CP}^2 $ is similar to the case of $ S^2\times S^2 $.
		
		(4) Consider $ X=\CP^2\# 2\overline{\CP}^2 $ with a standard basis $ \{ H,E_1,E_2\} $ and a normalized reduced symplectic class $ [\omega] $.
		Suppose $ (D)\in \mc{HLCY}(X,\omega) $ a homological configuration which contains a $ (-n) $-component for some $ n\ge 2 $ . By Proposition \ref{prop:general count M2}, there is exactly one toric homological configuration $ (\overline{D}) $ with a $ (-n) $-component. Suppose $ n=2k $ for $ k\ge 1 $. Then the homological configuration $ (\overline{D}) $ is given by \[
		(H-E_1,kH-(k-1)E_1,H-E_1-E_2,E_2,-(k-1)H+kE_1-E_2).	
		\]
		Note that smoothing of $ (\overline{D}) $ gives all the possible homological log Calabi-Yau divisors with a $ (-n) $-component, as listed in the proof of Proposition \ref{prop:general count M2}:
		\begin{itemize}
			\item $ ((k+1)H-kE_1,H-E_1-E_2,E_2,-(k-1)H+kE_1-E_2) $,\\
			$ (H-E_1,kH-(k-1)E_1,H-E_1,-(k-1)H+kE_1-E_2) $,\\
			$ (H-E_1,(k+1)H-kE_1-E_2,E_2,-(k-1)H+kE_1-E_2)$
			\item $ ((k+2)H-(k+1)E_1-E_2,E_2,-(k-1)H+kE_1-E_2),\\((k+1)H-kE_1,H-E_1,-(k-1)H+kE_1-E_2) $
			\item $ ((k+2)H-(k+1)E_1,-(k-1)H+kE_1-E_2 )$.
		\end{itemize}
		In particular, $ (D) $ is a smoothing of $ (\overline{D}) $. The case of $ n=2k-1 $ for $ k\ge 1 $ is similar. They are all smoothings of the toric configuration \[
		(H-E_1, (k+1)H-kE_1-E-2, E_2, H-E_1-E_2, -kH+(k+1)E_1).\]
		
		The remaining homological log Calabi-Yau divisors are \begin{itemize}
			\item $ (2H-E_2,H-E_1) $,
			\item $ (3H-E_1-2E_2,E_2) $,
			\item $ (H,H-E_1-E_2,H) $,
		\end{itemize}
		which are $ [\omega] $-positive for any normalized reduced symplectic class $ [\omega] $. They are smoothings of the toric homological configuration \[
		(H-E_2,H-E_1,E_1,H-E_1-E_2,E_2),
		\]
		which is also $ [\omega] $-positive for all normalized reduced symplectic class.
	\end{proof}
	
	\begin{rmk}
		Not all symplectic log Calabi-Yau divisors can be obtained from smoothing toric ones. There could be some symplectic classes outside the toric cone so that they don't admit toric divisors. For example, $M_4= \CP^2\# 4\overline{\CP}^2 $ with reduced symplectic class $\delta_1=\delta_2=\delta_3=\delta_4<\frac{1}{4}$. There is no toric divisor in this symplectic class, but there are LCYs such as $ (H-E_1-\dots-E_4,H,H) $. Moreover, even if $[\omega]$ is in the toric cone, there could exsit some LCYs which don't come from the smoothing. For instance,  $M_3= \CP^2\# 3\overline{\CP}^2 $ with reduced symplectic class $(\delta_1,\delta_2,\delta_3)=(\frac{6}{15},\frac{5}{15},\frac{4}{15})$ which is in the region $(8)$ of Proposition \ref{prop:M3} for $i=1$. So the count of toric LCY is 4:
		\[(E_1-E_2,E_2,H_{12},H_3,E_3,H_{13})\]
		\[(E_1-E_3,E_3,H_{13},H_2,E_2,H_{12})\]
		\[(E_2-E_3,E_3,H_{23},H_1,E_1,H_{12})\]
		\[(H_{12},E_1,H_{13},E_3,H_{23},E_2)\]
		However $(H_{12},E_2-E_3,E_1-E_2,H_1,H)$ is a LCY under this symplectic class, which can not be obtained by smoothing the above toric LCYs.
	\end{rmk}
	
	As remarked above, the smoothing operation, which corresponds to nodal trade in ATF, is not enough to generate all the realization of LCY by ATF, even for $M_3$. Nevertheless, when we take almost toric blow up into consideration, we are able to construct ATF realization for divisors in $M_3$.
	\begin{prop}\label{prop:surjective M3}
		The map $ \mc{ATF} (X,\omega)\to \mc{LCY}(X,\omega) $ is surjective for $X=M_3$.
	\end{prop}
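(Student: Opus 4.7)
The strategy mirrors that of Proposition \ref{prop:surjectivity conjecture minimal} but supplements smoothings with almost toric blow-ups, since as the preceding remark notes, not every log Calabi-Yau divisor in $M_3$ arises from smoothing a toric one. The underlying principle is that nodal trade, toric blow-up at an elliptic corner, and almost toric blow-up on an edge of an ATF base diagram realize respectively the smoothing, toric blow-up, and non-toric blow-up of the boundary divisor, and each of these surgeries admits a free positive size parameter. Thus to realize a given $D\in\mc{HLCY}(M_3,\omega)$ by an ATF it suffices to exhibit $D$ as the output of such surgeries applied to a divisor already known to be ATF-realized.

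Given $D\in\mc{HLCY}(M_3,\omega)$ of cyclic length $k$, I would apply Lemma \ref{lemma:reduced induction}. Whenever either the non-toric or toric blow-down case of the lemma applies (in particular for all $k\ge 3$, and for the $k=2$ cases that are not the special form), $D$ is exhibited as a toric or non-toric blow-up in the direction of $E_3$ of some $D'\in p\mc{HLCY}(M_2,\omega')$, where $\omega'$ is the blow-down form on $M_2$. Proposition \ref{prop:surjectivity conjecture minimal} supplies an ATF $\pi'$ of $(M_2,\omega')$ realizing $D'$; the edges and corners of the base diagram of $\pi'$ are in canonical bijection with the components and intersection points of $D'$, so one performs the reverse surgery at the location determined by this bijection: in the non-toric case an almost toric blow-up of size $\omega(E_3)$ on the edge labelled $A_i+E_3$, in the toric case a chopping of size $\omega(E_3)$ at the corner between the edges labelled $A_{i-1}+E_3$ and $A_{i+1}+E_3$. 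Since $\omega(A_j)>0$ for every component of $D'$, the required edge lengths exceed $\omega(E_3)$ and the surgery is admissible, producing an ATF of $(M_3,\omega)$ with boundary divisor in the prescribed class.

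The remaining case is the length-two special configuration $D=(3H-E_1-E_2-2E_3,\,E_3)$, whose two components meet at two points and which does not reduce to an $M_2$ divisor. I would realize it as an iterated smoothing of the length-six toric log Calabi-Yau divisor
\[
(H-E_1,\ E_1-E_2,\ E_2-E_3,\ E_3,\ H-E_1-E_2-E_3,\ H)
\]
in $M_3$, which by Theorem \ref{thm:toric=tLCY} is the boundary of a genuine toric fibration on the region of $N_3$ where all of its component areas are positive; four successive nodal trades at the nodes not adjacent to $E_3$ merge the cyclic sequence step by step into $D$, and each intermediate divisor has strictly positive component areas. For symplectic classes on the boundary of this subregion of $N_3$, relabelling the $E_i$ or appealing to Lemma \ref{lemma:stability} to deform within the chamber where $\mc{S}_\omega$ is constant reduces to the previous case. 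The main obstacle throughout is bookkeeping of sizes—ensuring at every surgery that the required affine length is available—but the size-freedom of each of the three surgeries, together with the reducedness of $\omega$, makes this a routine check rather than a genuine difficulty.
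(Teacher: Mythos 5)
Your overall strategy---blow down to $M_2$ via Lemma \ref{lemma:reduced induction}, realize the resulting divisor by an ATF using Proposition \ref{prop:surjectivity conjecture minimal}, then redo the blow-up as a corner chop or almost toric blow-up of size $\delta_3=\omega(E_3)$---is the same as the paper's. The gap is that the step you dismiss as routine bookkeeping is where essentially all of the paper's proof lives, and your justification for it is false. The ATF realizing $D'$ is obtained from the toric fibration of $(-kH+(k+1)E_1,H_1,(k+1)H-kE_1-E_2,E_2,H_{12})$ by nodal trades, so a component of $D'$ that is a smoothing of several toric components corresponds not to a single straight edge of the base diagram but to a broken chain of edges with branch cuts emanating from the junctions. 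The surgery triangle must sit inside one straight sub-segment and avoid the cuts, and the available sub-segment lengths are the areas of the individual \emph{toric} components, namely $1-\delta_1-\delta_2$, $\delta_2$, $-k+(k+1)\delta_1$, etc. On the walls of the reduced cone these equal $\delta_3$ (e.g.\ $\omega(H_{12})=\delta_3$ when $\delta_1+\delta_2+\delta_3=1$, $\omega(E_2)=\delta_3$ when $\delta_2=\delta_3$), and $-k+(k+1)\delta_1$ can be smaller still; positivity of $\omega(A_i)$ only controls the \emph{total} length $\omega(A_i)+\delta_3$ of the chain, not the length of any straight piece. This is exactly why the paper's proof must invoke mutations to straighten the smoothed corners (Figures \ref{fig:ATF3} and \ref{fig:ATF5}), reflections along $H_{123}$ or $E_2-E_3$ to replace $D$ by an equivalent divisor, and a separate argument for the configuration of Figure \ref{fig:ATF4}, where the mutation is obstructed because the eigenray hits another vertex. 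None of this is a routine size check.

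There is also a problem in your fallback for the exceptional configuration $(3H-E_1-E_2-2E_3,E_3)$. Your chosen toric ancestor $(H-E_1,E_1-E_2,E_2-E_3,E_3,H_{123},H)$ only exists when $\delta_1>\delta_2>\delta_3$ and $\delta_1+\delta_2+\delta_3<1$; on the walls, relabelling the $E_i$ cannot produce a positive-area $E_i-E_j$, and Lemma \ref{lemma:stability} only yields a divisor for a \emph{deformed} form $\omega'$---it provides no mechanism for transporting an almost toric fibration back to $(M_3,\omega)$, which is the whole point of the statement. The case is easily repaired: the toric divisor $(H_{12},E_1,H_{13},E_3,H_{23},E_2)$ has all components of positive area for every reduced class on $M_3$, and smoothing its four nodes away from $E_3$ yields $(3H-E_1-E_2-2E_3,E_3)$, so Theorem \ref{thm:toric=tLCY} plus four nodal trades handles all $\omega$ at once---but the argument as you wrote it does not close.
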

	
	\begin{proof}
		Suppose $D\in\mathcal{LCY}(M_3,\omega)$, $[\omega]$ is normalized and reduced with $\omega(E_i)=\delta_i$. Then $D$ is either the toric blow up or non-toric blow up of some $D'\in\mathcal{LCY}(M_2,\omega')$. By the previous proposition, $D'$ has ATF realization as the nodal trade of the toric one $$(-kH+(k+1)E_1,H_1,(k+1)H-kE_1-E_2,E_2,H_{12})$$ for some $k\in \ZZ$. Thus we can assume $D'$ is realized by the almost toric base diagram which is a Delzant polygon with branches at some vertices which could be sufficiently short by nodal slides. See Figure \ref{fig:ATF1}.
		
		\begin{figure}[h]
			\includegraphics*[width=\linewidth]{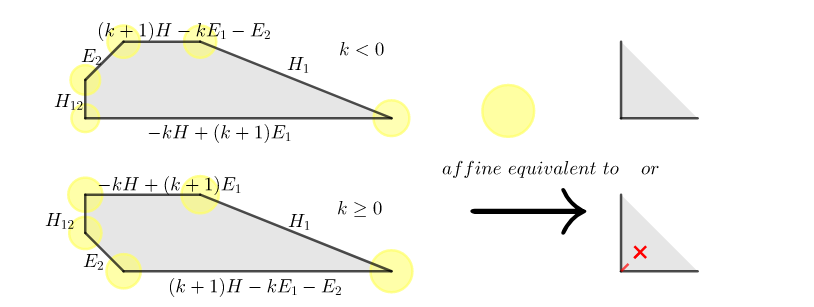}
			\caption{Possible ATF realizations for $M_2$.\label{fig:ATF1}}
		\end{figure}
		
		In the following, we will assume for $k\geq 0$. The case when $k<0$ is totally similar and will be omitted. Let's firstly consider the case when $D$ is the non-toric blow up on the component $C'$ of $D'$. Note that exactly one of the following two cases will happen:
		\begin{itemize}
			\item If $C'$ doesn't come from the smoothing of two (or more) toric components, or comes from the smoothing involving the component $H_1$ or $(k+1)H-kE_1-E_2$ in the toric divisors, we don't need to do mutations. By the reducedness conditions that $\delta_3\leq\delta_2,\delta_3\leq 1-\delta_1-\delta_2$,there always exists enough space for performing almost toric blow up, which realizes the non-toric blow up operation for LCYs. See Figure \ref{fig:ATF2}.
			
			\begin{figure}[h]
				\includegraphics*[width=\textwidth]{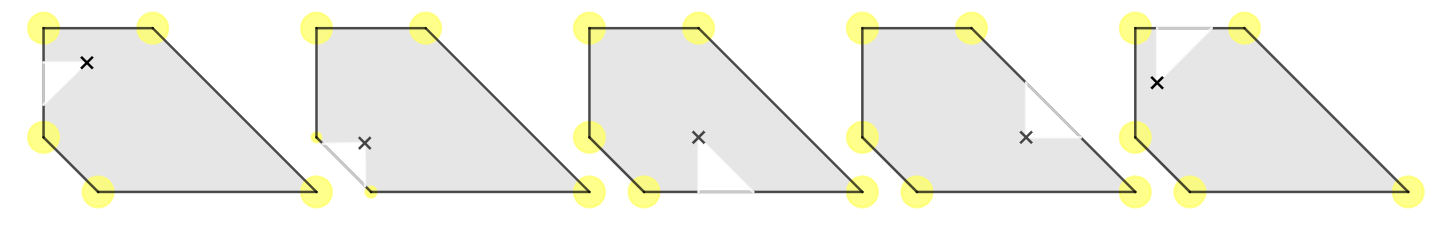}
				\caption{The case when the blow up size is smaller than the length of one edge.\label{fig:ATF2}}
			\end{figure}
			
			\item If $C'$ comes from the smoothing involving the component $H_{12}$ in the toric divisors, when $1=\delta_1+\delta_2+\delta_3$ we can use two types of mutations to create enough space for almost toric blow up. To be more precise, when $C'$ comes from the smoothing involving both $H_{12}$ and $E_2$, we can apply the mutation trick shown in the left graph of Figure \ref{fig:ATF3}; when $C'$ comes from the smoothing involving both $H_{12}$ and $-kH+(k+1)E_1$, we can apply the mutation trick shown in the right graph of Figure \ref{fig:ATF3}.
			
			\begin{figure}[h]
				\includegraphics*[width=\textwidth]{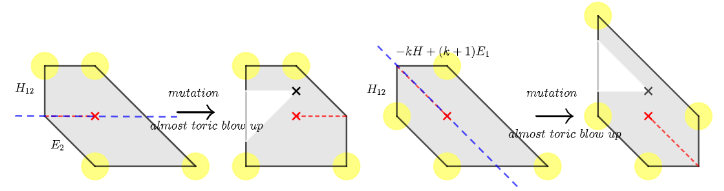}
				\caption{Two kinds of mutations creating enough space for almost toric blow up.\label{fig:ATF3}}
			\end{figure}
		\end{itemize}
		
		Next we assume $D$ is the toric blow up of $D'$, and we want to perform corner chopping of size $\delta_3$ at some vertex $P$ in the almost toric base diagram.
		
		Since $\omega$ is reduced, we have the following easy observation: the size $\delta_3$ can not be larger than the sum of affine lengths of two adjacent edges in the Delzant polygon. This would imply it suffices to consider the following two cases:
		\begin{itemize}
			\item If two edges having $P$ as endpoint both have lengths larger than $\delta_3$, we could just perform nodal slides such that the branches don't intersect the corner of size $\delta_3$ we want to chop.
			\item If one of two edges having $P$ as endpoint is the component $H_{12}$ with $1=\delta_1+\delta_2+\delta_3$ or $E_2$ with $\delta_2=\delta_3$, then we can apply the reflection along $H_{123}$ or $E_2-E_3$ (note that the conditions on $\vec\delta$ guarantee the reflection can be realized by symplectomorphisms) to the divisor $D$ to obtain an equivalent one. In these two cases, $D$ contains no component of $H_{12}$ or $E_2$ which implies there should be no $E_3$ component in the equivalent divisor given by the reflection. It follows that we can also view $D$ as the non-toric blow up of some divisor in $M_2$. Therefore the above discussion for non-toric blow up cases gives the realization.
			\item If one of two edges having $P$ as endpoint is the component $-kH+(k+1)E_1$ with $-k+(k+1)\delta_1\leq \delta_3$, then the other edge having $P$ as endpoint is either $H_{12}$ or $H_1$. In the former case, there might a very special situation when $k=0,\delta_1=\delta_2=\delta_3$. It's shown in Figure \ref{fig:ATF4} that the mutation is forbidden since the eigenray will intersect with another vertex. Nevertheless, we could apply the reflection along $E_1-E_3$ to reduce it to the non-toric blow up cases as what we did in the last paragraph. Other than this special situation, we could always do the mutations to create enough space for corner chopping. The same is true for the latter case. These are shown in Figure \ref{fig:ATF5}.

			\begin{figure}
				\centering
				\begin{minipage}{.3\textwidth}
					\centering
					\includegraphics[width=\linewidth]{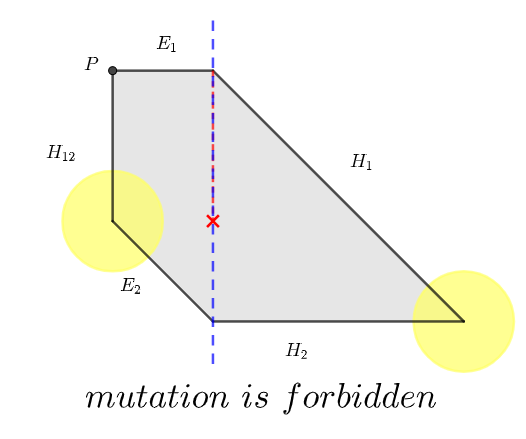}
					\caption{A special situation that mutation is not allowed.\label{fig:ATF4}}
				\end{minipage}%
				\begin{minipage}{.7\textwidth}
					\centering
					\includegraphics[width=\linewidth]{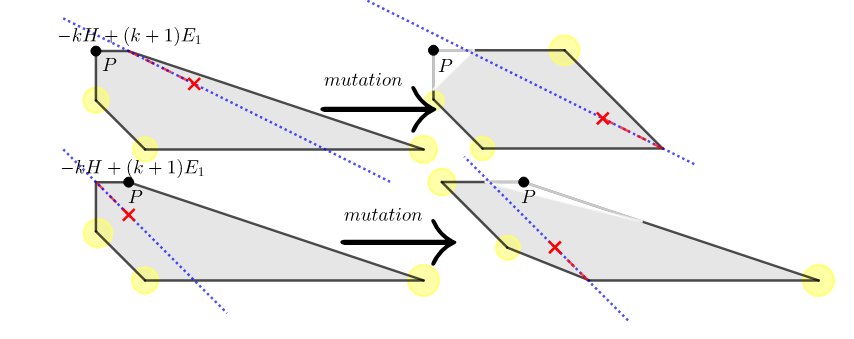}
					\caption{Two kinds of mutations creating enough space for toric blow up.\label{fig:ATF5}}
				\end{minipage}
			\end{figure}
		\end{itemize}

	\end{proof}
	
	\subsection{Connectedness of toric fibrations}
	In \cite{Sym02}, Symington observed that different toric fibrations on $ (S^2\times S^2,\omega) $ can be changed to each other via nodal trades, nodal slides and mutations, which gives a path of almost toric fibrations connecting toric fibrations. This observation led to the following conjecture.
	\begin{conjecture}[\cite{Sym02}]\label{conj:Symington}
		Any two toric fibrations of $ (M,\omega ) $ can be connected by a path of almost toric fibrations of $ (M,\omega ) $.
	\end{conjecture}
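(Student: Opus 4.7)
The plan is to reduce Conjecture \ref{conj:Symington} to a combinatorial statement about $t\mathcal{LCY}(M,\omega)$ and then exhibit explicit paths of almost toric fibrations realizing the transitions between distinct toric base diagrams. By Theorem \ref{thm:toric=tLCY}, equivalence classes of toric fibrations on $(M,\omega)$ are in bijection with $t\mathcal{LCY}(M,\omega)$, so the conjecture becomes: for any two toric symplectic log Calabi-Yau divisors $D,D'\subset(M,\omega)$, the corresponding Delzant base diagrams are connected by a finite sequence of nodal trade, nodal slide, mutation, and inverse nodal trade moves within $\mathcal{ATF}(M,\omega)$.

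First I would dispose of the minimal rational surfaces as base cases. For $\CP^2$, Lemma \ref{lemma:minimal surjectivity} gives a unique toric divisor, so the statement is vacuous. For $S^2\times S^2$ and $\CP^2\#\overline{\CP^2}$, I would use the explicit classification in Lemma \ref{lemma:minimal surjectivity} and realize Symington's original mutation sequence (nodal trade at a corner, nodal slide, mutation across the newly created eigenray, inverse nodal trade) to pass from the Hirzebruch polygon $F_{2k}$ or $F_{2k+1}$ to $F_{2k\pm 2}$; induction on $k$ then connects all toric fibrations. For $M_2$ and $M_3$, the same local recipe works, this time using the finite enumerations in Corollary \ref{cor:toric count M2} and Proposition \ref{prop:M3}, and for each pair I would pick a particular pivot edge and check that the required mutation is admissible for the given normalized reduced vector $(\delta_1,\dots,\delta_l)$.

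For the restrictive region of Section \ref{section:general counting}, the general counting formula in Proposition \ref{prop:general count} suggests a more systematic approach: each toric divisor is described by a pair $(f,g)\in\mathcal{F}_l^4\times\mathcal{G}_l$ encoding the length function and the sign pattern of blow-ups over a minimal model, and in this region the homological action of $\Symp(M_l,\omega)$ is trivial by Lemma \ref{lemma:divisor = homology type}. The plan is to induct on $l$: having realized connectedness on $M_{l-1}$ by Proposition \ref{prop:surjective M3} (or the inductive hypothesis), each toric divisor on $M_l$ is a corner chop of a toric divisor on $M_{l-1}$, and two different chops at different vertices can be compared by a single mutation across a nodal ray inserted between them, provided the ambient symplectic class leaves enough affine space.

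The main obstacle is precisely this admissibility condition for mutations. A mutation across a nodal ray demands that the transformed polygon still embeds in $\mathbb{R}^2$ with positive edge lengths, and this translates into strict inequalities on $(\delta_1,\dots,\delta_l)$ that can fail on the boundary of $P_l$ or when blow-up sizes are nearly balanced (as already visible in the forbidden mutation of Figure \ref{fig:ATF4}). In these borderline cases I expect to need an auxiliary move: a reflection along a Lagrangian sphere class $E_i-E_j$ or $H-E_i-E_j-E_k$ that exists exactly on the walls causing the obstruction, used to bypass the geometric collision in the base diagram just as in the proof of Proposition \ref{prop:surjective M3}. Making this bypass work uniformly—rather than case by case—is the hard part, and is the reason the statement remains a conjecture outside the small rational manifolds and the restrictive region.
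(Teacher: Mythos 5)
Your overall route coincides with the one the paper actually follows for the cases it settles: reduce to $t\mathcal{LCY}(M,\omega)$ via Theorem \ref{thm:toric=tLCY}, connect the explicit polygon families for the minimal models, $M_2$ and $M_3$ by toric mutations, and, in the restrictive region, induct on $l$ by viewing each toric divisor on $M_l$ as a corner chop of one on $M_{l-1}$ and comparing different chops by single mutations. So this is not a genuinely different approach, and like the paper you do not (and cannot, with these tools alone) claim the conjecture for arbitrary $(M,\omega)$.

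The gap is that you stop exactly where the paper's partial proof has to do real work. In the restrictive region the induction only closes because of two concrete facts that you flag as ``the hard part'' but never establish: (i) at most one edge of the Delzant polygon on $M_{l-1}$ has affine length at most $\delta_l$, so any two corner chops of the same polygon can be joined through chops at adjacent corners, each adjacent pair differing by one admissible mutation (Lemma \ref{lemma:connecting different corner chopping}); and (ii) whenever two toric fibrations on $M_{l-1}$ differ by a mutation along an eigenray $\ell$, there is a corner of size $\delta_l$ that can be chopped disjointly from $\ell$, which is proved by counting the $k_1+k_2=k+2$ vertices on the two sides of $\ell$ and needs $k\ge 3$ (Lemma \ref{lemma:chopping corner not intersecting eigenray}). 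Without these two statements the inductive step is only a hope, since mutation admissibility is precisely what can fail. Two smaller points: the base case of your induction should be Proposition \ref{prop:symington minimal} (connectedness for $l\le 3$), not Proposition \ref{prop:surjective M3}, which concerns realization rather than connectedness; and your proposed reflection-based bypass on the walls of $P_l$ is used in the paper only for the realization statement, while for connectedness outside the restrictive region the paper settles for the weaker conclusion obtained by deforming $\omega$ into that region via Lemma \ref{lemma:star shaped}, so the bypass would require a separate argument you have not supplied.
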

	
	Based on the counting results for toric divisors, we prove Symington's conjecture for $ \CP^2,S^2\times S^2,\CP^2\#\overline{\CP}^2,\CP^2\# 2\overline{\CP}^2 $ and $ \CP^2\# 3\overline{\CP}^2 $. We also prove a weaker version of this conjecture for general rational surfaces.
	
	From a toric fibration $ \pi_0 $, we could perform the following series of operations to get a path $ \pi_t $ of almost toric fibrations such that $ \pi_1 $ is another toric fibration. We perform first a nodal trade at a vertex to get a node, slide the node along the eigenray further away from the original vertex, then perform a mutation along this eigenray, which creates a new vertex where the eigenray intersects some edge of the base diagram. If the new vertex is smooth (i.e. satisfies the smoothness condition in Definition \ref{def:delzant}), then one can slide the node towards this vertex and use a nodal trade to get back to a toric fibration. The process is illustrated in Figure \ref{fig:toricmutation}, which we call a \textbf{toric mutation} at the chosen vertex.
	In effect, it cuts the moment polygon into two parts along the eigenray at this vertex, applies an $ AGL(2,\Z) $ transformation to one part and glues it back to form a new moment polygon.
	See Section 2 of \cite{CasVia-embedding} for a more detailed account.
	
	\begin{figure}[h]
		\includegraphics*[width=\linewidth]{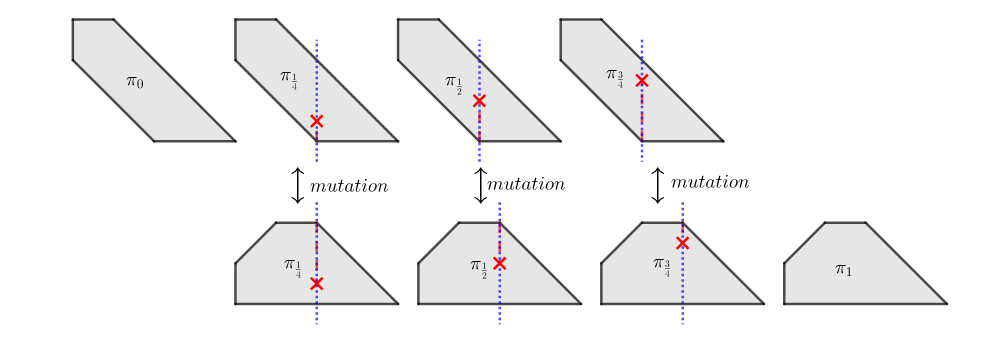}
		\caption{\label{fig:toricmutation}Toric mutation (= nodal trade + mutation + nodal trade) gives a path of almost toric fibrations.}
	\end{figure}
	
	
	\begin{prop}\label{prop:symington minimal}
		Symington's conjecture is true for  $ \CP^2,S^2\times S^2,\CP^2\#\overline{\CP}^2,\CP^2\# 2\overline{\CP}^2 $ and $ \CP^2\# 3\overline{\CP}^2 $.
	\end{prop}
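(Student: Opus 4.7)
The plan is to invoke Theorem \ref{thm:toric=tLCY}, which identifies equivalence classes of toric actions on $(X,\omega)$ with isotopy classes in $t\mathcal{LCY}(X,\omega)$. As recalled just before the proposition, a toric mutation (nodal trade, nodal slide, mutation, nodal trade) produces a path of almost toric fibrations whose two endpoints are distinct toric fibrations. It therefore suffices to show that, for each of the five manifolds listed and each symplectic form $\omega$, the equivalence relation generated on $t\mathcal{LCY}(X,\omega)$ by toric mutations is the indiscrete one.

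For $X=\CP^2$ the unique toric LCY divisor is $(H,H,H)$ by Lemma \ref{lemma:minimal surjectivity}, so there is nothing to prove. For $X=S^2\times S^2$ and $X=\CP^2\#\overline{\CP^2}$, the toric LCY divisors are cases $(C4)$ and $(D4)$ of Theorem \ref{thm:minimal model}, each indexed by a single integer parameter; a toric mutation at an appropriate vertex of the rectangle/Hirzebruch trapezoid shifts this parameter by one, so consecutive members of the family are connected. For $X=M_2$, Corollary \ref{cor:toric count M2} gives the count $\lceil \delta_1/(1-\delta_1)\rceil + \lceil(\delta_1-\delta_2)/(1-\delta_1)\rceil$ in each chamber of $N_2$; using the homological enumeration in the proof of Proposition \ref{prop:general count M2}, the toric divisors in a fixed chamber can again be ordered into a sequence in which adjacent members differ by a single toric mutation at a vertex whose two adjacent edges represent the two components that are transported to the new toric picture.

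The main case is $X=M_3$. Here I would work region by region using Proposition \ref{prop:M3} together with the explicit enumeration of cyclic homological configurations carried out in its proof. In each region I would select a canonical ``staircase'' toric divisor (for instance the configuration appearing after the last toric blow-up in Figure \ref{fig:ATF1}) and show that every other toric divisor in the same region can be brought to the canonical form by a finite sequence of toric mutations. The homological effect of a toric mutation at the vertex between consecutive components $C_i, C_{i+1}$ is to fix $[C_i]$ and $[C_{i+1}]$ and to shear the remaining classes by an integer affine transformation determined by the primitive edge vectors at that vertex (this is exactly the $AGL(2,\Z)$-action on one side of the eigenray). Hence reduction to canonical form becomes a purely combinatorial task once one tracks these primitive vectors, and the resulting count agrees with that in Proposition \ref{prop:M3}.

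The hard part will be organizing this case analysis cleanly: there are many chambers (see Figure \ref{fig:M3}), and along the faces of $N_3$ (where $\delta_i=\delta_{i+1}$ or $\delta_1+\delta_2+\delta_3=1$) certain toric mutations are obstructed, because the eigenray passes through another vertex of the polygon, precisely the obstruction illustrated in Figure \ref{fig:ATF4}. For these boundary chambers I would follow the strategy already used in the proof of Proposition \ref{prop:surjective M3}: apply the Lagrangian sphere reflection along the relevant $(-2)$-class ($E_i-E_{i+1}$ or $H-E_1-E_2-E_3$), which is realized by a symplectomorphism of $(M_3,\omega)$ and hence gives an identification of two distinct Delzant polygons as fibrations on the same symplectic manifold, to reduce the degenerate configuration to one in which generic toric mutations are available, and then conclude by the interior argument.
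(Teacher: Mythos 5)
Your treatment of $\CP^2$, $S^2\times S^2$, $\CP^2\#\overline{\CP}^2$ and $M_2$ matches the paper's: reduce to $t\mc{LCY}$ via Theorem \ref{thm:toric=tLCY} and connect the one-parameter families of Delzant polygons by a toric mutation that shifts the parameter by one. For $M_3$, however, your proposal is a plan rather than a proof, and the plan defers exactly the step that constitutes the content of the proposition. Saying ``select a canonical staircase divisor in each chamber and show every other toric divisor can be brought to it by toric mutations'' restates the goal; the proof must exhibit which mutations are performed and why they are available for every admissible $\omega$. The paper does this with a two-step reduction that your sketch does not contain: (i) every toric divisor on $M_3$ is a corner-chop of some pentagon $\gamma(n)$ on $M_2$, and chopping \emph{different} corners of the \emph{same} $\gamma(n)$ yields mutation-connected fibrations (adjacent corners directly, as in Figure \ref{fig:adjacentmutation}; non-adjacent corners via a third corner adjacent to both, which exists in a pentagon); (ii) this reduces the problem to the single family $\theta(n)$ obtained by chopping a fixed corner, and $\theta(2k)$ is linked to $\theta(2k+2)$ through an intermediate fibration $\theta'(2k)$ whose existence is guaranteed by reducedness of $\omega$ whenever $\theta(2k+2)$ exists (Figure \ref{fig:thetan}), with the parity-mixing case handled as in $M_2$. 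Without some equivalent of (i) and (ii), your region-by-region scheme over the infinitely many chambers of Proposition \ref{prop:M3} is not a finite, checkable argument.

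Two smaller issues. First, your description of the homological effect of a toric mutation at the vertex between $C_i$ and $C_{i+1}$ --- that it ``fixes $[C_i]$ and $[C_{i+1}]$ and shears the rest'' --- is not correct: in the basic example $\beta(2n)\to\beta(2n+2)$ on $S^2\times S^2$ the mutation at the vertex between the $F$-edge and the $(B+nF)$-edge changes $B+nF$ to $B+(n+1)F$, so an edge adjacent to the mutated vertex does change class. Since you intend to run the reduction by tracking these classes, this rule needs to be stated correctly. Second, your proposed use of Lagrangian sphere reflections on the boundary faces of $N_3$ only identifies the two fibrations up to the action of $\Symp(M_3,\omega)$, i.e., it shows they represent the same class in $t\mc{LCY}$; that is consistent with the equivalence-class framework the paper works in, but you should say explicitly that you are proving connectedness of equivalence classes of toric fibrations, since a symplectomorphism conjugating one fibration to another is not by itself a path of almost toric fibrations.
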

	\begin{proof}
		It suffices to consider normalized reduced symplectic forms on these rational surfaces.
		
		(1) For $ X=\CP^2 $, there is only one toric fibration.
		
		(2) For $ X=S^2\times S^2 $ with standard basis $ \{ B,F\} $ and a normalized reduced symplectic class $ [\omega]=B+\mu F $, the possible Delzant polygons are $ \beta(2n) $ as in Figure \ref{fig:betan}, where $n\in\ZZ_{\geq 0}$. By toric mutation along the eigenray at the lower right vertex, we could always connect $ \beta(2n) $ and $ \beta(2n+2) $.
		
		\begin{figure}[h]
			\includegraphics*[width=\linewidth]{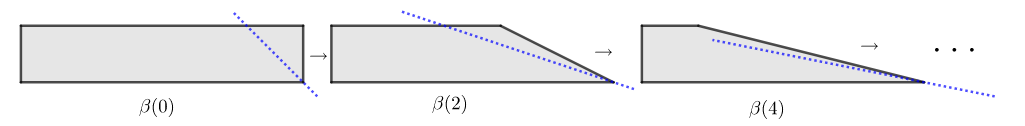}
			\caption{Toric mutations for Delzant polygons of $S^2\times S^2$.\label{fig:betan}}
		\end{figure}
		(3) For $ \CP^2\# \overline{\CP}^2 $ with standard basis $ \{ H,E\} $ and a normalized reduced symplectic class $ [\omega]=H-\mu E $, the possible Delzant polygons are $ \alpha(2n+1) $ as in Figure \ref{fig:alphan}, where $n\in\ZZ_{\geq 0}$. Again, by toric mutation we can connect $ \alpha(2n+1) $ and $ \alpha(2n+3) $.
		\begin{figure}[h]
			\includegraphics*[width=\linewidth]{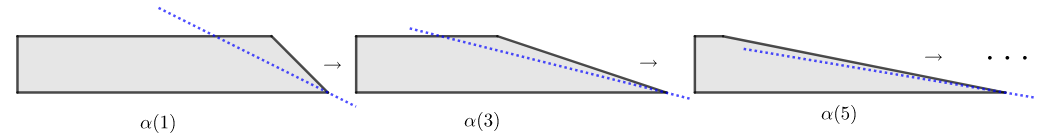}
			\caption{Toric mutations for Delzant polygons of $\CP^2\# \overline{\CP}^2$.\label{fig:alphan}}
		\end{figure}
		
		(4) For $ X= \CP^2\# 2\overline{\CP}^2 $ with a normalized reduced symplectic class $ [\omega]=H-\delta_1E_1-\delta_2E_2 $. By Corollary \ref{cor:toric count M2}, the toric fibrations are given by the family $\gamma (n)$ for $n\in\ZZ_{\geq 0}$ shown in Figure \ref{fig:gamman}. When $n=2k$, $\gamma(2k)$ denotes $((k+1)H-kE_1-E_2,H_1,-kH+(k+1)E_1,H_{12},E_2)$; when $n=2k+1$, $\gamma(2k+1)$ denotes $((k+1)H-kE_1,H_1,-kH+(k+1)E_1-E_2,E_2,H_{12})$. We see that by toric mutation along the eigenray at the lower right vertex, we can connect $\gamma(2k)$ and $\gamma(2k+2)$ or $\gamma(2k+1)$ and $\gamma(2k+3)$. Moreover, by toric mutation at the lower left vertex, we can connect $\gamma(0)$ and $\gamma(1)$.
		\begin{figure}[h]
			\includegraphics*[width=\linewidth]{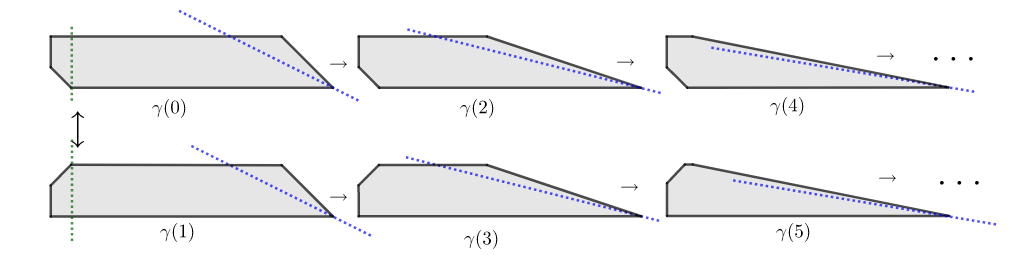}
			\caption{Toric mutations for Delzant polygons of $\CP^2\# 2\overline{\CP}^2$.\label{fig:gamman}}
		\end{figure}
		
		(5) For $ X= \CP^2\# 3\overline{\CP}^2 $, we claim if the toric fibrations come from chopping different corners of the same pentagon $\gamma(n)$, then by toric mutation, they can be connected. If the corners are adjacent, this is shown in Figure \ref{fig:adjacentmutation} in the next section; if not, this is also true simply because in a pentagon whenenver we can perform corner chopping at two non-adjacent corners, we can also perform corner chopping at another corner adjacent to both of them.
		
		Therefore, it suffices to consider the family $\theta (n)$ for $n\in\ZZ_{\geq 0}$, where $\theta(2k)$ denotes $((k+1)H-kE_1-E_2-E_3,E_3,H_{13},-kH+(k+1)E_1,H_{12},E_2)$ and  $\theta(2k+1)$ denotes $((k+1)H-kE_1-E_3,E_3,H_{13},-kH+(k+1)E_1-E_2,E_2,H_{12})$. Thus the Delzant polygons of $\theta (n)$ is obtained by chopping the lower right corner of the Delzant polygons of $\gamma(n)$ in Figure \ref{fig:gamman}. The reason why $\theta(0)$ and $\theta(1)$ can be connected is the same as (4). Moreover, note that if there exists toric fibration $\theta(2k+2)$, the toric fibration $\theta'(2k)$ given by  $((k+1)H-kE_1-E_2,H_{13},E_3,-kH+(k+1)E_1-E_3,H_{12},E_2)$ should also exist by the assumption that $\omega$ is reduced. In the Figure \ref{fig:thetan} we show that $\theta'(2k)$ can be connected to $\theta(2k+2)$ and $\theta'(2k)$ can be connected to $\theta(2k)$ by performing mutations at the lower right vertex. It follows that we can connect all $\theta(n)$ for even $n$ whenever they exist. By the similar observation, we can also check $\theta(2k+3)$ and $\theta(2k+1)$ can be connected. Consequently all the possible toric fibrations $\theta (n)$ can all be connected.
		\begin{figure}[h]
			\includegraphics*[width=\linewidth]{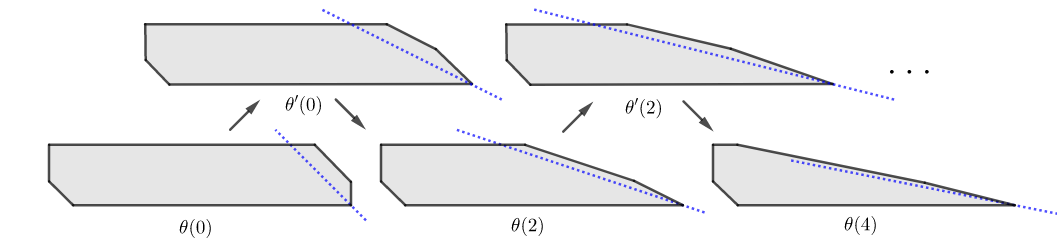}
			\caption{Toric mutations for Delzant polygons of $\CP^2\# 3\overline{\CP}^2$.\label{fig:thetan}}
		\end{figure}
	\end{proof}

	\subsection{Realization and connectedness in the restrictive region}\label{section:verify}
	Recall that the restrictive reduced symplectic classes defined in Section \ref{section:general counting} form a region in the normalized reduced symplectic cone, which is cut out by the inequalities:
	\[1>\delta_1+\cdots+\delta_l \]
	\[\delta_1>\delta_2\]
	\[\delta_k>\delta_{k+1}+\cdots+\delta_l\]
	for all $2\leq k\leq l-1$. One can think of this region as an explicit description of blow up with sufficiently small sizes which has many good properties. In this section we will prove Conjecture \ref{conj:ATF} and \ref{conj:Symington} hold in this region.
	
	Let's firstly discuss Conjecture \ref{conj:ATF}. From the counting result Proposition \ref{prop:general count} we see that in the restrictive region, all the divisors come from the toric and non-toric blow ups of some germs. Our strategy is to choose some appropriate ATF realizations of those germs and then realize each toric or non-toric blow up by toric or almost toric blow up on the almost toric base diagram. In the proof of Proposition \ref{prop:surjective M3} we need to apply toric mutations to overcome the issue that there might be the edge whose length is less than the blow up size (this happens only when $\delta_2=\delta_3$ or $1=\delta_1+\delta_2+\delta_3$). However, when the restrictive condition is added, we will see there is no need to do toric mutations if we select almost toric base diagrams for germs wisely. So we next only need to check whether there is enough space on the diagram to perform toric or almost toric blow up.
	
	It's worth noting that for an almost toric blow up, there are different presentations of excising an affine triangle of on the edge which differ by $AGL(2,\ZZ)$ transformations (See Figure \ref{fig:differentatfblowup}). The only requirement is the directions of any two edges form a $\ZZ$-basis. We say that an almost toric triangle of size $\delta$ can be embedded along an edge if it is possible to excise a size $\delta$ affine triangle in one of such forms.
	
	\begin{figure}[h]
		\includegraphics*[width=\linewidth]{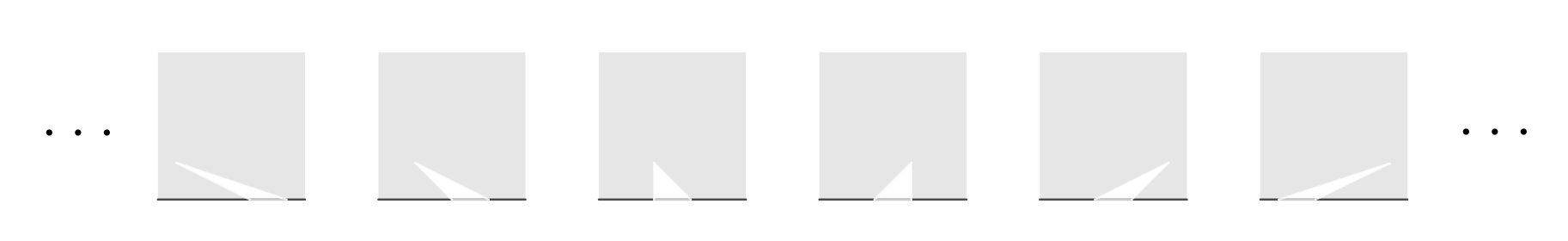}
		\caption{Different embeddings of almost toric triangles.\label{fig:differentatfblowup}}
	\end{figure}
	
	\begin{lemma}\label{lemma:ATFcorner}
		Suppose there are distinct positive numbers $\delta_1>\cdots>\delta_k, \delta_1>\delta_{1,1}>\cdots>\delta_{1,\alpha_1},\cdots,\delta_k>\delta_{k,1}>\cdots>\delta_{k,\alpha_k}$ such that if we order them in the decreasing manner, they satisfy the restrictive condition. Let $\Delta$ be the region obtained by performing corner choppings of sizes  $\delta_1,\cdots,\delta_k$ on the moment map image $\Delta_0$ of the unit open ball in $\CC^2$. Then there is a way to embed almost toric triangles of sizes $\delta_{i,1},\cdots,\delta_{i,\alpha_i}$ along the edge coming from corner chopping of size $\delta_i$ for all $1\leq i\leq k$ such that they are pairwisely disjoint.
	\end{lemma}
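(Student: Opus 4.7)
The plan is to set up explicit affine coordinates on $\Delta_0$, stack the almost toric triangles by hand, and verify pairwise disjointness directly from the restrictive inequalities on the combined ordered list $\epsilon_1>\epsilon_2>\cdots>\epsilon_N$ of all the $\delta_i$ and $\delta_{i,j}$.

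First I would realize $\Delta_0$ as the standard simplex $\{(x,y)\in\RR^2:x,y\ge 0,\ x+y\le 1\}$ with vertices $V_0=(0,0),\ V_1=(1,0),\ V_2=(0,1)$. Since $\Delta_0$ has only three vertices, $k\le 3$; relabel so the chop of size $\delta_i$ occurs at $V_{i-1}$. Because $\sum_i\delta_i<1$ by the restrictive condition, any two chops are simultaneously admissible: each original edge of affine length $1$ is shortened to an edge of length $1-\delta_a-\delta_b>0$, and each chop creates a new boundary edge $e_i\subset\partial\Delta$ of primitive affine length $\delta_i$ in a known $SL(2,\ZZ)$-direction.

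Next I would treat each $e_i$ in turn. In the corner frame at $V_{i-1}$, the edge $e_1$ is the segment from $(\delta_1,0)$ to $(0,\delta_1)$ in primitive direction $(-1,1)$. I would stack the triangles of sizes $\delta_{1,1}>\cdots>\delta_{1,\alpha_1}$ with bases placed consecutively along $e_1$ starting from $(\delta_1,0)$; since $\delta_{1,1}+\cdots+\delta_{1,\alpha_1}<\delta_1$ by restrictivity, the bases fit strictly inside $e_1$ with room to spare, and one can inject small positive gaps to enforce strict disjointness. For each triangle I choose the orientation whose two legs are the primitive vectors $\delta_{1,j}(0,1)$ and $\delta_{1,j}(1,0)$, placing the apex of the $j$-th triangle at $(\delta_1-s_{j-1},\ s_{j-1}+\delta_{1,j})$, where $s_{j-1}$ is the running sum of bases already placed; the entire triangle then lies in the strip $\{\delta_1\le x+y\le\delta_1+\delta_{1,j}\}\cap\Delta$. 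The same prescription, after an $AGL(2,\ZZ)$-change of coordinates, handles $e_2$ and $e_3$.

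Finally I would verify pairwise disjointness. Triangles along a common $e_i$ are disjoint by construction because their (relative) $x$-intervals are consecutive. For triangles attached to distinct new edges $e_i\neq e_j$, the triangle on $e_i$ lies in the corner strip $\{\delta_i\le x+y\le\delta_i+\delta_{i,1}\}$ near $V_{i-1}$ and similarly for $e_j$ near $V_{j-1}$; the disjointness of these strips inside $\Delta$ is equivalent to $\delta_i+\delta_j+\delta_{i,1}+\delta_{j,1}<1$, which is immediate from $\sum_k\epsilon_k<1$. An analogous inequality shows every apex lies inside $\Delta$ and not in any adjacent chopped corner.

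The main technical obstacle will be the uniform bookkeeping in the case that all three corners of $\Delta_0$ are chopped and each $e_i$ carries several triangles: one must simultaneously check that no apex enters a neighboring chopped region and that triangles on edges adjacent at a common vertex of $\Delta$ do not collide near that vertex. The chain of restrictive inequalities $\epsilon_k>\sum_{l>k}\epsilon_l$ handles every such case uniformly, since the affine depth consumed by any particular triangle is strictly less than the next $\epsilon$ above it in the ordering, which always leaves enough slack to complete the placement.
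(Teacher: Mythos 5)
There is a genuine gap, and it lies in your very first reduction: you claim that since $\Delta_0$ has only three vertices, $k\le 3$, and you place each chop of size $\delta_i$ at one of the three original vertices $V_0,V_1,V_2$ of the standard simplex. But the corner choppings in this lemma are \emph{iterated}: $\Delta$ is obtained as a chain $\Delta_0\to\Delta_1\to\cdots\to\Delta_k$, where the $j$-th chop may be performed at a vertex \emph{created by an earlier chop} (this is exactly how the lemma is used later, to realize an arbitrary number of iterated toric blow-ups inside a single corner region $\Psi_p$). So $k$ is unbounded, and the hard case is precisely when the chop of size $\delta_j$ sits at an endpoint $A_i$ or $B_i$ of the edge produced by the chop of size $\delta_i$. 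In that situation your separation argument via the disjoint "corner strips" $\{\delta_i\le x+y\le\delta_i+\delta_{i,1}\}$ near far-apart original vertices breaks down completely: the new edge $e_j$ is adjacent to a sub-segment of $e_i$, and the triangles attached to $e_i$ can collide with those attached to $e_j$ unless the affine length of $e_i$ is budgeted in advance.

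This budgeting is the actual content of the paper's proof: each new edge $A_jB_j$ is split by two points $P_j,Q_j$ into three sub-segments whose affine lengths dominate, respectively, the total size $\sum(\delta_m+\delta_{m,1}+\cdots+\delta_{m,\alpha_m})$ of all \emph{future} choppings on the left, the sizes $\delta_{j,1}+\cdots+\delta_{j,\alpha_j}$ of the triangles to be attached to $e_j$ itself, and the total size of all future choppings on the right; the triangles for $e_j$ are then packed into a parallelogram $\Theta_j$ over the middle segment whose transverse direction $\vec u_{\phi(j)}$ is chosen inductively, and disjointness of $\Theta_i$ and $\Theta_j$ for $i<j$ is established by exhibiting a separating line through the far side of $\Theta_j$. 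None of this appears in your argument, and the restrictive inequalities alone do not rescue it without that inductive bookkeeping. Your construction is fine in the special (and easy) case of at most three non-nested chops at the original vertices, but that is not the statement being proved.
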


	\begin{proof}
		Assume $\Delta_0$ is the region given by $\{(x,y)\,|\,x\geq0,y\geq0,x+y<1\}$. Let $\vec{u}_0=(1,0)$ be a vector. After chopping the corner of size $\delta_1$, we will get a new region $\Delta_1=\{(x,y)\,|\, x\geq0,y\geq0,\delta_1\leq x+y<1\}$, two vertices $A_1=(0,\delta_1),B_1=(\delta_1,0)$ and a unit affine length vector $\vec u_1=\delta_1^{-1}\overrightarrow{A_1B_1}$. For the remaining corner chopping, it makes sense to say whether it is on the left or right of the first corner ($A_1$ side or $B_1$ side). Let $L_1, R_1\subset\{2,\cdots,k\}$ denote the left and right ones respectively. Now by the restrictive condition, it's always possible to choose two points $P_1,Q_1$ on the segment $A_1B_1$ such that the affine lengths of $\overrightarrow{A_1P_1},\overrightarrow{P_1Q_1},\overrightarrow{Q_1B_1}$ are greater than $\sum_{i\in L_1}(\delta_i+\delta_{i,1}+\cdots+\delta_{i,\alpha_i}),\delta_{1,1}+\cdots+\delta_{1,\alpha_1},\sum_{i\in R_1}(\delta_i+\delta_{i,1}+\cdots+\delta_{i,\alpha_i})$ respectively. We then define the parallelogram region $\Theta_1$ attached to the segment $A_1B_1$ by $\{z\in\RR^2\,|\,\overrightarrow{Q_1z}=a\vec u_0+b\overrightarrow{Q_1P_1}, a\in[0,\delta_{1,1}],b\in[0,1]\}$. Note that by our choice of $P_1,Q_1$, $\Theta_1$ will be inside the region $\Delta_1$ and we can embed the pairwisely disjoint almost toric triangles of sizes $\delta_{1,1},\cdots,\delta_{1,\alpha_1}$ into $\Theta_1$. See Figure \ref{fig:atfcorner1}.
		
		After performing the second corner chopping of size $\delta_2$, we will get the region $\Delta_2$. Similarly as above, we can define two new vertices $A_2,B_2$ ($A_2$ is on the left of $B_2$), unit affine length vector $\vec u_2=\delta_2^{-1}\overrightarrow{A_2B_2}$, left and right remaining corner choppoing indices $L_2,R_2\subset\{3,\cdots,k\}$, $P_2,Q_2$ on the segment $A_2B_2$ satisfying the affine lengths of $\overrightarrow{A_2P_2},\overrightarrow{P_2Q_2},\overrightarrow{Q_2B_2}$ are greater than $\sum_{i\in L_2}(\delta_i+\delta_{i,1}+\cdots+\delta_{i,\alpha_i}),\delta_{2,1}+\cdots+\delta_{2,\alpha_2},\sum_{i\in R_2}(\delta_i+\delta_{i,1}+\cdots+\delta_{i,\alpha_i})$ respectively. The selection of the parallelogram region $\Theta_2$ attached to the segment $A_2B_2$ needs more words. When $2\in L_1$, that is, the second corner chopping is at $A_1$, we define $\Theta_2$ to be $\{z\in\RR^2\,|\,\overrightarrow{Q_2z}=a\vec u_1+b\overrightarrow{Q_2P_2}, a\in[0,\delta_{2,1}] ,b\in[0,1]\}$; when $2\in R_1$, $\Theta_2$ will be $\{z\in\RR^2\,|\,\overrightarrow{Q_2z}=a\vec u_0+b\overrightarrow{Q_2P_2}, a\in[0,\delta_{2,1}],b\in[0,1]\}$. See Figure \ref{fig:atfcorner2}.
		
		\begin{figure}[h]
			\centering
			\begin{minipage}{.5\textwidth}
				\centering
				\includegraphics[width=\linewidth]{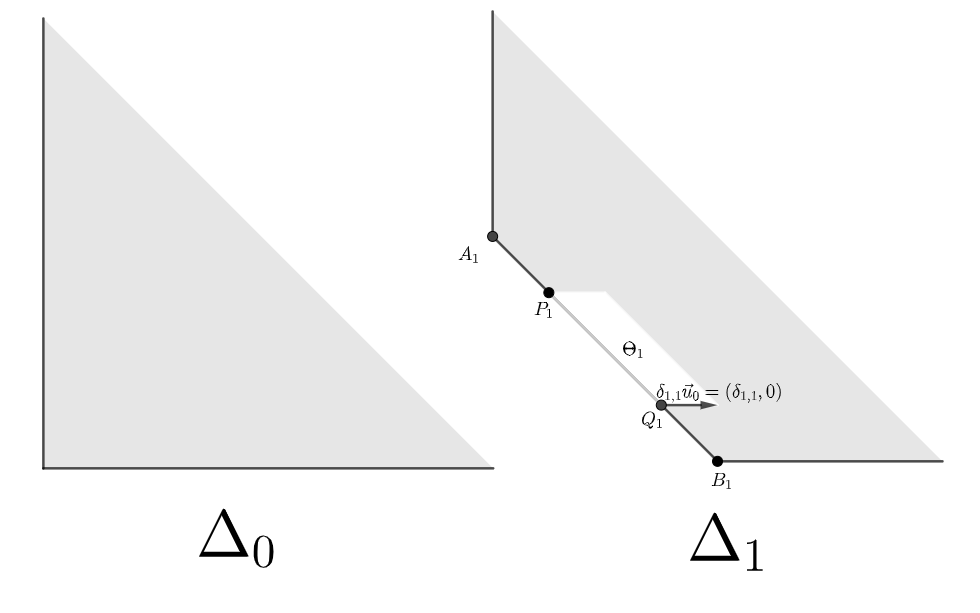}
				\caption{Embedding the first parallelogram region $\Theta_1$. \label{fig:atfcorner1}}
			\end{minipage}%
			\begin{minipage}{.5\textwidth}
				\centering
				\includegraphics[width=\linewidth]{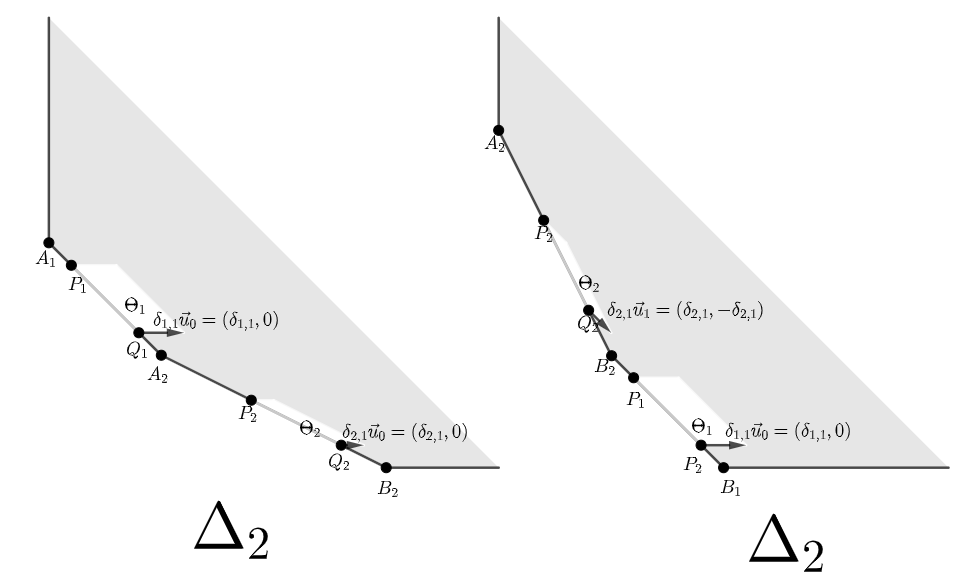}
				\caption{Two cases of embedding the second parallelogram region $\Theta_2$.\label{fig:atfcorner2}}
			\end{minipage}
		\end{figure}

		We could repeat this procedure for all the regions $\Delta_3,\cdots,\Delta_k=\Delta$ obtained by the remaining corner choppings. The only thing that needs to be mentioned is the choice of $\Theta_j$. Assume from $\Delta_{j-1}$ to $\Delta_j$, the new edge $A_jB_j$ come from the chopping the corner of $\Delta_{j-1}$ whose right edge has the unit affine length direction $\vec u_{\phi(j)}$ ($\phi(j)$ can be an arbitrary positive integer less than $j$). We will let $\Theta_j= \{z\in\RR^2\,|\,\overrightarrow{Q_jz}=a\vec u_{\phi(j)}+b\overrightarrow{Q_jP_j}, a,b\in[0,1]\}$. Note that this choice is consistent with the requirement of $\Delta_2$ in the above paragraph.
		
		Therefore we get these  parallelogram regions $\Theta_1,\cdots,\Theta_k$. It's easy to see from the restrictive condition and our choice that $\Theta_j$ will not intersect the corner we will chop away from $\Delta_j$ to get $\Delta_{j+1}$. So these parallelogram regions actually all live in the final region $\Delta_k=\Delta$. To finish the proof of this lemma, it suffices to show these $\Theta_j$'s are pairwisely disjoint by our construction. This can be verified by the observation that, for any $i<j$, $\Theta_i$ and $\Theta_j$ will be contained in two different half planes divided by the line $\ell$ of the opposite sides of $P_jQ_j$ in the parallelogram $\Theta_j$. Note that we can consider the line of the segment $\ell'$ and the intersection point $S$ between $\ell$ and $\ell'$. By our choice of $P_i,Q_i$, if $\Theta_i$ is on the left (resp. right) of $\Theta_j$, then $S$ must be on the left of $P_i$ (resp. right of $Q_i$). See Figure \ref{fig:Thetadisjoint}.
		
		\begin{figure}[h]
			\includegraphics*[width=\linewidth]{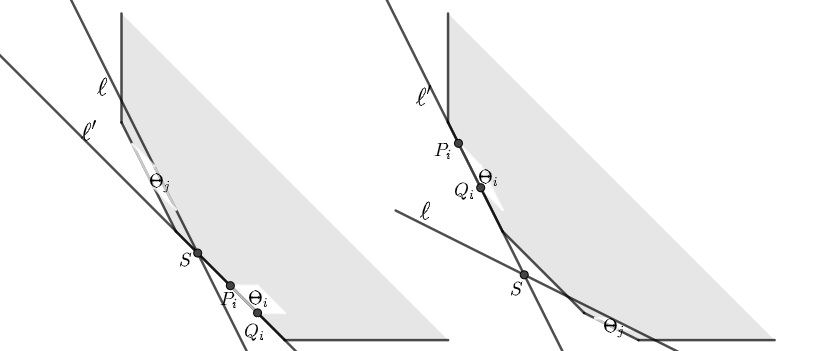}
			\caption{$\Theta_j$ is on the left or right of $\Theta_i$. In both cases the line $\ell$ will always seperate two parallelogram regions. \label{fig:Thetadisjoint}}
		\end{figure}
		
	\end{proof}

	Now let's choose the almost toric base diagrams for the germs in the proof of Proposition \ref{prop:general count}. We can put $((k+1)H-kE_1,H_1,-kH+(k+1)E_1,H_1),((k+2)H-(k+1)E_1,-kH+(k+1)E_1,H_1),((k+3)H-(k+2)E_1,-kH+(k+1)E_1)$ with $k\geq0$ together since their almost toric base diagrams only differ by nodal trades, see Figure \ref{fig:germrealization}. The remaining germs are $(H,H,H_1),(2H-E_1,H),(2H,H_1)$ and $(3H-E_1-\cdots-2E_j,E_j)$ whose almost toric base diagrams are shown in Figure \ref{fig:germrealizationn}. Note that we have given the ATF realizations we will use for all the germs.
	
	\begin{figure}[!]
		\centering
		\begin{minipage}{.5\textwidth}
			\centering
			\includegraphics[width=\linewidth]{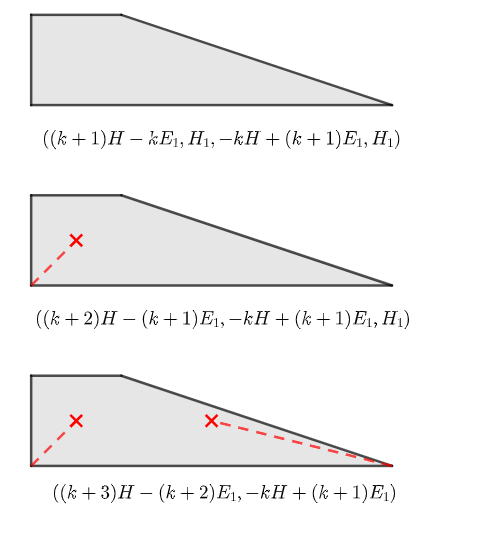}
			\caption{ATF realizations of some germs.\label{fig:germrealization}}
		\end{minipage}%
		\begin{minipage}{.5\textwidth}
			\centering
			\includegraphics[width=\linewidth]{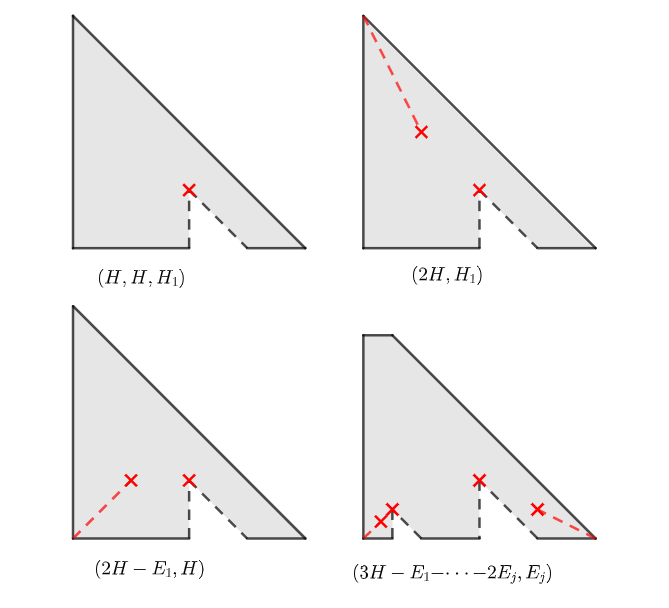}
			\caption{ATF realizations of the remaining germs.\label{fig:germrealizationn}}
		\end{minipage}
	\end{figure}

	\begin{proposition}\label{prop:proof of conjecture 5.3}
		For any rational surface $(M_l,\omega)$, if $[\omega]$ is in the restrictive region, then the map $ \Phi:\mc{ATF}(M_l,\omega)\to \mc{LCY}(M_l,\omega) $ is surjective.
	\end{proposition}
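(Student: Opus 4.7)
The plan is to exploit the structural decomposition of $\mathcal{LCY}(M_l,\omega)$ established in the proof of Proposition \ref{prop:general count}: in the restrictive region, every $D\in\mathcal{LCY}(M_l,\omega)$ is obtained by performing an iterated sequence of homological toric and non-toric blow ups on one of the explicit germs, namely an element of $p\mathcal{HLCY}(M_1)$ or $(3H-E_1-\cdots-E_{j-1}-2E_j,E_j)$ for some $2\le j\le l$. Each of these germs has already been equipped with a specific almost toric base diagram in Figures \ref{fig:germrealization} and \ref{fig:germrealizationn}. Since a homological toric blow up corresponds to chopping a corner at a smooth toric vertex of the base diagram and a homological non-toric blow up corresponds to excising an affine triangle along an edge (with a node inserted at the apex), it suffices to show that the prescribed sequence of blow ups for $D$ can be performed on the chosen germ diagram without ever running out of room.

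First I would assign to each exceptional class appearing in $D$ a size (essentially $\delta_i$) and locate it on the germ diagram, either at a corner (if it records a toric blow up) or along a definite edge (if it records a non-toric blow up), following the pattern used to enumerate divisors in the proof of Proposition \ref{prop:general count}. This produces, for each edge of the germ, a finite decreasing collection of triangle sizes that must be embedded along it, together with possibly some corners of sizes $\delta_{i_1}>\delta_{i_2}>\cdots$ waiting to be chopped. I would then apply Lemma \ref{lemma:ATFcorner} inductively: the restrictive hypotheses
\[
1>\delta_1+\cdots+\delta_l,\qquad \delta_1>\delta_2,\qquad \delta_k>\delta_{k+1}+\cdots+\delta_l\ \ (2\le k\le l-1)
\]
are exactly what is needed to feed that lemma, so the lemma guarantees that at every stage all of the required almost toric triangles can be embedded simultaneously and disjointly, in one of the $AGL(2,\mathbb{Z})$-equivalent presentations allowed on the relevant edge.

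The main obstacle, and the step requiring care, is the case-by-case bookkeeping for the different germs. One must verify for each germ diagram that the edges that could receive blow ups have primitive directions compatible with embedding the almost toric triangles, and that the nodal branches already present (from the nodal trades built into the germ realizations) can be slid out of the way of any corner we intend to chop. The point of the restrictive hypothesis is precisely that it rules out the degenerate configurations encountered in the proof of Proposition \ref{prop:surjective M3} where $1=\delta_1+\delta_2+\delta_3$ or $\delta_2=\delta_3$ forced the use of toric mutations; under the restrictive assumption no mutation is necessary and a single chosen diagram per germ suffices. Once this case check is complete, the resulting almost toric fibration has boundary divisor with the prescribed homological configuration, and by Proposition \ref{prop:realization} this lifts to an equality in $\mathcal{LCY}(M_l,\omega)$, proving surjectivity of $\Phi$.
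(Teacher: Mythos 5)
Your proposal is correct and follows essentially the same route as the paper: decompose each divisor as iterated toric/non-toric blow ups of a germ via Proposition \ref{prop:general count}, take the fixed almost toric base diagrams of the germs, and use the restrictive inequalities together with Lemma \ref{lemma:ATFcorner} (plus nodal slides to keep branches clear) to fit all the required corner chops and almost toric triangles disjointly without any mutation. The only cosmetic difference is that the paper makes the two-level bookkeeping explicit — first embedding disjoint parallelograms $\Phi_p$ and triangles $\Psi_p$ on the germ for the first-generation blow ups, then invoking Lemma \ref{lemma:ATFcorner} inside each $\Psi_p$ — which is what your "apply the lemma inductively" amounts to.
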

	\begin{proof}
		For the first diagram in Figure \ref{fig:germrealization}, let $X_1,X_2,X_3,X_4\subset\{2,\cdots,l\}$ be the set of indices of non-toric blow ups over the four edges and $Y_1,Y_2,Y_3,Y_4\subset\{2,\cdots,l\}$ be the set of indices of toric blow ups at four corners. Again, similar to the proof of Lemma \ref{lemma:ATFcorner}, by the restrictive condition we can embed pairwisely disjoint parallelogram regions $\Phi_p$ with affine width $\sum_{i\in X_p}\delta_i$, affine height $\max_{i\in X_p}\delta_i$ and triangles $\Psi_p$ of sizes $\sum_{i\in Y_p}\delta_i$ for all $1\leq p\leq 4$. Then we apply Lemma \ref{lemma:ATFcorner} to the regions inside $\Psi_p$ to realize all the remaining non-toric blow ups over the edges with indices in $Y_p$ by almost toric blow ups. It follows that we are able to create the almost toric base diagram for the ultimate divisor, see Figure \ref{fig:ATFrealizationgeneral}. The other two cases in Figure \ref{fig:germrealization} and the cases in \ref{fig:germrealizationn} can be settled in the same way. Observe that in each of those cases, the restrictive condition guarantees each edge has enough length to perform blow ups so that we don't need to do toric mutations. Also, by nodal slides, we can make the nodal points close enough the vertices so that the nodal rays don't intersect with those parallelogram or triangle regions.
	\end{proof}

	\begin{figure}[h]
		\includegraphics*[width=\linewidth]{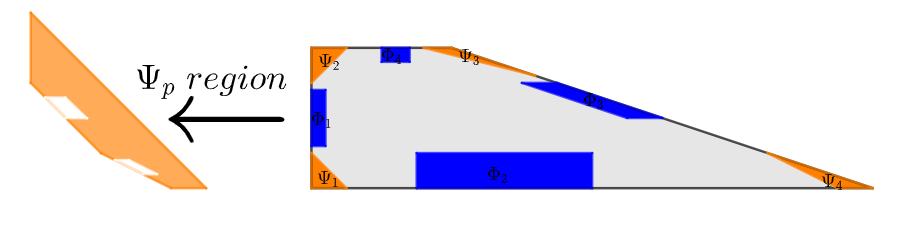}
		\caption{Arrangement of $\Psi_p,\Phi_p$ regions. When we zoom into the $\psi_p$ region we will get the diagram that appears in Lemma \ref{lemma:ATFcorner}. \label{fig:ATFrealizationgeneral}}
	\end{figure}

	Now let's prove Conjecture \ref{conj:Symington} in the restrictive region.
	
	\begin{lemma}\label{lemma:connecting different corner chopping}
		Suppose $\vec{\delta}'=(\delta_1,\cdots,\delta_k)$ and $\vec{\delta}=(\delta_1,\cdots,\delta_k,\delta_{k+1})$ are in the restrictive regions. Let $ \pi':(X,\omega_{\vec{\delta}'})\to \Delta' $ be a toric fibration. Then if $ \pi_i:(X\# \overline{\CP}^2,\omega_{\vec{\delta}})\to \Delta_i $ ($i=1,2$) are two toric fibrations obtained from performing corner chopping at different corners of $\Delta'$, then they can be connected by a path of almost toric fibrations.
	\end{lemma}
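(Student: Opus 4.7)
The plan is to reduce the lemma to the case where the two chopped corners are adjacent in $\Delta'$, and then to exhibit the desired path by a single toric mutation (nodal trade + nodal slide + mutation + inverse nodal trade).

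For the reduction, if the chopped corners $v_1, v_2$ are non-adjacent in $\Delta'$, enumerate the corners $v_1 = w_0, w_1, \dots, w_m = v_2$ along the boundary of $\Delta'$. The restrictive condition forces $\delta_{k+1}$ to be strictly smaller than the affine length of every edge of $\Delta'$ (the inequalities $\delta_j > \delta_{j+1} + \cdots + \delta_{k+1}$ for $2 \le j \le k$, together with the reducedness of $\vec{\delta}{\,}'$, bound $\delta_{k+1}$ below every such edge length), so a size-$\delta_{k+1}$ corner chop is admissible at every $w_i$, producing toric fibrations $\widetilde{\Delta}_0 = \Delta_1, \widetilde{\Delta}_1, \dots, \widetilde{\Delta}_m = \Delta_2$ in which consecutive pairs differ by chopping adjacent corners of $\Delta'$. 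Concatenating the adjacent-case paths yields the path in the general case.

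In the adjacent case, let $e$ be the edge of $\Delta'$ joining $v_1$ and $v_2$. Starting from $\Delta_1$, the vertex $v_2$ is still a smooth toric vertex; perform a nodal trade there to introduce a focus-focus singularity on an eigenray emanating from $v_2$ (whose direction is determined by the primitive inward edge vectors at $v_2$ in $\Delta_1$). Slide the node into the interior of $\Delta_1$, past the chopped region near $v_1$, then perform a mutation along this eigenray. The mutation leaves fixed the half of $\Delta_1$ lying opposite to the chopped corner and applies to the other half the unique $SL(2,\Z)$ shear fixing the eigenray direction. A direct local computation at $v_2$, entirely parallel to the toric mutations drawn in Figures \ref{fig:betan}--\ref{fig:thetan}, shows that this shear transports the triangle chopped at $v_1$ to an affinely congruent triangle at $v_2$; an inverse nodal trade at the image of $v_1$ then recovers a smooth toric fibration whose moment image is $AGL(2,\Z)$-congruent to $\Delta_2$.

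The principal obstacle is the geometric verification in the adjacent case: one must check that after the nodal slide, the eigenray from $v_2$ can be extended past the chopped region at $v_1$ without meeting any other vertex of $\Delta_1$ or re-entering the excised triangle, and that the resulting mutated polygon is $AGL(2,\Z)$-equivalent to $\Delta_2$ on the nose (not merely combinatorially similar). The restrictive hypothesis is essential for the first point, since it forces the excised triangle at $v_1$ to have affine size $\delta_{k+1}$, dwarfed by both the affine length of $e$ and the affine distance from $v_2$ to any other vertex, so a suitable slide position always exists and the eigenray clears all obstructions. Once the slide position is secured, the matching with $\Delta_2$ is a mechanical check in affine coordinates, using that the $SL(2,\Z)$ shear obtained from mutation at a smooth toric vertex is uniquely determined by the two primitive edge directions at that vertex.
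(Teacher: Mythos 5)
Your strategy is the same as the paper's (reduce to chopping adjacent corners, then connect those two by a toric mutation as in Figure \ref{fig:adjacentmutation}), but the justification of your reduction step rests on a false claim. You assert that the restrictive condition forces $\delta_{k+1}$ to be strictly smaller than the affine length of \emph{every} edge of $\Delta'$, so that every corner admits a size-$\delta_{k+1}$ chop. The restrictive inequalities do bound $\delta_{k+1}$ below the areas of components of the form $H-E_1-\sum\epsilon_iE_i$, $E_p-\sum_{i>p}\epsilon_iE_i$, and $kH-(k-1)E_1-\sum\epsilon_iE_i$ with $k\ge 1$, but a component $kH-(k-1)E_1-\sum_{i\ge2}\epsilon_iE_i$ with $k\le 0$ can have arbitrarily small positive area. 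For instance, take $\vec{\delta}'=(0.51,\,0.1)$ and $\delta_3=0.05$: both vectors are restrictive, yet the toric fibration $\gamma(2)$ on $M_2$ has the component $-H+2E_1$ of area $0.02<\delta_3$, so neither endpoint of that edge can be chopped with size $\delta_3$. What is true, and what the paper's proof actually uses, is that at most \emph{one} edge of $\Delta'$ has affine length $\le\delta_{k+1}$: by the list $t\mathcal{H}_k$ preceding Corollary \ref{cor:ingredients} only a class with non-positive $H$-coefficient can be that short, and two such classes pair negatively, so a toric configuration contains at most one of them. The reduction then still goes through, but needs the extra remark that the only non-choppable corners are the (at most two) endpoints of that single short edge, which are necessarily distinct from the two chopped corners by hypothesis, so the chain $w_0,\dots,w_m$ must be routed around the side of the polygon avoiding the short edge.

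A secondary issue is your recipe for the adjacent case. Performing the nodal trade at $v_2$ itself produces an eigenray in the direction $u_1+u_2$ of the two primitive edge vectors at $v_2$; mutating along that ray reshapes the polygon near $v_2$ (this is precisely the ``toric mutation'' used in Proposition \ref{prop:symington minimal} to pass between different toric models $\gamma(n)$), and the shear it applies to the half containing $v_1$ keeps the excised triangle based at the image of $v_1$ — it does not transport the chop to $v_2$. The move that works, and that Figure \ref{fig:adjacentmutation} depicts, performs the nodal trade at one of the two new vertices created by the chop at $v_1$, so that the eigenray is parallel to an edge of $\Delta'$ at $v_1$ and the ensuing mutation shears the excised triangle across the edge $e=v_1v_2$ to its other endpoint. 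Your closing ``mechanical check in affine coordinates'' is the right sentiment, but it must be carried out at that vertex, not at $v_2$.
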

	\begin{proof}
		By the assumption of $\vec{\delta}=(\delta_1,\cdots,\delta_k,\delta_{k+1})$ being restrictive, it's immediate to see from the list in $t\mathcal{H}_k$ before Corollary \ref{cor:ingredients} that the only possible homology class of the boundary divisor's component of $\Delta'$ with symplectic area $\leq\delta_{k+1}$ must be
		\[-aH+(a-1)E_1-\sum_{i=2}^k \epsilon_i E_i, a\in\ZZ_{+},\epsilon_i\in\{0,1\}\] This implies there is at most one edge of $\Delta'$ whose length is not larger than $\delta_{k+1}$. Consequently it suffices to show that corner chopping at two adjacent corners can be connected. And this can be achieved by toric mutation as in Figure \ref{fig:adjacentmutation}.
	\end{proof}
	
	\begin{figure}[h]
		\centering
		\begin{minipage}{.5\textwidth}
			\centering
			\includegraphics[width=\linewidth]{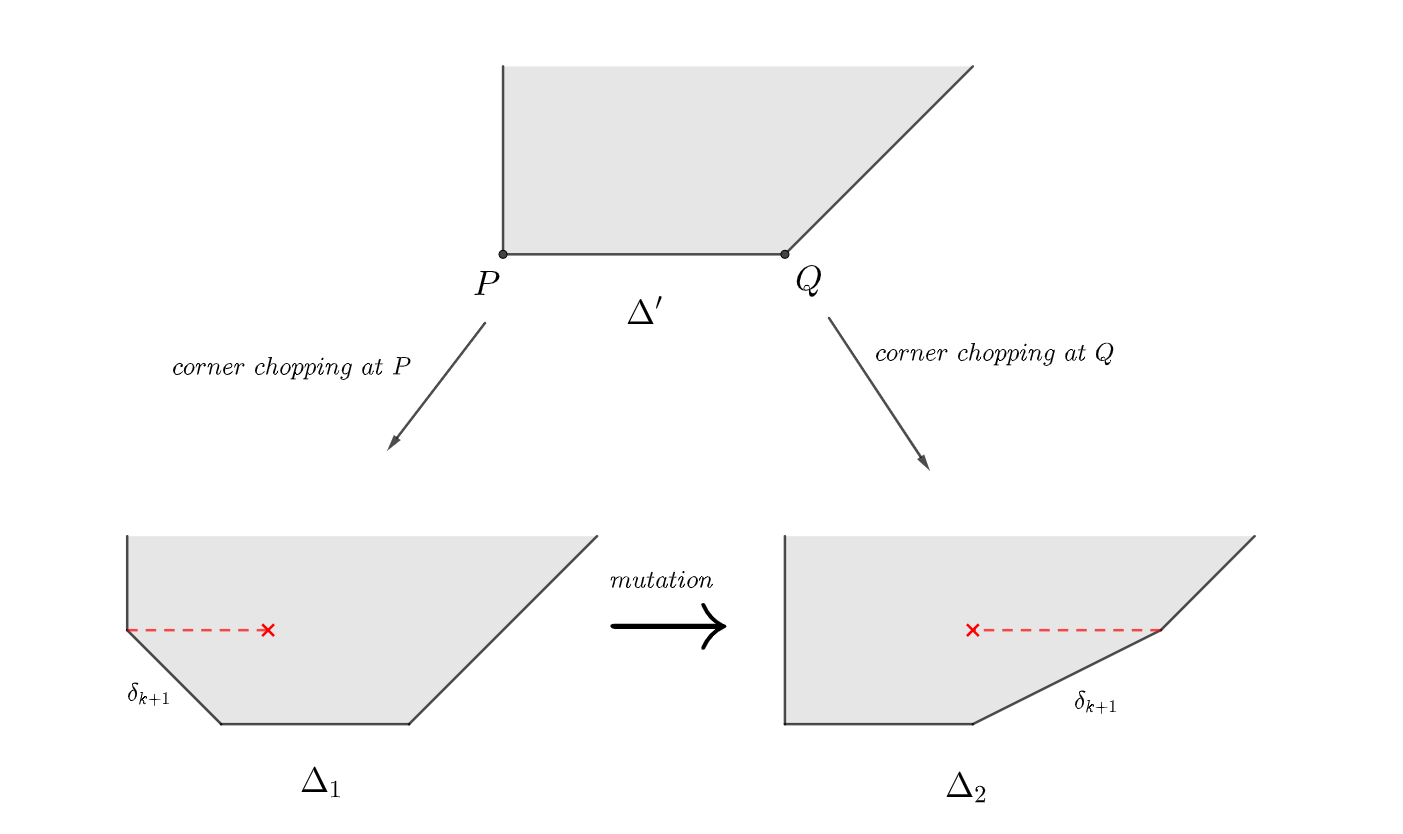}
			\caption{Adjacent corner choppings differ by a mutation.\label{fig:adjacentmutation}}
		\end{minipage}%
		\begin{minipage}{.5\textwidth}
			\centering
			\includegraphics[width=\linewidth]{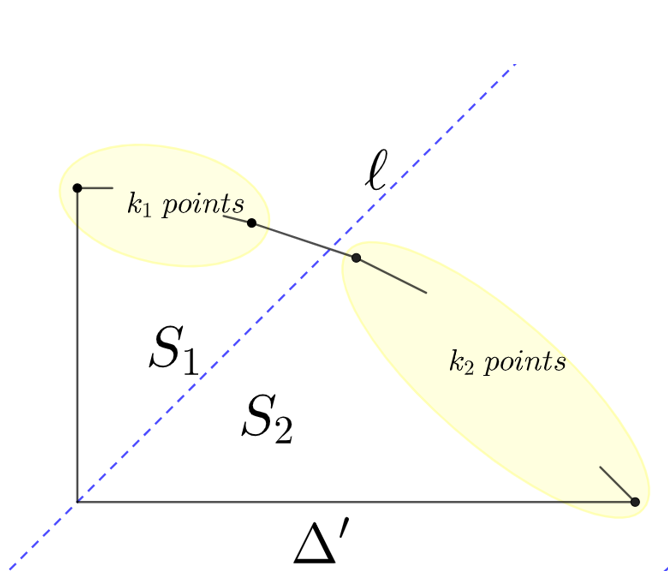}
			\caption{The eigenray $\ell$ divides the plane into two parts and must touch another edge.\label{fig:notintersectingl}}
		\end{minipage}
	\end{figure}

	\begin{lemma}\label{lemma:chopping corner not intersecting eigenray}
		Under the assumption of Lemma \ref{lemma:connecting different corner chopping} with additional hypothesis $k\geq3$, if $\pi':(X,\omega_{\vec{\delta}'})\to \Delta' $ can be connected to another toric fibration by mutation along an eigenray $\ell$, then it's always possible to chop some corner with size $\delta_{k+1}$ not intersecting $\ell$.
	\end{lemma}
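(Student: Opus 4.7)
The plan is to analyze the combinatorics of the eigenray $\ell$ inside the moment polygon $\Delta'$ and to use a simple convexity argument to guarantee that a small corner chopping at a carefully chosen vertex avoids $\ell$ entirely. Since $\Delta'$ is the Delzant polygon of $M_k$, it has exactly $k+3$ vertices and edges. The eigenray $\ell$ starts at some vertex $v$ of $\Delta'$ and exits $\Delta'$ either through the interior of an edge $e$ or through another boundary vertex. Let $L \supset \ell$ be the extended line; then $L$ partitions $\Delta'$ into two closed convex subpolygons $\overline{\Delta'_{1}}$ and $\overline{\Delta'_{2}}$.

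First I would label the vertices cyclically as $v_{1}=v, v_{2}, \ldots, v_{k+3}$ and call a vertex $w \neq v$ \emph{deep} if both of its cyclic neighbors lie in the same closed half-plane (relative to $L$) as $w$. In the generic case where $\ell$ exits through the interior of an edge $v_{i}v_{i+1}$, the deep vertices are $v_{2},\ldots,v_{i-1}$ together with $v_{i+2},\ldots,v_{k+3}$, giving $(i-2) + (k+3-i-1) = k$ deep vertices; the alternative case (when $\ell$ exits at a vertex) yields an even larger count. By Lemma 5.18, at most one edge of $\Delta'$ has affine length $\leq \delta_{k+1}$, so at most two vertices are incident to such a short edge. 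Since $k \geq 3$, we obtain at least $k-2 \geq 1$ deep vertices $w_{0}$ where the chopping of size $\delta_{k+1}$ is valid (both incident edges exceed $\delta_{k+1}$ in affine length).

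The main obstacle is verifying that the resulting chopping triangle $T_{w_{0}}$ is disjoint from $\ell$. The key observation is that $T_{w_{0}}$ is the convex hull of $w_{0}$ and the two leg endpoints, and by the ``deep'' condition, both edges adjacent to $w_{0}$ are contained in the closed half-plane $\overline{H}$ containing $w_{0}$. If a neighbor of $w_{0}$ is strictly in $H$, then so is the corresponding leg endpoint by convexity; if instead that neighbor lies on $L$ (i.e., is $v$ or the exit vertex), the leg endpoint lies strictly in $H$ because the validity of the chopping forces it to be a proper interior point of that edge, strictly away from its $L$-endpoint. Thus all three vertices of $T_{w_{0}}$ lie in the open half-plane $H$. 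Since $H$ is convex, $T_{w_{0}} \subset H$, so $T_{w_{0}} \cap L = \emptyset$ and in particular $T_{w_{0}} \cap \ell = \emptyset$, proving the lemma.
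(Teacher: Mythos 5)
Your proposal is correct and follows essentially the same route as the paper: both arguments count the vertices of $\Delta'$ lying off the line through $\ell$ and not on the exit edge (your $k$ ``deep'' vertices are exactly the paper's $k+2-2$ vertices), discard the at most two vertices of the unique possibly-short edge to leave $k-2\ge 1$ admissible corners, and observe that the chopping triangle at such a corner stays in one open half-plane. Your convexity argument for the disjointness of the triangle from $\ell$ just makes explicit a step the paper leaves to a figure.
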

	\begin{proof}
		Suppose $\ell$ divides the plane into two open half-planes $S_1,S_2$, which contains $k_1,k_2$ vertices of the Delzant polygon $\Delta'$ respectively. Then it's immediate to see that $k_1+k_2=k+2$ since $\ell$ does not pass through another vertex when the mutation is allowed. This is shown in Figure \ref{fig:notintersectingl}. Remember that the restrictive condition guarantees there is at most one edge of $\Delta'$ whose length is not larger than $\delta_{k+1}$. Assuming $\ell$ intersect the edge $L$, a triangle of size $\delta_{k+1}$ can always embedded at the corners of those $k_1+k_2$ vertices which are not the endpoints of $L$ or the shortest edges. Note that there will be at least $k+2-2-2=k-2\geq1$ such corners where we can perform corner chopping of size $\delta_{k+1}$ without intersecting $\ell$.
	\end{proof}

	\begin{proposition}\label{prop:proof of conjecture 5.7}
		For rational surface $(M_l,\omega_{(\delta_1,\cdots,\delta_{l})})$, if $(\delta_1,\cdots,\delta_{l})$ is in the restrictive region, then any two toric fibrations can be connected by a path of almost toric fibrations. As a result, on $ (X,\omega) $ with arbitrary $\omega$, any two toric fibrations are symplectic deformation equivalent to some toric fibrations on $(X,\omega')$ (in the sense of their boundary divisors) which can be connected through a path of almost toric fibrations.
	\end{proposition}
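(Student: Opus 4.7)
The plan is to prove the first assertion by induction on $l$, with base cases $l \leq 3$ supplied by Proposition \ref{prop:symington minimal}, and then deduce the second assertion from Theorem \ref{thm:all equivalent} via a deformation argument.

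For the inductive step, assume the assertion on $M_{l-1}$ for every restrictive class, and let $\pi_1, \pi_2$ be two toric fibrations on $(M_l, \omega_{\vec\delta})$ with $\vec\delta = (\delta_1, \ldots, \delta_l)$ restrictive. By Lemma \ref{lemma:reduced induction} combined with Theorem \ref{thm:toric=tLCY}, each $\pi_i$ is obtained by blowing down the unique $E_l$-component of its boundary divisor to a toric fibration $\pi_i'$ on $(M_{l-1}, \omega_{\vec\delta'})$ with $\vec\delta' = (\delta_1,\dots,\delta_{l-1})$ still restrictive; equivalently, $\pi_i$ is encoded by the pair $(\pi_i', v_i)$ where $v_i$ is a chosen corner of the moment polygon of $\pi_i'$ at which a size-$\delta_l$ chopping is performed. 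By the inductive hypothesis, $\pi_1'$ and $\pi_2'$ are connected by a sequence of toric mutations on $M_{l-1}$, and I lift this sequence to $M_l$ stage by stage. At each intermediate toric fibration $\pi'$ of the path on $M_{l-1}$, I select a corner carrying a size-$\delta_l$ chopping; Lemma \ref{lemma:chopping corner not intersecting eigenray} (applicable since $l-1 \geq 3$) guarantees that whenever the next mutation proceeds along an eigenray $\ell$, this corner can be chosen disjoint from $\ell$, so the mutation on $M_{l-1}$ lifts to a mutation on $M_l$ performed in the region away from the chopping. Whenever two consecutive liftings at the same $\pi'$ require different corners, Lemma \ref{lemma:connecting different corner chopping} supplies an intermediate path of toric mutations on $M_l$ exchanging them. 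Concatenating these pieces and adjusting at the endpoints to match $v_1$ and $v_2$ produces the desired path from $\pi_1$ to $\pi_2$.

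For the second assertion, after reducing by the action of orientation-preserving diffeomorphisms we may assume $(X,\omega) = (M_l, \omega_{\vec\delta})$ with $\vec\delta$ reduced, and let $D_1, D_2$ be the boundary divisors of two given toric fibrations. Choose a restrictive class $[\omega']$ on $M_l$ with $\delta'_i$ strictly decreasing and sufficiently small that every homology class appearing in $D_1 \cup D_2$ has positive $\omega'$-area; Corollary \ref{cor:ingredients} realizes these classes in $p\mathcal{HLCY}(M_l, \omega')$, and Theorem \ref{thm:all equivalent} yields toric divisors $D_1', D_2'$ on $(M_l, \omega')$ which are symplectic deformation equivalent to $D_1, D_2$. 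Via Theorem \ref{thm:toric=tLCY} these correspond to toric fibrations $\pi_1', \pi_2'$ on $(M_l, \omega')$, which are connected by the first assertion.

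The hard part is the coherent lifting of the mutation path: Lemma \ref{lemma:chopping corner not intersecting eigenray} only guarantees the existence of a valid chopping corner at each individual mutation, so the choices along the path must be threaded together and Lemma \ref{lemma:connecting different corner chopping} invoked exactly when adjacent choices disagree. The restrictive hypothesis $\delta_k > \delta_{k+1} + \cdots + \delta_l$ is essential, since it uniformly controls which edges of the intermediate moment polygons are shorter than $\delta_l$ and thereby prevents new obstructions from appearing at intermediate fibrations.
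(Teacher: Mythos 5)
Your proposal follows essentially the same route as the paper's proof: induction on $l$ with base cases from Proposition \ref{prop:symington minimal}, viewing each toric fibration on $M_l$ as a corner chopping of one on $M_{l-1}$, lifting the inductive mutation path via Lemma \ref{lemma:chopping corner not intersecting eigenray}, and splicing the lifts together with Lemma \ref{lemma:connecting different corner chopping}; the reduction of the second assertion to the restrictive region is also the paper's strategy, where the paper cites Lemma \ref{lemma:star shaped} for the deformation (keeping $\delta_1$ fixed and shrinking only $\delta_2,\dots,\delta_l$) rather than your ad hoc area-positivity argument. The argument is correct.
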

	\begin{proof}
		By Proposition \ref{prop:symington minimal}, we only need to look at the cases when $l\geq 4$. By induction we will assume any two toric fibrations on $(M_{l-1},\omega_{(\delta_1,\cdots,\delta_{l-1})})$ can be connected by toric mutations and prove the case for $l$. For any two toric fibrations $\pi,\pi'$ on $(M_l,\omega_{(\delta_1,\cdots,\delta_{l})})$ which come from performing the corner chopping of two toric fibrations $\overline{\pi},\overline{\pi}'$ on $(M_{l-1},\omega_{(\delta_1,\cdots,\delta_{l-1})})$ respectively, there will be a sequence of toric fibrations $\pi_1=\overline{\pi},\pi_2,\cdots,\pi_{n-1},\pi_n=\overline{\pi}'$ on $(M_{l-1},\omega_{(\delta_1,\cdots,\delta_{l-1})})$ such that $\pi_i$ and $\pi_{i+1}$ can be connected by performing one toric mutation for all $1\leq i\leq n-1$.  By Lemma \ref{lemma:chopping corner not intersecting eigenray}, there exist toric fibrations $\tilde{\pi}_i,\tilde{\tilde{\pi}}_{i}$ on $(M_l,\omega_{(\delta_1,\cdots,\delta_{l})})$ which are toric blow up of $\pi_{i},\pi_{i+1}$ and can be connected. As a result, by Lemma \ref{lemma:connecting different corner chopping} we see that $\tilde{\pi}_1,\tilde{\tilde{\pi}}_{n-1}$ can be connected and $\pi,\pi'$ can be connected to $\tilde{\pi}_1,\tilde{\tilde{\pi}}_{n-1}$  respectively. Therefore we finally obtain that $\pi$ can be connected to $\pi'$.
		
		Now given two toric fibrations on $(X,\omega)$ with toric divisors $D_1,D_2$, by Lemma \ref{lemma:star shaped}, there are always symplectic deformations $ (\omega_i^t,D^t_i) $ such that $ \omega_i^0=\omega $ and $ \omega_i^1=\omega' $ ($i=1,2$) with $\omega'$ satisfying the restrictive condition.
		Then the the toric fibrations on $ X $ corresponding to $ (\omega',D^1_1),(\omega',D^1_2) $ can be connected by a path of almost toric fibrations.
	\end{proof}
	
	
	\appendix
	\section{Appendix}
	\subsection{More discussions on tautness}\label{section:appendixtaut}
	
	In this subsection,  we study the def-tautness. Hopefully the result can be applied to study the problem of classifying symplectic fillings of torus bundles in the future. 
	
	Labeled holomorphic anticanonical pairs are in general not taut (\cite{Fr}). The following example of Golla and Lisca shows that the labeled symplectic log Calabi-Yau divisors are also in general not def-taut.
	\begin{example}[Proposition 3.6 of \cite{GoLi14}]
		The cycle of spheres with self-intersections \[
		(1,-2,-3,-3,-2,-3,-2)\]
		admits two different symplectic embeddings (with possibly different symplectic structures) in $ X=\CP^2\# 9\overline{\CP}^2 $ as symplectic log Calabi-Yau divisors $ (\omega_1,D_1) $ and $ (\omega_2,D_2) $. Golla and Lisca showed that their complements have different intersection forms and thus not homotopy equivalent. So the two divisors are not symplectic deformation equivalent (in both labeled and unlabeled senses).
	\end{example}

	The example given by Golla and Lisca is included in the classification of log Calabi-Yau divisors with $b^{+}=1$ up to toric equivalence in Theorem 1.4 (4) of \cite{LiMaMi-logCYcontact}. We can actually give a criterion of def-tautness for this type of divisors. Recall that in Theorem 1.4 (4) of \cite{LiMaMi-logCYcontact}, a blown-up $(1,1-p_1,-p_2,\cdots,-p_{l-1},1-p_l)$ means this self-intersection sequence is obtained by toric blow up of $(1,1,1)$ and then followed by non-toric blow up. We could give an equivalent description of this type of sequences following \cite{Lis08-lens}. Let
	\[\mathcal{C}=\{(n_1,\cdots,n_k,n_{k+1}):\sum_{i=1}^{k+1}n_i=3(k-1),[n_1,\cdots,n_k]=0,(n_1,\cdots,n_k)\text{ is admissible}\}\]
	\[\mathcal{C}'=\{(n_1,\cdots,n_k,n_{k+1},a_1,\cdots,a_{k+1}):(n_1,n_2,\cdots,n_k,n_{k+1})\in\mathcal{C},a_i\in\ZZ_{\geq 0},n_i+a_i\geq 2\}\]
	where $[n_1,\cdots,n_k]$ denotes the continued fraction and 'admissible' means each denominator appearing in the continued fraction is positive.
	By Lemma 2.2 in \cite{Lis08-lens}, there is a surjective map \[\Phi: \mathcal{C}'\rightarrow \{\text{blown-up }(1,1-p_1,-p_2,\cdots,-p_{k},1-p_{k+1}),p_i\geq 2\}\] sending $(n_1,\cdots,n_k,n_{k+1},a_1,\cdots,a_{k+1})$ to $(1,1-(n_1+a_1),-(n_2+a_2),\cdots,-(n_k+a_k),1-(n_{k+1}+a_{k+1}))$. For instance, $(1,-2,-3,-3,-2,-3,-2)$ in the above example is equal to $\Phi(2,1,3,2,1,3,1,2,0,0,2,0)$ or $\Phi(3,1,3,1,3,1,0,2,0,1,0,2)$.
	
	\begin{proposition}\label{prop:criterion of tautness}
		A labeled symplectic log Calabi-Yau divisor $(D=(C_0,C_1,\cdots,C_l),\omega)$ with blown-up self-intersection sequence $(1,1-p_1,-p_2,\cdots,-p_{l-1},1-p_l)$ such that all $p_i\geq 2$ is def-taut if and only if the preimage of the self-intersection sequence under $\Phi$ has only one element.
	\end{proposition}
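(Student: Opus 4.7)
The plan is to apply Theorem \ref{thm: symplectic deformation class=homology classes} to translate def-tautness into a homological question, and then establish a bijection between the set of labeled homological equivalence classes of divisors with the given self-intersection sequence and the fiber $\Phi^{-1}(s)$.

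First, by Theorem \ref{thm: symplectic deformation class=homology classes}, the labeled divisor $(D,\omega)$ is def-taut if and only if every labeled symplectic log Calabi-Yau divisor with the same self-intersection sequence $s=(1,1-p_1,-p_2,\cdots,-p_{l-1},1-p_l)$ lies in the same (unrestricted) labeled homological equivalence class as $D$. Hence the proposition reduces to showing that the number of labeled homological equivalence classes of divisors on $X$ with sequence $s$ is exactly $|\Phi^{-1}(s)|$.

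Given such a divisor, I would apply the minimal-model reduction from Section \ref{section:operations}: first perform a maximal sequence of non-toric blow-downs to reach a toric intermediate $D^{\tau}$, then a maximal sequence of toric blow-downs to reach a minimal model. By the blown-up hypothesis $p_i\ge 2$ and Theorem \ref{thm:minimal model}, the final minimal model must be case $(B3)$, namely $(H,H,H)$ in $\mathbb{CP}^2$. Consequently $D^{\tau}$ is obtained from $(1,1,1)$ by iterated toric blow-ups, so its self-intersection sequence has the form $(1,1-n_1,-n_2,\cdots,-n_k,1-n_{k+1})$ for some $(n_1,\ldots,n_{k+1})\in\mathcal{C}$. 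Setting $a_i=p_i-n_i\ge 0$ produces an element $(n_i,a_i)$ of $\mathcal{C}'$ which by construction lies in $\Phi^{-1}(s)$. This assignment $\Psi\colon D\mapsto(n_i,a_i)$ should descend to labeled homological equivalence classes, because a labeled homological equivalence of $D$ can be transported through each blow-down step, and by Lemma \ref{lemma:taut} the toric intermediate $D^{\tau}$ is determined up to labeled homological equivalence by its self-intersection sequence. For surjectivity, each preimage $(n_i,a_i)$ is realized by starting with a Delzant polygon whose boundary divisor has sequence $(1,1-n_1,\ldots,1-n_{k+1})$ and performing $a_i$ non-toric blow-ups of sufficiently small size at smooth points of the $i$-th toric component.

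For injectivity of $\Psi$, if $(D,\omega)$ and $(D',\omega')$ map to the same $(n_i,a_i)$, their toric intermediates are labeled-homologically equivalent by Lemma \ref{lemma:taut}; each subsequent non-toric blow-up subtracts an exceptional class $E$ (characterized homologically by $E^2=E\cdot K_\omega=-1$ together with $E\cdot[C^{\tau}_j]=\delta_{ij}$) from the $i$-th toric class, and the stabilizer of the toric homological configuration in $D(X)$ acts transitively on ordered tuples of $a_i$ mutually orthogonal such classes for each $i$; hence the two blown-up divisors are labeled-homologically equivalent. The principal obstacle is establishing the well-definedness of $\Psi$ on labeled homological equivalence classes, which amounts to showing that the toric intermediate $D^{\tau}$ is canonically determined up to labeled homological equivalence, independent of the order in which the non-toric blow-downs are carried out. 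This should follow by combining the homological characterization of non-toric exceptional spheres via adjunction with the uniqueness of the minimal model up to homological equivalence established in \cite{LiMa16-deformation}, applied inductively at each intermediate stage; once this is secured the bijection with $\Phi^{-1}(s)$ is established, and the proposition follows.
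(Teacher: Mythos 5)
Your proposal takes a genuinely different route from the paper. The paper does not pass through minimal-model reduction at all: it first shows, using reflections along $E_i-E_j$ and $H-E_i-E_j-E_k$, that one may normalize $[C_0]=H$, then invokes Proposition 4.4 of \cite{Lis08-lens} to pin down the homology classes of the remaining components ($[C_1]=H-\sum\epsilon_iE_i$ and $[C_i]=E_{\alpha_i}-\sum_{\beta>\alpha_i}\epsilon_\beta E_\beta$), and finally defines $a_i=\#\{E_j\mid E_j\cdot[C_i]=1,\ E_j\cdot[C_k]=0\text{ for }k\neq i\}$ directly as a homological invariant of the configuration. Well-definedness of the correspondence with $\Phi^{-1}(s)$ is then automatic, and the "only if" direction follows because any homological equivalence fixing $H$ and the canonical class must permute the $E_j$'s and hence preserves each $a_i$.

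The genuine gap in your argument sits exactly where you flag it: the well-definedness of $\Psi$. You need the toric intermediate $D^{\tau}$ (equivalently the sequence $(n_1,\dots,n_{k+1})$ and the multiplicities $a_i$) to be independent of the choice and order of the maximal sequence of non-toric blow-downs, and your proposed fix --- combining adjunction with the uniqueness of the minimal model from \cite{LiMa16-deformation} --- does not deliver this. Uniqueness of the minimal model only controls the final stage after \emph{both} kinds of blow-downs, not the intermediate toric stage; and the very phenomenon the proposition is about is that distinct elements of $\mathcal{C}'$, i.e.\ distinct toric sequences $(n_i)$ with distinct distributions $(a_i)$, can blow up non-torically to the same sequence $s$. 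So the combinatorics of $s$ alone cannot rule out two maximal non-toric blow-down sequences of the same divisor landing on different toric intermediates; you must extract the $(a_i)$ from the homology classes of the components themselves, which is precisely what the paper's formula does and what your sketch omits. A secondary unproved assertion is the transitivity of the stabilizer of the toric configuration in $D(X)$ on ordered tuples of mutually orthogonal exceptional classes meeting a fixed component, which you use for injectivity; the paper avoids needing this by reducing everything to permutations of the $E_j$'s after the normalization $[C_0]=H$. If you replace the blow-down bookkeeping by the direct homological definition of $a_i$ (after establishing the normal form of the components), your argument closes up and essentially becomes the paper's proof.
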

	\begin{proof}
		Firstly we show that the homology class of any component of self intersection $1$ in a symplectic log Calabi-Yau divisor with length $\geq 2$ can be mapped to the standard line class $H$ by some $\gamma\in D(X)$, where $D(X)$ is the image of $Diff^+(X)\rightarrow Aut(H^2(X;\ZZ))$. Note that with standard basis $H,E_1,\cdots,E_l$, the reflections along $E_i-E_j,H-E_i-E_j-E_k$ are in $D(X)$. By Corollary \ref{cor:ingredients}, we may assume the self intersection $1$-component is one of the following
		\[kH-(k-1)E_1-\sum_{i=2}^l \epsilon_i E_i, k\in\ZZ,\epsilon_i\in\{0,1\}\]
		\[2H-\sum_{i=2}^l \epsilon _i E_i, \epsilon_i\in\{0,1\}\]
		\[3H-E_1-\cdots-E_{p-1}-2E_p-\sum_{i=p+1}^l\epsilon _i E_i, 2\leq p\leq l,\epsilon_i\in\{0,1\}\]
		For the second and the third cases, we can firstly perform the reflections along some $E_1-E_i$ for the $i$ such that $\epsilon_i=1$ or $E_1-E_p$, so that these homology classes can be transformed into the first type. For the first type, we could perform the reflections along $H-E_1-E_i-E_j$ for $i,j$ such that $\epsilon_i=\epsilon_j=1$ to decrease $k$ by $1$. By repeating this procedure we will get $k=1$ and the homology class must be $H$.
		
		
		Since the symplectic deformation equivalent class will not changed by a diffeomorphism, by the above claim we may just assume $[C_0]=H$. By Proposition 4.4 in \cite{Lis08-lens}, it follows that $[C_1]$ has the form of $H-\sum_{i=2}^l \epsilon _i E_i, \epsilon_i\in\{0,1\}$, $[C_i]$ with $1< i<l$ has the form of $E_{\alpha_i}-\sum_{\beta>\alpha_i} \epsilon_{\beta}E_{\beta}, \epsilon_{\beta}\in\{0,1\}$. Define $a_i=\#\{E_j\, | \,E_j\cdot[C_i]=1,E_j\cdot[C_k]=0\text{ for other } k\}$. It follows that $\Phi(p_1-a_1,\cdots,p_{l}-a_{l},a_1,\cdots,a_l)=(1,1-p_1,-p_2,\cdots,-p_{l-1},1-p_l)$. Therefore each homological configuration will correspond to some element in the preimage of $\Phi$ according to the number of non-toric blow up times at each position. Conversely, if there are two different elements in the preimage, then the homological configurations corresponding to them can not be only differed by a homological action of diffeomorphism. Since otherwise, the diffeomorphism maps $H$ to $H$, which implies its homological action must be some permutations of $E_i$'s. It's easy to see the value $a_i$ will not change after the permutations of $E_i$'s. As a result, the two divisors corresponding to those two homological configurations are not symplectic deformation equivalent by Theorem \ref{thm: symplectic deformation class=homology classes} but have the same self-intersection sequence, which means the labeled divisor is not def-taut.
	\end{proof}
	
	For the completeness, we also determine the def-tautness for all the other self-intersection sequences with $b^+=1$ in Theorem 1.4 of \cite{LiMaMi-logCYcontact}. Those are $(1,p),(-1,-p)$ with $p=1,2,3$, $(1,1,p)$ with $p\leq1$, $(0,p)$ with $p\leq 4$ and $(1,p)$ with $p\leq 1$.
	
	\begin{proposition}\label{prop:criterion of tautness for others}
		All the labeled symplectic log Calabi-Yau divisors with the above self-intersection sequences are def-taut.
	\end{proposition}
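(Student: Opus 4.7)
The plan is to verify def-tautness case by case. By Theorem \ref{thm: symplectic deformation class=homology classes} it suffices, for each listed self-intersection sequence $s$, to enumerate all homological configurations realizing $s$ and show they form a single orbit under the action of $D(X)$, where $X=M_l$ is the rational surface determined by $c_1(X)^2=D^2$. Note that computing $D^2$ for a length-$k$ cyclic divisor uses that consecutive components intersect once when $k\ge 3$ and twice when $k=2$; this pins down $l$ in each case (for instance, $(1,p)$ of length two gives $l=4-p$, $(1,1,p)$ gives $l=1-p$, and $(0,p)$ gives $l=5-p$).

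First, I would list the admissible homology class of each component using Corollary \ref{cor:ingredients} together with the classification of negative spherical classes in \cite{LiLi20-pacific} (generalizing Proposition \ref{prop:spherical classes}). The $(+1)$-, $0$-, and $(-1)$-components in the listed sequences are spheres of very restricted type: either $H-\sum_{i\in I}E_i$ with $|I|\le 2$, a conic-like class $2H-\sum_{i\in I}E_i$, an exceptional class $E_i$, or a fiber class of a ruling. The cyclic intersection relations $A_i\cdot A_{i+1}=\#(C_i\cap C_{i+1})$ together with the Calabi--Yau condition $\sum_i A_i = PD(c_1(X))$ then cut the candidate list down substantially for each $s$.

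Next, I would exploit the action of $D(X)$---which, by \cite{LiWu12-lagrangian} for rational surfaces, contains all reflections along $(-2)$-classes (notably $E_i-E_j$ and $H-E_i-E_j-E_k$) as well as permutations of the $E_i$---to normalize a distinguished component to a standard representative. For $(1,p)$ and $(-1,-p)$ I expect to normalize $[C_1]$ to $H-\sum_{j\le r}E_j$ or $-H+2E_1$, respectively, after which the Calabi--Yau constraint forces $[C_2]$ up to permutations of the unused $E_i$. For $(1,1,p)$ I expect to arrange $[C_1]=[C_2]=H$ by Cremona-type reflections, leaving $[C_3]=H-\sum_{i\in I}E_i$ with $|I|=1-p$, and permutations in $D(X)$ act transitively on such subsets. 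The case $(0,p)$ will be handled by first normalizing the $0$-sphere to a ruling class $F$ (possible because $X$ carries a ruling over $\CP^1$), which determines $[C_2]$ modulo the stabilizer.

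The main obstacle is the tail cases with $p$ very negative, which involve $M_l$ for $l\ge 10$; there not every integral isometry is diffeomorphism-induced by \cite{FrMo88-diffeo}, so I cannot invoke \cite{Wall64-diffeo} directly. For these I would instead rely on the description of $D(X)$ in terms of reflections along $(-2)$-Lagrangian classes from \cite{LiWu12-lagrangian}, together with the classical fact that the Weyl group $W(E_l)$ acts transitively on exceptional classes of fixed self-intersection, which is enough to collapse the finitely many candidate configurations for each $s$ into a single $D(X)$-orbit.
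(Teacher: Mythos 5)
Your proposal follows essentially the same route as the paper's proof: reduce def-tautness to homological equivalence via Theorem \ref{thm: symplectic deformation class=homology classes}, normalize one distinguished component to a standard class using reflections along $E_i-E_j$ and $H-E_i-E_j-E_k$ (exactly the argument of Proposition \ref{prop:criterion of tautness}; since these are reflections in $(-2)$-sphere classes they lie in $D(X)$ for every $l$, so your $l\ge 10$ worry dissolves), and then let the anti-canonical constraint force the rest --- for length two the second class is $c_1$ minus the first, and for $(1,1,p)$ the other $(+1)$-component is forced to be $H$ because $A\cdot H=1$, $A^2=1$ implies $A=H$, leaving $H-E_1-\cdots-E_l$ as the third. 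Two small corrections: a square-$(-1)$ component should be normalized to $E_1$ (as the paper does), not to $-H+2E_1$, which has square $-3$; and once the first component is in canonical form the second is completely determined by $\sum_i [C_i]=PD(c_1)$ (the reflections used fix $c_1$), so no residual "permutations of the unused $E_i$" need to be quotiented out.
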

	\begin{proof}
		One could use the same argument as the proof of Proposition \ref{prop:criterion of tautness} to show that if $A\in\mathcal{H}_l$ and $A\neq3H-E_1-\cdots-E_l$, then when $A^2=0$, it could be transformed to $H-E_1$ (or $F$ when the manifold is $S^2\times S^2$) by reflections and when $A^2=-1$ it could be transformed to $E_1$ by reflections unless $A=H-E_1-E_2$ with $l=2$. Note that those reflections along $E_i-E_j,H-E_i-E_j-E_k$ preserve $c_1(M_l)=3H-E_1-\cdots-E_l$. So when the length of the sequence is $2$, if we could transform one component by reflections, the other one will be transformed as well. This shows that labeled divisors of self-intersection sequences $(1,p),(0,p)$ are def-taut, and labeled divisors of self-intersection sequences $(-1,-p)$ are def-taut unless the manifold is $M_2$, in which case $p=-4$. Therefore we have checked all the length $2$ sequences above are def-taut. For the remaining case $(1,1,p)$, note that if we transform one of the self-intersection $1$-component into $H$, the other self-intersection $1$-component must also be $H$. This is simply because $A\cdot H=1,A^2=1$ has the only solution $A=H$. This shows the homological configurations of the labeled divisors of self-intersection sequences $(1,1,p)$ can always be transformed into $(H,H,H-E_1-\cdots-E_l)$ which implies labeled divisors of self-intersection sequences $(1,1,p)$ are def-taut.
	\end{proof}
	
	\begin{proposition}\label{prop:toricinvariancetaut}
		The labeled symplectic log Calabi-Yau divisors with self-intersection sequence $(s_1,\cdots,s_k)$ are def-taut if and only if the labeled symplectic log Calabi-Yau divisors with the toric blow up self-intersection sequences (which are $(-1,s_1-1,\cdots,s_k-1),(s_1-1,-1,s_2-1,\cdots,s_k),\cdots,(s_1,\cdots,s_{k-1}-1,-1,s_k-1),(s_1-1,\cdots,s_k-1,-1)$) are def-taut.
	\end{proposition}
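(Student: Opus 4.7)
The plan is to realize the claim as a bijection between deformation classes under labeled toric blow-up/blow-down. By definition, def-tautness of a sequence is the statement that the set of symplectic deformation classes of labeled log Calabi-Yau divisors realizing it (on any fixed underlying smooth 4-manifold) is a singleton. So it suffices to establish, for each position $i$, a bijection between deformation classes on $X'$ realizing $(s_1,\ldots,s_k)$ and deformation classes on $X=X'\#\overline{\CP}^2$ realizing the blown-up sequence $\tilde s=(s_1,\ldots,s_i-1,-1,s_{i+1}-1,\ldots,s_k)$, implemented by labeled toric blow-up (one way) and by blowing down the uniquely labeled $-1$ component (the other way).

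For the forward direction, assume $(s_1,\ldots,s_k)$ is def-taut. Given two labeled divisors $\tilde D_a,\tilde D_b$ on $X$ with sequence $\tilde s$, the labeling identifies the inserted $-1$ component in each as a toric exceptional sphere; blowing both down yields labeled divisors $D_a',D_b'$ on $X'$ with sequence $(s_1,\ldots,s_k)$ (under the natural relabeling of the original components). By def-tautness and Theorem \ref{thm: symplectic deformation class=homology classes}, there is an orientation-preserving diffeomorphism $\phi\colon X'\to X'$ with $\phi_*[C_j^{a\prime}]=[C_j^{b\prime}]$ for all $j$. Composing $\phi$ with a self-isotopy of $X'$, we may assume $\phi$ sends the blow-down point $p_1$ of $\tilde D_a$ to the blow-down point $p_2$ of $\tilde D_b$; this is possible since $X'$ is connected and four-dimensional, and isotopies act trivially on homology. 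Then $\phi$ lifts canonically to $\tilde\phi\colon X\to X$ fixing the exceptional class $E$, and a direct homological check yields $\tilde\phi_*[\tilde C_j^a]=[\tilde C_j^b]$ for every $j$. Theorem \ref{thm: symplectic deformation class=homology classes} again gives the required symplectic deformation equivalence.

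The backward direction reverses this: assume $\tilde s$ is def-taut, and let $D_a,D_b$ be two labeled divisors on $X'$ with sequence $(s_1,\ldots,s_k)$. Perform labeled toric blow-ups of the same small size at the specified nodal point to obtain $\tilde D_a,\tilde D_b$ on $X'\#\overline{\CP}^2$ with sequence $\tilde s$. Def-tautness of $\tilde s$ yields an orientation-preserving diffeomorphism $\tilde\phi$ realizing a labeled homological equivalence between them; in particular, $\tilde\phi_*$ fixes the class $E$ of the inserted $-1$ spheres. By an ambient isotopy exploiting the connectedness of the space of embedded $(-1)$-spheres in a fixed exceptional class, we may arrange $\tilde\phi$ to preserve a chosen embedded representative of $E$; blowing this representative down on both source and target descends $\tilde\phi$ to $\phi\colon X'\to X'$ with $\phi_*[C_j^a]=[C_j^b]$ for all $j$, completing the deformation equivalence via Theorem \ref{thm: symplectic deformation class=homology classes}.

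I expect the descent step in the backward direction to be the main technical point: while $\tilde\phi$ fixes the homology class $E$, promoting it to a diffeomorphism preserving a specific embedded exceptional sphere (so that it descends through the blow-down) requires either the connectedness of the space of embedded spheres in class $E$, or alternatively the realization of the induced integral isometry on $H_2(X';\mathbb{Z})=E^\perp\subset H_2(X'\#\overline{\CP}^2;\mathbb{Z})$ as an element of $D(X')$, available for rational surfaces through Theorem 2.8(1) of \cite{LiLi-genus}. The forward direction, by contrast, is direct since the lift $\tilde\phi$ is built explicitly as a connected-sum extension after moving one point to another.
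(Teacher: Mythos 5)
Your strategy---performing actual geometric blow-ups and blow-downs and then lifting or descending the diffeomorphism supplied by def-tautness---is genuinely different from the paper's proof, which stays entirely at the level of homological configurations: the paper normalizes the $(-1)$-class to $E_l$ by reflections (flagging an exception at $l=2$), invokes the realization result (Corollary \ref{cor:ingredients}) to produce divisors realizing the blown-up or blown-down configurations without any surgery, and reduces both implications to a single lattice-theoretic fact extracted from Section 4.1.2 of \cite{LiWu12-lagrangian}: an integral isometry $\gamma$ of $H_2(M_{l-1};\ZZ)$ lies in $D_{K_{l-1}}(M_{l-1})$ if and only if $\gamma\oplus\id_{E_l}$ lies in $D_{K_l}(M_l)$. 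Your forward direction (def-tautness of $(s_1,\dots,s_k)$ implies def-tautness of the blown-up sequence $\tilde s$) is sound as sketched: blowing down the labeled $(-1)$-components sidesteps the normalization issue entirely, and the connected-sum lift together with the proper-transform relation $[\tilde C_j]=[C_j']-\epsilon_j E$ gives the homological equivalence, which Theorem \ref{thm: symplectic deformation class=homology classes} converts back into deformation equivalence.

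The backward direction has a genuine gap at the descent step, and neither of your proposed fixes closes it as stated. Fix (a): $\tilde\phi$ is merely an orientation-preserving diffeomorphism realizing a homological equivalence, so $\tilde\phi(\tilde C^a_{i+1})$ need not be symplectic for the target form; the isotopy uniqueness of exceptional spheres in a fixed class is a statement about spheres symplectic for one and the same form, and no smooth-category connectedness of embedded $(-1)$-spheres in a fixed class is available to you here. Fix (b): Theorem 2.8(1) of \cite{LiLi-genus} requires the isometry to fix a symplectic class pairing positively with $c_1$; in the def-taut setting the two divisors may carry different symplectic forms and $\tilde\phi_*$ is only known to respect the canonical class, so that theorem does not apply (it is the right tool for iso-tautness, not def-tautness). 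What you actually need is precisely the $D_K$-compatibility quoted above: since $\tilde\phi_*$ fixes $E$ it preserves $E^\perp=H_2(X';\ZZ)$ and restricts to an isometry sending $[C_j^a]=[\tilde C_j^a]+\epsilon_jE$ to $[C_j^b]$, and the statement from \cite{LiWu12-lagrangian} guarantees this restriction is induced by a diffeomorphism of $X'$---no descent of $\tilde\phi$ itself is required. (Alternatively, one can blow down fiberwise through the deformation family provided by Theorem \ref{thm: symplectic deformation class=homology classes}, as in the unlabeled Remark of Section \ref{section:operations}.) With that substitution your argument closes; as written, the decisive input is missing in one route and mis-cited in the other.
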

	\begin{proof}
		We only need to consider $(-1,s_1-1,\cdots,s_k-1)$, the others are same. Assume the divisors of self-intersection sequences $(s_1,\cdots,s_k),(-1,s_1-1,\cdots,s_k-1)$ are in $M_{l-1},M_{l}$ with reduced blowup forms. We adopt the canonical identification $H_2(M_{l};\ZZ)=H_2(M_{l-1};\ZZ)\oplus \ZZ E_l$ and always identify symplectic deformation equivalence and homological equivalence by Theorem \ref{thm: symplectic deformation class=homology classes}. We will use the following convenient result, which is actually implied in Section 4.1.2 of \cite{LiWu12-lagrangian} : let $K_{l-1},K_l$ be the standard canonical classes, if $\gamma,\gamma'$ are integral isometries of $ H_2(M_{l-1};\ZZ),H_2(M_{l};\ZZ)$ respectively and $\gamma'=\gamma\oplus id_{E_l}$, then $\gamma\in D_{K_{l-1}}(M_{l-1})$ if and only if $\gamma'\in D_{K_l}(M_l)$.
		
		On the one hand, if the labeled divisors of self-intersection sequence $(-1,s_1-1,\cdots,s_k-1)$ are def-taut and $D=(C_1,\cdots,C_k),D'=(C_1',\cdots,C_k')$ are two labeled divisors of self-intersection sequence $(s_1,\cdots,s_k)$, then there must exist divisors on $M_{l}$ with homological configurations $(E_l,[C_1]-E_l,\cdots,[C_k]-E_l),(E_l,[C_1']-E_l,\cdots,[C_k']-E_l)$ by Corollary \ref{cor:ingredients}. And there is some homological action $\gamma\in D_{K_l}(M_l)$ such that $\gamma(E_l,[C_1]-E_l,\cdots,[C_k]-E_l)=(E_l,[C_1']-E_l,\cdots,[C_k']-E_l)$. $\gamma(E_l)=E_l$ implies $\gamma([C_1],\cdots,[C_k])=([C_1'],\cdots,[C_k'])$ and $\gamma|_{H_2(M_{l-1};\ZZ)}$ is an isomorphism of the lattice $H_2(M_{l-1};\ZZ)$ and thus belongs to $D_{K_{l-1}}(M_{l-1})$ by the claim. This shows $D,D'$ are symplectic deformation equivalent.
		
		On the other hand,  if the labeled divisors of self-intersection sequence $(s_1,\cdots,s_k)$ are def-taut and $D=(C_0,C_1,\cdots,C_k),D'=(C_0',C_1',\cdots,C_k')$ are two labeled divisors of self-intersection sequence $(-1,s_1-1,\cdots,s_k-1)$. As the proof of the previous Proposition, unless $l=2$ we may assume $[C_0]=[C_0']=E_l$. It then follows that $([C_1]+E_l,\cdots,[C_k]+E_l),([C_1']+E_l,\cdots,[C_k']+E_l)$ are homological configurations with classes in $H_2(M_{l-1};\ZZ)$. So there is some $\gamma\in D_{K_{l-1}}(M_{l-1})$ such that $\gamma([C_1]+E_l,\cdots,[C_k]+E_l)=([C_1']+E_l,\cdots,[C_k']+E_l)$. Thus by the claim $\gamma'=\gamma\oplus id_{E_l}\in D_{K_l}(M_l)$ will satisfy $\gamma'([C_0],[C_1],\cdots,[C_k])=([C_0'],[C_1'],\cdots,[C_k'])$. This shows that $D,D'$ are symplectic deformation equivalent.
	\end{proof}
	
	Combining the above Propositions with Theorem 1.4 of \cite{LiMaMi-logCYcontact}, we already determined all the def-taut labeled divisors with $b^+=1$.

	\begin{remark}
		There are some differences between the symplectic def-tautness and holomorphic tautness. From the definition it's easy to see the holomorphic tautness implies symplectic def-tautness, but the converse is not true. In \cite{Fr} the sequence $(4,0)$ is considered to be not taut, but in our notion it is def-taut. This is because $(4,0)$ is the (only) self-intersection sequence having embeddings into two different manifolds $S^2\times S^2$ or $\CC\PP^2\#\overline{\CC\PP}^2$ and the definition of tautness in the holomorphic category does not fix the ambient manifolds. Other than this, the self-intersection sequence $(2,2)$ is not holomorphic taut since it has realizations in $S^2\times S^2$ with two non-isomorphic complex structures (namely $\mathbb{F}_0$ and $\mathbb{F}_2$, see \cite{Fr}). However it's not hard to check the def-tautness for divisors with self-intersection sequence $(2,2)$.
	\end{remark}
	
	\begin{remark}
		For the purpose of completely determining the diffeomorphism types of symplectic fillings of torus bundles, one should define the def-tautness for unlabeled divisors. However it's difficult to obtain the toric blow up invariance for unlabeled def-tautness analogous to Proposition \ref{prop:toricinvariancetaut} due to the cyclic symmetry.
	\end{remark}

	\subsection{Details of counting toric divisors in $M_3$}\label{section:detail1}
	\begin{proof}[Proof of Proposition \ref{prop:M3}:]
		The proof is very similar to that of Proposition \ref{prop:general count M2}. We start by listing the possible self-intersection sequences to be \begin{enumerate}[label=$ (\arabic*) $]
			\item $ (-n,0,n-2,-1,-2,-1) $,
			\item $ (-n,0,n-1,-2,-1,-2) $,
			\item $ (-n,-1,-1,n-2,-1,-1) $,
			\item $ (-1,-1,-1,-1,-1,-1) $,
		\end{enumerate}
		where $ n\ge 2 $. Also, to make the presentation easier, we write $ H_i=H-E_i $, $ H_{ij}=H-E_i-E_j $ and $ H_{ijk}=H-E_i-E_j-E_k $, for distinct $ i,j,k $.
		
		We denote by $ tF_n(3;1,\delta_1,\delta_2,\delta_3) $ the number of toric log Calabi-Yau divisors in $ (1) $-$ (3) $ and by $ tF_1(3;1,\delta_1,\delta_2,\delta_3) $ the number of toric log Calabi-Yau divisors in $ (4) $. We could make use of Proposition \ref{prop:spherical classes} to enumerate the homology classes with self-intersections listed above.
		
		Let's first consider $ (1)-(3) $. If $ n=2k+1 $ with $ k\ge 1 $, then the possible homology types are \begin{enumerate}
			\item $ (-(k-1)H+kE_1-E_2-E_3,H_1,kH-(k-1)E_1,H_{12},E_2-E_3,E_3),\\ (-kH+(k+1)E_1,H_1,(k+1)H-kE_1-E_2-E_3,E_3,E_2-E_3,H_{12}), $
			\item $ (-kH+(k+1)E_1,H_1,(k+1)H-kE_1-E_2,E_2-E_3,E_3,H_{123}),$
			\item $ (-(k-1)H+kE_1-E_2-E_3,E_2,H_{12},kH-(k-1)E_1,H_{13},E_3),\\ (-kH+(k+1)E_1,H_{12},E_2,(k+1)H-kE_1-E_2-E_3,E_3,H_{13}) $.
		\end{enumerate}
		Then we have \[tF_{2k+1}(3;1,\delta_1,\delta_2,\delta_3)=\begin{cases}
		5 \quad \text{if } -k+(k+1)\delta_1>0, \delta_2>\delta_3,\delta_1+\delta_2+\delta_3<1 \\
		2 \quad \text{if } -k+(k+1)\delta_1>0, \delta_2>\delta_3,\delta_1+\delta_2+\delta_3=1 \\
		2 \quad \text{if } -k+(k+1)\delta_1>0, \delta_2=\delta_3,\delta_1+\delta_2+\delta_3<1 \\
		1 \quad \text{if } -k+(k+1)\delta_1>0, \delta_2=\delta_3,\delta_1+\delta_2+\delta_3=1 \\
		2 \quad \text{if } \delta_1+\delta_2+\delta_3-1<-k+(k+1)\delta_1\le 0, \delta_2>\delta_3 \\
		1 \quad \text{if } \delta_1+\delta_2+\delta_3-1<-k+(k+1)\delta_1\le 0, \delta_2=\delta_3 \\
		0 \quad \text{if } -k+(k+1)\delta_1\le \delta_1+\delta_2+\delta_3-1
		\end{cases}\]
		If $ n=2k+2 $ with $ k\ge 1 $, then the possible homology types are \begin{enumerate}
			\item $ (-kH+(k+1)E_1-E_i,H_1,(k+1)H-kE_1-E_j,E_j,H_{123},E_i) $, with $ \{ i,j \}=\{ 2,3 \} $
			\item $ (-kH+(k+1)E_1-E_2,H_1,(k+1)H-kE_1,H_{123},E_3,E_2-E_3) $
			\item $ (-kH+(k+1)E_1-E_i,E_i,H_{1i},(k+1)H-kE_1-E_j,E_j,H_{1j}) $, with $ \{ i,j \}=\{ 2,3 \} $
		\end{enumerate}
		Then we have \[tF_{2k+2}(3;1,\delta_1,\delta_2,\delta_3)=\begin{cases}
		5 \quad \text{if } -k+(k+1)\delta_1-\delta_2>0, \delta_2>\delta_3,\delta_1+\delta_2+\delta_3<1 \\
		2 \quad \text{if } -k+(k+1)\delta_1-\delta_2>0, \delta_2>\delta_3,\delta_1+\delta_2+\delta_3=1 \\
		2 \quad \text{if } -k+(k+1)\delta_1-\delta_2>0, \delta_2=\delta_3, \delta_1+\delta_2+\delta_3<1 \\
		1 \quad \text{if } -k+(k+1)\delta_1-\delta_2>0, \delta_2=\delta_3, \delta_1+\delta_2+\delta_3=1 \\
		2 \quad \text{if } \delta_3-\delta_2<-k+(k+1)\delta_1-\delta_2\le 0, \delta_1+\delta_2+\delta_3<1 \\
		1 \quad \text{if } \delta_3-\delta_2<-k+(k+1)\delta_1-\delta_2\le 0, \delta_1+\delta_2+\delta_3=1 \\
		0 \quad \text{if } -k+(k+1)\delta_1-\delta_2\le \delta_3-\delta_2
		\end{cases}\]
		
		If $ n=2 $, the possible homology types are \begin{enumerate}
			\item $ (E_i-E_j,H_i,H_k,E_k,H_{123},E_j), \{ i,j,k\}=\{ 1,2,3\},i<j$
			\item $ (E_1-E_2,H_1,H,H_{123},E_3,E_2-E_3) $
			\item $ (E_i-E_j,E_j,H_{ij},H_k,E_k,H_{ik}),\{i,j,k \}=\{1,2,3 \},i<j $
		\end{enumerate}
		Then we have \[tF_{2}(3;1,\delta_1,\delta_2,\delta_3)=\begin{cases}
		7 \quad \text{if } \delta_1>\delta_2>\delta_3,\delta_1+\delta_2+\delta_3<1 \\
		3 \quad \text{if } \delta_1>\delta_2>\delta_3,\delta_1+\delta_2+\delta_3=1 \\
		2 \quad \text{if } \delta_1>\delta_2=\delta_3,\delta_1+\delta_2+\delta_3<1 \\
		1 \quad \text{if } \delta_1>\delta_2=\delta_3,\delta_1+\delta_2+\delta_3=1 \\
		2 \quad \text{if } \delta_1=\delta_2>\delta_3,\delta_1+\delta_2+\delta_3<1 \\
		1 \quad \text{if } \delta_1=\delta_2>\delta_3,\delta_1+\delta_2+\delta_3=1 \\
		0 \quad \text{if } \delta_1=\delta_2=\delta_3\\
		\end{cases}\]
		If $ n=1 $, there is only one possible homology type $ (H_{12},E_1,H_{13},E_3,H_{23},E_2) $ and is always realized. So we have $$ tF_1(3;1,\delta_1,\delta_2,\delta_3)=1 .$$
		Now the results follow from \[
		|t\mc{LCY}(3;1,\delta_1,\delta_2, \delta_3)|=\sum_{n=1}^{\infty } tF_n(3;1,\delta_1,\delta_2, \delta_3),
		\]
		and analyzing the bounds on $ \delta_1,\delta_2 , \delta_3 $ in each region. For example, in region $Q_{i-1}R_{i-1}P_{2i-1}P_{2i-2}$, one just have to check the followings:
		\begin{enumerate}
			\item $-k+(k+1)\delta_1>0 \quad \text{if and only if} \quad k< i$
			\item $ \delta_1+\delta_2+\delta_3-1<-k+(k+1)\delta_1\leq 0 \quad \text{will never happen for any} \,\, k $
			\item $-k+(k+1)\delta_1-\delta_2>0 \quad \text{if and only if} \quad k<i$
			\item $\delta_3-\delta_2<-k+(k+1)\delta_1-\delta_2 \leq 0 \quad \text{will never happen for any} \,\, k $
			\item $\delta_1>\delta_2>\delta_3,\delta_1+\delta_2+\delta_3<1$
		\end{enumerate}
		In the region $Q_{i-1}Q_{i}P_{2i-1}$, one can check that:
		\begin{enumerate}
			\item $-k+(k+1)\delta_1>0 \quad \text{if and only if} \quad k< i$
			\item $ \delta_1+\delta_2+\delta_3-1<-k+(k+1)\delta_1\leq 0 \quad \text{if and only if}\quad k=i $
			\item $-k+(k+1)\delta_1-\delta_2>0 \quad \text{if and only if} \quad k<i$
			\item $\delta_3-\delta_2<-k+(k+1)\delta_1-\delta_2 \leq 0 \quad \text{will never happen for any} \,\, k $
			\item $\delta_1>\delta_2=\delta_3,\delta_1+\delta_2+\delta_3<1$
		\end{enumerate}
		Therefore, the count in the region $Q_{i-1}R_{i-1}P_{2i-1}P_{2i-2}$ is given by
		\[
		|t\mc{LCY}(3;1,\delta_1,\delta_2, \delta_3)|=\sum_{n=1}^{\infty } tF_n(3;1,\delta_1,\delta_2, \delta_3)=1+7+\sum_{k=1}^{i-1}5+\sum_{k=1}^{i-1}5=10i-2
		\]
		And the count in the region $Q_{i-1}Q_{i}P_{2i-1}$ is given by
		\[
		|t\mc{LCY}(3;1,\delta_1,\delta_2, \delta_3)|=\sum_{n=1}^{\infty } tF_n(3;1,\delta_1,\delta_2, \delta_3)=1+2+\sum_{k=1}^{i-1}2+1+\sum_{k=1}^{i-1}2=4i
		\]
		These two examples just correspond to the case (1) and (6) in our statement.
		
	\end{proof}

	\subsection{Details of computing toric regions for $M_5$}\label{section:M5region}
	
	\begin{proof}[Proof of Proposition \ref{prop:toricconeM5}:]
		Firstly note that if the homological sequence for $M_5$ in Fact \ref{fact} can not be realized as a toric symplectic log Calabi-Yau divisor, then one of $\omega(H_{134}),\omega(E_1-E_4),\omega(E_2-E_5),\omega(H_{235})$ must be zero. Thus we only need to focus on the regions $MAD,MOD,MOA$ where the toric divisors might not exist.
		
		Secondly, we consider the following toric homological sequence:
		$$(E_2,E_1-E_2-E_3,E_3,H_{135},E_5,H_{45},E_4,H_{124})$$
		In the interior of region $XOA$, we see that $\omega$ has positive pairing with all the homology classes in the above sequence. Again by Proposition \ref{prop:realization}, the homological sequence can be realized as a toric symplectic Calabi-Yau divisor. So the interior of region $XOA$ is also contained in the toric cone.
		
		Next we explain the nonexistence in regions $MOD$, $MAD$ and $MOX$. In the $M_5$ case we have the following more complicated classification for self-intersection sequences:
		\begin{align*}
		&A^{1}_n=(-1,-2,-2,-1,n,-1,-1,-n-4),A^{2}_n=(-1,-3,-1,-2,n,-1,-1,-n-3)\\
		&A^{3}_n=(-3,-1,-2,-2,n,-1,-1,-n-2),B^{1}_n=(-1,-2,-1,n,-1,-2,-1,-n-4)\\
		&B^{2}_n=(-2,-1,-2,n,-1,-2,-1,-n-3),B^{3}_n=(-2,-1,-2,n,-2,-1,-2,-n-2)\\
		&C^{1}_n=(-4,-1,-2,-2,-2,n,0,-n-1),C^{2}_n=(-3,-2,-1,-3,-2,n,0,-n-1)\\
		&C^{3}_n=(-3,-1,-3,-1,-3,n,0,-n-1),D^{1}_n=(-2,-1,-4,-1,-2,n,0,-n-2)\\
		&D^{2}_n=(-1,-4,-1,-2,-2,n,0,-n-2),D^{3}_n=(-3,-1,-2,-3,-1,n,0,-n-2)\\
		&E^{1}_n=(-2,-1,-3,-2,-1,n,0,-n-3),E^{2}_n=(-1,-3,-1,-3,-1,n,0,-n-3)\\
		&F^{1}_n=(-1,-2,-2,-2,-1,n,0,-n-4)
		\end{align*}
		
		Similarly as in the case for $M_4$, we can apply the following information about the negative symplectic sphere classes by Proposition 3.4 of \cite{LiLi20-pacific}.
		
		In region $MOD$:
		\begin{align*}
		\mc{S}_{\omega}^{-2}\subset &\{E_p-E_5,H_{ijk}|1\leq p\leq 4,1\leq i<j<k\leq 5\}\\
		\mc{S}_{\omega}^{-3} \subset &\{H_{ijkl}|1\leq i<j<k<l\leq 5\},\mc{S}_{\omega}^{-4} \subset \{H_{12345}\}, \mc{S}_{\omega}^{\leq -5}=\emptyset
		\end{align*}
		In region $MAD$:
		\begin{align*}
		&\mc{S}_{\omega}^{-2}\subset \{E_1-E_p,E_q-E_5,H_{234},H_{235},H_{345},H_{125},H_{135},H_{145},H_{245}|2\leq p\leq 5,1\leq q\leq 4\}\\
		&\mc{S}_{\omega}^{-3} \subset \{-H+2E_1,H_{2345},E_1-E_p-E_q|2\leq p<q\leq 5\}\\
		&\mc{S}_{\omega}^{-4} \subset \{E_1-E_i-E_j-E_k,-H+2E_1-E_p|2\leq i<j<k\leq 5,2\leq p\leq 5\}\\
		&\text{For any $k\geq 1$:}\\
		&\mc{S}_{\omega}^{-2k-3}\subset \{ -(k-1)H+kE_1-E_2-E_3-E_4-E_5, -kH+(k+1)E_1-E_2-E_i,\\
		&-(k+1)H+(k+2)E_1,-kH+(k+1)E_1-E_p-E_q|3\leq i,p,q\leq 5,p\neq q\}\\
		&\mc{S}_{\omega}^{-2k-4} \subset \{-(k+1)H+(k+2)E_1-E_2, -kH+(k+1)E_1-E_2-E_p-E_q,\\
		&-kH+(k+1)E_1-E_3-E_4-E_5, -(k+1)H+(k+2)E_1-E_i|3\leq i,p,q\leq 5,p\neq q\}
		\end{align*}
		In region $MOX$:
		\begin{align*}
		&\mc{S}_{\omega}^{-2}\subset \{E_1-E_p,H_{ijk}|2\leq p\leq 5,1\leq i<j<k\leq 5\}\\
		&\mc{S}_{\omega}^{-3} \subset \{-H+2E_1,H_{ijkl},E_1-E_p-E_q|1\leq i<j<k<l\leq 5,2\leq p<q\leq 5\}\\
		&\mc{S}_{\omega}^{-4} \subset \{H_{12345},E_1-E_i-E_j-E_k,-H+2E_1-E_p|2\leq i<j<k\leq 5,2\leq p\leq 5\}\\
		&\mc{S}_{\omega}^{\leq -5}=\emptyset
		\end{align*}
		
		Finally by checking all the possible self-intersection sequences, one can carefully exclude all the possibilities by obeying the rules that any two adjacent homology classes have intersection number $1$ and any two nonadjacent ones have intersection number $0$.
	\end{proof}

	\bibliographystyle{amsalpha}
	\bibliography{mybib}{}

\end{document}